%
%

%
%

%
%

\newif\ifprint
\printfalse
\documentclass[twoside,10pt]{HandbookOfModuli}

\usepackage{amsmath,amsthm}
\usepackage{amssymb}
\usepackage{latexsym}
\usepackage[utf8]{inputenc}
\usepackage{textcomp}
\usepackage{color}
\usepackage[pdftex]{graphicx}
\usepackage{titlesec}
\usepackage{xspace}

\ifprint
	\input{giovanni} 
	\usepackage[final]{microtype}
\fi
\usepackage{bm}
\renewcommand{\mathbf}[1]{\bm{#1}} 

\ifprint
	\definecolor{linkred}{rgb}{0,0,0} 
	\definecolor{linkblue}{rgb}{0,0,0} 
\else
	\definecolor{linkred}{rgb}{0.7,0.2,0.2}
	\definecolor{linkblue}{rgb}{0,0.2,0.6}
\fi

\numberwithin{equation}{section} 

\usepackage[
	hyperindex,
	pagebackref,
	pdftex,
	pdftitle={Handbook of Moduli},
	pdfdisplaydoctitle,
	pdfpagemode=UseNone,
	breaklinks=true,
	extension=pdf,
	bookmarks=false,
	plainpages=false,
	colorlinks,
	linkcolor=linkblue,
	citecolor=linkred,
	urlcolor=linkred,
	pdfmenubar=true,
	pdftoolbar=true,
	pdfpagelabels,
	pdfpagelayout=TwoPage,
	pdfview=Fit,
	pdfstartview=Fit
]{hyperref}

\linespread{1.12}

%
%
\paperheight 10in
\paperwidth 7.17in
\textwidth 12.5cm
\textheight 19.5cm
\setlength{\oddsidemargin 3mm}
\setlength{\evensidemargin 3mm}
\headsep 6mm
\footskip 11mm
\catcode`@=11 \baselineskip 4.5mm
\parindent 8mm
\topmargin -0.25in
\leftmargin -0.0625in
\setlength{\arraycolsep}{0.5mm}
\def\ps@handbook{\def\@oddhead{\hfill \leftmark \hfill\thepage }
\def\@evenhead{\thepage \hfill \rightmark \hfill}
\def\@oddfoot{}
\def\@evenfoot{}}
\def\@evenhead{}
\def\@oddfoot{}
\def\@evenfoot{\hfill\copyright\ China Higher Education Press}
\def\list#1#2{\ifnum \@listdepth >5\relax \@toodeep \else \global
\advance \@listdepth\@ne \fi \rightmargin \z@ \listparindent\z@
\itemindent\z@ \csname @list\romannumeral\the\@listdepth\endcsname
\def\@itemlabel{#1}\let\makelabel\@mklab \@nmbrlistfalse #2\relax
\@trivlist \parskip -\parsep \parindent\listparindent \advance
\linewidth -\rightmargin \advance\linewidth -\leftmargin \advance
\@totalleftmargin \leftmargin \parshape \@ne \@totalleftmargin
\linewidth \ignorespaces}
\catcode`@=12
\pagestyle{handbook}
\renewcommand\thesection{\arabic{section}}
\renewcommand\thesubsection{\arabic{subsection}}
\renewcommand\thesubsubsection{\arabic{subsubsection}}

\pagenumbering{arabic}
\def\thebibliography#1{\section*{References}
\list{[\arabic{enumi}]}{\settowidth \labelwidth{[#1]} \leftmargin
\labelwidth \advance \leftmargin \labelsep \usecounter{enumi}}
\def\newblock{\hskip .11em plus .33em minus .07em} \sloppy
\clubpenalty 4000 \widowpenalty 4000 \sfcode`\.=1000 \relax}

\titleformat{\section}{\normalfont\large\bfseries}{\thesection.}{0.5em}{}
\titleformat{\subsection}{\normalfont\bfseries}{\thesection.\thesubsection.}{0.5em}{}
\titleformat{\subsubsection}[runin]{\normalfont\bfseries}{\thesection.\thesubsection.\thesubsubsection.}{0.5em}{}[\kern0.5em]

\usepackage{amsxtra}
\usepackage{amscd}
\usepackage{longtable}
\usepackage{bigstrut}
\usepackage[curve,matrix,arrow,frame,tips]{xy}
\usepackage{mathdots}
\usepackage{verbatim}
\usepackage{graphicx}
\usepackage{xspace}
\usepackage{ifthen}

%
%
%
%


\numberwithin{equation}{subsection}

\theoremstyle{plain}
\newtheorem{thm}[equation]{Theorem}
\newtheorem{prop}[equation]{Proposition}
\newtheorem{lemma}[equation]{Lemma}
\newtheorem{conjecture}[equation]{Conjecture}
\newtheorem{cor}[equation]{Corollary}

\theoremstyle{definition}
\newtheorem{Definition}[equation]{Definition}
\newtheorem{Variants}[equation]{Variants}

\newtheorem{Remark}[equation]{Remark}

\newtheorem{eg}[equation]{Example}

\newcommand{\cal}{\mathcal}

 

\newcommand{\A}{{\mathcal A}}

\newcommand{\B}{{\mathcal B}}

\newcommand{\Gg}{{\mathcal G}}

\newcommand{\Z}{{\bf Z}}
\newcommand{\F}{{\mathcal F}}
\newcommand{\ti}{\tilde}
 \renewcommand{\O}{{\mathcal O}}

\newcommand{\M}{{\mathcal M}}
\renewcommand{\L}{{\mathcal L}}

\newcommand{\La}{\Lambda}

\newcommand{\I}{{\mathcal I}}

\newcommand{\Tr}{{\rm Tr}}


\newcommand{\ov}{\overline}

\renewcommand{\P}{{\cal P}}






\DeclareMathOperator{\Ker}{Ker}

\newcommand{\mto}{\longmapsto}

\newcommand{\la}{\langle}
\newcommand{\ra}{\rangle}

\newcommand{\mA}{\ensuremath{\mathbb{A}}\xspace}
\newcommand{\mC}{\ensuremath{\mathbb{C}}\xspace}
\newcommand{\mF}{\ensuremath{\mathbb{F}}\xspace}
\newcommand{\mG}{\ensuremath{\mathbb{G}}\xspace}
\newcommand{\mP}{\ensuremath{\mathbb{P}}\xspace}
\newcommand{\mQ}{\ensuremath{\mathbb{Q}}\xspace}
\newcommand{\mR}{\ensuremath{\mathbb{R}}\xspace}
\newcommand{\mZ}{\ensuremath{\mathbb{Z}}\xspace}






\entrymodifiers={+!!<0pt,\fontdimen22\textfont2>}

\DeclareMathOperator{\Adm}{Adm}
\DeclareMathOperator{\Aff}{Aff}

\DeclareMathOperator{\charac}{char}

\DeclareMathOperator{\diag}{diag}
\DeclareMathOperator{\Gal}{Gal}
\DeclareMathOperator{\Gr}{Gr}
\DeclareMathOperator{\Lat}{Lat}
\DeclareMathOperator{\LGr}{LGr}

\DeclareMathOperator{\OGr}{OGr}
\DeclareMathOperator{\Perm}{Perm}
\DeclareMathOperator{\rank}{rank}
\DeclareMathOperator{\Res}{Res}
\DeclareMathOperator{\Spec}{Spec}
\DeclareMathOperator{\spn}{span}

\newcommand{\ad}{\ensuremath{\mathrm{ad}}\xspace}
\newcommand{\aff}{\ensuremath{\mathrm{a}}\xspace}
\newcommand{\aform}{$\langle$~,~$\rangle$\xspace}
\newcommand{\bs}{\backslash}
\newcommand{\can}{\ensuremath{\mathrm{can}}\xspace}
\newcommand{\der}{\ensuremath{\mathrm{der}}\xspace}
\newcommand{\Fr}{\ensuremath{\mathrm{Fr}}\xspace}
\newcommand{\id}{\ensuremath{\mathrm{id}}\xspace}
\newcommand{\loc}{\ensuremath{\mathrm{loc}}\xspace}
\newcommand{\naive}{\ensuremath{\mathrm{naive}}\xspace}
\newcommand{\ol}{\overline}
\newcommand{\R}{\ensuremath{\mathcal{R}}\xspace}
\renewcommand{\sc}{\ensuremath{\mathrm{sc}}\xspace}
\newcommand{\sep}{\ensuremath{\mathrm{sep}}\xspace}
\newcommand{\spin}{\ensuremath{\mathrm{spin}}\xspace}
\newcommand{\sform}{$($~,~$)$\xspace}

\newcommand{\un}{\ensuremath{\mathrm{un}}\xspace}
\renewcommand{\vert}{\ensuremath{\mathrm{vert}}\xspace}
\newcommand{\wh}{\widehat}
\newcommand{\wt}{\widetilde}
\SelectTips{cm}{}
\renewcommand{\to}{%
   \ifthenelse{\boolean{@display}}{\longrightarrow}{\rightarrow}%
   }
\newcommand{\inj}{\hookrightarrow}
\newcommand{\surj}{\twoheadrightarrow}
\let\shortmapsto\mapsto
\renewcommand{\mapsto}{%
   \ifthenelse{\boolean{@display}}{\longmapsto}{\shortmapsto}%
   }
\newcommand{\isoarrow}{%
\ifthenelse{\boolean{@display}}{\overset{\sim}{\longrightarrow}}{\xrightarrow\sim}%
   }
\newlength{\olen}
\newlength{\ulen}
\newlength{\xlen}
\newcommand{\xra}[2][]{%
   \ifthenelse{\boolean{@display}}{%
      \settowidth{\olen}{$\overset{#2}{\longrightarrow}$}%
      \settowidth{\ulen}{$\underset{#1}{\longrightarrow}$}%
      \settowidth{\xlen}{$\xrightarrow[#1]{#2}$}%
      \ifthenelse{\lengthtest{\olen > \xlen}}%
         {\underset{#1}{\overset{#2}{\longrightarrow}}}%
         {\ifthenelse{\lengthtest{\ulen > \xlen}}%
            {\underset{#1}{\overset{#2}{\longrightarrow}}}
            {\xrightarrow[#1]{#2}}}}{
      \xrightarrow[#1]{#2}}
   }

\newenvironment{altenumerate}
   {\begin{list}
      {\textup{(\theenumi)} }
      {\usecounter{enumi}
       \setlength{\labelwidth}{0pt}
       \setlength{\labelsep}{0pt}
       \setlength{\leftmargin}{0pt}
       \setlength{\itemsep}{\the\smallskipamount}
       \renewcommand{\theenumi}{\roman{enumi}}
      }}
   {\end{list}}
\newenvironment{altitemize}
   {\begin{list}
      {$\bullet$ }
      {\setlength{\labelwidth}{0pt}
       \setlength{\labelsep}{0pt}
       \setlength{\leftmargin}{0pt}
       \setlength{\itemsep}{\the\smallskipamount}
      }}
   {\end{list}}
   
\newsavebox{\lineone}
\newsavebox{\linetwo}
\newsavebox{\linethree}
\newlength{\lineonelen}
\newlength{\linetwolen}
\newlength{\linethreelen}
\newlength{\biggerlen}
\newcommand{\twolinestight}[2]{%
   \sbox{\lineone}{#1}%
   \sbox{\linetwo}{#2}%
   \settowidth{\lineonelen}{\usebox{\lineone}}%
   \settowidth{\linetwolen}{\usebox{\linetwo}}%
   \ifthenelse{\lengthtest{\lineonelen > \linetwolen}}%
      {\setlength{\biggerlen}{\the\lineonelen}}%
      {\setlength{\biggerlen}{\the\linetwolen}}%
   \begin{minipage}{\biggerlen}%
      \centering
      \usebox\lineone\\%
      \usebox\linetwo%
   \end{minipage}%
   }
\newcommand{\threelinestight}[3]{%
   \sbox{\lineone}{#1}%
   \sbox{\linetwo}{#2}%
   \sbox{\linethree}{#3}%
   \settowidth{\lineonelen}{\usebox{\lineone}}%
   \settowidth{\linetwolen}{\usebox{\linetwo}}%
   \settowidth{\linethreelen}{\usebox{\linethree}}%
   \ifthenelse{\lengthtest{\lineonelen > \linetwolen}}%
      {\ifthenelse{\lengthtest{\lineonelen > \linethreelen}}%
         {\setlength{\biggerlen}{\the\lineonelen}}%
         {\setlength{\biggerlen}{\the\linethreelen}}%
      }%
      {\ifthenelse{\lengthtest{\linetwolen > \linethreelen}}%
         {\setlength{\biggerlen}{\the\linetwolen}}%
         {\setlength{\biggerlen}{\the\linethreelen}}%
      }%
   \begin{minipage}{\biggerlen}%
      \centering
      \usebox\lineone\\%
      \usebox\linetwo\\%
      \usebox\linethree\\
   \end{minipage}%
   }
   
\newcounter{dummy}

\begin{document}
\setcounter{page}{1}
%
%
\long\def\replace#1{#1}

%
%
\title[Local models of Shimura varieties, I.] {Local models of Shimura varieties, I. \\  Geometry and combinatorics}
%
%
\author[G. Pappas]{Georgios Pappas*}\thanks{*Partially supported by NSF grants DMS08-02686 and  DMS11-02208.}
\address{Department of Mathematics\\
Michigan State University\\
East Lansing, MI 48824-1027\\
USA}
\email{pappas@math.msu.edu}

\author[M. Rapoport]{Michael Rapoport}
\address{Mathematisches Institut der Universit\"at Bonn\\  
Endenicher Allee 60\\
53115 Bonn\\
Germany}
\email{rapoport@math.uni-bonn.de}

\author[B. Smithling]{Brian Smithling}
\address{University of Toronto\\
Department of Mathematics\\
40 St.\ George St.\\
Toronto, ON  M5S 2E4\\
Canada}
\email{bds@math.toronto.edu}

%
%
\subjclass[2000]{Primary 14G35, 11G18; Secondary 14M15}
\keywords{Shimura variety, local model, affine flag variety}
\date{\today}

\begin{abstract}
We survey the theory of local models of Shimura varieties. In particular, we discuss their definition and illustrate it by examples. We give an overview of the results on their geometry and combinatorics obtained in the last 15 years. We also exhibit their connections to other classes of algebraic varieties. 
\end{abstract}  

\maketitle
%
%


 \thispagestyle{empty}

\section*{Introduction}\label{s:intro}

Local models of Shimura varieties are projective algebraic varieties over the spectrum of 
a discrete valuation ring. Their  singularities are supposed to model the singularities that
arise in the reduction modulo $p$ of Shimura varieties, in the cases where the level structure at $p$ is of parahoric type.
The simplest case occurs for the modular curve with $\Gamma_0 (p)$-level structure.
In this example the local model is obtained by blowing up the projective line $\mP_{\mZ_p}^1$
over $\Spec\mZ_p$ at the origin 
$0$ of the special fiber $\mP_{\mF_p}^1 = \mP_{\mZ_p}^1\times_{\Spec\mZ_p}\Spec\mF_p$. Local models
for Shimura varieties are defined   in terms of linear algebra data inside the product of Grassmann 
varieties,  at least as far as type $A$, or $C$, or some cases of type $D$ are concerned. Another version of these
varieties arises as closures of Schubert varieties inside the Beilinson-Drinfeld-Gaitsgory deformation
of affine flag varieties.  It is the aim of this survey to discuss local models from various points of view, exhibit their connections to other classes of algebraic varieties, and give an overview of the results on them obtained in the last 15 years.

Why does such a survey have a place in the handbook of moduli? The reason is that Shimura varieties are often moduli spaces of abelian varieties
with additional structure. Therefore, determining the singularities of their reduction modulo $p$ is an inherent part of the theory of such moduli spaces. The archetypical example is the Shimura variety attached to the group of symplectic similitudes (and its canonical family of Hodge structures). In this case, the Shimura variety represents the moduli functor on $\mQ$-schemes of isomorphism classes of principally polarized abelian varieties of a fixed dimension, equipped with a level structure. In case the $p$-component of this level structure is of parahoric type, there is an obvious way to extend the moduli functor to a moduli functor on $\mZ_{(p)}$-schemes, which however will have bad reduction, unless the $p$-component of the level structure is {\it hyperspecial}. Local models  then serve to analyze the singularities in the special fibers of the $\mZ_{(p)}$-models thus defined. For instance,  natural questions of a local nature are  whether the $\mZ_{(p)}$-schemes that arise in this way are flat over  $\mZ_{(p)}$, or Cohen-Macaulay, or what the set of branches through a point in the reduction is.  All these questions, exactly because they are  of a local nature,  can be transferred to questions on  the corresponding local models.

We will not give  a sketch of the historical development of the theory here.  We only mention that the origin of these ideas lies in the work  of  Deligne and Pappas \cite{DP}, of Chai and Norman \cite{CN}, and of de Jong \cite{J} on specific Shimura varieties. The definitions of local models in the examples considered in these papers were formalized to some degree in the work of Rapoport and Zink in \cite{R-Z} with the introduction of what were subsequently termed {\it naive local models}. The paper \cite{P1} of Pappas pointed to the fact that naive local models are not always flat. Whereas the examples of Pappas arise due to the fact that the underlying group is non-split (in fact, split only after a ramified extension), it was later pointed out by Genestier \cite{Ge2} that a similar phenomenon also occurs for split orthogonal groups.  This then led to the definition of local models in the papers \cite{P-R1, P-R2, P-R4}, as it is presented here.  The local structure of local models was considered in papers by G\"ortz \cite{Go1, Go2, Go4}, Faltings \cite{Fa1, Fa2},    Arzdorf \cite{A}, Richarz \cite{Ri},  and others. At the same time, the combinatorics of the special fiber of local models (in particular, the $\{\mu\}$-admissible set in the Iwahori-Weyl group), was considered in papers by 
Kottwitz and Rapoport \cite{KR}, Haines and Ng\^o \cite{HN2}, G\"ortz \cite{Go4}, and Smithling \cite{Sm1, Sm2, Sm3, Sm4, Sm5}. Finally, we mention the papers
by Gaitsgory \cite{Ga}, Haines and Ng\^o \cite{HN1}, G\"ortz \cite{Go3}, Haines \cite{H1, H2, HP}, and Kr\"amer \cite{Kr},  addressing the problem of determining  the complex of nearby cycles for local models ({\it Kottwitz conjecture}).

It is remarkable that local models also appear in the study of singularities of   other moduli schemes. In \cite{Ki} Kisin constructs a kind of birational modification scheme of the universal {\it flat}  deformation of a finite flat group scheme over a discrete valuation ring of unequal characteristic $(0, p)$, and shows that the singularities in characteristic $p$ of these  schemes are modeled by certain local models that correspond to Shimura varieties of type $A$. Another context in which local models  appear is in the description of Faltings \cite{Fa3} of the singularities of the moduli space of vector bundles on semi-stable singular algebraic curves. 

The theory of local models falls fairly neatly into two parts. The first part is concerned with the local commutative algebra of local models, and the combinatorics of the natural stratification of their special fibers. This  part of the theory  is surveyed in the present paper. The second part is concerned with the cohomology of sheaves on local models, and will be presented in a sequel to this paper. More precisely, we will survey in a second installment the cohomology of coherent sheaves on local models (and in particular will explain the coherence conjecture of \cite {P-R3}), as well as the cohomology of $\ell$-adic sheaves on local models, and in particular the determination of the complex of nearby cycles. Of course, both parts are interrelated by various links between them, and we will try to make this plain in the sequel to this first installment. 

This survey consists of three parts of a rather distinct nature. In the first part (\S \ref{s:motivationalsection}), we give two  approaches to local models, each with a different audience in mind. It should be pointed out that only one of these approaches is the one with which we actually work, and which relates directly to the theory of Shimura varieties,
especially those which are of PEL type. The other approach points 
to a more general theory and shows the ubiquity of local models in other contexts, 
but is not completely worked out at this stage (although there is relevant work in preparation
by Pappas and Zhu).

In the second part (\S\S \ref{s:examples}--\ref{s:combinatorics}), we give an account of the results  on local models that have been obtained in the last 15 years, and we highlight open questions in this area.

In the third part  (\S\S \ref{s:no}--\ref{s:wc}) we explain the  relation of local models to other classes of algebraic varieties, such as nilpotent orbit closures, matrix equation varieties, quiver Grassmannians, and wonderful compactifications of symmetric spaces, that have been established in some cases. Especially as concerns the last section, this is still largely uncharted territory, which explains why this part is of a more informal nature. 

We are happy to acknowledge the important contributions of K.~Arzdorf, \mbox{C.-L.}~Chai, P.~Deligne, V.~Drinfeld, G.~Faltings, D.~Gaitsgory, A.~Genestier, U.~G\"ortz, T.~Haines, J.~de~Jong, R.~Kottwitz, L.~Lafforgue, P.~Norman, T.~Richarz, and Th.~Zink to the subject of the survey. In addition, we thank U.~G\"ortz, T.~Richarz, J.~Schroer,  and  X.~Zhu for their help with this survey,  G.~Farkas and I.~Morrison for inviting us to include our text  in  their handbook of moduli and for putting on us just the right amount of pressure  for a (un)timely delivery, and the referee for his/her suggestions.
 
\section{An object in search of a definition}\label{s:motivationalsection}

In this motivational section, we sketch two possible approaches to local models. It is the 
first  approach  that is directly related to the original purpose of local models, which is to
construct an elementary projective scheme over the ring of integers of a $p$-adic localization of
the reflex field of a Shimura variety,  whose singularities model those of certain integral
models of a Shimura variety. Unfortunately we cannot make the corresponding definition  in as great a generality as we
would like. It is the second approach which is most easily related to the
theory of algebraic groups. It is also  the most elegant, in the sense that it is uniform. In a preliminary subsection we list the 
formal properties that we have come to expect from local models.

\subsection{Local models in an ideal world}\label{ss:lmideal}

The ideal situation presents itself as follows. Let $F$ be a discretely
valued field. We denote by $\O_F$ its ring of integers and by $k=k_F$ its  residue field which we assume to be perfect.
Let $G$ be a connected reductive group over $F$, and let $\{\mu\}$ be a geometric
conjugacy class of one-parameter subgroups of $G$, defined over an algebraic closure
$\overline{F}$ of $F$. Let $E$ be the field of definition of $\{\mu\}$, a finite extension
of $F$ contained in $\overline{F}$ (the \emph{reflex field} of the pair $(G, \{\mu\})$). Finally, let
$K$ be a parahoric subgroup of $G(F)$ in the sense of \cite{BTII}, see also \cite{T}.
These subgroups are (``up to connected component", see \cite{BTII} for a precise definition) the stabilizers of
points in the Bruhat-Tits building of the group $G(F)$. 
 We denote by $\mathcal G$ the smooth group
scheme over $\mathcal O_F$ with generic fiber $G$ and with connected special fiber such that
$K = \mathcal G (\mathcal O_F)$. The existence of a canonical  group scheme 
$\Gg$ with these properties is one of the main results of  \cite{BTII}.

To these data, one would like to associate the {\it local model}, a projective scheme
$M^{\rm loc} = M^{\rm loc} (G, \{\mu\})_K$ over $\Spec\mathcal O_E$, at least when $\{\mu\}$ is a conjugacy class of minuscule%
\footnote{Recall that a coweight $\mu$ is \emph{minuscule} if $\la \alpha,\mu \ra \in \{-1,0,1\}$ for every root $\alpha$ of $G_{\ol F}$.}
coweights. It should be equipped with an action of
$\mathcal G_{\mathcal O_E}=\mathcal G\otimes_{\mathcal O_F}{\mathcal O_E}$. At least when $\{\mu\}$ is minuscule,
$M^{\rm loc}$ should have the following properties.

\smallskip

\noindent (i) $M^{\rm loc}$ is flat over $\Spec\mathcal O_E$ with generic fiber isomorphic to $G/P_{\mu}$.
Here $G/P_{\mu}$ denotes the variety over $E$ of parabolic subgroups of $G$ of type $\{\mu\}$.

\smallskip

\noindent (ii) There is an identification of the geometric points of the special fiber,
\begin{equation*}
M^{\rm loc} (\overline{k}_E)= \bigl\{\, g\in G(L)/\widetilde{K} \bigm| \widetilde{K} g \widetilde{K}\in {\rm Adm}_{\widetilde{K}} (\{\mu\})\,\bigr\}\ .
\end{equation*}
Here $L$ denotes the completion of the maximal unramified extension of $F$ in $\overline{F}$,
and $\widetilde{K} = \mathcal G (\mathcal O_L)$ the parahoric subgroup of $G(L)$ corresponding to $K$.
Finally,
\begin{equation*}
{\Adm}_{\widetilde{K}}(\{\mu\})\subset\widetilde{K}\backslash G(L)/\widetilde{K}
\end{equation*}
is the finite subset of $\{\mu\}$-admissible elements \cite{R}, cf.\ Definition \ref{def:mu-admissible} below.

\smallskip

\noindent (iii) For any  inclusion of parahoric subgroups $K\subset K'$ of $G (F)$, there should be
a morphism
\begin{equation}
M^{\rm loc}_K\to M^{\rm loc}_{K'}\  ,
\end{equation}
which induces the identity (via (i)) on the generic fibers. For a central isogeny $G\to G'$,
and compatible conjugacy classes  $\{\mu\}$ and $\{\mu'\}$, and compatible parahoric subgroups $K\subset G(F)$,  resp.\
$K'\subset G'(F)$, one should have an identification
\begin{equation}
M^{\rm loc} (G, \{\mu\})_K = M^{\rm loc} (G', \{{\mu}'\})_{K'}\  .
\end{equation}
More generally, if $\varphi \colon G\to G'$ is a homomorphism, and $\{\mu'\} = \{\varphi\circ\mu\}$, and if
$\varphi (K)\subset K'$, there should be a morphism
\begin{equation}
M^{\rm loc} (G, \{\mu\})_K\to M^{\rm loc} (G', \{{\mu}'\})_{K'}\otimes_{\mathcal O_{E'}}\mathcal O_E\  ,
\end{equation}
which induces in the generic fiber the natural morphism $G/P_{\mu}\to (G'/P_{\mu'})\otimes_{E'} E$. Here $E'\subset E$ is the
reflex field of $(G', \{\mu'\})$. 

\smallskip

\noindent (iv) Let $F'$ be a finite extension of $F$ contained in $\overline{F}$. Let
$G' = G\otimes_F F'$, and regard $\{\mu\}$ as a geometric conjugacy class of
one-parameter subgroups of $G'$. Let $K'\subset G'(F')$ be a parahoric subgroup
with $ K = K'\cap G (F)$. Note that the reflex field of $(G', \{\mu\})$ is equal to
$E' = F' E$. Under these circumstances one should expect a morphism of local models
\begin{equation}\label{basechlocmod}
M^{\rm loc}(G, \{\mu\})_K\otimes_{\mathcal O_E}\mathcal O_{E'}\to M^{\rm loc} (G', \{\mu\})_{K'}\  ,
\end{equation}
which induces the natural morphism
\begin{equation*}
(G/P_{\mu})\otimes_E E'\to G' / P'_{\mu}\  
\end{equation*}
in the generic fibers. Furthermore, if $F'/F$ is unramified, then the morphism (\ref{basechlocmod}) should be an isomorphism.

\smallskip

\noindent (v) Suppose that $G=\prod_{i=1}^nG_i$, $K=\prod_{i=1}^n K_i$, and $\mu=\prod_{i=1}^n \mu_i$ are all products.
Then ${\mathcal G}=\prod_{i=1}^n {\mathcal G}_i$ and the reflex fields $E_i$, $1\leq i\leq n$, generate the reflex field $E$. We  then
expect an equivariant isomorphism of local models
\begin{equation}\label{prodlocmod}
M^{\rm loc}(G, \{\mu\})_K\xrightarrow{\sim} \prod\nolimits_i M^{\rm loc} (G_i, \{\mu_i\})_{K_i}\otimes_{\mathcal O_{E_i}}\mathcal O_{E}\  ,
\end{equation}
which induces the natural isomorphism
\begin{equation*}
(G/P_{\mu})= \prod\nolimits_i (G_i / P_{\mu_i})\otimes_{E_i} E 
\end{equation*}
in the generic fibers. 

\smallskip

Here we should point out that it is not clear that the above listed properties
are enough to characterize the local models $M^{\rm loc}(G, \{\mu\})_K$ 
up to isomorphism. In fact,
a general abstract (i.e.\ ``group theoretic") definition of local models is still lacking,
although, as we will explain in \S 1.3, there is now some  progress on this problem.

We now sketch two different approaches 
to  the concept of local models.


\subsection{Local models arising from Shimura varieties}\label{ss:Shimuralocmod}

Let $Sh_{\bold K} = Sh ({\bold G}, \{h\}, {\bold K})$ denote a Shimura variety \cite{D1} attached to the triple consisting of  a connected reductive group
$\bold G$ over $\mQ$, a family of Hodge structures $h$ and a compact open subgroup 
$\bold K\subset \bold G(\mA_f)$. We fix a prime number $p$ and assume that $\bold K$ factorizes as
$\bold K = K^p\cdot K_p\subset {\bold G}(\mA_f^p)\times {\bold G} (\mQ_p)$. In fact we assume in addition that $K=K_p$ is
a parahoric subgroup of $\bold G (\mQ_p)$.

Let $E\subset\mC$ denote the reflex field of  $({\bold G}, \{h\})$, i.e.,\ the field of definition 
of the geometric conjugacy class of one-parameter subgroups $\{\mu\} = \{\mu_h\}$
attached to $\{h\}$, cf.~\cite{D1}. Then $E$ is a subfield of the field of algebraic numbers $\overline{\mQ}$, of finite degree over $\mQ$.
Fixing an embedding $\overline{\mQ}\to\overline{\mQ}_p$ determines a place $\wp$
of $E$ above $p$. We denote by the same symbol the canonical model of $Sh_{\bold K}$ over $E$
and its base change to $E_{\wp}$. It is then an interesting problem to define a suitable
model $\mathcal S_{\bold K}$ of $Sh_{\bold K}$ over $\Spec\mathcal O_{E_{\wp}}$. Such a model should be projective if $Sh_{\bold K}$
is (which is the case when $\bold G_{\rm ad}$ is $\mQ$-anisotropic), and should always have 
manageable singularities. In particular, it should be flat over $\Spec\mathcal O_{E_{\wp}}$, and its local 
structure should only depend on the localized group $G = \bold G\otimes_{\mQ}\mQ_p$, the 
geometric conjugacy class $\{\mu\}$ over $\overline{\mQ}_p$, and the parahoric subgroup
$K = K_p$ of $G (\mQ_p)$. Note that, due to the definition of a Shimura variety, the conjugacy
class $\{\mu\}$ is minuscule. 

More precisely, we expect the local model 
$M^{\rm loc} (G, \{\mu\})_K$ to model the singularities of the model $\mathcal S_{\bold K}$, in the following
sense. We would like to have a {\it local model diagram} of $\mathcal O_{E_\wp}$-schemes, in the sense of \cite{R-Z}, 
$$\xymatrix{
& {\widetilde{\mathcal S}}_{\bold K}\ar[ld]_{\pi}\ar[rd]^{\widetilde{\varphi}} & \\
{\quad\quad \mathcal S_{\bold K}\quad\quad} & & {M^{\rm loc} (G, \{\mu\})_K}\, ,
 } $$
in which $\pi$ is a principal homogeneous space (p.h.s.) under the algebraic group $\mathcal G_{\mathcal O_{E_\wp}}=\mathcal G\otimes_{\mZ_p}\mathcal O_{E_\wp}$,  and
in which $\widetilde{\varphi}$ is smooth of relative dimension $\dim G$. Equivalently, using
the language of algebraic stacks, there should be a smooth morphism  of algebraic stacks of relative dimension
$\dim G$ to the stack quotient,
\begin{equation*}
\mathcal S_{\bold K}\to [M^{\rm loc} (G, \{\mu\})_K/\mathcal G_{\mathcal O_{E_\wp}}]\, .
\end{equation*}
In particular, for every geometric point $x\in \mathcal S_{\bold K} (\overline{\mF}_p)$, there exists a geometric
point $\overline{x}\in M^{\rm loc} (G, \{\mu\})_K (\overline{\mF}_p)$, unique up to the action of
$\mathcal G (\overline{\mF}_p)$, such that the strict henselizations of $\mathcal S_{\bold K}$ at $x$ and of $M^{\rm loc}$
at $\overline{x}$ are isomorphic.

Note that the generic fiber $G/P_{\mu} = M^{\rm loc} (G, \{\mu\})_K\otimes_{\mathcal O_{E_\wp}} E_\wp$
is nothing but the {\it compact dual} of the hermitian symmetric domain corresponding to the Shimura
variety $Sh (\bold G, \{h\},\bold  K)$ (after extending scalars from $E$ to $E_\wp$). From this perspective,
the local model $M^{\rm loc} (G, \{\mu\})_K$ is an $\mathcal O_{E_\wp}$-integral model of the compact
dual of the Shimura variety $Sh (\bold G, \{h\},\bold  K)$.

The problems of defining a model of $Sh_{\bold K}$ over $\mathcal O_{E_\wp}$ and of defining a local model
$M^{\rm loc} (G, \{\mu\})_K$ are closely intertwined (although not completely equivalent, as the
example of a ramified unitary group shows \cite{P-R4}). Let us explain this and also briefly review
the general procedure for the construction of   local models $M^{\rm loc} (G, \{\mu\})_K$
in some  cases where the Shimura variety is of PEL type.
Recall that in the PEL cases treated in \cite{R-Z} one first constructs a ``naive"
integral model $\mathcal S^{\rm naive}_{\bold K}$ of the Shimura variety ${Sh}_{\bold K}$; this
is given by a moduli space description and affords a corresponding ``naive local model" $M^{\rm naive}$
together with a smooth morphism
\begin{equation*}
\mathcal S^{\rm naive}_{\bold K}\to [M^{\rm naive}/\mathcal G_{\mathcal O_{E_{\wp}}}].
\end{equation*}
As we mentioned in the introduction,   these naive models $M^{\rm naive}$ and $\mathcal S_{\bold K}$
are often not even flat over $\mathcal O_{E_{\wp}}$ \cite{P1,Ge2}. Then, in most cases, the (non-naive) local model is a $\mathcal G_{\mathcal O_{E_{\wp}}}$-invariant closed subscheme $M^{\rm loc}:=M^{\rm loc} (G, \{\mu\})_K$ of $M^{\rm naive}$ with the same 
generic fiber which is  brutally defined as the flat closure. 
The general idea then is that, from $M^{\rm loc} (G, \{\mu\})_K$, one  also
obtains a good (i.e at least flat) integral model $\mathcal S_{\bold K}$ 
of the Shimura variety via the cartesian diagram
\[
   \xymatrix{
      \mathcal S_{\bold K} \ar[r] \ar[d]
         & [M^{\rm loc}  /\mathcal G_{\mathcal O_{E_\wp}}] \ar[d] \\
      \mathcal S^{\rm naive}_{\bold K} \ar[r]
         & [M^{\rm naive}  /\mathcal G_{\mathcal O_{E_\wp}}].
   }
\]
Unfortunately, in general,  the schemes $M^{\rm loc} (G, \{\mu\})_K$ and  $\mathcal S_{\bold K}$,
 do not have a reasonable moduli theoretic interpretation. Nevertheless, there are  still
 (proven or conjectural) moduli  descriptions  in many interesting cases
 \cite{P1,Go1, Go2, P-R1, P-R2, P-R4}. All these issues are explained 
in more detail in \S \ref{s:examples}.

We mention here that taking the expected functorialities
(i)--(v) of local models into account, we may,  in constructing a local model for the data 
$( G, \{\mu\}, K)$,  make the following hypotheses. We may assume that the adjoint group of $G$
is simple; we may extend scalars to an unramified extension $F$ of $\mQ_p$. If we insist
that $\{\mu\}$ be minuscule, this  reduces the number of possible cases to an essentially finite list.
Let us explain this in more detail. We assume  $G_{\rm ad}$ is  simple and denote by $\mu_{\rm ad}$ the corresponding minuscule cocharacter of $G_{\rm ad}(\ol\mQ_p)$.
Let $\mQ_p^{\rm un}$ be the completion of
the maximal unramified extension of $\mQ_p$; by Steinberg's theorem 
every reductive group over $\mQ_p^{\rm un}$ is quasi-split.
We can write 
$$
{G_{\rm ad}}_{/\mQ_p^{\rm un}}= {\rm Res}_{L/\mQ^{\rm un}_p} (H)\  , \quad \mu=\{\mu_\sigma\}_{\sigma:\, L\rightarrow \ol\mQ_p}\  ,
$$
where $H$ is absolutely simple adjoint over $L$,  $\sigma$ runs over embeddings 
of $L$ over $ \mQ^{\rm un}_p$, and $\mu_\sigma$ are minuscule cocharacters of $H(\ol\mQ_p)$.
The group $H$ over $L$ is also quasi-split.

The possible cases for the pairs $(H, \mu_\sigma)$ are given in
the table below  which can be obtained by combining the table of types of quasi-split, residually split, absolutely
simple groups from 
\cite[p.\ 60--61]{T} with the lists of minuscule coweights in \cite{B} which are dominant relative to the choices of positive roots in \cite{B}.  In the local Dynkin diagrams, $h$ denotes a hyperspecial vertex, $s$ a special (but not hyperspecial) vertex, and $\bullet$ a nonspecial vertex.  We refer to \cite[1.8]{T} for the explanation of the notation in the diagrams.  There are $n+1$ vertices in each diagram that explicitly depends on $n$, i.e.\ aside from the diagrams for $A_1$, $A_2^{(2)}$, $D^{(3)}_4$, $E_6$, $E^{(2)}_6$, and $E_7$.

\bigskip
\bigskip
\bigskip
 

{\small
\setlongtables
\setlength{\bigstrutjot}{\bigskipamount}
\begin{longtable}{|c|c|c|}
\hline
Type of $H(L)$ & \  Local Dynkin diagram & 
   \setlength{\bigstrutjot}{2pt} \twolinestight{\bigstrut[t] Nonzero dominant minuscule}{\bigstrut[b] coweights for $H(\ol\mQ_p)$}   \\
\hline \hline
$A_1$\bigstrut &
   $\xy
      (-2.5,0)*{h}="h1";
      (2.5,0)*{h}="h2";
      {\ar@{-} "h1";"h2"};
      {\ar@{-} "h1";"h2"<.35pt>};
      {\ar@{-} "h1";"h2"<-.35pt>};
      {\ar@{-} "h1";"h2"<.7pt>};
      {\ar@{-} "h1";"h2"<-.7pt>};
      {\ar@{-} "h1";"h2"<1.05pt>};
      {\ar@{-} "h1";"h2"<-1.05pt>};
   \endxy$ & $\varpi^\vee_1$\\
\hline
$A_n$, $n \geq 2$ &
      $\xy
      (0,2.5)*{h}="h1";
      (0,2.5)*{\bigstrut};
      (-7.5,-2.5)*{h}="h2";
      (-2.5,-2.5)*{h}="h3";
      (2.5,-2.5)*{h}="h4";
      (7.5,-2.5)*{h}="h5";
      "h5"*{\bigstrut};
      {\ar@{-} "h1";"h2"};
      {\ar@{-} "h2";"h3"};
      {\ar@{.} "h3";"h4"};
      {\ar@{-} "h4";"h5"};
      {\ar@{-} "h5";"h1"}
      \endxy$
   & $\varpi^\vee_i$, $1\leq i\leq n$  \\
\hline
\bigstrut $A_2^{(2)}$ ($C$-$BC_1$) &
   $\xy
      (-2.5,0)*{s}="h1";
      (2.5,0)*{s}="h2";
      (0,0)*{<};
      {\ar@{-} "h1";"h2"};
      {\ar@{-} "h1";"h2"<.35pt>};
      {\ar@{-} "h1";"h2"<-.35pt>};
      {\ar@{-} "h1";"h2"<.7pt>};
      {\ar@{-} "h1";"h2"<-.7pt>};
      {\ar@{-} "h1";"h2"<1.05pt>};
      {\ar@{-} "h1";"h2"<-1.05pt>};
   \endxy$ & $\varpi^\vee_1,\varpi^\vee_2$\\
\hline
\twolinestight
   {\bigstrut[t] $A^{(2)}_{2n}$ ($C$-$BC_n$),}
   {\bigstrut[b] $n \geq 2$} & 
      $\xy 
      (-10,0)*{s}="s1";
      (-5,0)*{\bullet};
      (0,0)*{\bullet};
      (5,0)*{\bullet};
      (10,0)*{\bullet};
      (15,0)*{s}="s2";
      (-7.5,0)*{<};
      (12.5,0)*{<};
      {\ar@2{-} "s1";(-5,0)};
      {\ar@{-} (-5,0);(0,0)};
      {\ar@{.} (0,0);(5,0)};
      {\ar@{-} (5,0);(10,0)};
      {\ar@2{-} (10,0);"s2"};
      \endxy$
   & $\varpi^\vee_i$, $1\leq i\leq 2n$ \\
\hline
\twolinestight
   {\bigstrut[t] $A^{(2)}_{2n-1}$ ($B$-$C_n$),}
   {\bigstrut[b] $n \geq 3$} &
      $\xy 
      (-10,0)*{\bullet}="v1";
      (-5,0)*{\bullet}="v2";
      (0,0)*{\bullet}="v3";
      (5,0)*{\bullet}="v4";
      (10,0)*{\bullet}="v5";
      (15,3)*{s}="s1";
      (15,-3)*{s}="s2";
      (-7.5,0)*{>};
      {\ar@2{-} (-10,0);(-5,0)};
      {\ar@{-} (-5,0);(0,0)};
      {\ar@{.} (0,0);(5,0)};
      {\ar@{-} (5,0);(10,0)};
      {\ar@{-} (10,0);"s1"};
      {\ar@{-} (10,0);"s2"};
      \endxy$
   & $\varpi^\vee_i$, $1\leq i\leq 2n-1$  \\
\hline\hline
$B_n$, $n \geq 3$ & 
   $\xy 
      (-10,0)*{\bullet}="v1";
      (-5,0)*{\bullet}="v2";
      (0,0)*{\bullet}="v3";
      (5,0)*{\bullet}="v4";
      (10,0)*{\bullet}="v5";
      (15,3)*{h}="s1";
      "s1"*{\bigstrut};
      (15,-3)*{h}="s2";
      "s2"*{\bigstrut};
      (-7.5,0)*{<};
      {\ar@2{-} (-10,0);(-5,0)};
      {\ar@{-} (-5,0);(0,0)};
      {\ar@{.} (0,0);(5,0)};
      {\ar@{-} (5,0);(10,0)};
      {\ar@{-} (10,0);"s1"};
      {\ar@{-} (10,0);"s2"};
      \endxy$
   & $\varpi^\vee_1$ \\ 
\hline\hline
$C_n$, $n \geq 2$\bigstrut  &
      $\xy 
      (-10,0)*{h}="s1";
      (-5,0)*{\bullet};
      (0,0)*{\bullet};
      (5,0)*{\bullet};
      (10,0)*{\bullet};
      (15,0)*{h}="s2";
      (-7.5,0)*{>};
      (12.5,0)*{<};
      {\ar@2{-} "s1";(-5,0)};
      {\ar@{-} (-5,0);(0,0)};
      {\ar@{.} (0,0);(5,0)};
      {\ar@{-} (5,0);(10,0)};
      {\ar@2{-} (10,0);"s2"};
      \endxy$
   & $\varpi^\vee_n$    \\ 
\hline\hline
$D_{n}$, $n \geq 4$ & 
      $\xy 
      (-10,3)*{h}="h1";
      "h1"*{\bigstrut};
      (-10,-3)*{h}="h2";
      (-5,0)*{\bullet}="v2";
      (0,0)*{\bullet}="v3";
      (5,0)*{\bullet}="v4";
      (10,0)*{\bullet}="v5";
      (15,3)*{h}="s1";
      (15,-3)*{h}="s2";
      "s2"*{\bigstrut};
      {\ar@{-} "h1";(-5,0)};
      {\ar@{-} "h2";(-5,0)};
      {\ar@{-} (-5,0);(0,0)};
      {\ar@{.} (0,0);(5,0)};
      {\ar@{-} (5,0);(10,0)};
      {\ar@{-} (10,0);"s1"};
      {\ar@{-} (10,0);"s2"};
      \endxy$
   & $\varpi^\vee_1, \varpi^\vee_{n-1}, \varpi^\vee_{n}$  \\ 
\hline
\twolinestight
   {\bigstrut[t] $D^{(2)}_{n+1}$ ($C$-$B_n$),}
   {\bigstrut[b] $n \geq 2$} & 
      $\xy 
      (-10,0)*{s}="s1";
      (-5,0)*{\bullet};
      (0,0)*{\bullet};
      (5,0)*{\bullet};
      (10,0)*{\bullet};
      (15,0)*{s}="s2";
      (-7.5,0)*{<};
      (12.5,0)*{>};
      {\ar@2{-} "s1";(-5,0)};
      {\ar@{-} (-5,0);(0,0)};
      {\ar@{.} (0,0);(5,0)};
      {\ar@{-} (5,0);(10,0)};
      {\ar@2{-} (10,0);"s2"};
      \endxy$
   & $\varpi^\vee_1, \varpi^\vee_{n}, \varpi^\vee_{n+1}$  \\ 
\hline
$D^{(3)}_4$, $D^{(6)}_4$ ($G_2^\mathrm{I}$)\bigstrut &
   $\xy
   (-5,0)*{s}="s";
   (0,0)*{\bullet};
   (5,0)*{\bullet};
   (2.5,0)*{<};
   {\ar@{-} "s";(0,0)};
   {\ar@3{-} (0,0);(5,0)};
   \endxy$ & $\varpi^\vee_1,\varpi^\vee_3,\varpi^\vee_4$\\
\hline\hline
$E_6$ & 
   $\xy
      (-10,0)*{h}="h1";
      (-5,0)*{\bullet};
      (0,0)*{\bullet};
      (5,3)*{\bullet};
      (10,6)*{h}="h2";
      "h2"*{\bigstrut};
      (5,-3)*{\bullet};
      (10,-6)*{h}="h3";
      "h3"*{\bigstrut};
      {\ar@{-} "h1";(-5,0)};
      {\ar@{-} (-5,0);(0,0)};
      {\ar@{-} (0,0);(5,3)};
      {\ar@{-} (5,3);"h2"};
      {\ar@{-} (0,0);(5,-3)};
      {\ar@{-} (5,-3);"h3"};
   \endxy$
   & $\varpi^\vee_1, \varpi^\vee_6$ \\ 
\hline
$E^{(2)}_6$ $(F_4^\mathrm{I})$ \bigstrut &
   $\xy
   (-10,0)*{s}="s";
   (-5,0)*{\bullet};
   (0,0)*{\bullet};
   (5,0)*{\bullet};
   (10,0)*{\bullet};
   (2.5,0)*{<};
   {\ar@{-} "s";(-5,0)};
   {\ar@{-} (-5,0);(0,0)};
   {\ar@2{-} (0,0);(5,0)};
   {\ar@{-} (5,0);(10,0)}
   \endxy$ &
   $\varpi^\vee_1, \varpi^\vee_6$\\
\hline
$E_7$ & 
   $\xy
      (-15,0)*{h}="h1";
      "h1"*{\bigstrut};
      (-10,0)*{\bullet};
      (-5,0)*{\bullet};
      (0,0)*{\bullet};
      (0,5)*{\bullet};
      (0,5)*{\bigstrut};
      (5,0)*{\bullet};
      (10,0)*{\bullet};
      (15,0)*{h}="h2";
      {\ar@{-} "h1";(-10,0)};
      {\ar@{-} (-10,0);(-5,0)};
      {\ar@{-} (-5,0);(0,0)};
      {\ar@{-} (0,0);(5,0)};
      {\ar@{-} (0,0);(0,5)};
      {\ar@{-} (5,0);(10,0)};
      {\ar@{-} (10,0);"h2"};
   \endxy$
   & $\varpi^\vee_1$ \\ 
\hline 
\end{longtable}
}

Note that the minuscule coweights are for $H(\ol\mQ_p)$
and so they only depend on the absolute type over $\ol\mQ_p$.
There are no nonzero minuscule coweights for $E_8$, $F_4$, $G_2$ types.
Of course, there is no simple description
of the local model for ${\rm Res}_{L/\mQ^{\rm un}_p}(H)$ in terms of a local model
for $H$. For example, see the case of $H=GL_n$ in \S\ref{s:examples}.\ref{ss:ResGL_n} below. However, we expect that most properties
of local models  for  a group which is the restriction of scalars ${\rm Res}_{L/\mQ^{\rm un}_p}(H)$ 
will only depend on $H$, the degree of $L$ over  $\mQ_p^{\rm un}$, the 
combinatorial data describing $\{\mu_\sigma\}_\sigma$ and the type (conjugacy class)
of the parahoric subgroup $K\subset H(L)$ (and not on the particular 
choice of the field $L$).

Recall that to each such pair $(H, \mu)$ with $H$ absolutely simple adjoint and $\mu$ minuscule as above, 
we associate a homogeneous space $H/P_\mu$. Following Satake, in \cite[1.3]{D2} Deligne studies
faithful symplectic representations $\rho\colon H'\to GSp_{2g}$, where
$H'\to H$ is a central isogeny, with the property  that the coweight $\rho_{\rm ad}\circ\mu_{\rm ad}$ 
is the (unique) minuscule coweight $\varpi^\vee_g$ in type $C_g$.
Such symplectic representations exist for all the pairs in the table,  except
for those corresponding to exceptional groups.  
Hence, for all classical pairs, we can obtain an embedding of $H/P_\mu$ in 
the  Grassmannian of Lagrangian subspaces of rank $g$ in symplectic $2g$-space.
As we will see in the rest of the paper, the local model is often defined using such an
embedding.
By  loc.\ cit.,\  Shimura varieties of ``abelian type" \cite{M,M2} produce pairs that support  such symplectic representations.
Among them, the pairs $(B_n, \varpi^\vee_1)$, $(D_n, \varpi^\vee_1)$, $(D^{(2)}_n, \varpi^\vee_1)$, $(D^{(3)}_4, \varpi^\vee_1)$, $(D^{(6)}_4, \varpi^\vee_1)$
do not appear,  when we are just considering
Shimura varieties of PEL type. For these pairs, the corresponding homogeneous spaces
$H/P_\mu$ are forms of quadric hypersurfaces 
in projective space. So far, local models involving these pairs  and the exceptional 
pairs have not been the subject of a systematic investigation. The construction in \S \ref{s:motivationalsection}.\ref{ss:BGm} 
applies to some of these local models, but we will  otherwise
  omit their discussion in this survey.

 \begin{eg}
 Let us consider the Siegel case, i.e.,\ the Shimura variety of principally polarized abelian varieties of dimension $g$ with level $\bold K$-structure, where the $p$-component $K_p$ of $\bold K$ is the 
 parahoric subgroup of $Gp_{2g}(\mQ_p)$ which is the stabilizer of a selfdual periodic lattice chain $\L$ in the standard symplectic vector space of dimension
 $2g$ over $\mQ_p$.  In this case the Shimura field is equal to $\mQ$, and  a model $\mathcal S_{\bold K}$ over $\mZ_{(p)}$ is given as the moduli scheme  of principally polarized chains of abelian varieties of dimension $g$ of type corresponding to $\L$,  with a level structure prime to $p$. In this case, the local model is given inside the product  of  finitely many copies of the Grassmannian of subspaces of dimension $g$ in a $2g$-dimensional vector space,   which satisfy two conditions: a periodicity condition, and a self-duality condition. This example is discussed in \S \ref{s:examples}.\ref{ss:GSp_2g}. 
 \end{eg}
 
 \begin{eg}\label{eg:picard}
Let us consider Shimura varieties related to the   Picard moduli schemes of principally polarized abelian varieties of
dimension $n$ with complex multiplication of the ring of integers $\mathcal O_k$ in an imaginary-quadratic field $k$ of signature $(r, s)$
(cf.\ \cite[ \S 4]{KuR} for precise definitions). Here the $\mQ_p$-group $G$ is the group of unitary
similitudes for the quadratic extension $k\otimes\mQ_p$ of $\mQ_p$. Three alternatives present themselves.

\smallskip

\noindent (i)\emph{ $p$ splits in $k$}. Then $G\simeq GL_n\times\mG_m$, and $\{\mu\}$ is the conjugacy class of a cocharacter of the form
$\bigl((1^{(r)}, 0^{(s)}); 1\bigr)$.  Here, for $n = r + s$, we write $(1^{(r)},0^{(s)})$ for the cocharacter
\[
   x \mapsto \diag(\underbrace{x,\dotsc,x}_r,\underbrace{1,\dotsc,1}_s)
\]
of $GL_n$.  The parahoric subgroup $K_p$ is of the form $K_p^0\times\mZ_p^\times$,
where $K_p^0$  is a parahoric supgroup of $GL_n (\mQ_p)$.

\smallskip

\noindent (ii) \emph{$p$ is inert  in $k$}. Then $G$ becomes isomorphic to $ GL_n\times\mG_m$
after the unramified base extension $\otimes_{\mQ_p} k_p$. Hence, by the expected general property (iv)
of local models (which is true in the case at hand), the local models in cases (i) and (ii) become
isomorphic after extension of scalars from $E_\wp$ to $E'_\wp = k_p\cdot E_\wp$. Note that, if
$r\neq s$, then $E_\wp$ can be identified with $k_p$, and hence $E'_\wp = E_\wp$.

\smallskip

\noindent (iii) \emph{$p$ is ramified in $k$}. Again $G\otimes_{\mQ_p} k_p =  GL_n\times\mG_m$. There is
a morphism of local models
\begin{equation*}
M^{\rm loc} (G, \{\mu\})_K\otimes_{\mathcal O_{E_\wp}}\mathcal O_{E'_\wp}\to M^{\rm loc} ( GL_n\times\mG_m , \{\mu\})_{K'}\, ,
\end{equation*}
for any parahoric subgroup $K'\subset GL_n (k_p)\times k_p^\times$ with intersection $K$ with $G (\mQ_p)$.
However, in general this is not an isomorphism.

The Picard moduli problems lead to  local models defined in terms of linear algebra, similar to the Siegel case above. The local models relating to the first two cases   are  discussed in \S \ref{s:examples}.\ref{ss:GL_n}; the local models of the last case  is discussed in \S \ref{s:examples}.\ref{ss:GU_n}.

\end{eg}

As is apparent from this brief discussion, the definitions of the local models related to the last two kinds of Shimura varieties strongly use the natural representations of the classical groups in question (the group of symplectic similitudes in the Siegel case, the general linear group in the Picard case for unramified $p$, and the group of unitary similitudes in the Picard case for ramified $p$). They are therefore
not purely group-theoretical.  In the next section, we will give,
in some case, a purely group-theoretical construction of local models.

\subsection{Local models in the Beilinson-Drinfeld-Gaitsgory style}\label{ss:BGm}

The starting point of the construction is a globalized version of the affine Grassmannian as in \cite{BD}.
Let $\mathcal O$ be a complete discrete valuation ring, with fraction field $F$ and residue field $k$. Let $X=\Spec  \mathcal O[t]$ be the affine line over $\mathcal O$. Let $G$ be a split reductive algebraic group. 
We consider the following functor on $(Sch/X)$. Let  $S\in (Sch/X)$, with structure morphism $y\colon S\to X$, and define
\begin{equation}\label{grGX}
   Gr_{G, X}(S)= \biggl\{\,\twolinestight{iso-classes of pairs}{$( \mathcal F, \beta)$} \biggm|
      \twolinestight{$\mathcal F$ a $G$-bundle on $X\times S$,}
            {$\beta$ a trivialization of $\mathcal F |_{ (X\times S)\setminus\Gamma_y}$}\,\biggr\}\, . 
\end{equation}
Here $\Gamma_y\subset X\times S$ denotes the graph of $y$, and the fiber products are over $\Spec\mathcal O$. 

Then $Gr_{G, X}$ is representable by an ind-scheme over $X$.  The relation of this ind-scheme to the usual affine Grassmannian is as follows.  

Recall that to $G$ and any field $\kappa$, there is associated   its positive loop group 
$L^+ G $ over $\kappa$,   its loop group $L G $, and its affine Grassmannian $Gr_G = L G/L^+ G$ (quotient
of $fpqc$-sheaves on $\kappa$-schemes). Here $L^+ G$ is the affine group scheme on $\Spec \kappa$
representing the functor on $\kappa$-algebras
\begin{equation*}
R\mto L^+ G (R) = G ( R[[T]])\, ,
\end{equation*}
and $L G$ is the ind-group scheme over $\Spec \kappa$ representing the functor
\begin{equation*}
R\mto L G (R) = G \big(R((T))\big)\, ,
\end{equation*}
and $Gr_G$ is the ind-scheme over $\Spec \kappa$ representing the functor
\[
   R\mto Gr_G (R) = \biggl\{\,\twolinestight{iso-classes of pairs}{$(\mathcal F, \beta)$} \biggm|
   \twolinestight{$\mathcal F$ a $G$-bundle on $\Spec R[[T]]$,}
   {$\beta$ a trivialization of $\mathcal F|_{\Spec R((T))}$}\,\biggr\}\, ,
\]
comp.\ \cite{B-L}, cf.\ also \S \ref{s:afv}.\ref{ss:afv} below. When 
we wish to emphasize that we are working over the field $\kappa$, we will denote
the affine Grassmannian  by $Gr_{G, \kappa}$.

\begin{lemma}[Gaitsgory {\cite[Lem.\ 2]{Ga}}]\label{globalGr}
Let $x\in X(\kappa)$, where $\kappa$ is either the residue field of $\mathcal O$, or the fraction field of $\mathcal O$, and identify the completed local 
ring $\mathcal O_x$ of $X\times \Spec \kappa$ with $\kappa[[T]]$, using the local parameter $T=t-x$. Then the restriction morphism induces an isomorphism of ind-schemes over $\Spec \kappa$,  
\begin{equation*}
i_x^\ast \colon Gr_{G, X}\times_{X, x} \Spec \kappa\to Gr_{G, \kappa}\, .
\end{equation*}
Here $Gr_{G, \kappa}$ denotes the affine Grassmannian of $G$ over $\kappa$. \qed
\end{lemma}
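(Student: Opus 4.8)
The plan is to interpret both sides as moduli groupoids of $G$-bundles --- on the affine line over the test base on the left, on a formal disc on the right --- and to identify them by means of the Beauville--Laszlo gluing lemma. Since $Gr_{G,X}\times_{X,x}\Spec\kappa$ is the base change of an ind-scheme and $Gr_{G,\kappa}$ is an fpqc sheaf by its very definition as a quotient, both sides are fpqc sheaves on $\kappa$-schemes, and it suffices to show that $i_x^\ast$ induces a bijection on $R$-points for every $\kappa$-algebra $R$; being then an isomorphism of the associated functors on affine schemes, $i_x^\ast$ is an isomorphism of ind-schemes.

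So fix a $\kappa$-algebra $R$ and put $S=\Spec R$, viewed as an $X$-scheme through $S\to\Spec\kappa\xrightarrow{x}X$, so that its structure morphism $y$ is the constant map with value $x$. First I would note that $X\times_{\Spec\mathcal O}S=\Spec(\mathcal O[t]\otimes_{\mathcal O}R)=\Spec R[t]$, that after the substitution $T:=t-x$ the graph $\Gamma_y$ is the principal Cartier divisor $V(T)\subset\Spec R[T]$ with complement $\Spec R[T,T^{-1}]$, and that the completion of $\Spec R[T]$ along $\Gamma_y$ is $\Spec R[[T]]$. Hence $(Gr_{G,X}\times_{X,x}\Spec\kappa)(R)$ is the set of iso-classes of pairs $(\mathcal F,\beta)$ with $\mathcal F$ a $G$-bundle on $\Spec R[T]$ and $\beta$ a trivialization of $\mathcal F|_{\Spec R[T,T^{-1}]}$, whereas $Gr_{G,\kappa}(R)$ is the set of iso-classes of pairs $(\mathcal F',\beta')$ with $\mathcal F'$ a $G$-bundle on $\Spec R[[T]]$ and $\beta'$ a trivialization over $\Spec R((T))$; under these descriptions $i_x^\ast$ is the restriction map $(\mathcal F,\beta)\mapsto(\mathcal F|_{\Spec R[[T]]},\,\beta|_{\Spec R((T))})$. (The two permitted choices of $\kappa$ play no role: in both $\mathcal O[t]\otimes_{\mathcal O}\kappa=\kappa[t]$ and $\Gamma_y$ is the constant divisor $t=x$.)

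The heart of the argument is then the Beauville--Laszlo descent lemma for the ring $R[T]$ and the non-zero-divisor $T$ (cf.\ \cite{B-L}, in the $G$-bundle form used in \cite{BD, Ga}): a $G$-bundle on $\Spec R[T]$ is equivalent to a triple consisting of a $G$-bundle on $\Spec R[[T]]$, a $G$-bundle on $\Spec R[T,T^{-1}]$, and an isomorphism of their pullbacks to $\Spec R((T))$. Feeding in the trivialization $\beta$ over $\Spec R[T,T^{-1}]$, I may take the second bundle to be trivial, whereupon the gluing isomorphism becomes exactly a trivialization over $\Spec R((T))$ of the pullback of the first bundle --- that is, exactly a pair $(\mathcal F',\beta')\in Gr_{G,\kappa}(R)$ --- and this correspondence is precisely $i_x^\ast$. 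This yields the required bijection, functorially in $R$, and the lemma follows.

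The step I expect to be the main obstacle is upgrading Beauville--Laszlo from quasi-coherent modules, where it is classical, to $G$-torsors: one needs that descent along $\{\Spec R[[T]],\,\Spec R[T,T^{-1}]\}\to\Spec R[T]$ is effective for $G$-bundles. This can be arranged by choosing a closed immersion $G\hookrightarrow GL_n$, applying the module version to the associated rank-$n$ bundle, and checking that the reduction of structure group to $G$ descends --- which works because $GL_n/G$ is quasi-affine, so compatible sections of the associated $GL_n/G$-bundle over the two pieces glue over the affine scheme $\Spec R[T]$; alternatively one invokes directly the torsor form of the lemma recorded in \cite{BD}. Everything else --- that $T$ is a non-zero-divisor in $R[T]$, that the relevant bundles are finitely presented, and the elementary identification of the complement of $\Gamma_y$ with $\Spec R[T,T^{-1}]$ --- is routine.
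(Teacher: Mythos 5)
Your proposal is correct and follows the standard argument: the paper itself gives no proof (the \qed follows the statement, which is simply a citation to Gaitsgory), and Gaitsgory's Lemma~2 is proved exactly by the Beauville--Laszlo gluing argument you give, after the same base-change observation that $X\times S = \Spec R[T]$ with $\Gamma_y = V(T)$. Your remarks about upgrading Beauville--Laszlo from quasi-coherent modules to $G$-torsors via a faithful representation $G\hookrightarrow GL_n$ and quasi-affineness of $GL_n/G$ correctly flag the one point that needs care and dispose of it in the usual way.
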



We next  construct a degeneration of $Gr_{G, F}$ to the affine flag
variety $Fl_{G, k} = L G/\B$ over $k$, where $\B$ denotes the Iwahori subgroup scheme of $L^+ G$
given as the inverse image under the reduction map of a fixed Borel subgroup $B$ of $G$,
 \[
      \xymatrix@R=2.5ex{
         L^+ G \ar[r] \ar@{}[d] |*{\rotatebox{90}{$\subset$}}&
           G \ar@{}[d] |*{\rotatebox{90}{$\subset$}}\\
        \B \ar@{-->}[r] &
           B.
      }
   \]
Denote by  $0\in X(\mathcal O)$ the zero section. Let $Fl_{G, X}$ be the ind-scheme over $X$ which represents the following
functor on $X$-schemes,
\begin{equation}
   S\mto \biggl\{\,\text{iso-classes of triples } (\mathcal F, \beta, \varepsilon)
   \biggm|
   \twolinestight{$(\mathcal F, \beta)\in Gr_{G, X} (S)$,}
      {$\varepsilon$ a reduction of $\mathcal F|_{\{0\}\times S}$ to $B$}\,\biggr\}\, .
\end{equation}
Let $\pi_X \colon Fl_{G, X}\to Gr_{G, X}$ be the forgetful morphism, which is a smooth proper
morphism with typical fiber $G/B$. 

Now fix a uniformizer $\pi\in\mathcal O$. We denote by $\delta$ the section of $X$ over $\mathcal O$ defined by $\delta^*(t)=\pi$. Let $Fl_{G, \mathcal O}$, resp.\ $Gr_{G, \mathcal O}$,  be the pull-back via $\delta$ of $Fl_{G, X}$, resp.\ $Gr_{G, X}$, 
to $\Spec\mathcal O$, and let  
\begin{equation}
\pi_{\mathcal O}\colon Fl_{G, \mathcal O}\to Gr_{G, \mathcal O}
\end{equation}
 be the pull-back of $\pi_X$. Note that the section $\delta$ gives by Lemma \ref{globalGr} identifications of the generic fiber of $Gr_{G, \mathcal O}$ with $Gr_{G, F}$, and of the special fiber of $Gr_{G, \mathcal O}$ with $Gr_{G, k}$. 

\begin{lemma} \label{lemma135} The morphism $\pi_{\mathcal O}$ induces
\begin{altitemize}
\item over $F$ a canonical isomorphism
\begin{equation*}
Fl_{G, \mathcal O}\times \Spec F\simeq Gr_{G, F}\times G/B\ ,
\end{equation*}
\item over $k$ a canonical isomorphism
\begin{equation*}
Fl_{G, \mathcal O}\times \Spec k\simeq Fl_{G, k}\ .
\end{equation*}
\end{altitemize}
\end{lemma}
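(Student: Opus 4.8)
The plan is to verify both isomorphisms on the level of functors of points, exploiting that the section $\delta$ meets the zero section $0\in X(\mathcal O)$ precisely along the special fiber. Write $\delta_F\colon\Spec F\to X$ and $\delta_k\colon\Spec k\to X$ for the two fibers of $\delta$; since $\delta^*(t)=\pi$, the former is the point $t=\pi$ with $\pi\in F^\times$, while the latter is the point $t=\bar\pi=0$, so $\delta_k$ coincides with the restriction $0_k$ of the zero section to $k$. By definition $Fl_{G,\mathcal O}\times\Spec F$, resp.\ $Fl_{G,\mathcal O}\times\Spec k$, represents the functor taking an $F$-scheme (resp.\ $k$-scheme) $S$, viewed as an $X$-scheme via $S\to\Spec F\xrightarrow{\delta_F}X$ (resp.\ $S\to\Spec k\xrightarrow{\delta_k}X$), to the set of iso-classes of triples $(\mathcal F,\beta,\varepsilon)$ with $(\mathcal F,\beta)\in Gr_{G,X}(S)$ and $\varepsilon$ a reduction to $B$ of $\mathcal F|_{\{0\}\times S}$.

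For the generic fiber I would argue as follows. Because $\pi$ is a unit in $F$, the graph $\Gamma_y$ (cut out in $X\times S$ by $t-\pi$) is disjoint from $\{0\}\times S$ (cut out by $t$) for every $F$-scheme $S$; hence $\{0\}\times S$ lies in the locus on which $\beta$ is defined, and $\beta$ supplies a canonical trivialization of the $G$-bundle $\mathcal F|_{\{0\}\times S}$ on $S$. Relative to this trivialization, a reduction $\varepsilon$ of $\mathcal F|_{\{0\}\times S}$ to $B$ is the same as a section of the trivial $G/B$-bundle on $S$, i.e.\ a morphism $S\to G/B$. This yields, functorially in $S$, a decomposition $(Fl_{G,\mathcal O}\times\Spec F)(S)=(Gr_{G,X}\times_{X,\delta_F}\Spec F)(S)\times(G/B)(S)$, the first factor recording $(\mathcal F,\beta)$ and the second recording $\varepsilon$ through $\beta$. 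Passing to the representing ind-schemes and using the identification $Gr_{G,X}\times_{X,\delta_F}\Spec F\simeq Gr_{G,F}$ of Lemma \ref{globalGr} (the one already recorded before the statement), I obtain $Fl_{G,\mathcal O}\times\Spec F\simeq Gr_{G,F}\times G/B$, under which $\pi_{\mathcal O}$ becomes the first projection.

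For the special fiber, now $\delta_k=0_k$, so for a $k$-scheme $S$ one has $\Gamma_y=\{0\}\times S$. Lemma \ref{globalGr}, applied with $\kappa=k$ and local parameter $T=t$, identifies the pair $(\mathcal F,\beta)\in(Gr_{G,\mathcal O}\times\Spec k)(S)$ with a point $(\mathcal F',\beta')$ of $Gr_{G,k}(S)$ by restriction to the formal neighborhood of $\{0\}\times S$; moreover this equivalence carries the fiber $\mathcal F|_{\{0\}\times S}$ to the restriction $\mathcal F'|_{T=0}$ of $\mathcal F'$ along the origin, both $G$-bundles on $S$. Therefore a reduction $\varepsilon$ of $\mathcal F|_{\{0\}\times S}$ to $B$ is the same datum as a reduction of $\mathcal F'|_{T=0}$ to $B$, and the latter is exactly what promotes a point of $Gr_{G,k}=LG/L^+G$ to a point of $Fl_{G,k}=LG/\B$, since $\B$ is the preimage of $B$ under $L^+G\to G$. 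I conclude $Fl_{G,\mathcal O}\times\Spec k\simeq Fl_{G,k}$, compatibly with the forgetful morphisms to $Gr_{G,k}$.

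The step I expect to be the main obstacle is the claim in the last paragraph that the equivalence of Lemma \ref{globalGr} is compatible with the ``fiber at $\{0\}$'' functor on $G$-bundles, so that the $B$-reduction data on the two sides match and the forgetful morphisms $Fl\to Gr$ correspond. This amounts to a Beauville--Laszlo type gluing: $G$-bundles on $X\times S$ trivialized away from $\{0\}\times S$ are recovered from $G$-bundles on the formal completion trivialized off the origin, and this recovery is literally the identity on restrictions to $\{0\}\times S$. The remaining verifications are bookkeeping with functors of points, using that $Gr_{G,X}$, $Fl_{G,X}$ and their fibers are represented by ind-schemes, so a functorial bijection on $S$-points is all that is needed.
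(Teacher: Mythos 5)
Your proof is correct and follows essentially the same route as the paper's: for the generic fiber, use that $t=0$ is disjoint from the graph $\Gamma_y$ (where $t=\pi$) so that $\beta$ trivializes $\mathcal F|_{\{0\}\times S}$ and the $B$-reduction becomes a map $S\to G/B$; for the special fiber, use Lemma \ref{globalGr} at the origin to match $(\mathcal F,\beta)$ with a point of $Gr_{G,k}$, and observe that the $B$-reduction of $\mathcal F|_{\{0\}\times S}$ is precisely the extra datum lifting that point to $Fl_{G,k}$. The compatibility you single out as the potential obstacle is indeed the only nontrivial point, and it holds for the reason you give (the identification of Lemma \ref{globalGr} is literally restriction to the formal neighborhood, so it commutes with further restriction to the origin); the paper takes this for granted.
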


\begin{proof}(cf.\ {\cite[Prop. 3]{Ga}}) If $S$ is a $F$-scheme, then 
$$X\times S\setminus\Gamma_y = \Spec_S\mathcal O_S[t, (t-\pi)^{-1}],$$
and the trivialization $\beta$ induces  a trivialization of $\F$ along the section $t=0$. Hence the reduction $\varepsilon$ to $B$ corresponds to a section of $G/B$ over $S$, which provides the claimed identification of the generic fiber.  

If $S$ is a $k$-scheme, then the identification of $Gr_{G, \mathcal O}\times \Spec k$ with $Gr_{G, k}$ is via the origin $t=0$, in the sense of Lemma  \ref{globalGr}. Hence the reduction $\varepsilon$ to $B$ corresponds to the choice of a compatible flag in the non-constant $G$-bundle $\F|_{t=0}$ over $S$, hence the triple $(\F, \beta, \varepsilon)$ corresponds to a lifting of the $S$-valued point $(\F, \beta)$ of $Gr_{G, k}$ to an $S$-valued point of $Fl_{G, k}$, which gives the claimed identification of the special fiber. 
\end{proof}

Next  we recall that the orbits of $L^+ G$ on $Gr_G$ are parametrized by the dominant
coweights, cf.\ Remark \ref{rk:identWK}.
More precisely, if $A$ denotes a maximal split torus in $B$, and $X_\ast (A)_+$
denotes the dominant coweights with respect to $B$, then the map
\begin{equation*}
\lambda\mto \big(L^+ G\cdot\lambda (T)\cdot L^+ G\big) \big/ L^+ G = \mathcal O_\lambda
\end{equation*}
defines a bijection between $X_\ast (A)_+$ and the set of orbits. Furthermore, $\mathcal O_\lambda$ is a
quasi-projective variety of dimension $\langle 2\varrho , \lambda\rangle$, and
$\mathcal O_\mu\subset\overline{\mathcal O}_\lambda$ if and only if $\mu{\leq}\lambda$
(i.e.,\ $\lambda -\mu$ is a non-negative integral sum of positive co-roots), cf.\ Proposition \ref{dimspec}.
In particular,
$\mathcal O_\lambda$ is a projective variety if and only if $\lambda$ is a minuscule
coweight.

Now we may define a version of local models in this context.
\begin{Definition}
The \emph{local model} attached to $\lambda\in X_\ast (A)_+$ in the Beilinson-Drinfeld-Gaitsgory
context is the scheme-theoretic closure $M_{G, \lambda}$  in $Fl_{G, \mathcal O}$ of the locally
closed subset $\mathcal O_\lambda\times\{e\}$ of $Gr_{G, F}\times_{\Spec F} G/B$.
\end{Definition}
This definition is essentially independent of the choice of the uniformizer $\pi$ of $\mathcal O$. Indeed, any two uniformizers differ by a unit, which may be used to construct a canonical  isomorphism between the corresponding local models. 

It follows from the definition that $M_{G, \lambda}$ is a projective scheme flat of relative
dimension $\langle 2\varrho , \lambda\rangle$ over $\mathcal O$. If $\lambda$ is minuscule,
then the generic fiber of $M_{G, \lambda}$ is projective and smooth. The theory of local models is
concerned with the structure of the schemes $M_{G, \lambda}$. Natural questions that arise in this connection  are the following. When is the special fiber
$M_{G, \lambda}\otimes_\O k$ reduced? What are its singularities, and how can one enumerate its irreducible components?

\begin{Variants} (i) Replacing the Borel subgroup $B$ by a parabolic subgroup $P$ containing
$B$, and the Iwahori subgroup $\B$ by the parahoric subgroup $\P$ corresponding
to $P$ under the reduction morphism, we obtain a scheme $Fl_{G, P, \mathcal O}$ with
generic fiber equal to $Gr_{G, F}\times G/P$ and with special fiber equal to the affine partial flag
variety $L G /\P$. Correspondingly we define local models $M_{G, P, \lambda}$ over $\Spec\mathcal O$ 
for $\lambda\in X_\ast (A)_+$, with generic fiber contained in $Gr_{G, F}$, and with special fiber contained in the partial flag variety $LG/L^+\P$ over $k$. For an inclusion $P\subset P'$ of two \emph{standard}
parabolic subgroups of $G$, we obtain a morphism between local models
\begin{equation*}
M_{G, P, \lambda}\to M_{G, P', \lambda}\ ,
\end{equation*}
which induces an isomorphism in the generic fibers. In the extreme case $P = G$,
the scheme $Fl_{G, P, \mathcal O}$  has generic fiber $Gr_{G, F}$ and special fiber $Gr_{G, k}$,  and the local model $M_{G, G, \lambda}$ ``looks constant"  over $\Spec\mathcal O$,  with generic fiber the Schubert variety $\overline{\mathcal O}_\lambda$ in $Gr_{G, F}$,  and special fiber the Schubert variety $\overline{\mathcal O}_\lambda$ in $Gr_{G, k}$. If $\lambda$ is minuscule,  then $M_{G, G, \lambda}$ is projective and smooth over $\mathcal O$. 
\smallskip

\noindent (ii) The preceding considerations generalize without substantial changes to the case when $G$ is a quasi-split reductive group over $\mathcal O$. 

\smallskip

\noindent (iii) An alternative definition of $M_{G, P, \lambda}$ can be given as follows. 
Starting from the Chevalley form of $G$ over $\O$
and a parabolic subgroup $P$ as above, we can construct a smooth  ``parahoric group
scheme" $\wh{\mathcal G}$ over $\Spec \O[[t]]$. The generic, resp.\  special, fiber  of $\wh\Gg\rightarrow \Spec \O$ is isomorphic to the  smooth affine ``parahoric group
scheme"  $\wh\Gg_\kappa$ over $\Spec \kappa[[t]]$ with $\kappa=F$, resp.\ $k$, given by Bruhat-Tits theory.
(These are characterized by requiring that
   $\wh\Gg_\kappa(\kappa^{\rm sep}[[t]])$ is equal to the group of elements of  $G(\kappa^{\rm sep}[[t]])$ with reduction modulo $t$ contained in  $P(\kappa^{\rm sep})$.)
    For example, $\wh\Gg$ can be obtained by applying the constructions
of \cite[3.2,  3.9.4]{BTII} to the two dimensional base $\Spec \O[[t]]$ by picking appropriate
schematic root data given by ideals generated by powers of  $t$, see also \cite[p.\ 147]{P-R4}.  
The base change $\wh{\mathcal G}\times _{\Spec \O[[t]]}\Spec \O((t))$ 
is identified with the Chevalley group scheme $G\times _{\Spec \O}\Spec \O((t))$.  We can now glue  
the ``constant" group scheme $G$ over $\Spec \O[t, t^{-1}]$ with  $\wh\Gg$ over $\Spec \O[[t]]$ to
produce a ``Bruhat-Tits group scheme" $\mathcal G$  over the affine line $X=\Spec \O[t]$, cf.\   \cite{P-R5,Heinloth}. 
Let us define the functor $ Gr_{{\mathcal G}, X}$
exactly as in (\ref{grGX}) above,  except that  $ G$-torsors are now replaced by $\mathcal G$-torsors. Also as above, set 
$$
Gr_{{\mathcal G},\O}=Gr_{{\mathcal G}, X}\times_{X, \delta} \Spec \O
$$
where $\delta\colon \Spec \O\to X$ is given by $t\mapsto \pi$. Note that  ${\mathcal G}\times_{X,\delta}\Spec \O$ 
is the parahoric group scheme associated to the subgroup of elements of  $G(\O^{\rm un})$ with reduction modulo $\pi$ contained in  $P(k^{\rm sep})$.
Similar to  Lemma \ref{lemma135}, we can see that $ Gr_{{\mathcal G}, \O}\times \Spec F=Gr_{G, F}$
is the affine Grassmannian of the loop group of $G$ over $F$, while $Gr_{\mathcal G, \O}\times \Spec k=LG/L^+\wh{\mathcal G}_k$ is the affine flag 
variety corresponding to the parahoric subgroup $\wh{\mathcal G}(k[[t]])$ over $k$. The rest of the construction proceeds the same way: we define $M_{G,P,\lambda}=M_{{\mathcal G}, \lambda}$ to be the Zariski closure of the orbit $\O_\lambda$.  

This construction extends beyond  the split case and is used in \cite{PZ}  to provide a definition 
of local models $M^{\rm loc}(G, \{\mu\})_K$ under some rather general assumptions. 
Indeed, one can deal   with all reductive groups $G$ that split over a tamely ramified 
extension of $F$ and with general parahoric subgroups. The technical details of the construction of the group scheme
$\Gg$ over $X=\Spec \O[t]$ and of the global affine Grassmannian $Gr_{\Gg, X}$ in the general (tamely ramified)
case are quite involved and we will not attempt to report on them here.
Instead, we  will refer the reader to the forthcoming
article \cite{PZ}.
 
\end{Variants} 

\smallskip

In the rest of this survey we will only discuss the models that are directly related to (mostly PEL) Shimura varieties, as sketched in \S \ref{s:motivationalsection}.\ref{ss:Shimuralocmod}. However, especially after Gaitsgory's paper \cite{Ga}, it is likely  that methods from elsewhere, such as from the theory of the Geometric Langlands Correspondence, will have an impact on the problems discussed in this report.\footnote{In this respect, we refer to very recent work of 
X.~ Zhu \cite{Zh} on the coherence conjecture \cite{P-R3} and to the forthcoming \cite{PZ}.}
We hope that our loose discussion  above  can help in this respect to attract people from these other areas to the theory of local models.

\section{Basic examples}\label{s:examples}

In this section we make explicit the definition of the local model in the style of \S\ref{s:motivationalsection}.\ref{ss:Shimuralocmod} in the most basic cases. Let $F$ be a discretely valued field, 
$\O_F$ its ring of integers, $\pi \in \O_F$ a uniformizer, and $k = \O_F/\pi\O_F$ its residue field which we assume
is perfect. Let $n$ be a positive integer.  A \emph{lattice chain in $F^n$} is a collection 
of $\O_F$-lattices in $F^n$ totally ordered under inclusion.  A lattice chain $\L$ is \emph{periodic} if $a\Lambda \in \L$ for 
every $\Lambda \in \L$ and $a \in F^\times$.  For $i = na+j$ with $0 \leq j < n$, we define the $\O_{F}$-lattice
\begin{equation}\label{disp:Lambda_i}
   \Lambda_i := \sum_{l=1}^j \pi^{-a-1}\O_F e_l + \sum_{l=j+1}^{n} \pi^{-a}\O_F e_l \subset F^n,
\end{equation}
where $e_1,\dotsc,e_n$ denotes the standard ordered basis in $F^n$.  Then the $\Lambda_i$'s form a periodic lattice chain
\begin{equation}\label{disp:std_lattice_chain}
   \dotsb \subset \Lambda_{-2} \subset \Lambda_{-1} \subset \Lambda_0 \subset \Lambda_1 \subset \Lambda_2 \subset \dotsb,
\end{equation}
which we call the \emph{standard lattice chain.}

Given a partition $n = r + s$, we recall the cocharacter $\bigl(1^{(r)},0^{(s)}\bigr)$ of $GL_n$ defined in Example \ref{eg:picard}(i); we shall also regard this as a cocharacter of certain subgroups of $GL_n$ (e.g.\ $GSp_{2g}$, $GO_{2g},\dots$), as appropriate.

In each case except for \S\S\ref{s:examples}.\ref{ss:ResGL_n} and \ref{s:examples}.\ref{ss:ResGSp}, we give the types of the adjoint group and nontrivial minuscule coweights under consideration, in the sense of the table in \S\ref{s:motivationalsection}.\ref{ss:Shimuralocmod}.

\subsection{Split unitary, i.e.\ $GL_n$  (types $(A_{n-1}, \varpi^\vee_r)$, $1 \leq r \leq n-1$)}\label{ss:GL_n}
We refer to \S\ref{s:motivationalsection}.\ref{ss:Shimuralocmod} for an explanation of why we lump the cases $GL_n$ and the split unitary group relative to an unramified quadratic extension together. 

Let $G := GL_n$ over $F$ and let $\L$ be a periodic lattice chain in $F^n$.  Fix an integer $r$ with $0 \leq r \leq n$, let $\mu$ denote the cocharacter $\bigl( 1^{(r)}, 0^{(n-r)} \bigr)$ of the standard maximal torus of diagonal matrices in $G$, and let $\{\mu\}$ denote the geometric conjugacy class of $\mu$ over $\ol F$.  The \emph{local model $M^\loc_{G,\{\mu\},\L}$} attached to the triple $(G,\{\mu\},\L)$ is the functor on the category of $\O_F$-algebras that assigns to each $\O_F$-algebra $R$ the set of all families $(\F_\Lambda)_{\Lambda\in\L}$ such that
\begin{altenumerate}
\item\label{it:lm_rank}
   (\emph{rank}) for every $\Lambda \in \L$, $\F_\Lambda$ is an $R$-submodule of $\Lambda \otimes_{\O_F} R$ which Zariski-locally on $\Spec R$ is a direct summand of rank $n-r$;
\item\label{it:lm_functoriality}
   (\emph{functoriality}) for every inclusion of lattices $\Lambda \subset \Lambda'$ in $\L$, the induced map $\Lambda \otimes_{\O_F} R \to \Lambda' \otimes_{\O_F} R$ carries $\F_\Lambda$ into $\F_{\Lambda'}$:
   \[
      \xymatrix@R=2.5ex{
         \Lambda \otimes_{\O_F} R \ar[r] \ar@{}[d] |*{\rotatebox{90}{$\subset$}}&
            \Lambda' \otimes_{\O_F} R \ar@{}[d] |*{\rotatebox{90}{$\subset$}}\\
         \F_\Lambda \ar@{-->}[r] &
            \F_{\Lambda'};
      }
   \]
\item\label{it:lm_periodicity}
\setcounter{dummy}{\value{enumi}}
   (\emph{periodicity}) for every $a \in F^\times$ and every $\Lambda\in\L$, the isomorphism $\smash{\Lambda \xra[\sim] a a\Lambda}$ identifies $\F_\Lambda \isoarrow \F_{a\Lambda}$.
\end{altenumerate}

It is clear that $M^\loc_{G,\{\mu\},\L}$ is representable by a closed subscheme of a product of finitely many copies of $\Gr(n-r,n)_{\O_F}$, the Grassmannian of $(n-r)$-planes in $n$-space; and that $M^\loc_{G,\{\mu\},\L}$ has generic fiber isomorphic to $\Gr(n-r,n)_F$.  The fundamental result of G\"ortz's paper \cite{Go1} is the following.

\begin{thm}[G\"ortz {\cite[4.19, 4.21]{Go1}}]\label{st:GL_n_loc_mod_flat}
For any $\mu = \bigl( 1^{(r)}, 0^{(n-r)} \bigr)$ and periodic lattice chain $\L$, $M^\loc_{G,\{\mu\},\L}$ is flat over $\Spec \O_F$ with reduced special fiber.  The irreducible components of its special fiber are normal with rational singularities, so in particular are Cohen-Macaulay. Furthermore, $M^\loc_{G,\{\mu\},\L}$ has semi-stable reduction when $\mu=\bigl(1, 0^{(n-1)}\bigr)$. \qed
\end{thm}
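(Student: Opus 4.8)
The plan is to reduce every assertion to known facts about Schubert varieties in affine flag varieties, following \cite{Go1}. Write $\ol M := M^\loc_{G,\{\mu\},\L}\otimes_{\O_F}k$ for the special fiber. As recalled just before the theorem, $M^\loc_{G,\{\mu\},\L}$ is a closed subscheme of a product of finitely many copies of $\Gr(n-r,n)_{\O_F}$, so it is projective over $\Spec\O_F$, and its generic fiber is the smooth irreducible variety $\Gr(n-r,n)_F$ of dimension $d := r(n-r)$. Let $\wt K\subset GL_n(F)$ be the parahoric subgroup stabilizing $\L$, with associated affine flag variety $Fl$ of $GL_n$ over $k$ (the full affine flag variety when $\L$ is a complete chain, a partial one in general) and Schubert varieties $\SS_w\subset Fl$. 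The first step is to pin down $\ol M$ set-theoretically: by the lattice-chain dictionary, a family $(\F_\Lambda)_{\Lambda\in\L}$ over a $k$-algebra $R$ satisfying the rank, functoriality and periodicity conditions of the theorem is the same as an $R$-point of $Fl$ of bounded relative position, and covering $Fl$ by translates of its big cell while matching the affine coordinates of $\Gr(n-r,n)$ with those of $Fl$ chart by chart identifies the underlying space of $\ol M$ with $\bigcup_{w\in\Adm_{\wt K}(\{\mu\})}\SS_w$, where $\Adm_{\wt K}(\{\mu\})$ is the $\{\mu\}$-admissible set (the combinatorial input of Kottwitz--Rapoport). Its maximal elements are the translations $t_\lambda$, $\lambda\in W\mu$, and $\ell(t_\lambda)=\langle 2\varrho,\mu\rangle = d$ for each of them, so the irreducible components of $\ol M$ are the $\SS_{t_\lambda}$ and $\ol M$ is equidimensional of dimension $d$.

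The main step is a local analysis at a point $x$ of $\ol M$, which I would carry out over $\O_F$ in order to get flatness and reducedness of the special fiber simultaneously. Around a torus-fixed point $x\in\ol M(k)$, write the moduli functor in affine coordinates on the ambient product of Grassmannians; then $M^\loc_{G,\{\mu\},\L}$ is cut out in an affine space over $\O_F$ by the explicit polynomial equations produced by the functoriality conditions between consecutive members of $\L$. The essential observation --- and this is exactly where the minusculity of $\mu = (1^{(r)},0^{(n-r)})$ is used, and exactly what breaks down in the ramified examples of Pappas and Genestier where the naive local model fails to be flat --- is that these equations define an $\O_F$-flat scheme whose special fiber is the affine chart of the Schubert variety $\SS_w$ through $x$ in $Fl$; in particular the local ring of $M^\loc_{G,\{\mu\},\L}$ at $x$ is $\O_F$-torsion free with reduced special fiber. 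Letting $x$ vary over $\ol M$, it follows that $M^\loc_{G,\{\mu\},\L}$ is flat over $\Spec\O_F$ and that $\ol M$ is reduced, hence coincides with $\bigcup_w\SS_w$ scheme-theoretically. (Alternatively, once $\ol M$ is known to be reduced of pure dimension $d$ with components the $\SS_{t_\lambda}$, flatness follows by comparing with the schematic closure $M^{\mathrm{flat}}$ of the generic fiber: a direct lifting of $k$-points $(\F_\Lambda)$ to $\O_F$-points --- possible because $\mu$ is minuscule --- shows every component of $\ol M$ lies in $M^{\mathrm{flat}}\otimes_{\O_F}k$, whence $M^{\mathrm{flat}} = M^\loc_{G,\{\mu\},\L}$ by a standard argument with the ideal sheaf of $M^{\mathrm{flat}}$.)

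Normality, Cohen--Macaulayness and rational singularities of the irreducible components of $\ol M$ are now inherited from Schubert varieties in affine flag varieties: each $\SS_{t_\lambda}$ is normal and Cohen--Macaulay with rational singularities by the Frobenius-splitting results of Faltings, Littelmann and Mathieu (in positive characteristic; normality of Schubert varieties in characteristic $0$ is classical as well). This yields all assertions of the theorem except the last.

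Finally, when $\mu = (1,0^{(n-1)})$ one has $r = 1$, so $\Gr(n-1,n)_{\O_F} = \mP^{n-1}_{\O_F}$, and $\Adm_{\wt K}(\{\mu\})$ is small enough that every Schubert variety $\SS_w$ occurring in $\ol M$ is smooth; in the local coordinates of the main step the defining equations over $\O_F$ degenerate to the normal-crossings form $z_{i_1}\cdots z_{i_m} = \pi$, so $M^\loc_{G,\{\mu\},\L}$ is regular and its special fiber is a reduced divisor with normal crossings whose components are the smooth $\SS_{t_\lambda}$ --- this recovers the iterated blow-up of $\mP^{n-1}_{\O_F}$ studied by de Jong \cite{J}. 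The hard part throughout is the local computation of the main step: setting up the coordinate description of the moduli problem, recognizing the resulting equations as those of an affine Schubert cell, and controlling which cells meet a given chart. Minusculity of $\mu$ is precisely what makes those equations well behaved; without it --- as the ramified cases show --- even flatness can fail.
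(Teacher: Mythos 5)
Your proposal correctly identifies the scaffolding of G\"ortz's argument --- embed the special fiber into an appropriate affine flag variety, show that set-theoretically it is the union of the Schubert varieties $\SS_w$ with $w$ in the $\{\mu\}$-admissible set (via Kottwitz--Rapoport), and then exploit known facts about affine Schubert varieties. But the ``main step'' is where the whole theorem lives, and your proposal asserts it rather than proves it. You claim that, in local coordinates at a torus-fixed point $x$, the functoriality equations ``define an $\O_F$-flat scheme whose special fiber is the affine chart of the Schubert variety $\SS_w$ through $x$,'' and then cite ``minusculity'' as the reason --- but this is precisely what has to be shown, and it is in no way self-evident. The equations one writes down are incidence (rank/determinantal) conditions between consecutive Grassmannian charts, and the scheme-theoretic special fiber they cut out is a priori a possibly non-reduced thickening of the corresponding Schubert chart; proving it is reduced and that the whole scheme is $\O_F$-flat is exactly the difficult content, not something one reads off from the shape of the equations. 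The invocation of minusculity is a heuristic (it explains why the ramified examples of Pappas and Genestier fail), not an argument.

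Your actual logical order is also inverted relative to G\"ortz. What he does is: (a) establish topological flatness, i.e.\ that the underlying space of the special fiber is $\bigcup_{w\in\Adm}\SS_w$, using the combinatorial identity $\Adm(\{\mu\})=\Perm(\{\mu\})$ for $GL_n$ together with a lifting argument for the open cells $C_{t_\lambda}$; (b) prove scheme-theoretic reducedness of the special fiber, using the embedding into the affine flag variety and the fact (Faltings, Mathieu, Littelmann) that affine Schubert varieties are normal and compatibly Frobenius split --- so that the intersections of the various incidence conditions cut out a reduced subscheme; and only then (c) deduce flatness over $\O_F$ from topological flatness plus reducedness of the special fiber. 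In your proposal the Frobenius-splitting machinery only appears at the end, to give normality and Cohen--Macaulayness of individual components, but it is in fact indispensable already at step (b), which you have left open. The parenthetical ``alternative argument'' you give assumes reducedness as input, so it does not fill the gap either. Until you supply an actual argument for why the special fiber, with its scheme structure as cut out by the moduli equations, equals the reduced union of Schubert varieties, the proposal does not constitute a proof.
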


Here a normal variety having ``rational singularities'' is meant in the strongest sense, i.e.,\ there exists a birational proper morphism from a smooth variety to it such that the higher direct images of the structure sheaf and of the dualizing sheaf vanish.

\begin{eg}\label{exn=2}
The simplest nontrivial example occurs for $n = 2$, $\mu = (1,0)$, and $\L$ the standard lattice chain (the Iwahori case).  The most interesting point on the local model is the $k$-point $x$ specified by the lines $k \ol e_1 \subset \ol\Lambda_0$ and $k \ol e_2 \subset \ol\Lambda_1$, where we use a bar to denote reduction mod $\pi$.  In terms of standard affine charts of the Grassmannian, we find that $x$ has an affine neighborhood $U$ in the local model consisting of all points of the form
\[
   \F_{\Lambda_0} = \spn\{e_1 + Xe_2\}
   \quad\text{and}\quad
   \F_{\Lambda_1} = \spn \{Y\pi^{-1}e_1 + e_2\}
\]
such that $XY = \pi$.  Hence $U \cong \Spec \O_F[X,Y]/(XY-\pi)$.  Hence $U$ visibly satisfies the conclusions of Theorem \ref{st:GL_n_loc_mod_flat}; its special fiber consists of two copies of $\mA_k^1$ meeting transversely at $x$.  In fact, globally the special fiber of the local model consists of two copies of $\mP_k^1$ meeting at $x$.  By contrast, taking $\L$ to be the homothety class of $\Lambda_0$ or of $\Lambda_1$ (the maximal parahoric case), the local model is tautologically isomorphic to $\mP_{\O_F}^1$.

In \S\ref{s:matrix}.\ref{ss:redmat} we shall consider various analogs of the scheme $U$ both for higher rank and for other groups, which we broadly refer to as schemes of matrix equations.  Note that $U$ is exactly the scheme $Z_{1,2}$ appearing in Theorem \ref{st:gencirc_flat}.
\end{eg}

\begin{Remark}\label{rem: 2.1.3}
In light of Theorem \ref{st:GL_n_loc_mod_flat}, it is an interesting question whether the special fiber, as a whole, of the local model is Cohen-Macaulay; by the flatness result, this is equivalent to the local model itself being Cohen-Macaulay.  If Cohen-Macaulayness holds, then, since by the theorem above the special
fiber is generically smooth, we can apply Serre's criterion to deduce that the local model is also normal.  

In \cite[\S4.5.1]{Go1},  G\"ortz proposes to attack the question of Cohen-Macaulayness of the special fiber by means of a purely combinatorial problem in the affine Weyl group, which however appears to be difficult, at least when $\L$ is the full standard lattice chain.  In this way he has found that the special fiber is Cohen-Macaulay for $n \leq 4$ by hand calculations, and for $n \leq 6$ by computer calculations. 
Cohen-Macaulayness can also be shown via his approach for any $n$, in the case that the lattice chain $\L$ consists of the multiples of only two lattices. Similar remarks apply to local models for any group, whenever the special fiber of the local model can be identified with a union of Schubert varieties in an affine flag variety. 
See Remark \ref{rem:2.2.5} for another case where this property can be shown. By contrast, we know of no experimental evidence for Cohen-Macaulayness of the special fiber in any other Iwahori, i.e.\ ``full lattice chain,''  cases. We shall discuss embedding the special fiber of local models in affine flag varieties in \S\ref{s:afv}.\ref{ss:embedding}.  The question of Cohen-Macaulayness and normality of local models is a major open problem in the field.
\end{Remark}

\subsection{Split symplectic (types $(C_g, \varpi^\vee_g)$)}\label{ss:GSp_2g}
Let $n = 2g$, and let \aform denote the alternating $F$-bilinear form on $F^{2g}$ whose matrix with respect to the standard ordered basis is
\begin{equation}\label{disp:sympl}
   J_n:=\begin{pmatrix}
            &  H_g\\
      -H_g  &
   \end{pmatrix},
\end{equation}
where $H_g$ denotes the $g \times g$ matrix
\begin{equation}\label{disp:antidiag_1}
   H_g :=
   \begin{pmatrix}
     &  &  1\\
     \phantom{\iddots} & \iddots & \phantom{\iddots}\\
     1
   \end{pmatrix}.
\end{equation}
Given a lattice $\Lambda$ in $\L$, we denote by $\wh \Lambda$ its \aform-dual,
\[
   \wh\Lambda := \bigl\{\, x\in F^{2g} 
                      \bigm| \langle\Lambda,x\rangle \subset \O_F \,\bigr\}.
\]
Then \aform induces a perfect bilinear pairing of $\O_F$-modules
\begin{equation}\label{disp:pairing}
   \Lambda \times \wh\Lambda \to \O_F.
\end{equation}
We say that a lattice chain $\L$ in $F^{2g}$ is \emph{self-dual} if $\wh\Lambda \in \L$ for all $\Lambda \in \L$.

Let $G := GSp_{2g} := GSp\bigl(\text{\aform}\bigr)$ over $F$, let $\mu$ denote the cocharacter $\bigl( 1^{(g)}, 0^{(g)} \bigr)$ of the standard maximal torus of diagonal matrices in $G$, and let $\{\mu\}$ denote its geometric conjugacy class over $\ol F$.  Let $\L$ be a periodic self-dual lattice chain in $F^{2g}$.  The \emph{local model $M^\loc_{G,\{\mu\},\L}$} is the closed $\O_F$-subscheme of $M^\loc_{GL_{2g},\{\mu\},\L}$ whose $R$-points, for each $\O_F$-algebra $R$, satisfy the additional condition
\begin{altenumerate}
\setcounter{enumi}{\value{dummy}}
\item\label{it:lm_duality}
   (\emph{perpendicularity}) for all $\Lambda \in \L$, the perfect $R$-bilinear pairing
   \[
      \bigl(\Lambda\otimes_{\O_F} R\bigr) \times 
         \bigl(\wh\Lambda \otimes_{\O_F} R\bigr) \to R
   \]
   obtained by base change from \eqref{disp:pairing} identifies $\F_\Lambda^\perp \subset \wh\Lambda \otimes_{\O_F} R$ with $\F_{\wh\Lambda}$.
\end{altenumerate}

This time the local model $M^\loc_{G,\{\mu\},\L}$ has generic fiber $\LGr(2g)_F$, the Grassmannian of Lagrangian subspaces in $F^{2g}$.  The fundamental result of G\"ortz's paper \cite{Go2} is the following.

\begin{thm}[G\"ortz {\cite[2.1]{Go2}}]\label{st:GSp_loc_mod_flat}
For any periodic self-dual lattice chain $\L$, $M^\loc_{G,\{\mu\},\L}$ is flat over $\Spec \O_F$ with reduced special fiber.  The irreducible components of its special fiber are normal with rational singularities, so in particular are Cohen-Macaulay.\qed
\end{thm}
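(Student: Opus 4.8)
The plan is to adapt Görtz's treatment of the $GL_n$ case (Theorem~\ref{st:GL_n_loc_mod_flat}) to $GSp_{2g}$, replacing the affine flag variety of type $\widetilde{A}_{2g-1}$ by the one of type $\widetilde{C}_g$. Since $M^\loc_{G,\{\mu\},\L}$ is by construction a closed subscheme of $M^\loc_{GL_{2g},\{\mu\},\L}$, which is already flat over $\O_F$ with reduced special fiber, and since its generic fiber is the smooth Lagrangian Grassmannian $\LGr(2g)_F$, every assertion of the theorem is really a statement about the special fiber $M^\loc_{G,\{\mu\},\L}\otimes_{\O_F}k$. Moreover the standard self-dual lattice chain together with the form \aform descends to the prime field, so one may as well assume the residue characteristic is $p$ and work where Frobenius-splitting methods apply; a general $\O_F$ then follows by base change.

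The geometric core is to realize this special fiber inside an affine flag variety. Using the degeneration of \S\ref{s:motivationalsection}.\ref{ss:BGm} (equivalently, analyzing the lattice-chain moduli problem modulo $\pi$ directly), one embeds $M^\loc_{G,\{\mu\},\L}\otimes k$ as a closed subscheme of the (partial) affine flag variety $\mathcal{F}\ell$ of $Sp_{2g}$ over $k$ attached to the parahoric determined by $\L$. Its underlying reduced subscheme is the union $\bigcup_{w\in\Adm(\{\mu\})}\mathcal{F}\ell_w$ of Schubert varieties indexed by the $\mu$-admissible subset (Definition~\ref{def:mu-admissible}) of the symplectic Iwahori--Weyl group: the $T$-fixed points are determined by the rank and periodicity conditions, while the perpendicularity condition carves the admissible set of $GSp_{2g}$ out of that of $GL_{2g}$, following the combinatorial analysis of Kottwitz--Rapoport \cite{KR}. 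Because $\{\mu\}$ is minuscule, this admissible set is closed in the Bruhat order and its maximal elements all have the same length, equal to $\dim\LGr(2g)_F$; hence $\bigcup_w\mathcal{F}\ell_w$ is equidimensional of the same dimension as the generic fiber, with irreducible components the $\mathcal{F}\ell_w$ attached to the maximal admissible $w$.

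I would then bring in the geometry of Schubert varieties in affine flag varieties: they are compatibly Frobenius split, normal, and Cohen--Macaulay, and they admit Demazure resolutions exhibiting rational singularities in the strong sense explained after Theorem~\ref{st:GL_n_loc_mod_flat}; in the affine Kac--Moody setting this rests on the work of Faltings \cite{Fa1,Fa2} (building on Littelmann, Mathieu, Mehta--Srinivas, Kumar). Since $\Adm(\{\mu\})$ is Bruhat-closed, the union $\bigcup_w\mathcal{F}\ell_w$ is itself compatibly Frobenius split, hence reduced, and its irreducible components are exactly the normal, Cohen--Macaulay Schubert varieties with rational singularities listed above --- which is the content of the last two sentences of the theorem. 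It remains to pin down the \emph{scheme-theoretic} special fiber of $M^\loc_{G,\{\mu\},\L}$ as this reduced union, rather than a nilpotent thickening of it. One inclusion is clear; for the converse, each top-dimensional Schubert variety has a Demazure resolution that deforms over $\O_F$ to a smooth $\O_F$-flat scheme mapping properly onto the corresponding component of $M^\loc_{G,\{\mu\},\L}$, and a dimension count --- the special fiber can have dimension at most $d:=\dim\LGr(2g)_F$, and equals it --- forces these resolutions to cover all of $M^\loc_{G,\{\mu\},\L}$. Pushing forward from the reduced, $\O_F$-flat source then shows that $M^\loc_{G,\{\mu\},\L}\otimes k$ is reduced of dimension $d$ and that $M^\loc_{G,\{\mu\},\L}$ has no associated point on the special fiber, so it is flat over $\O_F$ (equivalently, a uniformizer is a nonzerodivisor on its structure sheaf).

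The main obstacle I anticipate is exactly this last comparison: showing that cutting the $GL_{2g}$ local model by the perpendicularity condition yields a scheme whose special fiber is \emph{precisely} the reduced union of symplectic Schubert varieties, with no extra embedded or non-reduced structure. This is the type-$C$ analogue of the hardest part of Görtz's $GL_n$ argument, and it is where the symplectic group is better behaved than the split orthogonal one: for the latter the analogously defined naive local model fails even to be flat \cite{Ge2}, so the proof must genuinely use features special to the symplectic case --- the self-duality of the standard lattice chain, the explicit matrix-equation charts (cf.\ Example~\ref{exn=2} and \S\ref{s:matrix}), and a Frobenius splitting of the ambient affine flag variety compatible with the symplectic local model.
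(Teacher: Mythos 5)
The paper itself proves this theorem only by citation to G\"ortz \cite{Go2}, so the comparison is really with G\"ortz's argument, which your proposal follows in broad outline: embed the special fiber into the affine flag variety for $Sp_{2g}$, identify it set-theoretically with the union of Schubert varieties indexed by the $\{\mu\}$-admissible set (via the admissible $=$ permissible result of Kottwitz--Rapoport in type $C$), and import reducedness, normality, and rational singularities from the structure theory of affine Schubert varieties (Frobenius splitting, Faltings, \cite{P-R3}). This skeleton is correct, and the reducedness half of the argument --- once the set-theoretic identification is in place --- is essentially complete.

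The genuine gap is the ``converse inclusion''/topological flatness step, and you acknowledge the difficulty without actually filling it. Your proposed mechanism --- that each top-dimensional Schubert variety admits a Demazure resolution which ``deforms over $\O_F$'' to a flat scheme surjecting onto the corresponding component of $M^\loc_{G,\{\mu\},\L}$ --- is asserted, not constructed: Bott--Samelson/Demazure resolutions are $k$-schemes, and producing an $\O_F$-flat lift mapping onto a prescribed irreducible component of the local model is precisely the nontrivial deformation-theoretic input that G\"ortz supplies (by an explicit chart-by-chart lifting of permissible lattice chains; a construction along the lines of \S\ref{s:motivationalsection}.\ref{ss:BGm} is another conceivable route, but it too would need to be carried out). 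The supporting dimension count is likewise not free. ``The special fiber can have dimension at most $d$'' is a consequence of the containment $\ol M^\naive_{G,\{\mu\},\L}\subset\bigcup_{w\in\Adm(\{\mu\})}S_w$, which must itself be established by comparing the moduli conditions modulo $\pi$ with the defining inequalities of the permissible set --- \cite{KR} alone does not deliver it, and for split $GO_{2g}$ exactly this bound fails and the analogously defined naive model is not even topologically flat. Even granting the bound, equidimensionality does not by itself place every maximal admissible Schubert variety inside the flat closure; one still has to exhibit a lift of each such stratum to the generic fiber, and that lifting argument is the combinatorial heart of G\"ortz's proof and the part your proposal leaves open.
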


\begin{Remark}\label{rem:2.2.5}
In the case that the lattice chain $\L$ consists of multiples of two lattices $\Lambda$ and $\Lambda'$
such that $\wh\Lambda=\Lambda$ and $\wh\Lambda'=\pi\Lambda$, one can obtain a   better
result, namely that the whole special fiber is Cohen-Macaulay and that the local model is normal.
This was first shown in \cite{CN}.
See Theorem \ref{Thm6.1.6} and the discussion after its statement. 
\end{Remark}

\subsection{Split orthogonal (types $(D_g, \varpi_{g-1}^\vee)$, $(D_g,  \varpi^\vee_g)$)}\label{ss:GO_2g}

In this example we assume $\charac k \neq 2$.
Let $n = 2g$, and let \sform denote the symmetric $F$-bilinear form on $F^{2g}$ whose matrix with respect to the standard ordered basis is $H_{2g}$ \eqref{disp:antidiag_1}.  Let $\wh\Lambda$ denote the \sform-dual of any lattice $\Lambda$ in $F^{2g}$.  Analogously to the previous subsection, \sform induces a perfect pairing $\Lambda \times \wh\Lambda \to \O_F$ for any lattice $\Lambda$.  We again say that a lattice chain in $F^{2g}$ is \emph{self-dual} if it is closed under taking duals.

Let $G := GO_{2g} := GO\bigl(\text{\sform}\bigr)$ over $F$,%
\footnote{Note that $G$ is \emph{disconnected,} so that it does not honestly fit into the framework of \S\ref{s:motivationalsection}. See the discussion after Remark \ref{rk:oriflamme}.}
let $\mu$ denote the cocharacter $\bigl( 1^{(g)}, 0^{(g)} \bigr)$ of the standard maximal torus of diagonal matrices in $G$, and let $\{\mu\}$ denote its $G(\ol F)$-conjugacy class over $\ol F$.  Let $\L$ be a periodic self-dual lattice chain $F^{2g}$.  The \emph{naive local model $M^\naive_{G,\{\mu\},\L}$} is the closed  $\O_F$-subscheme of $M^\loc_{GL_{2g},\{\mu\},\L}$ defined in exactly the same way as for $GSp_{2g}$, that is, we impose condition \eqref{it:lm_duality} with the understanding that all notation is taken with respect to \sform.

Analogously to the symplectic case, $M^\naive_{G,\{\mu\},\L}$ has generic fiber
$\OGr(g,2g)_F$, the orthogonal Grassmannian of totally isotropic $g$-planes in
$F^{2g}$. But contrary to the symplectic and linear cases --- and the reason
here for the adjective ``naive'' --- the naive local model is typically \emph{not flat over $\O_F$}, as was first observed by Genestier \cite{Ge2}.

\label{ref:not_flat} A major source of trouble is the fact that the orthogonal Grassmannian is
not connected, but has two connected components. To fix ideas, let us suppose
that $\L$ contains a self-dual lattice $\Lambda'$ and a lattice $\Lambda''
\supset \Lambda'$ with $\dim_k \Lambda''/\Lambda' = g$; then $\wh {\Lambda''} =
\pi \Lambda''$. Given an $R$-point $(\F_{\Lambda})_{\Lambda\in\L}$ of
$M^\naive_{G,\{\mu\},\L}$, the perpendicularity condition requires that
$\F_{\Lambda'}$ be totally isotropic for $\text{\sform}_R$, and the
perpendicularity and periodicity conditions require that $\F_{\Lambda''}$ be
totally isotropic for $\bigl(\pi^{-1}\text{\sform}\bigr)_R$, where we use a
subscript $R$ to denote base change to $R$. Hence we get a map
\begin{equation}\label{disp:OGr_mapping}
   \begin{matrix}
   \xymatrix@R=0ex{
      M^\naive_{G,\{\mu\},\L} \ar[r]  &  \OGr(g,2g)_{\O_F} \times \OGr(g,2g)_{\O_F}\\
      (\F_{\Lambda})_{\Lambda\in\L} \ar@{|->}[r]  &  (\F_{\Lambda'}, \F_{\Lambda''}).
   }
   \end{matrix}
\end{equation}

Now, quite generally, a scheme $X$ over a regular, integral, $1$-dimensional base scheme is flat if and only if the scheme-theoretic closure in $X$ of the generic fiber of $X$ is equal to $X$.  In our present situation, the target space in \eqref{disp:OGr_mapping} has $4$ connected components,  $2$ of which contain the image of the $2$ connected components of the generic fiber of $M^\naive_{G,\{\mu\},\L}$.  But for any $g \geq 1$, the image of $M^\naive_{G,\{\mu\},\L}$ always meets another component; see \cite[8.2]{P-R4} for a simple example which is easy to generalize to higher rank. Hence the generic fiber of $M^\naive_{G,\{\mu\},\L}$ is not dense in $M^\naive_{G,\{\mu\},\L}$, so that $M^\naive_{G,\{\mu\},\L}$ is not flat.

To correct for non-flatness of the naive local model, one simply defines the true local model $M^\loc_{G,\{\mu\},\L}$ to be the scheme-theoretic closure in $M^\naive_{G,\{\mu\},\L}$ of its generic fiber.  Then $M^\loc_{G,\{\mu\},\L}$ is flat essentially by definition, but a priori it carries the disadvantage of not admitting a ready moduli-theoretic description.  In \cite{P-R4} a remedy for this disadvantage is proposed in the form of a new condition, called the \emph{spin condition}, which is added to the moduli problem defining $M^\naive_{G,\{\mu\},\L}$.  Unfortunately the spin condition is a bit technical to formulate; we refer to \cite[\S\S7.1, 8.2]{P-R4} or \cite[\S2.3]{Sm1} for details.  In the simple case that $\L$ consists of the homothety classes of a self-dual lattice $\Lambda'$ and a lattice $\Lambda''$ satisfying $\wh{\Lambda''} = \pi \Lambda''$, the map \eqref{disp:OGr_mapping} is a closed embedding, and the effect of the spin condition is simply to intersect $M^\naive_{G,\{\mu\},\L}$ with the two connected components of $\OGr(g,2g)_{\O_F} \times \OGr(g,2g)_{\O_F}$ marked by the generic fiber of $M^\naive_{G,\{\mu\},\L}$.  For more general $\L$, the spin condition becomes more complicated.

In general, let $M^\spin_{G,\{\mu\},\L}$ denote the closed subscheme of $M^\naive_{G,\{\mu\},\L}$ that classifies points satisfying the spin condition.  The inclusion $M^\spin_{G,\{\mu\},\L} \subset M^\naive_{G,\{\mu\},\L}$ is shown in \cite{P-R4} to be an isomorphism on generic fibers, and we then have the following.  

\begin{conjecture}[{\cite[Conj.\ 8.1]{P-R4}}]
For any periodic self-dual lattice chain $\L$, $M^\spin_{G,\{\mu\},\L} = M^\loc_{G,\{\mu\},\L}$, that is, $M_{G,\{\mu\},\L}^\spin$ is flat over $\Spec \O_F$.
\end{conjecture}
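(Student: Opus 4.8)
The plan is to prove the two inclusions between $M^\spin_{G,\{\mu\},\L}$ and $M^\loc_{G,\{\mu\},\L}$ separately. The inclusion $M^\loc_{G,\{\mu\},\L}\subseteq M^\spin_{G,\{\mu\},\L}$ is automatic: $M^\spin_{G,\{\mu\},\L}$ is a closed subscheme of $M^\naive_{G,\{\mu\},\L}$ that agrees with it on generic fibres, so it contains the scheme-theoretic closure of the common generic fibre, which is $M^\loc_{G,\{\mu\},\L}$ by definition. As $M^\loc_{G,\{\mu\},\L}$ is $\O_F$-flat by construction, it suffices to prove that $M^\spin_{G,\{\mu\},\L}$ is $\O_F$-flat. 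For a scheme of finite type over the discrete valuation ring $\O_F$, this is equivalent to two conditions together: \emph{topological flatness}, i.e.\ the underlying topological space of $M^\spin_{G,\{\mu\},\L}\otimes_{\O_F}k$ coincides with that of $M^\loc_{G,\{\mu\},\L}\otimes_{\O_F}k$ (ruling out irreducible components in the special fibre), and \emph{reducedness} of the scheme $M^\spin_{G,\{\mu\},\L}\otimes_{\O_F}k$ (which, given topological flatness, rules out embedded components there). Indeed, if $M^\spin\otimes k$ is reduced, then the surjective closed immersion $M^\loc\otimes k\hookrightarrow M^\spin\otimes k$ supplied by topological flatness is an isomorphism, so the ideal sheaf $\mathcal I$ of $\pi$-power torsion of $M^\spin$ satisfies $\mathcal I\subseteq\pi\cdot\O_{M^\spin}$, hence $\mathcal I\subseteq\pi\mathcal I$, whence $\mathcal I=0$ by Nakayama. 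This is the scheme of proof used by G\"ortz for $GL_n$ and $GSp_{2g}$ in Theorems \ref{st:GL_n_loc_mod_flat} and \ref{st:GSp_loc_mod_flat}, and it is the strategy pursued in \cite{Sm1,Sm2,Sm3,Sm4,Sm5}.

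For both conditions I would pass to the special fibre and use the embedding, discussed in \S\ref{s:afv}.\ref{ss:embedding}, of the special fibre of a local model into an affine flag variety --- here the one attached to the split connected group $SO_{2g}$ and the parahoric determined by $\L$, handling the ``oriflamme'' refinement of the lattice-chain combinatorics that arises in type $D$ when $\L$ contains a self-dual lattice in the relevant position. The first task is to identify $M^\spin_{G,\{\mu\},\L}(\ol k)$, under this embedding, with the union $\bigcup_{w\in\Adm_{\wt K}(\{\mu\})}S_w$ of Schubert varieties indexed by the $\{\mu\}$-admissible set of Definition \ref{def:mu-admissible}, in accordance with property (ii) of \S\ref{ss:lmideal}. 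The inclusion ``$\subseteq$'' amounts to showing that a point satisfying the rank, functoriality, periodicity, perpendicularity, and spin conditions has relative position lying in $\Adm_{\wt K}(\{\mu\})$; this is exactly where the spin condition does its work, sharpening the bound coming from perpendicularity alone so as to exclude the ``wrong'' connected components of the orthogonal Grassmannian, and it must be extracted from an explicit analysis of the half-spin representation along the lines of \cite[\S\S7--8]{P-R4}. The reverse inclusion ``$\supseteq$'' I would obtain by exhibiting, for each $w\in\Adm_{\wt K}(\{\mu\})$, a point of $M^\loc_{G,\{\mu\},\L}$ (a fortiori of $M^\spin_{G,\{\mu\},\L}$) specializing into the Schubert cell attached to $w$ --- by a Kottwitz--Rapoport-style lattice construction, or by degeneration from the generic fibre --- which at the same time shows $\bigcup_{w}S_w\subseteq M^\loc_{G,\{\mu\},\L}\otimes_{\O_F}\ol k$. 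Since each $S_w$ with $w\in\Adm_{\wt K}(\{\mu\})$ has dimension at most $\langle 2\varrho,\mu\rangle=\dim\OGr(g,2g)_F$, this pins down both special fibres to have the same underlying space, of dimension $\langle 2\varrho,\mu\rangle$; topological flatness follows.

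For reducedness I would invoke the Frobenius splitting of the relevant affine flag variety, available here because $\charac k\neq 2$: by the work of Faltings \cite{Fa1,Fa2} and its extension to twisted (parahoric) affine flag varieties, Schubert varieties in these ind-schemes are compatibly Frobenius split, so arbitrary scheme-theoretic unions of Schubert varieties are reduced. Combined with the \emph{scheme-theoretic}, not merely set-theoretic, identification of $M^\spin_{G,\{\mu\},\L}\otimes_{\O_F}k$ with $\bigcup_{w\in\Adm_{\wt K}(\{\mu\})}S_w$, this yields reducedness, and hence, with the preceding paragraph, flatness and the desired equality $M^\spin_{G,\{\mu\},\L}=M^\loc_{G,\{\mu\},\L}$. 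As a more conceptual alternative, one could instead identify $M^\spin_{G,\{\mu\},\L}$ with the local model attached to $(SO_{2g},\{\mu\},K)$ by the group-theoretic construction of \S\ref{ss:BGm}(iii), carried out in the forthcoming work \cite{PZ} --- legitimate since split orthogonal groups are tamely ramified for $p\neq 2$ --- where flatness and reducedness of the special fibre hold by construction; the remaining work is then to compare the explicit spin moduli problem with the construction via the global affine Grassmannian of a Bruhat--Tits group scheme, and to reconcile the disconnectedness of $GO_{2g}$ with the connected group $SO_{2g}$ underlying that construction.

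The main obstacle, in either approach, is the scheme-theoretic identification of $M^\spin_{G,\{\mu\},\L}\otimes_{\O_F}k$ with the admissible union of Schubert varieties --- that is, controlling the spin equations finely enough to obtain \emph{both} the dimension bound (no spurious components in the special fibre) \emph{and} reducedness (no nilpotents). The spin condition is defined through the half-spin representation and is genuinely awkward to handle in families of $\O_F$-algebras; checking that it introduces neither extra components nor embedded structure in the special fibre, uniformly over \emph{all} periodic self-dual lattice chains $\L$ --- with the full ``Iwahori'' lattice chain the hardest case, exactly as for $GL_n$ in Remark \ref{rem: 2.1.3} --- is the crux of the matter.
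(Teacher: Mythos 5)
The statement you address is recorded in the paper as a \emph{conjecture} (attributed to \cite[Conj.\ 8.1]{P-R4}), and the paper states immediately afterwards that ``the full version [\dots] is still open.'' There is thus no proof in the paper to compare against; the paper's own content on this point is Theorem \ref{st:GO_loc_mod_top_flat}, which is precisely the partial result your proposal correctly isolates as the first half of the problem: \emph{topological} flatness of $M^\spin_{G,\{\mu\},\L}$, i.e.\ set-theoretic coincidence of its special fibre with that of $M^\loc_{G,\{\mu\},\L}$. Your reduction of flatness to ``topological flatness $+$ reducedness of the special fibre'' is correct, and your route to topological flatness (embed the geometric special fibre in the affine flag variety, match the cells with the $\{\mu\}$-admissible set, produce specializations from the generic fibre) is the strategy actually implemented in \cite{Sm1}.

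The genuine gap is exactly where you locate it, but the argument you sketch does not close it. Frobenius splitting gives reducedness of a scheme that is \emph{a priori} a scheme-theoretic union of compatibly split Schubert varieties. What you know about $M^\spin_{G,\{\mu\},\L}\otimes k$, however, is only that it \emph{topologically} coincides with $\bigcup_{w\in\Adm}S_w$. To apply the Frobenius-splitting conclusion you would need the scheme $M^\spin\otimes k$ to already \emph{equal} that union of reduced Schubert varieties --- and establishing that equality is exactly the assertion that $M^\spin\otimes k$ is reduced. The argument as laid out is therefore circular. In the $GL_n$ and $GSp_{2g}$ cases (Theorems \ref{st:GL_n_loc_mod_flat}, \ref{st:GSp_loc_mod_flat}), the circle is broken by having explicit, manageable equations for the special fibre of the (naive) local model inside the affine Grassmannian, for which a direct comparison with the union of Schubert varieties --- via charts and coherence/Hilbert-series control, with Frobenius splitting as the tool to handle the Schubert side --- can be carried out. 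For $GO_{2g}$ the spin condition is formulated through the half-spin representation, whose reduction modulo $\pi$ nobody has controlled in the needed generality; this is precisely why the statement remains conjectural. Your alternative route via the group-theoretic construction of \cite{PZ} does not dissolve the difficulty either: it replaces it by the (equally open, and similarly delicate) problem of identifying the explicit spin moduli space with the PZ local model, rather than giving an independent proof. Finally, a small point: the natural target for the embedding is the affine flag variety for the disconnected group $GO_{2g}$ as in Remark \ref{rk:wtW_GO}, not for $SO_{2g}$ directly; and the compatible Frobenius splitting you want, in the form applicable here, is Theorem \ref{st:schub_vties} (from \cite{P-R3}, building on \cite{Fa4}), rather than \cite{Fa1,Fa2}. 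In sum, your proposal is a well-aimed programme that matches the known partial results and correctly diagnoses the obstruction, but it is not a proof, and the paper itself leaves the statement open.
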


Hand calculations show that $M^\spin_{G,\{\mu\},\L}$ is indeed flat with reduced special fiber for $n \leq 3$; see \cite[\S8.3]{P-R4} for some explicit examples for $n = 1$ and $2$.  The main result of \cite{Sm1} is the following weakened form of the conjecture (for arbitrary $n$), the full version of which is still open.  Recall that a lattice chain is \emph{complete} if all successive quotients have $k$-dimension $1$, and that a scheme over a regular, integral, $1$-dimensional base scheme is \emph{topologically flat} if its generic fiber is dense.

\begin{thm}[{\cite[7.6.1]{Sm1}}]\label{st:GO_loc_mod_top_flat}
For any complete periodic self-dual lattice chain $\L$, $M^\spin_{G,\{\mu\},\L}$ is topologically flat over $\Spec \O_F$; or in other words, the underlying topological spaces of $M^\spin_{G,\{\mu\},\L}$ and $M^\loc_{G,\{\mu\},\L}$ coincide.\qed
\end{thm}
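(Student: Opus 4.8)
Since $M^\loc_{G,\{\mu\},\L}$ is by construction the scheme-theoretic closure in $M^\naive_{G,\{\mu\},\L}$ of the generic fiber, and since $M^\loc_{G,\{\mu\},\L}\subseteq M^\spin_{G,\{\mu\},\L}\subseteq M^\naive_{G,\{\mu\},\L}$ with $M^\loc$ and $M^\spin$ sharing that generic fiber, topological flatness of $M^\spin_{G,\{\mu\},\L}$ is equivalent to the set-theoretic equality of special fibers $M^\spin_{G,\{\mu\},\L}\otimes_{\O_F}\ol k = M^\loc_{G,\{\mu\},\L}\otimes_{\O_F}\ol k$. As the inclusion $\supseteq$ is automatic, the whole content is the reverse inclusion. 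The plan is to transport both special fibers into the affine flag variety of the split even orthogonal group, where each becomes a union of Schubert varieties indexed by a subset of the Iwahori-Weyl group $\wt W$, and then to establish the resulting inclusion of index sets by a purely combinatorial argument.

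First I would set up the geometry. With $\L$ the complete self-dual standard chain of the hypothesis (the Iwahori case), there is a $\mathcal G_{\ol k}$-equivariant locally closed immersion of $M^\naive_{G,\{\mu\},\L}\otimes\ol k$ into the affine flag variety attached to $SO_{2g}$ --- more precisely into the ``oriflamme'' incidence variety that records the two families of maximal isotropic subspaces in even orthogonal space, which is the source of the subtleties here. Since $\mathcal G_{\ol k}$ acts with finitely many orbits, both $M^\spin_{G,\{\mu\},\L}\otimes\ol k$ and $M^\loc_{G,\{\mu\},\L}\otimes\ol k$ are finite, Bruhat-closed unions of Schubert cells $\mathcal O_w$, $w\in\wt W$. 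By the Kottwitz--Rapoport analysis, $M^\naive\otimes\ol k$ is the union of the cells over the $\{\mu\}$-\emph{permissible} set $\Perm_{\wt K}(\{\mu\})$, which strictly contains the $\{\mu\}$-admissible set $\Adm_{\wt K}(\{\mu\})$ --- the strictness being exactly the failure of flatness of $M^\naive$. On the other hand $M^\loc\otimes\ol k$ contains $\ol{\mathcal O}_w$ for every $w\in\Adm_{\wt K}(\{\mu\})$, since admissible points lift to the generic fiber (this is checked, e.g., via the Beilinson--Drinfeld--Gaitsgory degeneration of the affine Grassmannian, or directly in the lattice-chain model, and such points automatically satisfy the spin condition). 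Thus we have
\[
   \textstyle \bigcup_{w\in\Adm_{\wt K}(\{\mu\})}\ol{\mathcal O}_w \;\subseteq\; M^\loc\otimes\ol k \;\subseteq\; M^\spin\otimes\ol k \;\subseteq\; \bigcup_{w\in\Perm_{\wt K}(\{\mu\})}\ol{\mathcal O}_w ,
\]
and it suffices to show that every Schubert cell occurring in $M^\spin\otimes\ol k$ is indexed by an element of $\Adm_{\wt K}(\{\mu\})$, since then all four terms coincide.

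The second step is to make the spin condition combinatorial. I would verify that along a Schubert cell $\mathcal O_w$ the validity of the spin condition depends only on $w$, so that there is a well-defined subset $\Perm^\spin_{\wt K}(\{\mu\})\subseteq\Perm_{\wt K}(\{\mu\})$ with $M^\spin\otimes\ol k = \bigcup_{w\in\Perm^\spin_{\wt K}(\{\mu\})}\ol{\mathcal O}_w$. The crux is the sign/parity data built into the spin condition: these are precisely the discrete invariants that separate the two connected components of $\OGr(g,2g)$ at each lattice of the chain, and one must read off how this constraint looks in terms of the alcove --- equivalently, the $D_g$-coordinates --- attached to $w$. Granting this, the theorem reduces to the purely combinatorial assertion
\[
   \Perm^\spin_{\wt K}(\{\mu\}) \subseteq \Adm_{\wt K}(\{\mu\}) \qquad (\text{indeed, with equality}),
\]
for $\mu = \bigl(1^{(g)},0^{(g)}\bigr)$ in the affine Weyl group of type $\wt D_g$.

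This combinatorial comparison is where essentially all of the work lies, and I expect it to be the main obstacle. I would proceed by: (a) writing $\Adm_{\wt K}(\{\mu\})$ down explicitly --- as $\mu$ is minuscule this set is the translations $t_\lambda$ for $\lambda$ in the Weyl orbit $W\mu$ together with everything $\leq$ them in the Bruhat order, and one can use the vertexwise characterization of admissibility to reduce statements about $\wt W$ to statements about the finite parabolic quotients attached to the vertices of the local Dynkin diagram; (b) describing $\Perm^\spin_{\wt K}(\{\mu\})$ through the gallery/alcove combinatorics of permissible elements, tracking carefully the spin sign imposed at each lattice; and (c) verifying that any $w$ admitted by (b) is dominated in the Bruhat order by some $t_\lambda$ with $\lambda\in W\mu$, which after the vertexwise reduction I expect to come down to a finite list of rank-one and rank-two checks inside the $D_g$ root system. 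The delicate point, I anticipate, is the type-$D$ bookkeeping: one must separate the two arms of the $\wt D_g$ diagram, match the spin sign against the parity of the relevant lattice index, and confirm that the flatness-obstructing permissible elements --- exactly those landing on the ``wrong'' component of the orthogonal Grassmannian --- are precisely the ones the spin condition discards.
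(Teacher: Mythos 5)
Your outline follows essentially the same strategy as the cited proof in \cite{Sm1}: embed the special fibers in the affine flag variety of type $\wt D_g$, show that $M^\spin\otimes_{\O_F}\ol k$ is a union of Schubert cells indexed by a combinatorially defined ``spin-permissible'' set, and reduce topological flatness to the combinatorial inclusion of that set into the admissible set (in fact the equality), which is where the real work lies. The one point to keep in view when executing this is that, since $GO_{2g}$ is disconnected, the target admissible set is the union of \emph{two} admissible sets for $GO_{2g}^\circ$ --- one for $\bigl(1^{(g)},0^{(g)}\bigr)$ and one for $\bigl(1^{(g-1)},0,1,0^{(g-1)}\bigr)$, conjugate under $GO_{2g}$ but not $GO_{2g}^\circ$ --- so the spin condition's role is precisely to discard permissible elements landing on the wrong component, and \cite{Sm1} couples this with the type-$D_g$ identity $\Adm(\{\mu\})=\Perm(\{\mu\})$ rather than routing through vertexwise admissibility as you suggest in step (c).
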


\begin{Remark}\label{rk:oriflamme}
For sake of unformity assume $g \geq 4$.  In proving Theorem \ref{st:GO_loc_mod_top_flat}, it suffices to take $\L$ to be the standard lattice chain $\Lambda_\bullet$ \eqref{disp:std_lattice_chain}.  But from a building-theoretic perspective, it is more natural to instead consider the periodic self-dual lattice \emph{oriflamme}
\[
   \xy
      (-47,7)*+{\dotsb};
      (-42.5,3.5)*+{\rotatebox{-45}{$\subset$}};
      (-42.5,10.5)*+{\rotatebox{45}{$\subset$}};
      (-38,14)*+{\Lambda_0};
      (-38,0,)*+{\Lambda_{0'}};
      (-34,3.5)*+{\rotatebox{45}{$\subset$}};
      (-34,10.5)*+{\rotatebox{-45}{$\subset$}};
      (-19,7)*+{\Lambda_2 \subset \dotsb \subset \Lambda_{g-2}};
      (-4.5,3.5)*+{\rotatebox{-45}{$\subset$}};
      (-4.5,10.5)*+{\rotatebox{45}{$\subset$}};
      (0,14)*+{\Lambda_g};
      (0,0)*+{\Lambda_{g'}};
      (4,3.5)*+{\rotatebox{45}{$\subset$}};
      (4,10.5)*+{\rotatebox{-45}{$\subset$}};
      (21.5,7)*+{\Lambda_{g+2} \subset \dotsb \subset \Lambda_{2g-2}};
      (38.5,3.5)*+{\rotatebox{-45}{$\subset$}};
      (38.5,10.5)*+{\rotatebox{45}{$\subset$}};
      (43.8,14)*+{\Lambda_{2g}};
      (43.8,0)*+{\Lambda_{2g'}};
      (48.5,3.5)*+{\rotatebox{45}{$\subset$}};
      (48.5,10.5)*+{\rotatebox{-45}{$\subset$}};
      (53,7)*+{\dotsb};
   \endxy,
\]
where, for $a \in \mZ$,
\[
   \Lambda_{2ga'} := \pi^{-a-1}\O_F e_1 + \biggl(\sum_{l = 2}^{2g-1} \pi^{-a}\O_F e_l\biggr) + \pi^{-a+1}\O_F e_{2g}
\]
and
\[
   \Lambda_{(g + 2ga)'} := \biggl(\sum_{l = 1}^{g-1} \pi^{-a-1} \O_F e_l\biggr)
                           + \pi^{-a} \O_F e_{g}
                           + \pi^{-a-1} \O_F e_{g+1}
                           + \sum_{l=g+2}^{2g} \pi^{-a} \O_F e_l.
\]
Then the lattice-wise fixer of $\Lambda_\bullet$ in $G(F)$ is the same as that for the displayed oriflamme, namely the Iwahori subgroup $B$ of elements in $G(\O_F)$ which are upper triangular mod $\pi$;%
\footnote{One verifies easily that in fact $B \subset G^\circ(\O_F)$.}
and the parahoric subgroups of $G^\circ(F)$ that contain $B$ are precisely the parahoric stabilizers of periodic, self-dual subdiagrams of the displayed oriflamme.

One can define a naive local model for the displayed oriflamme just as we have done for lattice chains, namely by specifying a locally direct summand of rank $g$ for each lattice in the oriflamme, subject to functoriality, periodicity, and perpendicularity conditions.  However this naive local model again fails to be flat:  this time the four lattices $\Lambda_0$, $\Lambda_{0'}$, $\Lambda_g$, and $\Lambda_{g'}$ are all self-dual up to scalar, and one can see in a way very similar to what we discussed on p.~\pageref{ref:not_flat} that the naive local model is not even topologically flat.  One can see as in \cite[\S8.2]{P-R4} that it is necessary to impose a version of the spin condition, and we conjecture that the resulting spin local model is flat.
\end{Remark}

As already noted, the treatment of the local model in this subsection does not honestly fall under the framework set out in \S\ref{s:motivationalsection}, since $GO_{2g}$ is disconnected.  But if we take the philosophy of \S\ref{s:motivationalsection} seriously, then we should expect to have local models for the connected group $GO_{2g}^\circ$ (or its adjoint quotient $PGO_{2g}^\circ$) and each of its minuscule coweights $\varpi_{g-1}$ and $\varpi_g$.  Here we can simply define these two local models to be the respective Zariski closures of each of the two components of $\OGr(g,2g)_F$ in $M^\naive_{G,\{\mu\},\L}$.  In this way the local model $M^\loc_{G,\{\mu\},\L}$ for $GO_{2g}$ is just the disjoint union of these two local models for $GO_{2g}^\circ$. 


\subsection{Weil restriction of $GL_n$}\label{ss:ResGL_n}
We now begin to consider the simplest examples of local models for \emph{nonsplit} groups.
Let $F_0$ be a discretely valued field with ring of integers $\O_{F_0}$ and residue field $k_0$.  We suppose that $F$ is a finite extension of $F_0$ contained in a separable closure $F_0^\sep$ of $F_0$.  Let $d := [F:F_0]$ and let $e$ denote the ramification index of $F/F_0$, so that $e \mid d$.

In this subsection we generalize our discussion of $GL_n$ in \S\ref{s:examples}.\ref{ss:GL_n} to the group $G := \Res_{F/F_0} GL_n$ over $F_0$.  As in \S\ref{s:examples}.\ref{ss:GL_n}, we place no restrictions on the characteristic of $k$.  Let $\L$ be a periodic $\O_F$-lattice chain in $F^n$.  Let $F^{\Gal}$ denote the Galois closure of $F$ in $F_0^\sep$. Then we have the standard splitting upon base change to $F^{\Gal}$,
\begin{equation}\label{disp:ResGL_n_splitting}
   G_{F^{\Gal}} \cong \prod_{\varphi\colon\! F \rightarrow F_0^\sep} GL_n,
\end{equation}
where the product runs through the set of $F_0$-embeddings $\varphi\colon F \to F_0^\sep$.  For each such $\varphi$, choose an integer $r_\varphi$ with $0 \leq r_\varphi \leq n$; let
\[
   r := \sum_{\varphi}r_\varphi;
\]
let $\mu_{\varphi}$ denote the cocharacter $\bigl( 1^{(r_\varphi)}, 0^{(n-r_\varphi)} \bigr)$ of the standard maximal torus of diagonal matrices in $GL_n$; let $\mu$ denote the geometric cocharacter of $G$ whose $\varphi$-component, in terms of \eqref{disp:ResGL_n_splitting}, is $\mu_\varphi$; and let $\{\mu\}$ denote the geometric conjugacy class of $\mu$.  Let $E$ denote the reflex field of $\{\mu\}$; this is easily seen to be the field of definition of $\mu$, that is, the fixed field in $F_0^\sep$ of the subgroup of the Galois group
\[
   \bigl\{\, \sigma\in \Gal(F_0^\sep/F_0) \bigm|
      r_{\sigma \circ \varphi} = r_\varphi 
      \text{ for all } \varphi\colon F \rightarrow F_0^\sep \,\bigr\}.
\]
Plainly $E \subset F^{\Gal}$.  Let $\O_E$ denote the ring of integers in $E$.

The \emph{naive local model $M^\naive_{G,\{\mu\},\L}$} attached to the triple $(G,\{\mu\},\L)$ is the functor on the category of $\O_E$-algebras that assigns to each $\O_E$-algebra $R$ the set of all families $(\F_\Lambda)_{\Lambda\in\L}$ such that
\begin{altenumerate}
\item
   for every $\Lambda \in \L$, $\F_\Lambda$ is an $(\O_F \otimes_{\O_{F_0}} R)$-submodule of $\Lambda \otimes_{\O_{F_0}} R$ which Zariski-locally on $\Spec R$ is a direct summand as an $R$-module of rank $dn-r$;
\item
   for every inclusion of lattices $\Lambda \subset \Lambda'$ in $\L$, the induced map $\Lambda \otimes_{\O_{F_0}} R \to \Lambda' \otimes_{\O_{F_0}} R$ carries $\F_\Lambda$ into $\F_{\Lambda'}$;
\item
   for every $a \in F^\times$ and every $\Lambda\in\L$, the isomorphism $\smash{\Lambda \xra[\sim] a a\Lambda}$ identifies $\F_\Lambda \isoarrow \F_{a\Lambda}$; and
\item
   (\emph{Kottwitz condition}) for every $a \in \O_F$ and every $\Lambda\in\L$, the element $a \otimes 1 \in \O_F \otimes_{\O_{F_0}} R$ acts on the quotient $(\Lambda\otimes_{\O_{F_0}} R)/\F_\Lambda$ as an $R$-linear endomorphism with characteristic polynomial
   \[
      {\rm char}_R \bigl(\,a \otimes 1 \bigm| (\Lambda\otimes_{\O_{F_0}} R)/\F_\Lambda\,\bigr) 
      = \prod_{\varphi\colon\! F \rightarrow F_0^\sep} \bigl(X- \varphi(a)\bigr)^{r_\varphi}. 
   \]
  %
\end{altenumerate}

Note that in the statement of the Kottwitz condition the polynomial $\prod_{\varphi} \bigl(X-\varphi(a)\bigr)^{r_\varphi}$ can first be regarded as a polynomial with coefficients in $\O_E$ by definition of $E$, and then as a polynomial with coefficients in $R$ via its $\O_E$-algebra structure.  We remark that in \cite[\S5]{Ko1} and \cite[\S3.23(a)]{R-Z}, the Kottwitz condition is formulated in a different (but equivalent) way as a ``determinant'' condition.  As always, $M^\naive_{G,\{\mu\},\L}$ is plainly representable by a projective $\O_E$-scheme.

When $F$ is \emph{unramified} over $F_0$, upon base change to $F^{\Gal}$, $M^\naive_{G,\{\mu\},\L}$ becomes isomorphic to a product of local models for $GL_n$ of the form considered in \S\ref{s:examples}.\ref{ss:GL_n}.  Hence \eqref{st:GL_n_loc_mod_flat} implies the following.

\begin{thm}[G\"ortz {\cite[4.25]{Go1}}]\label{st:ResGL_n_unram}
Suppose $F$ is unramified over $F_0$.  Then for any $\mu$ as above and any periodic $\O_F$-lattice chain $\L$, $M^\naive_{G,\{\mu\},\L}$ is flat over $\Spec \O_E$ with reduced special fiber.  The irreducible components of its special fiber are normal with rational singularities, so in particular are Cohen-Macaulay.\qed
\end{thm}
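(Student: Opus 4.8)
The plan is to deduce the statement from the split case, Theorem~\ref{st:GL_n_loc_mod_flat}, by an unramified base change followed by faithfully flat descent. First I would pass to the Galois closure $F^{\Gal}$: since $F/F_0$ is unramified and the residue field is perfect, $F^{\Gal}/F_0$ is unramified as well, so $\O_{F^{\Gal}}$ is \'etale --- in particular faithfully flat --- over both $\O_{F_0}$ and $\O_E$, and $\O_F\otimes_{\O_{F_0}}\O_{F^{\Gal}}\cong\prod_{\varphi\colon F\to F_0^{\sep}}\O_{F^{\Gal}}$. Consequently, for an $\O_{F^{\Gal}}$-algebra $R$ and $\Lambda\in\L$, the free $(\O_F\otimes_{\O_{F_0}}R)$-module $\Lambda\otimes_{\O_{F_0}}R$ splits as $\bigoplus_\varphi\bigl(\Lambda\otimes_{\O_F,\varphi}R\bigr)$, each summand free of rank $n$ over $R$, and any $(\O_F\otimes_{\O_{F_0}}R)$-submodule $\F_\Lambda$ splits correspondingly as $\bigoplus_\varphi\F_{\Lambda,\varphi}$.

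The key observation --- and the step I expect to require the most care --- is that under this splitting the Kottwitz condition decouples into the individual rank conditions. The element $a\otimes 1$ acts on the $\varphi$-summand of $(\Lambda\otimes_{\O_{F_0}}R)/\F_\Lambda$ through the scalar $\varphi(a)$, so prescribing the characteristic polynomial $\prod_\varphi\bigl(X-\varphi(a)\bigr)^{r_\varphi}$ for all $a\in\O_F$ is equivalent (using that the $\varphi(a)$ separate the factors, as $F/F_0$ is separable) to demanding that $(\Lambda\otimes_{\O_F,\varphi}R)/\F_{\Lambda,\varphi}$ be locally free of rank $r_\varphi$ for every $\varphi$, i.e.\ that $\F_{\Lambda,\varphi}$ be a locally direct summand of rank $n-r_\varphi$. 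Since the functoriality and periodicity conditions split off factor by factor as well, this identifies
\[
   M^\naive_{G,\{\mu\},\L}\otimes_{\O_E}\O_{F^{\Gal}}
   \;\cong\;
   \prod\nolimits_{\varphi}\bigl(M^\loc_{GL_n,\{\mu_\varphi\},\L}\otimes_{\O_F,\varphi}\O_{F^{\Gal}}\bigr),
\]
a fiber product over $\Spec\O_{F^{\Gal}}$, where $\mu_\varphi=\bigl(1^{(r_\varphi)},0^{(n-r_\varphi)}\bigr)$ and $\L$ is regarded as an $\O_F$-lattice chain through $\varphi$.

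Now I would invoke Theorem~\ref{st:GL_n_loc_mod_flat} for each factor: $M^\loc_{GL_n,\{\mu_\varphi\},\L}$ is flat over $\Spec\O_F$ with reduced special fiber whose irreducible components are normal with rational singularities. Flatness is stable under base change and under fiber products over the base, so the displayed product is flat over $\O_{F^{\Gal}}$; its special fiber is the product over the (perfect) residue field $k_{F^{\Gal}}$ of the special fibers of the factors, and a product over a perfect field of reduced (resp.\ normal, resp.\ rational-singularity) schemes again has that property, since all of these notions are geometric and products of resolutions are resolutions with the relevant higher direct images of the structure and dualizing sheaves vanishing by K\"unneth. Thus $M^\naive_{G,\{\mu\},\L}\otimes_{\O_E}\O_{F^{\Gal}}$ is flat over $\O_{F^{\Gal}}$ with reduced special fiber whose components are normal with rational singularities, hence Cohen--Macaulay.

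Finally I would descend along the faithfully flat (indeed \'etale) morphism $\Spec\O_{F^{\Gal}}\to\Spec\O_E$. Flatness over the base descends by fppf descent of flatness. For the special fiber, $k_{F^{\Gal}}/k_E$ is a finite separable extension, so reducedness, normality, Cohen--Macaulayness and rational singularities of $M^\naive_{G,\{\mu\},\L}\otimes k_{F^{\Gal}}$ descend to $M^\naive_{G,\{\mu\},\L}\otimes k_E$; and since each irreducible component $Y$ of the latter becomes, after $\otimes_{k_E}k_{F^{\Gal}}$, a union of irreducible components of the former --- all of which are normal with rational singularities --- the same holds for $Y$, whence $Y$ is Cohen--Macaulay. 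Granting Theorem~\ref{st:GL_n_loc_mod_flat}, the only genuinely non-formal input is the product identification of the second paragraph (in particular, that the Kottwitz condition is exactly what splits the moduli problem into independent $GL_n$-problems with the correct ranks); everything else is standard stability of these properties under products over a perfect field and under faithfully flat base change.
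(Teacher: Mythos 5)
Your argument is correct and follows the same route as the paper (and G\"ortz [Go1, 4.25]): base change to $\O_{F^{\Gal}}$, where the Kottwitz condition (thanks to $F/F_0$ being unramified, so that distinct $\varphi$ stay distinct modulo the maximal ideal) decouples $M^\naive$ into a product of $GL_n$ local models, invoke Theorem~\ref{st:GL_n_loc_mod_flat}, and descend along the finite \'etale cover $\Spec\O_{F^{\Gal}} \to \Spec\O_E$. The details you fill in --- in particular verifying that the Kottwitz condition is exactly the decoupled rank condition in the unramified case, and that flatness, reducedness, normality, Cohen--Macaulayness and rational singularities are stable under the products and faithfully flat descent involved --- are precisely the points the paper's one-sentence sketch leaves implicit.
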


In general, i.e.\ in the presence of ramification, the naive local model  need not be flat.  As in the orthogonal case, the honest local model $M^\loc_{G,\{\mu\},\L}$ is then defined to be the scheme-theoretic closure in $M^\naive_{G,\{\mu\},\L}$ of its generic fiber; thus $M^\loc_{G,\{\mu\},\L} = M^\naive_{G,\{\mu\},\L}$ when $F$ is unramified.  Unfortunately, in contrast to the orthogonal case, it appears to be unreasonable to hope to give a simple, explicit, purely moduli-theoretic description of $M^\loc_{G,\{\mu\},\L}$ in general; see however \cite[Th.~5.7]{P-R1}, where just such a description is given in special cases under the hypothesis of the correctness of the conjecture of DeConcini and Procesi on equations defining the closures of nilpotent conjugacy classes in $\mathfrak{gl}_n$.  To better focus on the issues at hand, we shall suppose henceforth that $F$ is \emph{totally ramified} over $F_0$, i.e.~that $e = d$.

Although there seems to be no simple moduli-theoretic description of $M^\loc_{G,\{\mu\},\L}$, there are at least two nontrivial descriptions of it that bear mention.  For the first, note that whenever $\L'$ is a subchain of $\L$, there is a natural forgetful morphism
\[
   \rho_{\L'}\colon M^\naive_{G,\{\mu\},\L} \to M^\naive_{G,\{\mu\},\L'}.
\]
In particular, for every lattice $\Lambda \in \L$,  we may consider its homothety class $[\Lambda] \subset \L$ and the projection $M^\naive_{G,\{\mu\},\L} \to M^\naive_{G,\{\mu\},[\Lambda]}$; here the target space corresponds to the \emph{maximal parahoric case,} as the stabilizer of $\Lambda$ in $G(F_0)$ is a maximal parahoric subgroup.  In \cite[\S8]{P-R1} it is proposed to describe $M^\loc_{G,\{\mu\},\L}$ by first taking the local model $M^\loc_{G,\{\mu\},[\Lambda]}$ in the sense of the previous paragraph for each homothety class $[\Lambda] \subset \L$, and then defining%
\footnote{Note that \cite{P-R1} uses the notation $M^\loc$ to denote what we call $\smash{M^\vert_{G,\{\mu\},\L}}$, which a priori is different from our definition of $M^\loc_{G,\{\mu\},\L}$.}
\[
   M^\vert_{G,\{\mu\},\L} := \bigcap_{[\Lambda]\subset \L} 
                        \rho^{-1}_{[\Lambda]} \bigl(M^\loc_{G,\{\mu\},[\Lambda]}\bigr).
\]
In the maximal parahoric case, the special fiber of $M^\loc_{G,\{\mu\},[\Lambda]}$ is integral and normal with only rational singularities \cite[5.4]{P-R1}.  On the other hand, G\"ortz \cite[Prop.~1]{Go4} has shown that $M^\vert_{G,\{\mu\},\L}$ is topologically flat.  These results can be combined to yield the following.

\begin{thm}[G\"ortz {\cite[\S1 Th.]{Go4}}; {\cite[7.3]{P-R2}}, {\cite[5.4]{P-R1}}]\label{st:ResGL_n_tot_ram}
For any $\mu$ as above and any periodic $\O_F$-lattice chain $\L$, $M^\vert_{G,\{\mu\},\L} = M^\loc_{G,\{\mu\},\L}$, that is, $M^\vert_{G,\{\mu\},\L}$ is flat over $\Spec \O_E$.  The special fiber of $M^\loc_{G,\{\mu\},\L}$ is reduced and its irreducible components are  normal with rational singularities, so in particular are Cohen-Macaulay.  When $\L$ consists of a single lattice homothety class, the special fiber of $M^\loc_{G,\{\mu\},\L}$ is moreover irreducible.\qed
\end{thm}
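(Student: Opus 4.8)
The plan is to recognize the statement as a combination of G\"ortz's topological flatness result \cite{Go4}, the structure theory of (twisted) affine flag varieties of \cite{P-R2}, and the maximal-parahoric computation of \cite{P-R1}, glued together by an elementary flatness-descent argument over the discrete valuation ring $\O_E$. First note that, since each $M^\loc_{G,\{\mu\},[\Lambda]}$ has the same generic fiber as $M^\naive_{G,\{\mu\},[\Lambda]}$, each $\rho_{[\Lambda]}^{-1}\bigl(M^\loc_{G,\{\mu\},[\Lambda]}\bigr)$ has full generic fiber inside $M^\naive_{G,\{\mu\},\L}$, hence so does $M^\vert_{G,\{\mu\},\L}=\bigcap_{[\Lambda]\subset\L}\rho_{[\Lambda]}^{-1}\bigl(M^\loc_{G,\{\mu\},[\Lambda]}\bigr)$. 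As $M^\loc_{G,\{\mu\},\L}$ is by construction the smallest closed subscheme of $M^\naive_{G,\{\mu\},\L}$ with that generic fiber, we obtain closed immersions $M^\loc_{G,\{\mu\},\L}\hookrightarrow M^\vert_{G,\{\mu\},\L}\hookrightarrow M^\naive_{G,\{\mu\},\L}$, and the first assertion ``$M^\loc_{G,\{\mu\},\L}=M^\vert_{G,\{\mu\},\L}$'' is equivalent to $\O_E$-flatness of $M^\vert_{G,\{\mu\},\L}$, i.e.\ to $\mathcal O_{M^\vert}$ having no $\pi_E$-torsion for a uniformizer $\pi_E$ of $\O_E$. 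By \cite[\S1 Th.]{Go4}, $M^\vert_{G,\{\mu\},\L}$ is topologically flat, so its generic fiber is dense; combined with the fact that $M^\loc_{G,\{\mu\},\L}$ is the scheme-theoretic, hence also topological, closure of the same generic fiber, this forces $M^\loc_{G,\{\mu\},\L}$ and $M^\vert_{G,\{\mu\},\L}$ to have the same underlying topological space, and in particular the same underlying space of special fibers.

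Next I claim it suffices to show that the special fiber $\ol M^\vert := M^\vert_{G,\{\mu\},\L}\otimes_{\O_E}\kappa$ (with $\kappa$ the residue field of $\O_E$) is reduced. Indeed, the closed immersion $\ol M^\loc\hookrightarrow\ol M^\vert$ obtained by base change is a homeomorphism onto a reduced scheme, hence an isomorphism, so $M^\loc_{G,\{\mu\},\L}$ and $M^\vert_{G,\{\mu\},\L}$ have equal special fibers as well as equal generic fibers. Writing $\mathcal I\subset\mathcal O_{M^\vert}$ for the ideal sheaf of $M^\loc_{G,\{\mu\},\L}$, equality of generic fibers gives $\mathcal I[\pi_E^{-1}]=0$, while equality of special fibers gives (by right exactness of $-\otimes_{\O_E}\kappa$ applied to $0\to\mathcal I\to\mathcal O_{M^\vert}\to\mathcal O_{M^\loc}\to 0$) that $\mathcal I\subset\pi_E\mathcal O_{M^\vert}$. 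For a local section $x=\pi_E y$ of $\mathcal I$, the image of $y$ in $\mathcal O_{M^\loc}=\mathcal O_{M^\vert}/\mathcal I$ is killed by $\pi_E$; since $\mathcal O_{M^\loc}$ is $\O_E$-flat, that image vanishes, i.e.\ $y\in\mathcal I$. Hence $\mathcal I=\pi_E\mathcal I$, and Nakayama applied to the coherent sheaf $\mathcal I$ at the points of the special fiber (where $\pi_E$ lies in the maximal ideal) together with $\mathcal I[\pi_E^{-1}]=0$ forces $\mathcal I=0$. Thus $M^\loc_{G,\{\mu\},\L}=M^\vert_{G,\{\mu\},\L}$, which is therefore flat over $\Spec\O_E$.

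It remains to prove that $\ol M^\vert$ is reduced and to identify its components; this is where the totally ramified hypothesis $e=d$ is used in an essential way. I would embed $\ol M^\vert$ (equivalently $\ol M^\loc$) into the twisted affine flag variety attached to the loop group of $G=\Res_{F/F_0}GL_n$ over $F_0$, as in \S\ref{s:afv}.\ref{ss:embedding}. By the combinatorial description of the special fiber going back to Kottwitz--Rapoport --- already used by G\"ortz in his proof of topological flatness --- the image is set-theoretically the union $\bigcup_w \ol{\mathcal O}_w$ of affine Schubert varieties indexed by the $\{\mu\}$-admissible set $\Adm_{\wt K}(\{\mu\})$, in accordance with the expected property (ii) of local models. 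By the structure theory of \cite{P-R2} for Schubert varieties in twisted affine flag varieties, each $\ol{\mathcal O}_w$ is normal, Cohen--Macaulay, and has rational singularities in the strong sense explained after Theorem \ref{st:GL_n_loc_mod_flat}, and this family of Schubert varieties is compatibly Frobenius split, so that their scheme-theoretic union is reduced. One then checks --- using the description of the maximal-parahoric local models $M^\loc_{G,\{\mu\},[\Lambda]}$ (whose special fibers are integral and normal by \cite[5.4]{P-R1}, via nilpotent orbit closures) and the defining intersection $M^\vert=\bigcap_{[\Lambda]}\rho_{[\Lambda]}^{-1}\bigl(M^\loc_{G,\{\mu\},[\Lambda]}\bigr)$ --- that the natural morphism identifies $\ol M^\vert$ with this reduced union. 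This yields the reducedness needed above, and also exhibits the irreducible components of $\ol M^\loc=\ol M^\vert$ as the $\ol{\mathcal O}_w$ with $w$ maximal in $\Adm_{\wt K}(\{\mu\})$, hence normal, Cohen--Macaulay, with rational singularities. When $\L$ is a single homothety class $[\Lambda]$ we are in the maximal parahoric case, $M^\vert_{G,\{\mu\},[\Lambda]}=M^\loc_{G,\{\mu\},[\Lambda]}$, and its special fiber is integral --- in particular irreducible --- by \cite[5.4]{P-R1}.

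The main obstacle is precisely this last step: upgrading G\"ortz's topological flatness to the scheme-theoretic identification of $\ol M^\vert$ with the reduced union of twisted affine Schubert varieties. This requires the full force of the normality, Cohen--Macaulayness and Frobenius-splitting results of \cite{P-R2} for these ramified affine flag varieties, the Kottwitz--Rapoport combinatorics pinning down the index set as $\Adm_{\wt K}(\{\mu\})$, and a careful comparison, at each lattice vertex, between the scheme structure supplied by \cite[5.4]{P-R1} and the one inherited from the flag variety. Everything else --- flatness of $M^\vert$ and the remaining assertions --- then follows formally as in the first two paragraphs.
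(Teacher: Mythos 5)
Your framework is correct and essentially matches the paper's strategy of combining G\"ortz's topological flatness result \cite{Go4} with the maximal-parahoric integrality and normality from \cite[5.4]{P-R1}. The Nakayama argument in your second paragraph is a clean and valid device, and the observation that $M^\vert_{G,\{\mu\},\L}=M^\loc_{G,\{\mu\},\L}$ is equivalent to $\O_E$-flatness of $M^\vert_{G,\{\mu\},\L}$ is correct. However, the third paragraph contains a genuine gap, which you flag yourself as ``the main obstacle'': you assert that ``one then checks that the natural morphism identifies $\ol M^\vert$ with this reduced union,'' but you never give the check. Proving that $\ol M^\vert$ is reduced is the entire content of the theorem beyond G\"ortz's topological result; listing ingredients is not an argument.

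The missing idea is to work with the intersection directly rather than trying to ``identify'' $\ol M^\vert$ with a reduced scheme. Under the embedding of $\ol M^\naive_{G,\{\mu\},\L}$ into the affine flag variety $\F_\L$ (as in \S\ref{s:afv}.\ref{ss:embedding} and \cite[\S4]{P-R2}), one has $\ol M^\vert=\bigcap_{[\Lambda]\subset\L}\ol\rho_{[\Lambda]}^{-1}\bigl(\ol M^\loc_{G,\{\mu\},[\Lambda]}\bigr)$ scheme-theoretically, because scheme-theoretic intersections (fiber products) commute with base change to the special fiber. Each $\ol\rho_{[\Lambda]}\colon\F_\L\to\F_{[\Lambda]}$ is a smooth projection with finite-dimensional flag-variety fibers, and $\ol M^\loc_{G,\{\mu\},[\Lambda]}$ is an integral normal Schubert variety by \cite[5.4]{P-R1}; hence each preimage $\ol\rho_{[\Lambda]}^{-1}\bigl(\ol M^\loc_{G,\{\mu\},[\Lambda]}\bigr)$ is a reduced union of Schubert varieties in $\F_\L$. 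By Theorem \ref{st:schub_vties} all the Schubert varieties in play are compatibly Frobenius split, so the intersection $\ol M^\vert$ is itself compatibly split, hence reduced---which is exactly what your Nakayama argument requires. Two smaller corrections: the normality and Frobenius-splitting results you attribute to \cite{P-R2} are Theorem \ref{st:schub_vties}, i.e.\ \cite[Th.~8.4]{P-R3} (or, in the split $GL_n$ case actually relevant here, \cite{Fa4} and \cite{Go1}); and the ambient object is the ordinary affine flag variety for $GL_n$ (or $SL_n$) over $\ol k_E((t))$ as constructed in \cite[\S4]{P-R2}, not a ``twisted affine flag variety attached to the loop group of $\Res_{F/F_0}GL_n$ over $F_0$.''
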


Note that if a moduli-theoretic description of the local models $M^\loc_{G,\{\mu\},[\Lambda]}$ can be found, then there would clearly also be a moduli-theoretic description of $ M^\vert_{G,\{\mu\},\L}$.  The definition of $M^\vert_{G,\{\mu\},\L}$ is closely related to the combinatorial notion of \emph{vertexwise admissibility,} which we shall take up in \S\ref{s:combinatorics}.\ref{ss:vwise_adm}.

The second description of $M^\loc_{G,\{\mu\},\L}$ makes use of the \emph{splitting model} $\M_{G,\{\mu\},\L}$ defined in \cite[\S5]{P-R2}.  We shall not recall the details of the definition here.  Roughly speaking, $\M_{G,\{\mu\},\L}$ is a projective scheme defined over the ring of integers $\O_{F^{\Gal}}$ in $F^{\Gal}$ which represents a rigidified version of the moduli problem defining $M^\naive_{G,\{\mu\},\L}$.  There are canonical morphisms
\[
   \M_{G,\{\mu\},\L} \to M^\naive_{G,\{\mu\},\L} \otimes_{\O_E} \O_{F^{\Gal}}
      \to M^\naive_{G,\{\mu\},\L}.
\]
The \emph{canonical local model $M^\can_{G,\{\mu\},\L}$} is defined to be the scheme-theoretic image in $M^\naive_{G,\{\mu\},\L}$ of the composite.  It is shown in \cite{P-R2} that the first displayed arrow is an isomorphism on generic fibers (the second is trivially an isomorphism after base change to $F^{\Gal}$, of course) and that $\M_{G,\{\mu\},\L}$ can be identified with a certain twisted product of local models for $GL_n$ over $\Spec \O_{F^{\Gal}}$, so that $\M_{G,\{\mu\},\L}$ is flat.  One then obtains the following.

\begin{thm}[{\cite[5.1, 5.3]{P-R2}}]
For any $\mu$ as above and any periodic $\O_F$-lattice chain $\L$, $M^\can_{G,\{\mu\},\L} = M^\loc_{G,\{\mu\},\L}$.\qed
\end{thm}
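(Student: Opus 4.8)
All three schemes $M^\naive_{G,\{\mu\},\L}$, $M^\loc_{G,\{\mu\},\L}$ and $M^\can_{G,\{\mu\},\L}$ live inside the projective $\O_E$-scheme $M^\naive_{G,\{\mu\},\L}$, the latter two as closed subschemes, and $M^\loc$ is by definition the scheme-theoretic closure of the generic fiber of $M^\naive$. The plan is to prove the two inclusions $M^\loc_{G,\{\mu\},\L}\subseteq M^\can_{G,\{\mu\},\L}$ and $M^\can_{G,\{\mu\},\L}\subseteq M^\loc_{G,\{\mu\},\L}$ inside $M^\naive_{G,\{\mu\},\L}$, taking as input only the two structural facts about the splitting model already recalled: that $\M_{G,\{\mu\},\L}\to M^\naive_{G,\{\mu\},\L}\otimes_{\O_E}\O_{F^{\Gal}}$ is an isomorphism on generic fibers, and that $\M_{G,\{\mu\},\L}$ is flat over $\Spec\O_{F^{\Gal}}$ (being a twisted product of $GL_n$-local models, which are flat by Theorem \ref{st:GL_n_loc_mod_flat}). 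Write $g\colon \M_{G,\{\mu\},\L}\to M^\naive_{G,\{\mu\},\L}$ for the composite of the two displayed arrows, regarded as a morphism of $\O_E$-schemes; then $M^\can_{G,\{\mu\},\L}$ is the scheme-theoretic image of $g$.

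\textbf{The inclusion $M^\loc\subseteq M^\can$.} First I would compute the generic fiber of $M^\can_{G,\{\mu\},\L}$. Formation of the scheme-theoretic image commutes with flat base change along the (quasi-compact, quasi-separated) situation at hand, in particular along the open immersion $\Spec E\to\Spec\O_E$; hence the generic fiber of $M^\can$ is the scheme-theoretic image of $g\otimes_{\O_E}E$. Now $g\otimes_{\O_E}E$ factors as the generic-fiber isomorphism $\M_{G,\{\mu\},\L}\otimes_{\O_{F^{\Gal}}}F^{\Gal}\xra[\sim]{} M^\naive_{G,\{\mu\},\L}\otimes_{\O_E}F^{\Gal}$ followed by the base-change projection $M^\naive_{G,\{\mu\},\L}\otimes_{\O_E}F^{\Gal}\to M^\naive_{G,\{\mu\},\L}\otimes_{\O_E}E$ associated to the finite field extension $E\subset F^{\Gal}$. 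The latter is faithfully flat, hence scheme-theoretically dominant, so the scheme-theoretic image of $g\otimes_{\O_E}E$ is all of $M^\naive_{G,\{\mu\},\L}\otimes_{\O_E}E$. Thus the generic fiber of $M^\can$ equals the full generic fiber of $M^\naive$. Since $M^\loc$ is the smallest closed subscheme of $M^\naive$ restricting to the full generic fiber, we conclude $M^\loc_{G,\{\mu\},\L}\subseteq M^\can_{G,\{\mu\},\L}$.

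\textbf{The inclusion $M^\can\subseteq M^\loc$.} Here I would use that $\Spec\O_{F^{\Gal}}$ is a regular, integral, one-dimensional base and that $\M_{G,\{\mu\},\L}$ is flat over it: by the standard flatness criterion (a finite-type scheme over such a base is flat iff the scheme-theoretic closure of its generic fiber is the whole scheme), the generic fiber $\M_{G,\{\mu\},\L}\otimes_{\O_{F^{\Gal}}}F^{\Gal}$ is scheme-theoretically dense in $\M_{G,\{\mu\},\L}$. The morphism $g$ sends this dense open into $M^\naive_{G,\{\mu\},\L}\otimes_{\O_E}F^{\Gal}$, hence into the generic fiber $M^\naive_{G,\{\mu\},\L}\otimes_{\O_E}E$, which is contained in the closed subscheme $M^\loc_{G,\{\mu\},\L}$. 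Since $M^\loc$ is closed in $M^\naive$ and $g$ restricted to a scheme-theoretically dense open factors through it, $g$ itself factors through $M^\loc$; therefore the scheme-theoretic image $M^\can_{G,\{\mu\},\L}$ of $g$ is contained in $M^\loc_{G,\{\mu\},\L}$. Combining the two inclusions yields $M^\can_{G,\{\mu\},\L}=M^\loc_{G,\{\mu\},\L}$.

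\textbf{Where the real work is.} The argument above is purely formal manipulation of scheme-theoretic images and closures; the substance, which I would regard as the main obstacle and which is the content of \cite[\S5]{P-R2}, is packed into the two cited inputs. First, one must construct the rigidified model $\M_{G,\{\mu\},\L}$ and show that after base change to $F^{\Gal}$ --- where $F$ splits completely and the Kottwitz condition decouples across the embeddings $\varphi\colon F\to F_0^\sep$ --- the rigidifying data exhibits $\M_{G,\{\mu\},\L}$ as an iterated twisted product of honest $GL_n$-local models of the kind in \S\ref{s:examples}.\ref{ss:GL_n}; flatness of $\M_{G,\{\mu\},\L}$ is then imported from Theorem \ref{st:GL_n_loc_mod_flat} (twisted products and base change preserving flatness). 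Second, one must check that forgetting the rigidification is an isomorphism on generic fibers, i.e.\ that over $F^{\Gal}$ the rigidifying choices form a torsor under a smooth connected group, so that quotienting by them does not alter the generic fiber. Granting these structural facts about $\M_{G,\{\mu\},\L}$, the identification $M^\can_{G,\{\mu\},\L}=M^\loc_{G,\{\mu\},\L}$ is exactly the formal comparison carried out above.
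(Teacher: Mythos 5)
Your argument is correct and is exactly the one the paper intends: starting from the two structural facts about the splitting model $\M_{G,\{\mu\},\L}$ (isomorphism on generic fibers after base change to $\O_{F^{\Gal}}$, and flatness via the twisted-product-of-$GL_n$-local-models description), the equality $M^\can = M^\loc$ is obtained by the formal two-inclusion comparison of scheme-theoretic images and closures, which is what "one then obtains the following" refers to. Your write-up just fills in the standard scheme-theoretic lemmas (commuting scheme-theoretic image with flat base change, dominance of faithfully flat base change, density of the generic fiber for flat schemes over a DVR) that the paper leaves implicit.
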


Note that although $M^\loc_{G,\{\mu\},\L}$ itself does not appear to admit a ready moduli-theoretic description, the theorem exhibits it as the image of a canonical morphism between schemes that do.

As pointed out by Haines, the splitting model can be used to give a second proof of flatness for $ M^\vert_{G,\{\mu\},\L}$ that bypasses part of the proof of topological flatness of G\"ortz.  See \cite[7.5]{P-R2} and \cite[\S5 Rem.]{Go4}.

\subsection{Weil restriction of $GSp_{2g}$}\label{ss:ResGSp}
%

In addition to Weil restrictions of $GL_n$, local models for Weil restrictions of $GSp_{2g}$ have also been studied in some detail.  In this subsection we shall very briefly survey their theory, outsourcing essentially all of the details to the papers \cite{P-R2} and \cite{Go4}.

Let $G:= \Res_{F/F_0} GSp_{2g}$, and otherwise continue with the assumptions and notation of the previous subsection.  For each $F_0$-embedding $\varphi\colon F \to F_0^\sep$, let $r_\varphi := g$. Let $\mu$ denote the resulting geometric cocharacter of $\Res_{F/F_0} GL_{2g}$, regard $\mu$ as a geometric cocharacter for $G$, and let $\{\mu\}$ denote the geometric conjugacy class of $\mu$ for $G$.  Then the reflex field of $\{\mu\}$ is $F_0$.  Let $\L$ be a periodic $\O_F$-lattice chain in $F^{2g}$ which is ``self-dual'' in the sense of \cite[\S8]{P-R4} or \cite[\S6]{Go4}.  The \emph{naive local model $M_{G,\{\mu\},\L}^\naive$} attached to $(G,\{\mu\},\L)$ is the closed subscheme of $M_{\Res_{F/F_0}GL_{2g}, \{\mu\},\L}^\naive$ whose points satisfy a perpendicularity condition relative to every pair of dual lattices in $\L$, in close analogy with the perpendicularity condition in \S\ref{s:examples}.\ref{ss:GSp_2g}; again see \cite[\S8]{P-R4} or \cite[\S6]{Go4}.

Essentially all the results in the previous subsection are known to carry over to the present setting.  For unramified extensions we have the following.

\begin{thm}[G\"ortz {\cite[\S2 Rem.\ (ii)]{Go2}}]
Suppose $F$ is unramified over $F_0$.  Then for any self-dual periodic $\O_F$-lattice chain $\L$, $M^\naive_{G,\{\mu\},\L}$ is flat over $\Spec \O_{F_0}$ with reduced special fiber.  The irreducible components of its special fiber are normal with rational singularities, so in particular are Cohen-Macaulay.\qed
\end{thm}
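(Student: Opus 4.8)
\emph{The plan.} The statement reduces, after a faithfully flat base change that splits the Weil restriction, to the split symplectic case of Theorem~\ref{st:GSp_loc_mod_flat}; this is the exact analogue of G\"ortz's argument for $\Res_{F/F_0}GL_n$ (Theorem~\ref{st:ResGL_n_unram}, itself deduced from Theorem~\ref{st:GL_n_loc_mod_flat}). Since $F/F_0$ is unramified, its Galois closure $F^{\Gal}/F_0$ is again unramified, so $\O_{F_0}\to\O_{F^{\Gal}}$ is finite \'etale and faithfully flat; recall also that the reflex field of $\{\mu\}$ is $F_0$, so $M^\naive_{G,\{\mu\},\L}$ lives over $\Spec\O_{F_0}$. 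Flatness over $\Spec\O_{F_0}$, reducedness of the special fiber, and normality and rational singularities of its irreducible components all descend along this \'etale base change (flatness and reducedness by faithfully flat descent; for the last two one uses that the residue extension $k_{F_0}\subset k_{F^{\Gal}}$ is finite separable with $k_{F_0}$ perfect, and that the resolution-theoretic notion of rational singularities fixed in the Remark after Theorem~\ref{st:GL_n_loc_mod_flat} is insensitive to such an extension). Hence it suffices to prove the assertions for $M^\naive_{G,\{\mu\},\L}\otimes_{\O_{F_0}}\O_{F^{\Gal}}$ over $\Spec\O_{F^{\Gal}}$.

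\emph{Product decomposition.} Because $F/F_0$ is unramified, $\O_F\otimes_{\O_{F_0}}\O_{F^{\Gal}}\cong\prod_{\varphi\colon F\to F^{\Gal}}\O_{F^{\Gal}}$, with orthogonal idempotents $e_\varphi$, and correspondingly $G_{F^{\Gal}}\cong\prod_\varphi GSp_{2g}$. For any $\O_{F^{\Gal}}$-algebra $R$ and any $\Lambda\in\L$, the $\O_F\otimes_{\O_{F_0}}R$-module $\Lambda\otimes_{\O_{F_0}}R$ and its submodule $\F_\Lambda$ split as $\bigoplus_\varphi$ of their $e_\varphi$-parts; running this through the moduli conditions, the chain $\L$ decomposes into periodic lattice chains $\L_\varphi$ in $(F^{\Gal})^{2g}$, the symplectic form and the self-duality of $\L$ decompose factorwise (so each $\L_\varphi$ is self-dual), and the Kottwitz condition with $r_\varphi=g$ forces the $e_\varphi$-part of $\F_\Lambda$ to be a direct summand of rank $g$, which the perpendicularity condition makes Lagrangian. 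Thus
\[
   M^\naive_{G,\{\mu\},\L}\otimes_{\O_{F_0}}\O_{F^{\Gal}}
      \;\cong\; \prod_{\varphi\colon F\to F^{\Gal}} M^\loc_{GSp_{2g},\{\mu_\varphi\},\L_\varphi},
\]
the fibered product over $\Spec\O_{F^{\Gal}}$, with $\mu_\varphi=\bigl(1^{(g)},0^{(g)}\bigr)$ and each factor the split symplectic local model of \S\ref{s:examples}.\ref{ss:GSp_2g}.

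\emph{Conclusion.} By Theorem~\ref{st:GSp_loc_mod_flat} each factor $M^\loc_{GSp_{2g},\{\mu_\varphi\},\L_\varphi}$ is flat over $\Spec\O_{F^{\Gal}}$ with reduced special fiber whose irreducible components are normal with rational singularities in the strong sense (hence Cohen-Macaulay). Flatness is preserved by fibered products over the base. The special fiber of the product is the product over the perfect field $k_{F^{\Gal}}$ of the special fibers; over a perfect field a product of reduced schemes is reduced, the irreducible components of a product are the products of the components of the factors, a product of normal varieties is normal, and a product of varieties admitting a resolution with vanishing higher direct images of the structure and dualizing sheaves again admits one (take the product of the resolutions and apply the K\"unneth formula), so the resulting components are again Cohen-Macaulay. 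Hence $M^\naive_{G,\{\mu\},\L}\otimes_{\O_{F_0}}\O_{F^{\Gal}}$ has all the required properties, and the descent step transfers them down to $M^\naive_{G,\{\mu\},\L}$.

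\emph{Main obstacle.} There is no deep point beyond Theorem~\ref{st:GSp_loc_mod_flat}, which is assumed and supplies all the genuine content for the split group; the work lies in bookkeeping. One must make the decomposition precise --- in particular translating the $\O_F$-linearity together with the Kottwitz determinant condition into the rank condition on each symplectic factor, and matching the self-dual chain $\L$ with the self-dual chains $\L_\varphi$ --- and one must check that reducedness, normality, and the resolution-theoretic rational-singularity property behave well both under the \'etale base change and under the fibered products above; this last point is precisely why the strong formulation of rational singularities, for which the K\"unneth formula applies directly, is convenient.
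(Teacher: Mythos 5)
Your argument is correct and matches the route indicated in the paper: étale base change to $\O_{F^{\Gal}}$ (legitimate because an unramified extension has unramified Galois closure), decomposition of the moduli problem into a product of split $GSp_{2g}$ local models via $\O_F\otimes_{\O_{F_0}}\O_{F^{\Gal}}\cong\prod_\varphi\O_{F^{\Gal}}$, application of Theorem~\ref{st:GSp_loc_mod_flat}, and faithfully flat descent — exactly the argument the paper sketches for Theorem~\ref{st:ResGL_n_unram} and implicitly invokes here. The only point worth flagging is the claim about irreducible components of the product: it holds because the components in question are Schubert varieties in the special fiber and hence geometrically irreducible, which you should say explicitly, but this is a minor omission.
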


Let us suppose for the rest of the subsection that $F/F_0$ is totally ramified.  Then it is not known whether $M_{G,\{\mu\},\L}^\naive$ is flat (but see Conjecture \ref{st:ram_loc_mod_conj} below), and we define $M_{G,\{\mu\},\L}^\loc$ to be the scheme-theoretic closure in $M_{G,\{\mu\},\L}^\naive$ of its generic fiber.  In \cite[Display 12.2]{P-R2}, there is defined a natural ``vertexwise'' analog of $M_{\Res_{F/F_0}GL_{2g},\{\mu\},\L}^\vert$, which we denote by $M_{G,\{\mu\},\L}^\vert$ (this is denoted by $N_I^\loc$ in loc.\ cit.).  We then have the following.

\begin{thm}[G\"ortz {\cite[Prop.\ 3]{Go4}}; {\cite[Ths.\ 12.2, 12.4]{P-R2}}]\label{st:ResGSp}
For any self-dual periodic $\O_F$-lattice chain $\L$, $M^\vert_{G,\{\mu\},\L} = M^\loc_{G,\{\mu\},\L}$, that is, $M^\vert_{G,\{\mu\},\L}$ is flat over $\Spec \O_{F_0}$.  The special fiber of $M^\loc_{G,\{\mu\},\L}$ is reduced and its irreducible components are  normal with rational singularities, so in particular are Cohen-Macaulay.  When $\L$ is a minimal self-dual periodic lattice chain, the special fiber of $M^\loc_{G,\{\mu\},\L}$ is moreover irreducible.\qed
\end{thm}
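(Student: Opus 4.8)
The plan is to bootstrap from the split symplectic case (Theorem~\ref{st:GSp_loc_mod_flat}) to the totally ramified Weil restriction by means of the splitting model, in close parallel with the $GL_n$ case of Theorem~\ref{st:ResGL_n_tot_ram}, and then to invoke G\"ortz's combinatorial flatness result for the vertexwise model. First I would record the formal inclusions: since $M^\loc_{G,\{\mu\},\L}$ is by definition the scheme-theoretic closure of the generic fiber in $M^\naive_{G,\{\mu\},\L}$, and since for each lattice homothety class $[\Lambda]\subset\L$ the closed subscheme $\rho_{[\Lambda]}^{-1}\bigl(M^\loc_{G,\{\mu\},[\Lambda]}\bigr)$ of $M^\naive_{G,\{\mu\},\L}$ has the full generic fiber, one obtains inclusions of closed subschemes $M^\loc_{G,\{\mu\},\L}\subseteq M^\vert_{G,\{\mu\},\L}\subseteq M^\naive_{G,\{\mu\},\L}$ in which the first two share the same generic fiber. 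It therefore suffices to prove that (a) $M^\loc_{G,\{\mu\},\L}$ is flat over $\Spec\O_{F_0}$ with reduced special fiber whose irreducible components are normal with rational singularities (and irreducible when $\L$ is minimal self-dual), and (b) $M^\vert_{G,\{\mu\},\L}$ is flat over $\Spec\O_{F_0}$; indeed (a) then forces the scheme in (b), squeezed between $M^\loc$ and $M^\naive$ with the same generic fiber as $M^\loc$, to coincide with the flat closure $M^\loc$ of that generic fiber.

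For (a) I would use the splitting model $\M_{G,\{\mu\},\L}$ over $\O_{F^{\Gal}}$ of \cite[\S\S 5, 12]{P-R2}. Because $F/F_0$ is totally ramified and every $r_\varphi$ equals $g$, $\M_{G,\{\mu\},\L}$ is identified in loc.\ cit.\ with an iterated (``twisted'') product of split symplectic local models $M^\loc_{GSp_{2g},\{\mu'\},\L'}$ over $\O_{F^{\Gal}}$; by Theorem~\ref{st:GSp_loc_mod_flat} each factor is flat with reduced special fiber whose components are normal with rational singularities, and since the twisting is smooth-locally trivial these properties persist for $\M_{G,\{\mu\},\L}$, which in particular is reduced. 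The canonical morphism $\M_{G,\{\mu\},\L}\to M^\naive_{G,\{\mu\},\L}\otimes_{\O_{F_0}}\O_{F^{\Gal}}$ is an isomorphism on generic fibers, so the scheme-theoretic image $M^\can_{G,\{\mu\},\L}$ of the composite $\M_{G,\{\mu\},\L}\to M^\naive_{G,\{\mu\},\L}$ is a reduced closed subscheme of a projective $\O_{F_0}$-scheme whose dense generic fiber is that of $M^\loc_{G,\{\mu\},\L}$; hence $M^\can$ is flat over $\O_{F_0}$ and equals $M^\loc$. The properties of the special fiber then descend from $\M_{G,\{\mu\},\L}$ to $M^\loc_{G,\{\mu\},\L}$ by the analysis of the image morphism on special fibers carried out in \cite[\S 12]{P-R2}: the twisted-product stratification matches up the irreducible components, the morphism is finite and birational on each, and normality and rational singularities are preserved. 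When $\L$ is a minimal self-dual chain the symplectic factors are taken in the minimal-chain case, whose special fiber is irreducible (the Chai--Norman situation of Remark~\ref{rem:2.2.5}); a twisted product of geometrically irreducible special fibers, and hence the special fiber of its image $M^\loc$, is irreducible.

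For (b) --- the combinatorial heart of the matter --- I would invoke G\"ortz's \cite[Prop.\ 3]{Go4}: $M^\vert_{G,\{\mu\},\L}$ is topologically flat over $\Spec\O_{F_0}$, i.e.\ its generic fiber is topologically dense. Granting this, $M^\vert$ is the topological closure of its generic fiber, while the scheme-theoretic closure of the same generic fiber in $M^\naive$ is $M^\loc$, so $M^\vert$ and $M^\loc$ have the same underlying space; as $M^\loc$ is by (a) flat over a discrete valuation ring with reduced special fiber, it is reduced, hence is the underlying reduced subscheme of $M^\vert$. To upgrade to a scheme-theoretic equality one checks that the special fiber of $M^\vert$ is reduced; here, as in G\"ortz's treatment of the split cases, I would use the embedding of the common special fiber into an affine flag variety discussed in \S\ref{s:afv}.\ref{ss:embedding}, under which it becomes the union of the $\{\mu\}$-admissible Schubert varieties --- reduced, being compatibly Frobenius split --- so that the special fiber of $M^\vert$ is reduced and, together with topological flatness, $M^\vert$ is flat. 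Then $M^\vert=M^\loc$.

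\textbf{Main obstacle.} The genuinely hard ingredient is (b): G\"ortz's topological flatness of $M^\vert_{G,\{\mu\},\L}$, which rests on the combinatorial identification of vertexwise $\{\mu\}$-admissibility with $\{\mu\}$-admissibility in the Iwahori--Weyl group of $G$, together with the reducedness statement for the relevant union of affine Schubert varieties. By contrast, the splitting-model bookkeeping in (a) is technical but essentially formal once Theorem~\ref{st:GSp_loc_mod_flat} is in hand.
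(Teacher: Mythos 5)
Your proposal correctly reconstructs the route the survey indicates and cites: the splitting-model identification of $\M_{G,\{\mu\},\L}$ with a twisted product of split symplectic local models (from \cite[\S 12]{P-R2}, reducing to Theorem~\ref{st:GSp_loc_mod_flat}) yields the flatness of $M^\can = M^\loc$ and the stated special-fiber properties including irreducibility for the minimal chain, while G\"ortz's topological flatness of $M^\vert$ (\cite[Prop.~3]{Go4}) then closes the gap $M^\vert = M^\loc$. This is exactly the two-ingredient decomposition the paper attributes in the theorem heading and in the surrounding commentary, so your proof plan matches the paper's approach.
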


G\"ortz's contribution to Theorem \ref{st:ResGSp} is to show that $M^\vert_{G,\{\mu\},\L}$ is topologically flat.  In fact he proves the following stronger result.

\begin{thm}[G\"ortz {\cite[Prop.\ 3]{Go4}}]
For any self-dual periodic $\O_F$-lattice chain $\L$, $M_{G,\{\mu\},\L}^\naive$ is topologically flat over $\Spec \O_{F_0}$.\qed
\end{thm}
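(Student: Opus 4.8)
The plan is to reduce topological flatness to a combinatorial comparison inside an extended affine Weyl group, and then to settle that comparison by reducing to the known minuscule case in type $C$. Recall that topological flatness means exactly that no irreducible component of $M^\naive_{G,\{\mu\},\L}$ is contained in its special fiber; equivalently, writing $M^\loc_{G,\{\mu\},\L}$ for the scheme-theoretic closure of the generic fiber, the underlying reduced special fibers of $M^\naive_{G,\{\mu\},\L}$ and $M^\loc_{G,\{\mu\},\L}$ coincide. So it suffices to prove that every $\ov k$-point of $M^\naive_{G,\{\mu\},\L}$ lies in the closure of the generic fiber.

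First I would obtain a combinatorial model of the special fiber. Over $\ov k$ the moduli problem of $M^\naive_{G,\{\mu\},\L}$ unwinds, via the dictionary already used in \S\ref{s:examples}.\ref{ss:GL_n}--\S\ref{s:examples}.\ref{ss:GSp_2g} and in \cite{Go1,Go2,P-R1,P-R2}, into a chain of subspaces $(\F_\Lambda)$ in the $\ov k$-spaces $\Lambda\otimes\ov k$, each stable under the operator $\Pi$ induced by multiplication by a uniformizer of $F$ --- nilpotent with $\Pi^e=0$, since $F/F_0$ is totally ramified of degree $e=d$ --- with prescribed ranks and $\ov k[\Pi]/(\Pi^e)$-module type (this is where the Kottwitz condition enters), together with the self-duality coming from perpendicularity. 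Passing to associated graded pieces and using the embedding of the special fiber into an affine flag variety (cf.\ \S\ref{s:afv}.\ref{ss:embedding}), the $\ov k$-points are identified with a union of Schubert cells indexed by a $\{\mu\}$-permissible set $\Perm_K(\{\mu\})\subset\wt W_K\bs\wt W/\wt W_K$ inside a double-coset space of an extended affine Weyl group of type $C_g$, uniformly for every parahoric level $K$ determined by $\L$. Here the coweight governing the combinatorics is \emph{not} minuscule: it is the $e$-fold multiple $e\varpi^\vee_g$ of the minuscule coweight $\varpi^\vee_g$ of type $C_g$, the $e$ copies matching the $e$ embeddings $F\to F_0^\sep$. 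This step is essentially bookkeeping, already carried out in the cited papers.

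Next I would identify the special fiber of the flat closure. The generic fiber of $M^\naive_{G,\{\mu\},\L}$ is the product $\prod_\varphi\LGr(2g)$ of Lagrangian Grassmannians, and by the same argument as in \cite{R-Z,KR} the special fiber of $M^\loc_{G,\{\mu\},\L}$ is the union of the Schubert varieties over the maximal elements of the $\{\mu\}$-admissible set $\Adm_K(\{\mu\})$; the identity $\langle 2\varrho,\, e\varpi^\vee_g\rangle = e\dim\LGr(2g) = \dim\prod_\varphi\LGr(2g)$ confirms that the degeneration fills this up. Since $M^\loc_{G,\{\mu\},\L}\subset M^\naive_{G,\{\mu\},\L}$ one always has $\Adm_K(\{\mu\})\subset\Perm_K(\{\mu\})$, so topological flatness becomes the assertion that the reverse inclusion holds \emph{after imposing self-duality}: the self-dual part of $\Perm_K(\{\mu\})$ is contained in $\Adm_K(\{\mu\})$. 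Without self-duality this inclusion fails --- that is exactly why, for $\Res_{F/F_0}GL_n$, it is $M^\vert$ rather than $M^\naive$ that is topologically flat --- so the perpendicularity condition is doing real work here.

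The combinatorial heart --- and the step I expect to be the main obstacle --- is thus to show that every self-dual $\{\mu\}$-permissible element for the non-minuscule coweight $e\varpi^\vee_g$ in type $C_g$ is $\{\mu\}$-admissible, for all $K$. The plan is to factor through the minuscule case: (a) every self-dual element of $\Perm_K(e\varpi^\vee_g)$ can be written as a product of $e$ elements of $\Perm_K(\varpi^\vee_g)$, arranged compatibly with the duality --- a Minkowski-type/galleries decomposition of the vertexwise convexity conditions, where both the self-duality and the good behaviour of type $C$ are essential (by contrast $\Perm\neq\Adm$ in general in type $D$, cf.\ Haines--Ng\^o \cite{HN2}); (b) $\Perm_K(\varpi^\vee_g)=\Adm_K(\varpi^\vee_g)$ for the minuscule coweight, by Kottwitz--Rapoport \cite{KR}; and (c) a product of $e$ elements each below a translation $t_\lambda$ with $\lambda\in W_0\varpi^\vee_g$ lies below a translation $t_{\lambda'}$ with $\lambda'$ in the $W_0$-orbit of $e\varpi^\vee_g$ --- a Bruhat-order multiplicativity statement, arranged via length-additivity of the relevant products of translations. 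Chaining (a)--(c) yields the desired inclusion and finishes the proof. An alternative, closer to G\"ortz's hands-on treatment of $GL_n$ in \cite{Go1}, is to exhibit the degenerations explicitly through local equations in affine charts of the ambient product of Grassmannians; the combinatorial route has the advantage of treating all parahoric levels at once.
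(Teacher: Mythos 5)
Your overall architecture matches the intended one: embed the geometric special fiber of $M^\naive$ into the affine flag variety for $GSp_{2g}$, identify the Schubert cells it contains, and reduce topological flatness to the combinatorial inclusion of that index set into the $\{\mu\}$-admissible set parametrizing the flat closure. The identification of the naive special fiber with $\Perm_K(\{\mu\})$ does hold here, but it is worth flagging why: since all $r_\varphi = g$, the convex hull of the $GL_{2g}$-Weyl orbit of $(e^{(g)},0^{(g)})$ happens to coincide with the ``box'' $[0,e]^{2g}\cap\{\text{sum}=ge\}$ cut out by the naive moduli conditions, and this persists after intersecting with the symplectic constraints; for unequal $r_\varphi$ (the general $\Res_{F/F_0}GL_n$ situation) the box is strictly larger than $\Perm$, which is precisely why $M^\naive$ can fail to be topologically flat in that setting.

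The gap lies in the combinatorial heart, steps (a)--(c). The statement you need is exactly $\Adm_K=\Perm_K$ for $\wt W_{GSp_{2g}}$ and sums of dominant minuscule cocharacters, which is \cite[Thm.\ 10.1]{HN2} in the Iwahori case and \cite[Cor.\ 13]{Go4} for general parahorics, and which is the input to G\"ortz's proof; neither source proceeds by factoring permissible elements into minuscule ones. Your step (a) --- that every self-dual element of $\Perm_K(e\varpi_g^\vee)$ factors as a length-additive product of $e$ elements of $\Perm_K(\varpi_g^\vee)$ --- is asserted without argument, carries essentially the full weight of the proof, and would amount to a new proof of the Haines--Ng\^o theorem if it held. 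Moreover, even granting (a), step (c) fails as stated: the subword/lifting property of Bruhat order gives $w_1\cdots w_e\le t_{\lambda_1}\cdots t_{\lambda_e}$ only when $\ell(t_{\lambda_1}\cdots t_{\lambda_e})=\sum_i\ell(t_{\lambda_i})$, i.e.\ only when the $\lambda_i$ lie in a common closed Weyl chamber. But the minuscule orbit $W_0\varpi_g^\vee$ in type $C_g$ meets each closed chamber in a single point, so this forces $\lambda_1=\dotsb=\lambda_e$, something your factorization (a) does not produce. The workable version of your plan is to take $\Adm_K=\Perm_K$ for sums of dominant minuscule in type $C$ as a black box from \cite{HN2} and \cite[Cor.\ 13]{Go4}, and to concentrate on the genuinely geometric verification that the naive moduli conditions together with perpendicularity translate into exactly the $\Perm_K$-conditions.
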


Thus $M_{G,\{\mu\},\L}^\loc$, $M_{G,\{\mu\},\L}^\vert$, and $M_{G,\{\mu\},\L}^\naive$ all coincide at the level of topological spaces.  G\"ortz furthermore conjectures that they are equal on the nose.

\begin{conjecture}[G\"ortz {\cite[\S6 Conj.]{Go4}}]\label{st:ram_loc_mod_conj}
For any self-dual periodic $\O_F$-lattice chain $\L$, $M_{G,\{\mu\},\L}^\naive$ is flat over $\Spec \O_{F_0}$.
\end{conjecture}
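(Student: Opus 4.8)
One natural way to approach this conjecture is as follows. Write $\O=\O_{F_0}$, with uniformizer $\pi$ and residue field $k$, and abbreviate $M^\naive=M^\naive_{G,\{\mu\},\L}$ and $M^\loc=M^\loc_{G,\{\mu\},\L}$. By construction $M^\loc$ is the scheme-theoretic closure of the common generic fibre inside $M^\naive$, so $M^\loc\subseteq M^\naive$ is a closed immersion, an isomorphism over $F_0$, and --- by the topological flatness of $M^\naive$ recalled just above, together with the flatness of $M^\loc$ --- a homeomorphism. Hence the conjecture is equivalent to the assertion that this closed immersion is an isomorphism, equivalently that $M^\naive$ is flat over $\Spec\O$. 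The first, essentially formal, step is to reduce this to a reducedness statement: since $M^\naive$ is topologically flat, every irreducible component of $M^\naive$ dominates $\Spec\O$, so if $M^\naive$ were reduced then no associated prime of $M^\naive$ could contain $\pi$, $\pi$ would be a nonzerodivisor, $M^\naive$ would be flat, and then it would coincide with the closure of its generic fibre, i.e.\ $M^\naive=M^\loc$. The content of the conjecture is therefore exactly that $M^\naive$ is \emph{reduced}; and since the generic fibre $M^\naive\otimes_\O F_0$ is the smooth variety $G/P_\mu$, $M^\naive$ is generically reduced, so by Serre's criterion the whole point is to show that $M^\naive$ has no embedded components --- which, given the above, must all be supported in the special fibre.

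To rule these out I would combine the two explicit handles available in this setting. The first is the embedding of the special fibre into the twisted affine flag variety attached to the parahoric stabiliser of $\L$, in the spirit of Variant~(iii) of \S\ref{s:motivationalsection}.\ref{ss:BGm} and \S\ref{s:afv}.\ref{ss:embedding}, using the Pappas--Zhu group scheme \cite{PZ}. On underlying topological spaces, G\"ortz's topological flatness together with Theorem~\ref{st:ResGSp} identifies $M^\naive\otimes_\O k$ with the reduced union of affine Schubert varieties indexed by the $\{\mu\}$-admissible set; and these Schubert varieties, and their unions, are reduced --- indeed normal and Cohen--Macaulay --- by the Frobenius-splitting theory of (twisted) affine flag varieties. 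So $M^\naive\otimes_\O k$ is a thickening, with the same support, of a known reduced scheme, and the task is to show this thickening is trivial and introduces no embedded primes in $M^\naive$. The second handle is local: in the standard affine charts of the ambient product of Grassmannians, $M^\naive$ is cut out by explicit ``schemes of matrix equations'' as in \S\ref{s:matrix}.\ref{ss:redmat}, here governing a single operator $X$ (multiplication by a uniformizer of $F$) satisfying $X^e=0$ with $e=[F:F_0]$, the Kottwitz condition on its characteristic polynomial, and a skew-self-adjointness constraint for an alternating form --- close relatives of scheme-theoretic closures of nilpotent orbits in $\mathfrak{sp}_{2g}$. The plan is to prove that these matrix-equation schemes are Cohen--Macaulay, or at least satisfy Serre's $S_1$, either by importing the De~Concini--Procesi/Tanisaki-type descriptions of the equations of symplectic nilpotent orbit closures through the dictionary of \S\ref{s:no} and verifying these coincide with the naive equations, or by constructing a flat degeneration to the reduced, Cohen--Macaulay Schubert picture just mentioned.

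The main obstacle is precisely this last step: showing that the naive equations define a scheme with no embedded components --- equivalently, a radical ideal over $\Spec\O$ --- rather than merely one with the expected support. This is genuinely delicate; even the closely related question whether the special fibre of the \emph{split} $GL_n$ local model is Cohen--Macaulay is a well-known open problem (Remark~\ref{rem: 2.1.3}), and the Frobenius-splitting machinery that governs Schubert varieties does not, by itself, control an a priori larger naive scheme sharing their support. A variant route, which I would pursue in parallel, uses the splitting model $\M=\M_{G,\{\mu\},\L}$ of \cite{P-R2} over $\O_{F^{\Gal}}$: it is flat and is a twisted product of $GSp_{2g}$ local models, hence has reduced special fibre by Theorem~\ref{st:GSp_loc_mod_flat}, and it maps to $M^\naive\otimes_{\O_E}\O_{F^{\Gal}}$. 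If this map could be shown to be surjective with geometrically reduced and connected fibres --- so that, by a Stein-factorisation argument, reducedness descends --- one would be done; but the map is not flat in general (it behaves like a resolution in the worst cases), so ordinary faithfully flat descent is unavailable and one is thrown back onto an explicit analysis of its fibres, which is again the symplectic matrix-equation computation. In short, every route I see converges on the same commutative-algebra statement about these matrix-equation schemes, and that is where the real difficulty lies.
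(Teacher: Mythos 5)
The statement you were asked to prove is not a theorem of the paper: it is recorded as an \emph{open conjecture} attributed to G\"ortz, and the paper supplies no proof --- only the weaker facts it does know (topological flatness of $M^\naive$ over $\Spec\O_{F_0}$, stated just above the conjecture; the equality $M^\vert=M^\loc$ with reduced special fibre in Theorem~\ref{st:ResGSp}; and the remark that $M^\naive$, $M^\vert$, $M^\loc$ coincide as topological spaces), together with a pointer to the matrix-equation scheme $P$ in \S\ref{s:matrix}.\ref{ss:redmat}, whose flatness is itself left as a conjecture there. So there is no ``paper's own proof'' to compare your proposal against, and you are candid that what you have written is a program rather than a proof. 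Your formal reductions are correct: given topological flatness and the smoothness of the generic fibre, flatness of $M^\naive$ over $\O_{F_0}$ is equivalent to reducedness of $M^\naive$, which by Serre's criterion (the $R_0$ part being automatic, since every component dominates $\Spec\O_{F_0}$ and the generic fibre is smooth) is equivalent to $S_1$, i.e.\ absence of embedded primes, necessarily supported in the special fibre. Your inventory of possible attacks --- the affine flag variety embedding with Frobenius-splitting technology, the matrix-equation schemes, and the splitting models of \cite{P-R2} --- also matches what the paper itself records around Theorems~\ref{st:ResGSp} and~\ref{st:gl_nilp_flat}, and you correctly observe that all of them funnel into the same open commutative-algebra problem.

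The genuine gap is therefore exactly the one you name: nothing here establishes $S_1$ for $M^\naive$ (equivalently, flatness of the scheme $P$ of \S\ref{s:matrix}.\ref{ss:redmat}), and no known technique does. Two small corrections to the framing. First, the charts over $\O_{F_0}$ impose the Eisenstein relation $Q(a)=0$, not $a^e=0$; the latter is only what the relation becomes after reduction modulo the maximal ideal, and since the flatness question lives over the base DVR and not merely in the special fibre, it matters to keep the integral relation. Second, the proposed descent of reducedness along $\M\to M^\naive\otimes_{\O_{F_0}}\O_{F^{\Gal}}$ would need more than surjectivity with connected reduced geometric fibres: that morphism is not flat, there is no general descent of $S_1$ (or of reducedness) along a non-flat morphism, and so this route collapses back into the same direct fibre analysis --- as you yourself concede. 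In short, your proposal is an accurate triage of an open problem, not a proof of it, and the missing commutative-algebra step is precisely why the paper records this statement as a conjecture rather than a theorem.
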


Note that the conjecture stands in contrast to the case of $\Res_{F/F_0} GL_n$, where the naive local model may even fail to be topologically flat.

We finally mention that, in analogy with the previous subsection, the notions of {\it splitting model}  and {\it canonical local model} are also developed in \cite{P-R2} in the setting of local models for $G$.  We refer to loc.\ cit.\ for details, where, in particular, it is shown that the canonical local model equals $M_{G,\{\mu\},\L}^\loc$.

\subsection[Ramified, quasi-split unitary (types $(A_1,\varpi_1^\vee)$; $(A^{(2)}_{n-1}, \varpi^\vee_s)$, $1 \leq s \leq n-1$)]{Ramified, quasi-split unitary (types $(A_1,\varpi_1^\vee)$; $(A^{(2)}_{n-1}, \varpi^\vee_s)$, $1 \leq s \leq n-1$)%
\protect\footnote{Note that type $A_3^{(2)}$ does not actually appear in the table in \S\ref{s:motivationalsection}.\ref{ss:Shimuralocmod}.  Rather the adjoint group $PGU_4$ is of type $D_3^{(2)}$.}
}\label{ss:GU_n}
In this subsection we take up another typical example of a group that splits only after a ramified base extension, namely ramified, quasi-split $GU_n$.  We suppose $n \geq 2$ and $\charac k \neq 2$.
We continue with the notation of the previous subsection, but we now restrict to the special case that $F/F_0$ is ramified quadratic.  To simplify matters, assume that  $\pi \mapsto -\pi$  under the nontrivial automorphism of $F/F_0$, so that $\pi_0=\pi^2$ is a uniformizer of $F_0$. Let $\phi$ denote the $F/F_0$-Hermitian form on $F^n$ whose matrix with respect to the standard ordered basis is $H_n$ \eqref{disp:antidiag_1}.  We attach to $\phi$ the alternating $F_0$-bilinear form
\[
   \text{\aform}\colon
   \xymatrix@R=0ex{
      V \times V  \ar[r]  &  F_0\\
      (x,y)  \ar@{|->}[r]  &  \frac 1 2 \Tr_{F/F_0}\bigl(\pi^{-1}\phi(x,y)\bigr).
   }
\]
Given an $\O_F$-lattice in $F^n$, we denote by $\wh\Lambda$ its common \aform- and $\phi$-dual,
\[
   \wh\Lambda := \bigl\{\, x \in F^n \bigm| 
      \la \Lambda, x \ra \subset \O_{F_0} \,\bigr\}
      = \bigl\{\, x \in F^n \bigm| 
      \phi(\Lambda, x) \subset \O_F \,\bigr\}.
\]
As usual, \aform induces a perfect $\O_{F_0}$-bilinear pairing
\[
   \Lambda \times \wh\Lambda \to \O_{F_0}   
\]
for any $\O_F$-lattice $\Lambda$; and we say that an $\O_F$-lattice chain in $F^n$ is \emph{self-dual} if it is closed under taking duals.

Let $G := GU_n := GU(\phi)$ over $F_0$, and let $\L$ be a periodic self-dual $\O_F$-lattice chain in $F^n$.  Although we shall define local models for any such $\L$, when $n$ is even, facets in the building only correspond to $\L$ with the property that
\begin{altenumerate}
\item[$(*)$]\
   if $\L$ contains a lattice $\Lambda$ such that $\pi\Lambda \subset \wh\Lambda \subset \Lambda$ and $\dim_k \wh\Lambda/\pi\Lambda = 2$, then $\L$ also contains a lattice $\Lambda' \supset \Lambda$ with $\dim_k \Lambda'/\Lambda = 1$.
\end{altenumerate}
Such a $\Lambda'$ then satisfies $\wh \Lambda' = \pi \Lambda'$.  See \cite[\S4.a]{P-R3}, \cite[\S1.2.3]{P-R4}.

Over $F$ we have the standard splitting
\begin{equation}\label{disp:GU_splitting}
   G_F \xra[\sim]{(f,c)} GL_n \times \mG_m,
\end{equation}
where $c\colon G_F \to \mG_m$ is the similitude character and $f\colon G_F \to GL_n$ is given on $R$-points by the map on matrix entries
\[
   \xymatrix@R=0ex{
      R \otimes_{F_0} F \ar[r]  &  R\\
       x\otimes y \ar@{|->}[r]  & xy
   }
\]
for an $F$-algebra $R$.  Let $D$ denote the standard maximal torus of diagonal matrices in $GL_n$.  Choose a partition $n = r+s$; we refer to the pair $(r,s)$ as the \emph{signature}.  Let $\mu$ denote the cocharacter $\bigl(1^{(s)},0^{(r)};1\bigr)$ of $D \times \mG_m$.  Then we may regard $\mu$ as a geometric cocharacter of $G$ via \eqref{disp:GU_splitting}, and we denote by $\{\mu\}$ its geometric conjugacy class.  We denote by $E$ the reflex field of $\{\mu\}$; then $E = F_0$ if $r = s$ and $E = F$ otherwise.  Let $\O_E$ denote the ring of integers in $E$.

The \emph{naive local model $M^\naive_{G,\{\mu\},\L}$} is the functor on the category of $\O_E$-algebras that assigns to each $\O_E$-algebra $R$ the set of all families $(\F_\Lambda)_{\Lambda\in\L}$ such that
\begin{altenumerate}
\item
   for every $\Lambda \in \L$, $\F_\Lambda$ is an $(\O_F \otimes_{\O_{F_0}} R)$-submodule of $\Lambda \otimes_{\O_{F_0}} R$ which Zariski-locally on $\Spec R$ is a direct summand as an $R$-module of rank $n$;
\item
   for every inclusion of lattices $\Lambda \subset \Lambda'$ in $\L$, the induced map $\Lambda \otimes_{\O_{F_0}} R \to \Lambda' \otimes_{\O_{F_0}} R$ carries $\F_\Lambda$ into $\F_{\Lambda'}$;
\item
   for every $a \in F^\times$ and every $\Lambda\in\L$, the isomorphism $\smash{\Lambda \xra[\sim] a a\Lambda}$ identifies $\F_\Lambda \isoarrow \F_{a\Lambda}$;
\item
  for every $\Lambda \in \L$, the perfect $R$-bilinear pairing
   \[
      \bigl(\Lambda\otimes_{\O_{F_0}} R\bigr) \times 
         \bigl(\wh\Lambda \otimes_{\O_{F_0}} R\bigr) \to R
   \]
   induced by \aform identifies $\F_\Lambda^\perp \subset \wh\Lambda \otimes_{\O_{F_0}} R$ with $\F_{\wh\Lambda}$; 
\item
   (\emph{Kottwitz condition}) for every $a \in \O_F$ and every $\Lambda \in \L$, the element $a \otimes 1 \in \O_F \otimes_{\O_{F_0}} R$ acts on the quotient $(\Lambda \otimes_{\O_{F_0}} R)/\F_\Lambda$ as an $R$-linear endomorphism with characteristic polynomial
   \[
      {\rm char}_R \bigl(\, a \otimes 1 \bigm| (\Lambda\otimes_{\O_{F_0}} R)/\F_\Lambda\,\bigr) 
      = (X- a)^r(X-\ol a)^s,
   \]
   where we use a bar to denote the nontrivial automorphism of $F/F_0$.
   %
\end{altenumerate}
   
   When $r = s$, the right-hand side of the last display is to be interpreted as $\bigl(X^2 -(a+\ol a) X + a\ol a\bigr)^s$.  The Kottwitz condition is equivalent to requiring the ``determinant'' condition that for every $\Lambda \in \L$, we have an equality of polynomials with coefficients in $R$
   \[
      {\textstyle \det_R} \bigl(\, X (1 \otimes 1) + Y  (\pi \otimes 1) 
      \bigm| 
      (\Lambda \otimes_{\O_{F_0}} R)/\F_\Lambda\,\bigr) = (X + Y \pi)^r(X - Y \pi)^s,
   \]
   where $1 \otimes 1$, $\pi \otimes 1 \in \O_F \otimes_{\O_{F_0}} R$; and these conditions are mutually equivalent to requiring that the single element $\pi \otimes 1$ acts on $(\Lambda\otimes_{\O_{F_0}} R)/ \F_\Lambda$ with characteristic polynomial $(X - \pi)^r(X + \pi)^s$.


As always, the naive local model is representable by a closed subscheme of a finite product of Grassmannians over $\Spec \O_E$.  If we denote by $V$ the $n$-dimensional $F$-vector space
\[
   V := \ker(\,\pi \otimes 1 - 1 \otimes \pi \mid F^n \otimes_{F_0} F\,),
\]
then the map
\[
   (\F_\Lambda)_\Lambda \mapsto \ker(\,\pi \otimes 1 - 1 \otimes \pi \mid \F_\Lambda\,)
\]
(independent of $\Lambda$) defines an isomorphism from the $F$-generic fiber $M^\naive_{G,\{\mu\},\L} \otimes_{\O_E} F$ onto the Grassmannian $\Gr(s,V)_F$.

It was observed in \cite{P1} that $M^\naive_{G,\{\mu\},\L}$ fails to be flat in general; historically,  this was the first time it was found that the Rapoport--Zink local model can fail to be flat.  The key point is that the Kottwitz condition fails to impose a condition on the reduced special fiber.  Indeed, if $R$ is a $k$-algebra, then $\pi \otimes 1$ is nilpotent in $\O_F \otimes_{\O_{F_0}} R$.  Hence, when $R$ is reduced, $\pi \otimes 1$ necessarily acts on $(\Lambda \otimes_{\O_{F_0}} R)/\F_\Lambda$ with characteristic polynomial $X^n$, in accordance with the Kottwitz condition.  Thus the reduced special fiber is \emph{independent} of the signature. %
%
%
Hence by Chevalley's theorem (EGA IV.13.1.5), the special fiber has dimension
\[
   \geq \max\{\dim M^\naive_{G,\{\mu\},\L} \otimes_{\O_E} E \}_{0 \leq s \leq n} = \max\{\dim \Gr(s,V) \}_{0 \leq s \leq n} = \Bigl\lfloor \frac n 2 \Bigr\rfloor \Bigl\lceil \frac n 2 \Bigr\rceil.
\]
The max in the display is achieved for $|r-s| \leq 1$.  Thus $M^\naive_{G,\{\mu\},\L}$ is not flat for $|r-s| > 1$, as its generic and special fibers have different dimension.  We note that the analogous argument given in the proof of \cite[Prop.\ 3.8(b)]{P1} should be amended to use the reduced special fiber in place of the honest special fiber.
%

As always, one remedies for non-flatness of the naive local model by defining the honest local model $M^\loc_{G,\{\mu\},\L}$ to be the scheme-theoretic closure in $M^\naive_{G,\{\mu\},\L}$ of its generic fiber.  Although less is known about $M^\loc_{G,\{\mu\},\L}$ for ramified $GU_n$ than for ramified $\Res_{F/F_0}GL_n$ and $\Res_{F/F_0} GSp_{2g}$, there are by now a number of results that have been obtained in various special cases.  In low rank, the case $n = 3$ has been completely worked out.

\begin{thm}[{\cite[4.5, 4.15]{P1}}, {\cite[\S6]{P-R4}}]
\label{st:GU(2,1)}
Let $n=3$ and $(r,s) = (2,1)$.
\begin{altenumerate}
\item\label{it:GU(2,1)_0}
   Let $\L$ be the homothety class of the lattice $\Lambda_0 = \O_F^n \subset F^n$.  Then $M^\naive_{G,\{\mu\},\L} = M^\loc_{G,\{\mu\},\L}$, that is, $M^\naive_{G,\{\mu\},\L}$ is flat over $\Spec \O_F$.  Moreover, $M^\naive_{G,\{\mu\},\L}$ is normal and Cohen-Macaulay, it is smooth outside a single point $y$ in its special fiber, and its special fiber is integral and normal and has a rational singularity at $y$.  The blowup $\wt M^\loc_{G,\{\mu\},\L} \to M^\loc_{G,\{\mu\},\L}$ at $y$ is regular with special fiber a reduced union of two smooth surfaces meeting transversely along a smooth curve.
\item
   Let $\L = [\Lambda_1, \Lambda_{2}]$, the lattice chain consisting of the homothety classes of $\Lambda_1$ and $\Lambda_{2}$.  Then $M^\loc_{G,\{\mu\},\L}$ is smooth over $\Spec \O_F$ with geometric special fiber isomorphic to $\mP^2$.
\item
   Let $\L$ be the standard maximal lattice chain in $F^3$.  Then $M^\loc_{G,\{\mu\},\L}$ is normal and Cohen-Macaulay.  Its special fiber is reduced and consists of two irreducible components, each normal and with only rational singularities, which meet along two smooth curves which, in turn, intersect transversally at a point.\qed
\end{altenumerate}
\end{thm}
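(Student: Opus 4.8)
The plan is to treat the three parahoric types in turn, in each case reducing to an explicit computation with the moduli problem of \S\ref{s:examples}.\ref{ss:GU_n}. Fix $\O_F = \O_{F_0}[\pi]$ with $\pi^2 = \pi_0$, so that for an $\O_{F_0}$-algebra $R$ one has $\O_F\otimes_{\O_{F_0}}R = R[u]/(u^2-\pi_0)$ with $u = \pi\otimes 1$; a point over $R$ is then a tuple of $R$-module direct summands $\F_\Lambda\subset\Lambda\otimes_{\O_{F_0}}R$ of rank $3$, stable under $u$, compatible with the chain inclusions and the duality $\F_\Lambda^\perp=\F_{\wh\Lambda}$, on whose quotients $u$ acts with characteristic polynomial $(X-\pi)^2(X+\pi)$. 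For (ii) one first computes $\wh{\Lambda_1}=\pi\Lambda_2$ and $\wh{\Lambda_2}=\pi\Lambda_1$, so neither lattice is self-dual up to scalar and the pathology causing non-flatness is absent; I would use the duality to express $\F_{\Lambda_1}$ in terms of $\F:=\F_{\Lambda_2}$, reducing to a single $u$-stable rank-$3$ summand of $\Lambda_2\otimes_{\O_{F_0}}R$ subject to the Kottwitz condition and compatibility with the chain map. A direct chart computation identifies the resulting functor with a scheme smooth over $\Spec\O_F$ of relative dimension $2$, with generic fiber $\Gr(1,V)_F=\mP_F^2$ and, upon reduction of the same equations mod $\pi$, special fiber $\mP_k^2$.

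For (i), $\Lambda_0=\O_F^n$ is self-dual, so $M^\naive_{G,\{\mu\},\L}$ is the closed subscheme of the Lagrangian Grassmannian $\LGr(\Lambda_0)_{\O_F}$ cut out by $u$-stability and the Kottwitz condition; its generic fiber is $\Gr(1,V)_F=\mP_F^2$, so $M^\naive_{G,\{\mu\},\L}\otimes_{\O_F}F$ is irreducible of dimension $2$. Over $k$ one has $u^2=0$ on $\bar\Lambda_0:=\Lambda_0\otimes k$, and the distinguished point is $y:=[\F_0]$ with $\F_0 := u\bar\Lambda_0 = \ker(u\mid\bar\Lambda_0)$, which is $u$-stable, equal to its own perpendicular, and killed by $u$ on the quotient. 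The key step is the local analysis at $y$: choosing a symplectic $\O_F$-basis of $\Lambda_0$ adapted to $\F_0$ and writing $\F_{\Lambda_0}$ in the corresponding affine chart of $\LGr$, the $u$-stability and Kottwitz conditions (equivalently $(u-\pi)^2(u+\pi)=0$ on the quotient) become explicit polynomial equations, and I would show that $\widehat\O_{M^\naive,y}$ is a Cohen--Macaulay normal local ring of dimension $3$ with isolated singularity at $y$ and $2$-dimensional special fiber. From this, Cohen--Macaulayness of $M^\naive_{G,\{\mu\},\L}$ and smoothness away from $y$ follow; since $\pi$ is then a nonzerodivisor, $M^\naive_{G,\{\mu\},\L}=M^\loc_{G,\{\mu\},\L}$ is flat, and its special fiber is reduced, integral and normal with an isolated surface singularity at $y$; normality of the total space follows from Serre's criterion $(R_1)+(S_2)$. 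Finally I would compute the blowup at $y$ in charts, check that the total transform is regular and that its exceptional divisor and the strict transform of the special fiber are smooth surfaces meeting transversally along a smooth rational curve, and deduce rationality of the singularity at $y$ from the vanishing of the higher direct images of the structure and dualizing sheaves read off from this explicit resolution.

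For (iii), $\L$ is the full standard (Iwahori) chain, self-dual since $\wh{\Lambda_0}=\Lambda_0$, $\wh{\Lambda_1}=\pi\Lambda_2$, $\wh{\Lambda_2}=\pi\Lambda_1$. I would pass to the special fiber and use the embedding of $M^\loc_{G,\{\mu\},\L}\otimes k$ into the affine flag variety of $GU_3$ over $k$ (cf.\ \S\ref{s:afv}.\ref{ss:embedding}), under which, by property (ii) of \S\ref{s:motivationalsection}.\ref{ss:lmideal}, the underlying set is the admissible set $\Adm_{\widetilde K}(\{\mu\})$ for the Iwahori $\widetilde K$. Since $\mu$ is minuscule and its relative Weyl orbit has two elements, $\Adm_{\widetilde K}(\{\mu\})$ has exactly two maximal elements, so once reducedness of the special fiber is established it is the union of two Schubert varieties $Z_1$, $Z_2$; each is normal with rational singularities (normality of Schubert varieties in affine flag varieties, valid here as $\charac k\ne 2$), and an explicit computation of the Bruhat intervals below the two maximal elements shows $Z_1\cap Z_2$ is a union of two projective lines meeting transversally at the point attached to the minimal admissible alcove. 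Reducedness, hence flatness of $M^\loc_{G,\{\mu\},\L}$, I would obtain either from the reducedness of the special fibers in (i) and (ii) via the forgetful morphisms $\rho_{[\Lambda_0]}$, $\rho_{[\Lambda_1,\Lambda_2]}$, or by a Frobenius-splitting argument for the ambient affine flag variety. Cohen--Macaulayness of the total space then follows because its special fiber is reduced, with Cohen--Macaulay components glued transversally along a Cohen--Macaulay curve; and normality follows from $(R_1)+(S_2)$ once one notes that $M^\loc_{G,\{\mu\},\L}$ is regular at the generic points of $Z_1$ and $Z_2$, the special fiber being a reduced Cartier divisor there. The main obstacle throughout is part (i): pinning down the local equations of $M^\naive_{G,\{\mu\},\L}$ at the worst point $y$, proving they define a Cohen--Macaulay normal ring with isolated singularity, and constructing and verifying the resolution by a single blowup; the Iwahori case in (iii) then rests on this together with the combinatorics of $\Adm_{\widetilde K}(\{\mu\})$ in the affine Weyl group.
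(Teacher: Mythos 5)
The approach you sketch---explicit affine-chart computations for (i) and (ii), together with the affine flag variety embedding and admissibility combinatorics for (iii)---is indeed the route followed by the cited sources \cite{P1} and \cite{P-R4}, so the broad strategy is sound and matches theirs. Your identification of the worst point $y=[\,u\bar\Lambda_0\,]$ in (i), and your computation $\wh{\Lambda_1}=\pi\Lambda_2$, $\wh{\Lambda_2}=\pi\Lambda_1$ in (ii) allowing $\F_{\Lambda_1}$ to be eliminated in favor of $\F_{\Lambda_2}$, are both correct.

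There is, however, a real gap in the reducedness step of (iii). The phrase ``Reducedness, hence flatness of $M^\loc$'' is logically off: $M^\loc$ is flat by construction (it is the flat closure of the generic fiber), so flatness is not at issue; what needs proof is reducedness of the special fiber, and from that CM-ness and normality of the total space. Neither of the two arguments you offer does this directly. The forgetful morphisms $\rho_{[\Lambda_0]}$ and $\rho_{[\Lambda_1,\Lambda_2]}$ do not in general transport reducedness of the targets' special fibers to the source---that would amount to the vertexwise identity $M^\vert=M^\loc$, which for ramified $GU_n$ is not available off the shelf and is exactly the kind of statement one is trying to establish. The Frobenius-splitting argument shows that the scheme-theoretic union $Z_1\cup Z_2$ of the two admissible Schubert varieties is reduced, but it does not show that this union \emph{coincides} with $\ol M^\loc_{G,\{\mu\},\L}$ as a scheme; a priori the special fiber could carry embedded or fat structure supported on $Z_1\cup Z_2$. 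What is actually needed, and what \cite{P-R4} does, is an explicit local computation at the worst point of the Iwahori model showing the local ring of $\ol M^\loc$ there is reduced (and CM of the right dimension), so that the inclusion $Z_1\cup Z_2\subset\ol M^\loc$, which a priori is only set-theoretic together with reducedness of the left side, is an equality of schemes. (Alternatively one could compare Hilbert polynomials of $Z_1\cup Z_2$ with the generic fiber $\mP^2$ using flatness, a coherence-type argument, but that machinery postdates these references.) Once that is in place, your deduction of CM for the total space from CM-ness of the reduced special fiber and your appeal to Serre's criterion for normality go through.

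A smaller caveat for (ii): your chart computation, as written, would exhibit $M^\naive_{G,\{\mu\},[\Lambda_1,\Lambda_2]}$ itself as smooth, which (if true) gives $M^\naive=M^\loc$ for free. That identity is not asserted by the theorem and is not automatic; you should either verify it directly or, following \cite[\S 5.3]{P-R4} and \cite[Prop.~4.16]{A}, work with the wedge model $M^\wedge$ and then observe that its smoothness forces $M^\wedge=M^\loc$.
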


We shall discuss the case $n=2$ at the end of the subsection in Remark \ref{rk:GU2}.

\begin{Remark}
In each case in the theorem, the stabilizer of $\L$ in $G(F_0)$ is a parahoric subgroup.  In this way the three cases correspond to the three conjugacy classes of parahoric subgroups in $G(F_0)$.  See \cite[\S1.2.3(a)]{P-R4}.
\end{Remark}

\begin{Remark}\label{rk:switch}
Quite generally, for any fixed $n$ and $\L$, it is elementary to verify that $M^\naive_{G,\{\mu\},\L}$ (and hence $M^\loc_{G,\{\mu\},\L}$) is unchanged up to isomorphism if we replace the signature $(r,s)$ with $(s,r)$.  Moreover, it is easy to see from the \emph{wedge condition} discussed below that $M^\loc_{G,\{\mu\},\L}$ is just $\Spec \O_F$ itself in case $r$ or $s$ is $0$.  So the theorem covers all cases of interest when $n=3$.
\end{Remark}

For larger $n$, results on $M^\loc_{G,\{\mu\},\L}$ are known for cases of simple signature and for cases of simple lattice chains $\L$. One important tool for proving reducedness of the special fiber is Hironaka's Lemma (EGA IV.5.12.8). 

\begin{thm}\label{st:GU_simple_cases}
\begin{altenumerate}
\item
   \emph{(\cite[Th.~5.1]{P-R4}; Arzdorf \cite[Th.~2.1]{A}, Richarz \cite[Cor. 5.6]{Ri})} \label{it:spec_parahoric}  Let $n \geq 3$.  Suppose that $n$ is even and $\L = [\Lambda_{n/2}]$, or that $n = 2m+1$ is odd and $\L = [\Lambda_0]$ or $\L = [\Lambda_m, \Lambda_{m+1}]$.  Then for any signature $(r, s)$, the special fiber of $M^\loc_{G,\{\mu\},\L}$ is integral and normal and has only rational singularities.
\item\label{it:GU_Lambda_0}
   \emph{(\cite[4.5]{P1})}  Let $n \geq 2$, $(r,s) = (n-1,1)$, and $\L = [\Lambda_0]$.  Then $M^\loc_{G,\{\mu\},\L}$ is normal and Cohen-Macaulay, and it is smooth over $\Spec \O_E$ outside a single point $y$ in the special fiber.  For $n = 2$, $M^\loc_{G,\{\mu\},\L}$ is regular and its special fiber is a divisor with simple normal crossings.  For $n \geq 3$, the blowup $\wt M^\loc_{G,\{\mu\},\L} \to M^\loc_{G,\{\mu\},\L}$ at $y$ is regular with special fiber a divisor with simple normal crossings.
\item\label{it:GU_Lambda_m}
   \emph{(\cite[\S5.3]{P-R4}; Richarz \cite[Prop.~4.16]{A}
)} \label{it:spec_parahoric_smooth}
Let $n \geq 3$ and $(r,s) = (n-1,1)$.  Suppose that $n$ is even and $\L = [\Lambda_{n/2}]$ or that $n = 2m+1$ is odd and $\L = [\Lambda_m, \Lambda_{m+1}]$.  Then $M^\loc_{G,\{\mu\},\L}$ is smooth.\qed
\end{altenumerate}
\end{thm}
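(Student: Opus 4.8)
The plan is to route everything through an intermediate ``wedge'' model and then through the geometry of a single Schubert variety in a twisted affine flag variety. Write $M^{\wedge}_{G,\{\mu\},\L}$ for the closed subscheme of $M^{\naive}_{G,\{\mu\},\L}$ obtained by imposing, for each $\Lambda\in\L$, the \emph{wedge conditions} of \cite{P-R4} on the action of $\pi\otimes 1$ on $\F_\Lambda$ (roughly, that $\pi\otimes 1 + \pi$ and $\pi\otimes 1 - \pi$ act on $\F_\Lambda$ with ranks $\leq r$ and $\leq s$). One has closed immersions $M^{\loc}_{G,\{\mu\},\L}\subseteq M^{\wedge}_{G,\{\mu\},\L}\subseteq M^{\naive}_{G,\{\mu\},\L}$, all isomorphisms on generic fibers. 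For the special maximal parahoric chains $\L$ appearing in (i) and (iii), the first step is to prove that $M^{\wedge}_{G,\{\mu\},\L}$ is flat over $\Spec\O_E$ and hence equals $M^{\loc}_{G,\{\mu\},\L}$ (this is \cite[Th.~5.1]{P-R4} and the reworkings of \cite{A} and \cite{Ri}); in case (ii), where $(r,s)=(n-1,1)$ so that $|r-s|\leq 1$, one shows instead that $M^{\naive}_{G,\{\mu\},\L}$ is itself already flat, so $M^{\naive}=M^{\loc}$, the dimension count behind Pappas's non-flatness examples now going the ``good'' way.

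Next I would identify the special fiber combinatorially. Using the embedding of the reduced special fiber of a local model into the twisted affine flag variety $Fl_{\wt K}$ of the parahoric $K=\mathcal G(\O_{F_0})$ (cf.\ \S\ref{s:afv}.\ref{ss:embedding}), one matches $\bigl(M^{\loc}_{G,\{\mu\},\L}\otimes_{\O_E}\ol k\bigr)_{\mathrm{red}}$ with the union of the Schubert varieties indexed by $\Adm_{\wt K}(\{\mu\})$, compatibly with strata. The crucial point for (i) and (iii) is that when $K$ is one of the listed \emph{special} maximal parahorics, $\Adm_{\wt K}(\{\mu\})$ has a \emph{unique} maximal element $w_{\{\mu\}}$, so this union is the single Schubert variety $S_{w_{\{\mu\}}}$ --- just as the special fiber of the $GL_n$ local model at a hyperspecial vertex is the single minuscule orbit closure $\ol{\mathcal O}_\mu$. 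For (iii), with $s=1$, one checks in addition that $w_{\{\mu\}}$ is small enough that $S_{w_{\{\mu\}}}$ is \emph{smooth} (a point, resp.\ a projective space); smoothness of the whole special fiber together with flatness then forces $M^{\loc}_{G,\{\mu\},\L}$ to be smooth over $\Spec\O_E$ by the infinitesimal criterion.

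For the normality and rational-singularity assertions in (i), I would invoke the theorem that Schubert varieties in twisted affine flag varieties of tamely ramified groups are normal, Cohen--Macaulay, and have rational singularities (Pappas--Rapoport, building on Faltings's work on affine flag varieties, via Frobenius-splitting methods). Combined with Step~1 (special fiber reduced) and flatness (special fiber generically smooth), this gives the special-fiber statements; Hironaka's lemma (EGA IV.5.12.8) lets one conclude reducedness/normality cleanly, and Serre's criterion plus flatness and Cohen--Macaulayness then upgrade $M^{\loc}_{G,\{\mu\},\L}$ itself to normal and Cohen--Macaulay, which is also how one gets (ii). For (ii) the alternative, hands-on route of \cite{P1} is to produce around the unique non-smooth point $y$ of the special fiber an explicit affine chart that is a scheme of matrix equations of the type studied in \S\ref{s:matrix} (matrices $A$ with $A^2=\pi_0$ together with a rank bound coming from the $(n-1,1)$ wedge/Kottwitz condition), to recognize it as a classical determinantal/nilpotent-orbit variety whose normality and Cohen--Macaulayness are known, to check smoothness away from $y$ by the Jacobian criterion, and finally to blow up $y$ once and verify that the exceptional divisor is smooth and meets the strict transform transversally --- with no blow-up needed when $n=2$.

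The main obstacle is the transfer of geometric properties from the single Schubert variety to the local model, which has two delicate halves. On the combinatorial side one must genuinely prove that $\bigl(M^{\loc}_{G,\{\mu\},\L}\otimes\ol k\bigr)_{\mathrm{red}}=S_{w_{\{\mu\}}}$: this needs both the flatness and reducedness of $M^{\loc}$ from Step~1 and the fact that $\Adm_{\wt K}(\{\mu\})$ is an interval with a single top element, together with a matching of the natural stratification of the local model with the Schubert stratification of $Fl_{\wt K}$. On the geometric side one needs the normality, Cohen--Macaulayness, and rational singularities of Schubert varieties in the relevant \emph{twisted} (not split) affine flag variety, which is a substantial input rather than something one rederives here. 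In the matrix-equation approach to (ii), the corresponding hard point is pinning down precisely the variety cut out by the interaction of the perpendicularity condition with the $(n-1,1)$ Kottwitz condition and matching it to one whose Cohen--Macaulayness and normality are on record.
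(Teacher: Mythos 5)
Your overall plan — embed the reduced special fiber in a twisted affine flag variety, identify it with a single Schubert variety (since the admissible set for a special maximal parahoric has a unique maximal element), appeal to the Pappas--Rapoport theorem on normality and rational singularities of Schubert varieties, use Hironaka's lemma and Serre's criterion to pass from reduced special fiber to properties of $M^{\loc}$, and treat (ii) via explicit matrix equations and a single blowup — is indeed the strategy of the source papers. But your ``Step~1'', identifying $M^{\loc}$ with one of the easier-to-describe moduli schemes, contains two concrete errors.

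First, for (ii) you assert that since $(r,s)=(n-1,1)$ we have $|r-s|\leq 1$, hence $M^{\naive}_{G,\{\mu\},[\Lambda_0]}$ is already flat. But $|r-s| = n-2$, which exceeds $1$ as soon as $n \geq 4$. In that range the dimension argument of \S\ref{s:examples}.\ref{ss:GU_n} shows that $M^{\naive}$ is \emph{not} flat (its special fiber is too big). The correct statement, and the one actually proved in \cite[4.5]{P1} and recorded as Proposition \ref{st:M^wedge=M^loc}, is that the \emph{wedge} local model is flat here: $M^{\wedge}_{G,\{\mu\},[\Lambda_0]} = M^{\loc}_{G,\{\mu\},[\Lambda_0]}$. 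Even the equality $M^{\naive}=M^{\loc}$ for $|r-s|\leq 1$ is only a conjecture in general, proved just for $n\leq 3$.

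Second, for (i) and (iii) you claim that, at the listed special maximal parahorics, $M^{\wedge}$ is flat and hence equals $M^{\loc}$. This fails when $n$ is even: Remark \ref{locverswedge} states that for $n$ even and $r,s$ both nonzero, $M^{\wedge}_{G,\{\mu\},[\Lambda_{n/2}]}$ is \emph{not even topologically flat}. One must either impose the spin condition as well (and this is still conjectural as an equality of schemes, Conjecture \ref{conj:GU_top_flat}), or — as the cited proofs of \cite{P-R4}, \cite{A}, \cite{Ri} actually do — work directly with the flat closure $M^{\loc}$, identify its reduced geometric special fiber with the relevant Schubert variety via an explicit affine chart computation and the dimension/Hironaka argument, and deduce normality, rational singularities and reducedness of the honest special fiber from Theorem \ref{st:schub_vties}. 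For (iii) the additional smoothness comes from an explicit local chart around the worst point isomorphic to $\mA^{n-1}_{\O_F}$, not from the (inaccurate) claim that $S_{w_{\{\mu\}}}$ is a point or a projective space. So the skeleton of the argument is right, but the reduction to a wedge or naive model does not hold in the cases you apply it, and the actual proofs must argue directly about the flat closure.
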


\begin{Remark}
In \eqref{it:spec_parahoric}, the cases $n$ even, $\L = [\Lambda_{n/2}]$ and $n$ odd, $\L = [\Lambda_0]$ are in \cite{P-R4}, and the other is due to Arzdorf.  A different proof is due to Richarz \cite{Ri}. The significance of the assumptions on $n$ and $\L$ is that, up to $G(F_0)$-conjugacy, these are all the cases that correspond to \emph{special maximal parahoric level structure,} i.e.\ the parahoric stabilizer of $\L$ in $G(F_0)$ is the parahoric 
subgroup corresponding to a vertex in the building which is \emph{special} in the sense of Bruhat--Tits theory.  See \cite[\S1.2.3]{P-R4}. 
\end{Remark}

\begin{Remark}
The blowup $\wt M^\loc_{G,\{\mu\},\L}$ occurring in \eqref{it:GU_Lambda_0} is described explicitly by Kr\"amer in \cite{Kr} in terms of a moduli problem analogous to the Demazure resolution of a Schubert variety in the Grassmannian.  She shows that the special fiber of $\wt M^\loc_{G,\{\mu\},\L}$ consists of two smooth irreducible components of dimension $n-1$ --- one of which, the fiber over $y$, being isomorphic to $\mP_k^{n-1}$, the other one being a $\mathbb P_k^1$-bundle over a smooth quadric --- which intersect transversely in a smooth irreducible variety of dimension $n-2$.
\end{Remark}

\begin{Remark}
In \eqref{it:spec_parahoric_smooth}, the case of $n$ even is in \cite{P-R4}, and the case of $n$ odd is due to Richarz.  The result in the former case is not directly stated in \cite{P-R4}, but it follows from the cited reference, where it is shown that $M^\loc_{G,\{\mu\},[\Lambda_{n/2}]}$ has an open neighborhood around its ``worst point'' isomorphic to $\mA^{n-1}_{\O_F}$ (note that in the last sentence of \cite[\S5.3]{P-R4}, ${}_1 U_{r,s}$ should be replaced by ${}_1 U_{r,s}^\wedge$).

In the cases $n$ even, $\L = [\Lambda_{n/2}]$ and $n$ odd, $\L = [\Lambda_0]$, the local models are never smooth outside the cases enumerated in \eqref{it:GU_Lambda_m} (up to switching $(r, s)$ and $(s, r)$, cf. Remark \ref{rk:switch}), provided that the signature is nontrivial, i.e.,\ $r\neq 0$ or $s\neq 0$, see \cite[Th.\ 3.15]{Ri1}. Probably the same holds for the case $n = 2m+1$ is odd and $\L = [\Lambda_m, \Lambda_{m+1}]$.
\end{Remark}

In light of the failure of $M^\naive_{G,\{\mu\},\L}$ to be flat in general, it is an interesting problem to obtain a moduli-theoretic description of $M^\loc_{G,\{\mu\},\L}$.  
Motivated by the Kottwitz condition's failure to impose a condition on the reduced special fiber, 
in \cite{P1} the following additional  condition is introduced to the moduli problem defining $M^\naive_{G,\{\mu\},\L}$:
\begin{altenumerate}
\item [(vi)]\ 
   (\emph{wedge condition}) if $r \neq s$, then for every $\Lambda \in \L$, we have
   \[
      \sideset{}{_R^{s+1}}{\bigwedge} (\pi\otimes 1 + 1 \otimes \pi \mid \F_i) = 0
      \quad\text{and}\quad
      \sideset{}{_R^{r+1}}{\bigwedge} (\pi\otimes 1 - 1 \otimes \pi \mid \F_i) = 0.
   \]
   (There is no condition when $r=s$.)
\end{altenumerate}

The \emph{wedge local model $M^\wedge_{G,\{\mu\},\L}$} is the closed subscheme of $M^\naive_{G,\{\mu\},\L}$ that classifies points satisfying the wedge condition.  It is easy to see that the wedge and naive local models have common generic fiber, and under the special hypotheses of Theorem \ref{st:GU_simple_cases}\eqref{it:GU_Lambda_0} it has been shown that the wedge condition cuts out the flat closure $M^\loc_{G,\{\mu\},\L}$.

\begin{prop}[{\cite[4.5]{P1}}]\label{st:M^wedge=M^loc}
Let $n \geq 2$, $(r,s) = (n-1,1)$, and $\L = [\Lambda_0]$.  Then $M^\wedge_{G,\{\mu\},\L} = M^\loc_{G,\{\mu\},\L}$.\qed
\end{prop}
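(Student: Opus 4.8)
The case $n=2$ is degenerate: then $r=s$, so there is no wedge condition and $M^\wedge_{G,\{\mu\},\L}=M^\naive_{G,\{\mu\},\L}$; a direct computation in a single affine chart (of a shape similar to Example~\ref{exn=2}) shows this scheme is already flat over $\Spec\O_E$ with reduced special fiber, and in fact regular, so it equals $M^\loc_{G,\{\mu\},\L}$ (consistently with Theorem~\ref{st:GU_simple_cases}\eqref{it:GU_Lambda_0}). So assume $n\ge 3$, whence $E=F$. As recalled just above the statement, the wedge and naive local models have the same generic fiber, and $M^\loc_{G,\{\mu\},\L}$ is by definition the scheme-theoretic closure of that generic fiber in $M^\naive_{G,\{\mu\},\L}$; thus $M^\loc_{G,\{\mu\},\L}\subseteq M^\wedge_{G,\{\mu\},\L}\subseteq M^\naive_{G,\{\mu\},\L}$, with equalities on generic fibers, and $M^\loc_{G,\{\mu\},\L}$ is the smallest closed subscheme of $M^\naive_{G,\{\mu\},\L}$ with that generic fiber. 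Since $M^\loc_{G,\{\mu\},\L}$ is $\O_F$-flat, the asserted equality $M^\wedge_{G,\{\mu\},\L}=M^\loc_{G,\{\mu\},\L}$ is equivalent to flatness of $M^\wedge_{G,\{\mu\},\L}$ over $\Spec\O_F$, i.e.\ to the absence of $\pi$-torsion in its structure sheaf; as $\O_F$ is a discrete valuation ring this can be checked on completed local rings.

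The plan is then to localize at the worst point. Write $\Pi:=\pi\otimes 1$ and let $y_0$ be the $k$-point $\F_{\Lambda_0}=\Pi\cdot(\Lambda_0\otimes_{\O_{F_0}}k)$ of the special fiber. Away from $y_0$, the moduli description together with the wedge condition already presents $M^\wedge_{G,\{\mu\},\L}$ as smooth over $\Spec\O_F$ of relative dimension $n-1$ --- this is verified directly in the explicit charts of \cite{P1} --- so it remains to treat the completed local ring at $y_0$. For this I would choose a standard affine chart of the ambient (Lagrangian-type) Grassmannian centred at $y_0$ and, imposing the periodicity/$\O_F\otimes R$-module, self-duality and wedge conditions, obtain an explicit presentation $\wh\O_{M^\wedge_{G,\{\mu\},\L},\,y_0}\cong\O_F[[\underline x]]/I$. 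Over the special fiber, where $\Pi$ becomes nilpotent --- so that the Kottwitz condition is vacuous there, which is exactly the source of the non-flatness of $M^\naive_{G,\{\mu\},\L}$ --- the wedge condition with $(r,s)=(n-1,1)$ imposes that the relevant operator have rank $\le 1$, and one identifies $\O_F[[\underline x]]/(I,\pi)$ with (the local ring of) a determinantal-type variety: matrices of rank $\le 1$ cut by one further quadratic relation. By the standard theory of such loci --- invoking Hironaka's Lemma (EGA IV.5.12.8) for the reducedness --- this special fiber is reduced, Cohen--Macaulay and equidimensional of dimension $n-1$, in agreement with the generic fiber and with the picture in Theorem~\ref{st:GU_simple_cases}\eqref{it:GU_Lambda_0}; a parallel analysis shows that $\O_F[[\underline x]]/I$ itself is Cohen--Macaulay of dimension $n$. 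Since $V(\pi)$ is this special fiber, of dimension $n-1<n=\dim\O_F[[\underline x]]/I$, it contains no minimal prime, so $\pi$ is a non-zerodivisor on the Cohen--Macaulay ring $\O_F[[\underline x]]/I$; hence $M^\wedge_{G,\{\mu\},\L}$ is flat at $y_0$, therefore flat, and therefore equal to $M^\loc_{G,\{\mu\},\L}$.

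The hard part will be precisely this local computation at $y_0$: setting up coordinates on the Lagrangian Grassmannian adapted to the worst point, correctly translating the combination of the $\O_F\otimes R$-module, self-duality and wedge conditions into equations over $\O_F$, and --- the genuinely delicate step --- recognizing the resulting special fiber as a determinantal (bounded-rank) scheme, so that its reducedness, equidimensionality, dimension $n-1$ and Cohen--Macaulayness all become available, together with the Cohen--Macaulayness of the ambient local ring. Granting that, the remainder is formal: the reduction to flatness of $M^\wedge_{G,\{\mu\},\L}$, the reduction to the single point $y_0$, and the passage from ``a Cohen--Macaulay local ring of the expected dimension whose special fiber has strictly smaller dimension'' to flatness, and hence to $M^\wedge_{G,\{\mu\},\L}=M^\loc_{G,\{\mu\},\L}$.
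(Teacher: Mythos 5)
The overall architecture you propose — reduce to $\O_F$-flatness of $M^\wedge_{G,\{\mu\},\L}$, observe smoothness away from the worst point $y_0$, and finish by a local computation at $y_0$ — is indeed the shape of the argument in \cite[Th.~4.5]{P1}, and your $n=2$ discussion and your identification of the reduced special fiber at $y_0$ (rank $\le 1$ persymmetric matrices with a further quadratic constraint, i.e.\ essentially the affine cone over the 2-Veronese of a smooth quadric) are on target. However, the step you phrase as ``a parallel analysis shows that $\O_F[[\underline x]]/I$ itself is Cohen--Macaulay of dimension $n$'' is the genuine gap, and as written it is circular: the standard way to deduce Cohen--Macaulayness of $A:=\O_F[[\underline x]]/I$ from Cohen--Macaulayness of $A/\pi A$ is precisely to first know that $\pi$ is a non-zerodivisor, which is what you are trying to prove. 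Nor does a pure dimension count rescue this: if $A/\pi A$ is CM of dimension $n-1$ and $\dim A = n$, $\pi$ can still lie in an \emph{embedded} associated prime of $A$ (e.g.\ $A=k[[\pi,x,y]]/(\pi x, xy)$ has $A/\pi A = k[[x,y]]/(xy)$ CM of dimension $1$, $\dim A = 2$, yet $\pi$ is a zerodivisor); ruling out embedded primes is exactly what CM of $A$ would give you, so you cannot assume it. Also, your appeal to Hironaka's Lemma (EGA~IV.5.12.8) to get reducedness of the special fiber is misdirected at this stage: that lemma applies to a scheme already known to be flat (such as $M^\loc$) with geometrically normal generic fiber and normal reduced special fiber, not to the as-yet-possibly-non-flat $M^\wedge$.

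The clean way to close the gap, and the one that I believe is closest to what actually happens in \cite{P1}, is to avoid asserting CM of the total local ring at all. Instead: (i) show directly that the special fiber $A/\pi A$ is \emph{reduced}, by recognizing it as a classical integral variety (the cone over the quadratic Veronese of a smooth quadric, as you indicate); (ii) show topological flatness of $M^\wedge_{G,\{\mu\},\L}$ at $y_0$, i.e.\ that every minimal prime of $A$ avoids $\pi$. Then the $\pi$-power-torsion ideal $T\subset A$ is nilpotent by (ii), hence $T\subset\sqrt{0_A}\subset\pi A$ by (i), and since $\pi T\subset T$ one iterates to get $T\subset\bigcap_k\pi^k A=0$ by Krull's intersection theorem; so $A$ is $\O_F$-flat. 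Equivalently, and perhaps more transparently: $(M^\loc)_k$ is a closed subscheme of $(M^\wedge)_k$ with the same generic fiber, so topological flatness of $M^\wedge$ means they share underlying topological space, and reducedness of $(M^\wedge)_k$ then forces the closed immersion to be the identity, whence $M^\loc = M^\wedge$. Once flatness is in hand, the CM, normality and rational-singularities statements in Theorem~\ref{st:GU_simple_cases}\eqref{it:GU_Lambda_0} follow a posteriori (now Hironaka's Lemma \emph{is} legitimately applicable to $M^\loc$), but they are not needed to prove the equality $M^\wedge=M^\loc$ itself. In short: your local-model-theoretic reduction is sound, your description of the special fiber is right, but the flatness deduction should run through ``topological flatness $+$ reduced special fiber $\Rightarrow$ flat'' rather than through an a priori Cohen--Macaulayness of the mixed-characteristic local ring.
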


More generally, it is conjectured in \cite{P1} that $M^\wedge_{G,\{\mu\},[\Lambda_0]}$ is flat for any $n$ and any signature.%
\footnote{We mention that it is also conjectured that $M^\naive_{G,\{\mu\},[\Lambda_0]}$ coincides with $M^\wedge_{G,\{\mu\},[\Lambda_0]}$ and $M^\loc_{G,\{\mu\},[\Lambda_0]}$ for $|r-s| \leq 1$.  This is proved for $n$ equal to $2$ and $3$ in \cite[4.5, 4.15]{P1}.}
But for more general lattice chains, the wedge condition turns out to be insufficient \cite[Rems.~5.3, 7.4]{P-R4}.  For example, for $n = 3$ and $(r,s) = (2,1)$, the schemes $M^\wedge_{G,\{\mu\},\L}$ for $\L = [\Lambda_1,\Lambda_2]$ and $\L$ the standard lattice chain are topologically flat but not flat. And for $n$ even and $r \neq 0$ or $s \neq 0$, the scheme $M^\wedge_{G,\{\mu\},[\Lambda_{n/2}]}$ is not even topologically flat.

\begin{Remark}\label{locverswedge}
When $n$ is even,  \cite{P-R4} only shows that $M^\wedge_{G,\{\mu\},[\Lambda_{n/2}]}$ is not topologically flat for $r$ and $s$ odd. But the same holds for $r$ and $s$ even, provided neither is $0$:  for example, the point denoted $\F_1$ in \cite[\S5.3]{P-R4} is not in the closure of the generic fiber in this case.  Accordingly, for $n$ even, \cite[Rem.~5.3(a)]{P-R4} should be corrected to say that Conjecture 5.2 in loc.\ cit.\ implies that $M^\wedge_{G,\{\mu\},[\Lambda_{n/2}]}$ contains $M^\loc_{G,\{\mu\},[\Lambda_{n/2}]}$ as an open subscheme for any signature, not that $M^\wedge_{G,\{\mu\},[\Lambda_{n/2}]} = M^\loc_{G,\{\mu\},[\Lambda_{n/2}]}$ for $r$ and $s$ even.    (Here the corrected statement allows for $r$ and $s$ to be odd as well as even, since the odd case reduces to the even case, as follows from \cite[\S5.3]{P-R4}.) See Conjecture \ref{conjmatunit} below for a statement of Conjecture 5.2 in loc.\ cit. 

More precisely, one verifies at once that the perfect pairing
\[
   \Lambda_{n/2} \times \Lambda_{n/2} \xra[\sim]{\id \times \pi}
      \Lambda_{n/2} \times \Lambda_{-n/2} \xra{\text{\aform}} \O_{F_0}
\]
is \emph{split symmetric}.  Hence $M^\naive_{G,\{\mu\},[\Lambda_{n/2}]}$ naturally embeds as a closed subscheme of $\OGr(n,2n)_{\O_E}$.  Then \cite[Conj.~5.2]{P-R4},  together with the topological flatness result Theorem \ref{st:GU_top_flat} below,  implies that $M^\loc_{G,\{\mu\},[\Lambda_{n/2}]}$ is the intersection of $M^\wedge_{G,\{\mu\},[\Lambda_{n/2}]}$ with the connected component of $\OGr(n,2n)_{\O_E}$ marked by the common generic fiber of $M^\naive_{G,\{\mu\},[\Lambda_{n/2}]}$ and $M^\wedge_{G,\{\mu\},[\Lambda_{n/2}]}$.
\end{Remark}

Although the wedge condition is not sufficient in general to cut out the local model inside $M^\naive_{G,\{\mu\},\L}$, one can still hope to describe $M^\loc_{G,\{\mu\},\L}$ via a further refinement of the moduli problem.  In \cite{P-R4} it is shown that, in addition to the wedge condition, $M^\loc_{G,\{\mu\},\L}$ satisfies a close analog of the spin condition that arose in the setting of even orthogonal groups in \S\ref{s:examples}.\ref{ss:GO_2g}, which is again called the \emph{spin condition}.  In the setting of Remark \ref{locverswedge}, with $n$ even and $\L = [\Lambda_{n/2}]$, the spin condition amounts exactly to intersecting $M_{G,\{\mu\},[\Lambda_{n/2}]}^\naive$ with the connected component of $\OGr(n,2n)_{\O_E}$ marked by the generic fiber of $M_{G,\{\mu\},[\Lambda_{n/2}]}^\naive$.  In general the spin condition is more complicated, and we shall just refer to the source papers for its formulation: see \cite[\S7.2]{P-R4} or \cite[\S2.5]{Sm3} (the latter contains a correction to a minor sign error in the former).  As in the orthogonal case, we denote by $M^\spin_{G,\{\mu\},\L}$ the closed subscheme of $M^\wedge_{G,\{\mu\},\L}$ that classifies points satisfying the spin condition, and we have the following.

\begin{conjecture}[{\cite[Conj.~7.3]{P-R4}}]\label{conj:GU_top_flat}
Let $\L$ be a periodic self-dual $\O_F$-lattice chain, satisfying property $(*)$ from the beginning of the subsection if $n$ is even.  Then for any $n \geq 3$ and any signature, $M^\spin_{G,\{\mu\},\L} = M^\loc_{G,\{\mu\},\L}$, that is, $M^\spin_{G,\{\mu\},\L}$ is flat over $\Spec \O_E$.
\end{conjecture}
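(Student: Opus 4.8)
The plan is to reduce the conjecture to the reducedness of the special fiber of $M^\spin_{G,\{\mu\},\L}$, and then to establish that reducedness either by embedding the special fiber into an affine flag variety as a Frobenius-split union of Schubert varieties or, exploiting $\charac k\neq2$, by identifying $M^\spin_{G,\{\mu\},\L}$ with a group-theoretic (Beilinson--Drinfeld--Gaitsgory / Pappas--Zhu) local model. First I would record the formal reductions. By \cite{P-R4} the spin condition is automatic on the generic fiber, so $M^\loc_{G,\{\mu\},\L}$ and $M^\spin_{G,\{\mu\},\L}$ have the same generic fiber; since $M^\loc_{G,\{\mu\},\L}$ is the scheme-theoretic closure of that generic fiber inside $M^\naive_{G,\{\mu\},\L}$, we automatically have $M^\loc_{G,\{\mu\},\L}\subseteq M^\spin_{G,\{\mu\},\L}$, and the conjecture is equivalent to the flatness of $M^\spin_{G,\{\mu\},\L}$ over $\Spec\O_E$. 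Granting the topological flatness result (Theorem \ref{st:GU_top_flat}), the two schemes coincide as topological spaces, and by the elementary fact that an $\O_E$-scheme of finite type which is topologically flat and has reduced special fiber is flat --- with special fiber then agreeing scheme-theoretically with that of the flat closure --- the entire problem collapses to showing that $M^\spin_{G,\{\mu\},\L}\otimes_{\O_E}k$ is reduced. In the cases covered by Proposition \ref{st:M^wedge=M^loc}, where already $M^\wedge_{G,\{\mu\},\L}=M^\loc_{G,\{\mu\},\L}$, the conjecture is immediate from the chain $M^\loc\subseteq M^\spin\subseteq M^\wedge$; the substance lies in the general Iwahori case, on which I would concentrate, the other parahoric levels being reachable either by the same arguments with the appropriate partial affine flag variety in place of $\mathrm{Fl}_{\widetilde K}$ or, where the forgetful morphisms are understood as in Görtz's work for $GL_n$ and $GSp_{2g}$, by descent along the maps $\rho_{\L'}$.

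For the core reducedness statement, the first route is combinatorial and geometric. Following \S\ref{s:afv}.\ref{ss:embedding}, I would embed $M^\spin_{G,\{\mu\},\L}\otimes_{\O_E}\overline k$ into the affine flag variety $\mathrm{Fl}_{\widetilde K}$ attached to $\widetilde K=\mathcal G(\O_L)$ and aim to show that, as a closed subscheme, it equals the union $\bigcup_{w\in\Adm_{\widetilde K}(\{\mu\})}\overline{S_w}$ of Schubert varieties over the $\mu$-admissible set, taken with its reduced structure. This splits into two tasks: proving set-theoretic equality, which would follow from topological flatness together with the combinatorial determination of the $\mu$-admissible set (Kottwitz--Rapoport, Smithling \cite{Sm1,Sm3}); and proving that the wedge, Kottwitz and spin equations define a reduced subscheme, which one would hope to deduce from a Frobenius splitting of $\mathrm{Fl}_{\widetilde K}$ compatible with the relevant Schubert varieties, in the style of Görtz's treatment of $GL_n$ and $GSp_{2g}$, possibly combined with an explicit local analysis at the worst points via schemes of matrix equations (\S\ref{s:matrix}.\ref{ss:redmat}; compare Conjecture \ref{conjmatunit} and the baby case in Theorem \ref{st:gencirc_flat}). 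Once the identification is in place, reducedness --- and the normality, Cohen--Macaulayness and rationality of the singularities of the irreducible components --- is automatic, since Schubert varieties in affine flag varieties are normal and compatibly Frobenius split (Faltings, Pappas--Rapoport).

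The second route is group-theoretic. Since $\charac k\neq2$, ramified quasi-split $GU_n$ splits over a tamely ramified extension of $F_0$, so the construction sketched in \S\ref{s:motivationalsection}.\ref{ss:BGm}(iii) and carried out in \cite{PZ} applies: it produces a Bruhat--Tits group scheme $\mathcal G$ over $\mathbb A^1_{\O_{F_0}}$ and a local model $M^{\mathrm{PZ}}=M_{\mathcal G,\{\mu\}}$, defined as the scheme-theoretic closure of the orbit $\mathcal O_{\{\mu\}}$ inside a Beilinson--Drinfeld--Gaitsgory degeneration of the affine Grassmannian. By construction $M^{\mathrm{PZ}}$ is flat over $\Spec\O_E$, and its special fiber is a union of Schubert varieties, which is reduced because the global affine Grassmannian is Frobenius split. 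One then has to construct a $\mathcal G_{\O_E}$-equivariant morphism $M^{\mathrm{PZ}}\to M^\naive_{G,\{\mu\},\L}$ --- sending a point of the global affine Grassmannian to the induced chain of lattices together with its Kottwitz, wedge and perpendicularity data --- to show this morphism is a closed immersion with image $M^\loc_{G,\{\mu\},\L}$ (using flatness of $M^{\mathrm{PZ}}$ and the agreement of generic fibers with $\Gr(s,V)_E$), and finally to verify that $M^{\mathrm{PZ}}$ itself satisfies the spin condition, by specializing from the generic fiber --- where the spin condition is automatic by \cite{P-R4} --- and again invoking flatness of $M^{\mathrm{PZ}}$. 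This would yield $M^{\mathrm{PZ}}\isoarrow M^\spin_{G,\{\mu\},\L}=M^\loc_{G,\{\mu\},\L}$.

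The hard part is the same in both routes: controlling the \emph{scheme} structure of $M^\spin_{G,\{\mu\},\L}\otimes k$, not merely its set of points. Topological flatness (Theorem \ref{st:GU_top_flat}) already identifies the points with the union of admissible Schubert cells, but as the still-open Cohen--Macaulayness problem for $GL_n$ (Remark \ref{rem: 2.1.3}) makes clear, the scheme structure of local models is genuinely subtler. In the first route the obstacle is proving that the spin, wedge and Kottwitz equations carry no superfluous nilpotents; in the second it migrates to the compatibility between the linear-algebraic definition of $M^\spin_{G,\{\mu\},\L}$ and the group-theoretic definition of $M^{\mathrm{PZ}}$ --- precisely the kind of matching that the ramified-unitary caveat in \S\ref{s:motivationalsection}.\ref{ss:Shimuralocmod} flags as delicate. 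I would expect the second route to be the more tractable, essentially reducing the conjecture to the Pappas--Zhu theory together with a single spin-condition compatibility check.
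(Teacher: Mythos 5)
The statement you have been asked to prove is a genuine open conjecture --- the paper itself does not prove it, and explicitly says so in the sentence that follows it: ``Although the conjecture remains open, there is the following result, in analogy with the orthogonal case,'' referring to Theorem \ref{st:GU_top_flat}, which establishes only \emph{topological} flatness. So there is no proof in the paper to compare against; what you have produced is a research plan, not a proof, and you are honest about that when you say ``the hard part is the same in both routes.'' Your preliminary reductions are correct: the spin condition is trivial on the generic fiber, so $M^\loc_{G,\{\mu\},\L}\subseteq M^\spin_{G,\{\mu\},\L}$; a finite-type scheme over a DVR that is topologically flat and has reduced special fiber is flat (localize at a point of the special fiber, show the $\pi$-torsion ideal is contained in $\bigcap_k\pi^k$ using reducedness of the special fiber and nilpotence of torsion elements, and conclude by Krull); and via Theorem \ref{st:GU_top_flat} this collapses the conjecture to reducedness of $M^\spin_{G,\{\mu\},\L}\otimes_{\O_E}k$. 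Both of the routes you propose --- Frobenius splitting in the affine flag variety in the style of G\"ortz, or identification with a Pappas--Zhu local model --- are the approaches that the field would actually use, and the footnote in \S\ref{s:motivationalsection} pointing to \cite{PZ} indicates the second route is the one the authors themselves anticipate.

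There is, however, one spot where your Route 2 as written would not close the gap. You propose to verify that $M^{\mathrm{PZ}}$ satisfies the spin condition by specializing from the generic fiber and invoking flatness of $M^{\mathrm{PZ}}$. That argument, once made precise, proves only that the composite $M^{\mathrm{PZ}}\to M^\naive_{G,\{\mu\},\L}$ factors through the closed subscheme $M^\spin_{G,\{\mu\},\L}$ --- i.e.\ it gives a closed immersion $M^{\mathrm{PZ}}\hookrightarrow M^\spin_{G,\{\mu\},\L}$, equivalently the already-known inclusion $M^\loc\subseteq M^\spin$. What remains is to show this inclusion is scheme-theoretic \emph{equality}, and flatness of $M^{\mathrm{PZ}}$ alone cannot do that: nothing yet excludes the possibility that $M^\spin_{G,\{\mu\},\L}$ carries embedded components or non-reduced structure along its special fiber invisible to $M^{\mathrm{PZ}}$. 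Topological flatness (Theorem \ref{st:GU_top_flat}) rules out superfluous irreducible components set-theoretically, but the scheme structure of the special fiber of $M^\spin$ is cut out by the wedge, Kottwitz, perpendicularity and spin equations, and one must still show these equations generate a radical ideal, or at least that the inclusion of special fibers $\overline{M^{\mathrm{PZ}}}\hookrightarrow\overline{M^\spin}$ is an isomorphism of schemes. This is exactly the nilpotents problem you correctly flag at the end, and Route 2 does not dissolve it; it merely relocates it from the affine flag variety to the comparison between the PEL-type equations and the Beilinson--Drinfeld orbit closure. Your assessment that Route 2 is ``more tractable'' should therefore be tempered: the spin-condition compatibility check is not a ``single'' check but is itself the full reducedness statement in disguise.
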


Although the conjecture remains open, there is the following result, in analogy with the orthogonal case.

\begin{thm}[{\cite[Main Th.]{Sm3}}, \cite{Sm4}]\label{st:GU_top_flat}
For any $n \geq 3$, any signature, and any $\L$ as in Conjecture \ref{conj:GU_top_flat}, $M^\spin_{G,\{\mu\},\L}$ is topologically flat over $\Spec \O_E$.  If $n$ is odd, then $M^\wedge_{G,\{\mu\},\L}$ is also topologically flat.\qed
\end{thm}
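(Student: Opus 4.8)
\emph{Reduction and reformulation.} Since $\O_E$ is a discrete valuation ring, to say that $M^\spin_{G,\{\mu\},\L}$ is topologically flat over $\Spec\O_E$ is to say that no irreducible component of its geometric special fiber $\ol M:=M^\spin_{G,\{\mu\},\L}\otimes_{\O_E}\ol k$ is contained in the scheme-theoretic closure of the generic fiber. Because $M^\spin_{G,\{\mu\},\L}$ and $M^\naive_{G,\{\mu\},\L}$ have the same generic fiber, this closure is exactly $Z:=M^\loc_{G,\{\mu\},\L}$, which is integral and flat over $\O_E$ with generic fiber $\Gr(s,V)_E$; hence it suffices to prove $\ol M=Z_{\ol k}$ as topological spaces, the inclusion $Z_{\ol k}\subseteq\ol M$ being automatic. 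The plan is to describe $\ol M$ set-theoretically by embedding it in an affine flag variety and comparing with the $\{\mu\}$-admissible locus. First I would note that it is enough to treat the standard (complete) lattice chain: for a coarser chain $\L'\subset\L$ the forgetful morphism $M^\spin_{G,\{\mu\},\L}\to M^\spin_{G,\{\mu\},\L'}$ is surjective and an isomorphism on generic fibers (as for $GL_n$ and $GSp_{2g}$), so topological flatness descends along it. Then, following the circle of ideas in \S\ref{s:afv}.\ref{ss:embedding}, a point $(\F_\Lambda)_\Lambda$ of $\ol M$ is recorded by the chain of $\ol k$-lattices $\F_\Lambda$ sitting between $\ol\Lambda$ and $\ol{\wh\Lambda}$, which determines a well-defined coset in the affine flag variety $\mathrm{Fl}_{\wt K}$ attached to $G(L)$ and $\wt K$, hence an Iwahori--Weyl element $w(x)$; thus $\ol M$ is a union of Schubert cells $S_w$, and everything comes down to identifying which $w$ occur.

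\emph{Upper bound (the crux).} The first main step is to show
\[
\ol M\ \subseteq\ \bigcup_{w\in\Adm_{\wt K}(\{\mu\})}\overline{S_w},
\]
where $\Adm_{\wt K}(\{\mu\})=\{\,w\mid w\le t_\lambda\text{ for some }\lambda\in W\mu\,\}$ as in Definition \ref{def:mu-admissible}. This is where the wedge and spin conditions do their work and is the hard part of the argument. One translates the Bruhat inequality ``$w\le t_\lambda$ for some $\lambda\in W\mu$'' into explicit lattice-theoretic inequalities on the relative positions of the $\F_\Lambda$ (dimensions of intersections and sums with $\pi\ol{\wh\Lambda}$ and the like), and then checks that the wedge condition --- which bounds the rank of $\pi\otimes1\pm1\otimes\pi$ acting on the cokernels $\Lambda\otimes R/\F_\Lambda$ --- together with the spin condition --- which selects the correct connected component(s) of the ambient (orthogonal) Grassmannian --- forces precisely those inequalities. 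Here the even/odd dichotomy in the statement appears intrinsically: for $n$ odd the wedge condition already confines $\ol M$ to the admissible locus, so $M^\wedge_{G,\{\mu\},\L}$ is handled directly; for $n$ even the wedge condition alone does \emph{not} suffice --- e.g.\ $M^\wedge_{G,\{\mu\},[\Lambda_{n/2}]}$ meets spurious components of $\OGr(n,2n)$, as in Remark \ref{locverswedge} --- and it is exactly the spin condition that discards those extra components. Carrying this analysis out uniformly in $n$ and in the signature $(r,s)$, and cleanly separating the spin contribution from the wedge contribution, is the principal obstacle.

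\emph{Lower bound and conclusion.} Conversely one must show $\bigcup_{w\in\Adm_{\wt K}(\{\mu\})}\overline{S_w}\subseteq Z_{\ol k}$. Since $Z$ is integral and flat over $\O_E$ with irreducible generic fiber $\Gr(s,V)_E$, the special fiber $Z_{\ol k}$ is equidimensional of dimension $\lfloor n/2\rfloor\lceil n/2\rceil$ and, being $\wt K$-stable, is a union of Schubert varieties $\overline{S_w}$. By the upper bound $Z_{\ol k}\subseteq\bigcup_{w\in\Adm}\overline{S_w}$, and by Kottwitz--Rapoport (see \S\ref{s:combinatorics}) this admissible union is equidimensional with top-dimensional components exactly the $\overline{S_{t_\lambda}}$, $\lambda\in W\mu$, each of the above dimension (using $\dim S_{t_\lambda}=\langle2\varrho,\lambda\rangle=\dim G/P_\mu$, cf.\ \S\ref{ss:BGm}). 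Hence each irreducible component of $Z_{\ol k}$, having full dimension, must coincide with some $\overline{S_{t_\lambda}}$. To see that \emph{all} of them occur, it suffices to place a general point of each cell $S_{t_\lambda}$ in $Z_{\ol k}$, which I would do either by exhibiting an explicit $\O_E$-curve in $M^\loc_{G,\{\mu\},\L}$ through such a point, or by base change along $\O_{F_0}\to\O_F$ (where $G_F\cong GL_n\times\mG_m$) and comparison with the local models for $GL_n$, which are known to be flat with special fiber the full $\{\mu\}$-admissible union (Theorem \ref{st:GL_n_loc_mod_flat}; one also has the symplectic analogue, Theorem \ref{st:GSp_loc_mod_flat}, via the embedding of $\Lambda_{n/2}\times\Lambda_{n/2}$ into split symmetric space). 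Combining the two inclusions gives $\ol M=\bigcup_{w\in\Adm}\overline{S_w}=Z_{\ol k}$, i.e.\ $M^\spin_{G,\{\mu\},\L}$ is topologically flat; and the same chain of inclusions with $M^\wedge$ in place of $M^\spin$ goes through when $n$ is odd, since then the wedge upper bound already lands in the admissible locus. This falls short of Conjecture \ref{conj:GU_top_flat} precisely because it controls only the underlying reduced scheme and not the scheme structure; the remaining (secondary) bookkeeping is the descent of topological flatness along the forgetful maps among lattice chains noted above.
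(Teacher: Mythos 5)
Your overall architecture---embed the geometric special fiber in a twisted affine flag variety, show the Schubert cells appearing there are exactly the $\{\mu\}$-admissible ones, and deduce equality with the reduced special fiber of the flat closure $Z=M^\loc$---does track the shape of Smithling's argument in \cite{Sm3,Sm4}. You also correctly isolate where the even/odd dichotomy enters: for $n$ odd the wedge condition already cuts $\ol M$ down to the admissible locus, whereas for $n$ even it does not and the spin condition does the pruning. But there are two concrete gaps.

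\emph{The lifting argument cannot be done by base change to $\O_F$.} You propose to show each $\ol S_{t_\lambda}$, $\lambda\in\La_{\{\mu\}}$, lies in $Z_{\ol k}$ by comparison with local models for $GL_n$ after the base change $\O_{F_0}\to\O_F$. This does not work: $F/F_0$ is ramified quadratic, so $\O_F$ is not unramified over $\O_{F_0}$, the special fiber is unchanged by this extension, and the $GU_n$ local model does not become a $GL_n$ local model over $\O_F$ --- indeed the survey itself flags in Example \ref{eg:picard}(iii) that the natural comparison map to $M^\loc(GL_n\times\mG_m)$ is in general \emph{not} an isomorphism in the ramified case. What is actually needed, and what Smithling does, is a direct lifting argument in the style of G\"ortz: for each extreme $\lambda$ (or rather, starting from one obvious one and passing to the others via the $W_0$-action) one exhibits an explicit $\O_L$-point of $M^\naive$ whose reduction is a generic point of $S_{t_\lambda}$ and whose generic fiber lands in $\Gr(s,V)$. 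This is a nontrivial computation, not a formal base change.

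\emph{The reduction to the complete chain is unjustified, and is not how \cite{Sm3,Sm4} proceed.} You assert that the forgetful morphism $M^\spin_{G,\{\mu\},\L}\to M^\spin_{G,\{\mu\},\L'}$ is surjective ``as for $GL_n$ and $GSp_{2g}$.'' For those groups the local model \emph{is} the naive local model, and surjectivity of the chain-forgetful map on naive local models is elementary. For $GU_n$ the spin condition is an extra closed condition, and it is not clear that a point satisfying the spin condition for a coarse chain $\L'$ can be filled in to a point of $M^\spin_\L$ satisfying the spin condition for the full chain; you would need to prove this surjectivity, which is roughly as hard as the theorem itself. In \cite{Sm3,Sm4} there is no such reduction: all $\L$ (subject to $(*)$ for $n$ even) are treated directly. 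A smaller point: you state $Z_{\ol k}$ has dimension $\lfloor n/2\rfloor\lceil n/2\rceil$; since $Z$ is flat with generic fiber $\Gr(s,V)$ the correct value is $rs$, and you in fact use $\dim G/P_\mu=rs$ two sentences later, so this is an internal inconsistency rather than a structural problem.

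Finally, on the crux you rightly leave vague: the translation of the wedge and spin conditions into the combinatorics of cells is carried out in \cite{Sm3,Sm4} not via admissibility directly but via $\{\mu\}$-\emph{permissibility} (Definition \ref{def:mu-permissible}) and a ``spin-permissible'' refinement of it for $n$ even, after which the content becomes the purely combinatorial statements $\Adm=\Perm$ (for $n$ odd) and $\Adm=\text{spin-}\Perm$ (for $n$ even). Your sketch would be strengthened by naming that intermediate object, since it is what makes the upper bound tractable.
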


In the special case $n = 2m+1$ is odd, $(r,s) = (n-1,1)$, and $\L = [\Lambda_m,\Lambda_{m+1}]$ mentioned in Theorem  \ref{st:GU_simple_cases}\eqref{it:GU_Lambda_m}, topological flatness of $M^\wedge_{G,\{\mu\},\L}$ also follows from \cite[Prop.~4.16]{A}.

We emphasize that for odd $n$, although $M^\spin_{G,\{\mu\},\L}$ and $M^\wedge_{G,\{\mu\},\L}$ coincide as topological spaces, their scheme structures really do differ in general, and it is only the spin local model that is conjectured to be flat.  By contrast, for even $n$, $M^\spin_{G,\{\mu\},\L}$ and $M^\wedge_{G,\{\mu\},\L}$ typically do not even agree at the level of topological spaces; see \cite{Sm4}.

\begin{Remark}[$GU_2$]\label{rk:GU2}
To be able to treat the Bruhat-Tits-theoretic aspects of $GU_n$ in a uniform way, the paper \cite{P-R4} omits the case $n = 2$.  Let us briefly discuss it now.  The only nontrivial signature to worry about is $(r,s) = (1,1)$.  In this case the naive and wedge local models coincide and are defined over $\Spec \O_{F_0}$.  The derived group $SU_2$ is isomorphic to $SL_2$, which is split.  Each alcove in the building has two vertices, both of which are special and $GU_2(F_0)$-conjugate.  Thus there are essentially two cases to consider: the (special) maximal parahoric case and the Iwahori case.

First take $\L = [\Lambda_1]$.  Then the stabilizer in $GU_{2}(F_0)$ of $\Lambda_1$ is a maximal parahoric subgroup.  The naive local model is a closed subscheme of $\Gr(2,\Lambda_1)_{\O_{F_0}}$, and by restricting standard open affine charts of the Grassmannian, \cite[\S5.3]{P-R4} computes two affine charts on $M_{GU_2,\{\mu\},[\Lambda_1]}^\naive$.  (Although, strictly speaking, \cite{P-R4} makes the blanket assumption $n \geq 3$, the calculations in loc.\ cit.\ still go through for $n = 2$.)  The first chart $U_{1,1}$ identifies with the scheme of all $2 \times 2$ matrices $X$ such that
\[
   X^2 = \pi_0\cdot \mathrm{Id}, \quad
   X^t = -JXJ,
   \quad\text{and}\quad
   \charac_X(T) = T^2 - \pi_0,
\]
where
\[
   J =
   \begin{pmatrix}
      0 & -1\\
      1 & 0
   \end{pmatrix}.
\]
One easily solves these equations to find that $U_{1,1} = \Spec k$.  The second affine chart $_1U_{1,1}$ identifies with the scheme of all $2\times 2$ matrices $X$ such that
\[
   X^t = -JXJ,
\]
which, as noted in loc.\ cit., is the scheme of all scalar matrices $X$.  Hence $_1U_{1,1} \cong \mA_{\O_{F_0}}^1$.  By restricting the remaining standard affine charts on the Grassmannian to the local model, one finds that globally
\[
   M_{GU_2,\{\mu\},[\Lambda_1]}^\naive = M_{GU_2,\{\mu\},[\Lambda_1]}^\wedge \cong \mP_{\O_{F_0}}^1 \amalg \Spec k.
\]
Of course this scheme is not flat because of the copy of $\Spec k$.  As described in Remark \ref{locverswedge} and the paragraph preceding Theorem \ref{st:GU_top_flat}, $M_{GU_2,\{\mu\},[\Lambda_1]}^\naive$ is actually contained in $\OGr(2,\Lambda_1)_{\O_{F_0}}$ inside $\Gr(2,\Lambda_1)_{\O_{F_0}}$,
and imposing the spin condition amounts to intersecting $M_{GU_2,\{\mu\},[\Lambda_1]}^\naive$ with the connected component of $\OGr(2,\Lambda_1)_{\O_{F_0}}$ marked by $M_{GU_2,\{\mu\},[\Lambda_1]}^\naive \otimes_{\O_{F_0}} F_0$.  In this way the spin condition visibly eliminates the extraneous copy of $\Spec k$.

For the Iwahori case we take $\L$ to be the standard lattice chain.  To warm up, let us consider the naive local model associated just to the homothety class $[\Lambda_0]$, without worrying about the functoriality conditions attached to the inclusions $\Lambda_0 \subset \Lambda_1$ and $\Lambda_1 \subset \pi^{-1} \Lambda_0$.  Let $x$ denote the $k$-point on $M_{GU_2,\{\mu\},[\Lambda_0]}^\naive$ given by
\[
   (\pi \otimes 1) \cdot (\Lambda_0 \otimes_{\O_{F_0}}k) \subset \Lambda_0 \otimes_{\O_{F_0}}k.
\]
An affine chart for $M_{GU_2,\{\mu\},[\Lambda_0]}^\naive$ around $x$ is described in \cite[p.\ 596--7]{P1}: it is the scheme of all $2 \times 2$ matrices $X$ such that%
\footnote{Note that \cite{P1} has the condition $X^t = X$ instead of $X^t = H_2 X H_2$, owing to how the form $\psi_p$ is defined there.}
\[
   X^2 = \pi_0 \cdot \mathrm{Id},\quad
   X^t = H_2XH_2,
   \quad\text{and}\quad
   \charac_X(T) = T^2 - \pi_0,
\]
where as always $H_2$ is the antidiagonal unit matrix \eqref{disp:antidiag_1}. Writing
\[
   X = 
   \begin{pmatrix}
      x_{11} & x_{12}\\ 
      x_{21} & x_{22}
   \end{pmatrix},
\]
one finds that this chart is given by $\Spec\O_{F_0}[x_{12},x_{21}]/(x_{12}x_{21} - \pi_0)$.  Thus we find semistable reduction; in fact the global special fiber consists of two copies of $\mP^1_k$ meeting at the point $x$.

The full local model $M_{GU_2,\{\mu\},\L}^\loc$ can now be obtained from $M_{GU_2,\{\mu\},[\Lambda_0]}^\loc$ and $M_{GU_2,\{\mu\},[\Lambda_1]}^\loc$ by imposing the functoriality conditions attached to the inclusions $\Lambda_0 \subset \Lambda_1$ and $\Lambda_1 \subset \pi^{-1}\Lambda_0$.  We leave it to the reader to verify that in fact $M_{GU_2,\{\mu\},\L}^\loc \cong M_{GU_2,\{\mu\},[\Lambda_0]}^\loc$, i.e.\ the submodule $\F_{\Lambda_0}$ uniquely determines $\F_{\Lambda_1}$, without constraint.  This fact admits a building-theoretic interpretation: the stabilizer of $\Lambda_0$ in $GU_2(F_0)$ is not a maximal parahoric subgroup, but rather, after passing to its connected component, the Iwahori subgroup fixing the entire standard chain.  Finally, note that the local model obtained in each of our two cases is isomorphic to the local model for $GL_2$ in the analogous case, cf.\ Example \ref{exn=2}.
\end{Remark}

\subsection{Quasi-split but nonsplit orthogonal (types $(D_{g}^{(2)}, \varpi^\vee_{g-1})$, $(D_{g}^{(2)}, \varpi^\vee_{g})$)}

Again assume $\charac k \neq 2$ and let $n = 2g-2$, $n\geq 4$.  Let $V$ be the $2g$-dimensional $F$-vector space on the ordered basis $e$, $f$, $e_1,\dotsc,$ $e_{2g-2}$, and let \sform denote the symmetric $F$-bilinear form on $V$ whose matrix with respect to this basis is%
\footnote{It follows from Springer's Theorem \cite[Th.\ VI.1.4]{Lam} that, after passing to a sufficiently big unramified extension of $F$, any symmetric bilinear form on $F^{2g}$ becomes isomorphic to \sform or to the split form \sform considered in \S\ref{s:examples}.\ref{ss:GO_2g}.}
\[
   \left(
   \begin{array}{cc|c}
      \pi & &\\
        & 1 & \\
       \hline
       &  & H_{2g-2}
   \end{array}
   \right),
\]
with $H_{2g-2}$ the anti-diagonal matrix \eqref{disp:antidiag_1}.  As always, we denote by $\wh\Lambda$ the \sform-dual of any $\O_F$-lattice $\Lambda$ in $V$, and \sform induces a perfect pairing $\Lambda \times \wh\Lambda \to \O_F$. 

Let $G := GO\bigl(\text{\sform}\bigr)$ over $F$.  Then $G$ is quasi-split but not split. Consider the  cocharacter $\bigl(1^{(g)}, 0^{(g)}\bigr)$ of  $G_{\ol F}\simeq GO_{2g}$ given as in \S \ref{s:examples}.\ref{ss:GO_2g}, and let $\{\mu\}$ denote its $G(\ol F)$-conjugacy class over $\ol F$.  Let 
$\L$ be a periodic lattice chain in $F^{2g}$ which is self-dual for the form \sform.  The \emph{naive local model $M^\naive_{G,\{\mu\},\L}$} is the closed  $\O_F$-subscheme of $M^\loc_{GL_{2g},\{\mu\},\L}$ defined in the exactly the same way as for $GSp_{2g}$ and split $GO_{2g}$, that is, we impose the duality condition \eqref{it:lm_duality} with the understanding that all notation is taken with respect to \sform.
Once again, $M^\naive_{G,\{\mu\},\L}$ has generic fiber $\OGr\bigl(\sform \bigr)_F$, the orthogonal Grassmannian of totally isotropic $g$-planes in $F^{2g}$ for the form \sform. This Grassmannian is now connected 
although not geometrically connected. We can see \cite[8.2.1]{P-R4} that   $\OGr\bigl(\sform \bigr)_F$   supports a canonical morphism to $\Spec K$ where $K=F(\sqrt{D})$ is the ramified quadratic extension of $F$ obtained by extracting a square root of the discriminant $D=(-1)^g\pi$. (Let us remark here that the form \sform splits over $K$.) The base change $M_{G,\{\mu\},\L}^\naive\otimes_{\O_F}K$  is the split  orthogonal Grassmannian ${\rm OGr}(g, 2g)_K$ which has two (geometrically) connected components. 

There is enough computational evidence to suggest the following.

\begin{conjecture}
The scheme $M_{G,\{\mu\}, \L}^\naive$ is topologically flat over $\Spec \O_F$.
\end{conjecture}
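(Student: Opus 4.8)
\noindent\emph{A strategy for proving the conjecture.} The plan is to translate topological flatness into a combinatorial comparison inside the Iwahori--Weyl group of $G$ by embedding the special fibre of $M^\naive_{G,\{\mu\},\L}$ into an affine flag variety. First I would reduce to the case where $\L$ is a complete periodic self-dual lattice chain for \sform: for a subchain $\L' \subset \L$ the forgetful morphism $M^\naive_{G,\{\mu\},\L} \to M^\naive_{G,\{\mu\},\L'}$ is surjective and an isomorphism on generic fibres (the extra functoriality conditions being vacuous over $F$), so topological flatness at the Iwahori level propagates to every self-dual parahoric level. Note that, since the local Dynkin diagram of type $D^{(2)}_g$ is a string of type $C$--$B_{g-1}$ with no fork, there is no analogue here of the oriflamme configuration of \S\ref{ss:GO_2g}, so complete lattice chains already realise the deepest level structure; also, as in \S\ref{ss:GO_2g}, one works with the connected group $G^\circ$ in the places where connectedness matters.

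Next, invoking Bruhat--Tits theory for the quasi-split group $G$ over $F$ --- equivalently the construction of \S\ref{ss:BGm}, Variant (iii), valid since $G$ is quasi-split --- I would form the connected parahoric group scheme $\mathcal G$ over $\O_F$ with $\wt K = \mathcal G(\O_L)$ and the affine flag variety $Fl_{\mathcal G,k} = LG/L^+\mathcal G$ over $k$. A $\ol k$-point of $M^\naive_{G,\{\mu\},\L}$ is a periodic, functorial, self-dual family $(\F_\Lambda)_{\Lambda\in\L}$ with $\dim_{\ol k}\bigl((\Lambda\otimes_{\O_F}\ol k)/\F_\Lambda\bigr) = g$; lifting each $\F_\Lambda$ to a lattice $L_\Lambda$ with $\pi\Lambda \subseteq L_\Lambda \subseteq \Lambda$ of colength $g$ produces a $\wt K$-stable, periodic, self-dual lattice chain, that is, a point of $Fl_{\mathcal G,\ol k}$. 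This identifies $M^\naive_{G,\{\mu\},\L}\otimes_{\O_F}\ol k$, as a topological space, with an $L^+\mathcal G$-stable closed subset of $Fl_{\mathcal G,\ol k}$, hence with a union of affine Schubert varieties $\overline{Fl_w}$ with $w$ in a subset of $\wt W$. The hypothesis $\charac k \neq 2$ enters here, so that the relevant quadratic-form geometry and the isotropy conditions are well-behaved over $\ol k$; Hironaka's Lemma remains available for any reducedness statements one wishes to make along the way.

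Now the corank condition $\pi\Lambda \subseteq L_\Lambda \subseteq \Lambda$ of colength $g$ imposed at every $\Lambda \in \L$ is precisely the $\{\mu\}$-permissibility condition of Kottwitz--Rapoport for $\mu = \bigl(1^{(g)},0^{(g)}\bigr)$, so that $M^\naive_{G,\{\mu\},\L}\otimes_{\O_F}\ol k = \bigcup_{w\in\Perm_{\wt K}(\{\mu\})}\overline{Fl_w}$. On the other hand, the generic fibre of $M^\naive_{G,\{\mu\},\L}$ is the orthogonal Grassmannian of totally isotropic $g$-planes in $V \cong F^{2g}$, of dimension $g(g-1)/2$; since $M^\loc_{G,\{\mu\},\L}$ is a flat, $\mathcal G_{\O_F}$-invariant closed subscheme with this generic fibre, its special fibre is an $L^+\mathcal G$-stable union of Schubert varieties that contains $\bigcup_{w\in\Adm_{\wt K}(\{\mu\})}\overline{Fl_w}$ (each $t_\lambda$, $\lambda\in W\mu$, being a degeneration of a generic point), and equals it if one grants the analogue of property (ii) of \S\ref{ss:lmideal}. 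Since $\Adm_{\wt K}(\{\mu\}) \subseteq \Perm_{\wt K}(\{\mu\})$ always, and $\Adm_{\wt K}(\{\mu\})$ is a lower set for the Bruhat order, topological flatness of $M^\naive_{G,\{\mu\},\L}$ reduces to the reverse inclusion $\Perm_{\wt K}(\{\mu\}) \subseteq \Adm_{\wt K}(\{\mu\})$, i.e.\ to the assertion that every $\{\mu\}$-permissible alcove in the $C$--$B_{g-1}$ building is $\{\mu\}$-admissible.

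This combinatorial identity is the main obstacle, and it is exactly what separates the conjecture from a theorem: in the \emph{split} type $D_g$ case the analogous equality $\Perm = \Adm$ \emph{fails}, which is precisely why the naive orthogonal local model of \S\ref{ss:GO_2g} is not even topologically flat and must be cut down by the spin condition. A proof must therefore genuinely exploit the different affine root datum of type $D^{(2)}_g$, and presumably the non-splitness of $G$ itself. The most natural attack is to base change to the ramified quadratic extension $K = F(\sqrt D)$, over which $G_K \cong GO_{2g}$ splits and the standard chain \eqref{disp:std_lattice_chain} refines, and to show that the Galois descent constraints along $K/F$ eliminate precisely those $\mu$-permissible alcoves which survive, and produce the extra non-flat components, in the split case. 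Carrying this out amounts to an explicit analysis of $\mu$-permissible versus $\mu$-admissible elements in the Iwahori--Weyl group of type $C$--$B_{g-1}$, in the spirit of Smithling's work on the split orthogonal and ramified unitary local models; failing the general argument, one could at least confirm the conjecture in low rank (say $g = 4$, perhaps $g = 5$) by direct computation of affine charts.
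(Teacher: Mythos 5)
This statement is a \emph{conjecture} in the paper, supported only by ``enough computational evidence''; there is no proof to compare against, so what follows is an assessment of whether your strategy would actually work.

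The central pivot of your proposal is false, and it is false in a way the paper itself records. You write that ``in the split type $D_g$ case the analogous equality $\Perm = \Adm$ fails, which is precisely why the naive orthogonal local model of \S\ref{ss:GO_2g} is not even topologically flat.'' But Proposition \ref{st:adm_vs_perm}\eqref{it:general_classical}, citing Smithling \cite[Main Theorem]{Sm2} (with the parahoric extension deferred to \cite{Sm5}), asserts precisely that $\Adm(\{\mu\}) = \Perm(\{\mu\})$ holds for any minuscule $\{\mu\}$ whenever the root datum involves only types $A$, $B$, $C$, $D$ — so in particular for split $D_g$. The failure of topological flatness for split $GO_{2g}$ is therefore \emph{not} a $\Perm$-versus-$\Adm$ phenomenon. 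As the paper explains on p.~\pageref{ref:not_flat}, the actual culprit is the disconnectedness of the orthogonal Grassmannian: the special fibre of the naive model hits a connected component (of the relevant product of $\OGr$'s) that is not in the closure of the generic fibre. Equivalently, the lattice-chain conditions (rank, functoriality, periodicity, perpendicularity) cut out a union of Schubert varieties whose index set is \emph{strictly larger} than $\Perm(\{\mu\}) \cup \Perm(\{\mu'\})$ for the two $GO_{2g}^\circ$-conjugacy classes $\{\mu\}$, $\{\mu'\}$, so the identification $\ol{M}{}^\naive = \bigcup_{w\in\Perm}\overline{Fl_w}$ that your reduction rests on already fails in the split case. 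This means the reduction of topological flatness to the inclusion $\Perm_{\wt K}(\{\mu\}) \subseteq \Adm_{\wt K}(\{\mu\})$ is not valid, and a proof along these lines would need to begin by identifying exactly which set of double cosets the naive conditions carve out — including the possibility of extra cosets modulo $W_\aff$ coming from the disconnectedness of $GO_{2g}$ and from mismatched components across the lattice chain.

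What survives from your plan is the general framework (reduce to the Iwahori/complete-chain case, embed the geometric special fibre into the affine flag variety, compare the resulting index set to $\Adm$) and the instinct that the relevant distinction from the split case is the Galois twist by $K = F(\sqrt{D})$, over which the orthogonal Grassmannian splits into two geometric components. A more promising version of your strategy would be: carry out the embedding over $\ol k_E$, determine the full set $\mathcal{N}$ of Schubert classes occurring in the naive special fibre (not a priori $\Perm$), then argue that the component-mismatch configurations which produce the extra classes in the split case are ruled out here because $\OGr(\sform)_F$ is connected over $F$ and the morphism to $\Spec K$ of \cite[8.2.1]{P-R4} forces a compatible choice of component along the chain. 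Showing $\mathcal{N} = \Adm(\{\mu\})$ is then the real combinatorial content, and it is plausibly accessible by the methods of \cite{Sm1, Sm2}, but it is not the assertion $\Perm \subseteq \Adm$, which is already known and insufficient.
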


This is in contrast to the case of split $GO_{2g}$.
However,  the naive local model $M_{G,\{\mu\}, \L}^\naive$ is still typically not flat and a  version of the spin condition is 
 needed.  This condition is explained in \cite[8.2]{P-R4} where  the reader can also find the conjecture that
the corresponding spin local model $M_{G, \{\mu\}, \L}^{\rm spin}$ is flat. In fact, in this case, 
$M_{G, \{\mu\}, \L}^{\rm spin}$ is naturally an $\O_K$-scheme. The local model $M_{G,\{\mu\}, \L}^{\rm loc}$ is 
by definition the flat closure of $M_{G,\{\mu\}, \L}^\naive\otimes_{\O_F}K$ in $M_{G,\{\mu\}, \L}^\naive\otimes_{\O_F}\O_K$
and is also naturally an $\O_K$-scheme.
Except for the results of a few calculations not much is known in this case. For more details, we refer to loc.\ cit. 

Let us remark here that, similarly to the example in the split case of \S  \ref{s:examples}.\ref{ss:GO_2g},
the group $G$ does not fit neatly into our framework of \S \ref{s:motivationalsection} since $G$ is not connected. To define corresponding 
local models for the connected quasi-split group $PGO^\circ\bigl(\sform\bigr)$ of type $D^{(2)}_g$ and the cocharacters given as above, we can argue as follows:  
First note that the reflex field in this case is the quadratic ramified extension $K$ of $F$ as above.
As above, the generic fiber $M_{G,\{\mu\}, \L}^\naive\otimes_{\O_F}F$ supports a canonical morphism to $\Spec K$. 
We can now consider the flat closures of the two components of the orthogonal Grassmannian ${\rm OGr}(g, 2g)_K=M_{G,\{\mu\}, \L}^\naive\otimes_{\O_F}K$ in $M_{G,\{\mu\}, \L}^\naive\otimes_{\O_F}\O_K$. These two schemes over $\Spec \O_K$   give by definition the local models for $PGO^\circ\bigl(\sform\bigr)$ and the two PEL minuscule cocharacters $\varpi^\vee_{g-1}$, $\varpi_{g}^\vee$.

\section{Local models and flag varieties for loop groups}\label{s:afv}

A basic technique in the theory of local models, introduced by G\"ortz \cite{Go1}, is to embed the special fiber of the local model into an appropriate affine flag variety.  In this section we discuss this and related matters, focusing on the representative examples of the linear and symplectic groups.  Throughout this section we denote by $k$ a field, by $K := k((t))$ the field of Laurent series in $t$ with coefficients in $k$, and by $\O_K := k[[t]]$ the subring of $K$ of power series.

\subsection{Affine flag varieties}\label{ss:afv}
For any contravariant functor $G$ on the category of $K$-algebras, we denote by $LG$ the functor on $k$-algebras
\[
   LG\colon R \mapsto G\bigl( R((t)) \bigr),
\]
where we regard $R((t))$ as a $K$-algebra in the obvious way.  Similarly, for any contravariant functor $P$ on the category of $\O_K$-algebras, we denote by $L^+P$ the functor on $k$-algebras
\[
   L^+P\colon R \mapsto P\bigl( R[[t]] \bigr).
\]
If $P$ is an affine $\O_K$-scheme, then $L^+P$ is an affine $k$-scheme.  If $G$ is an affine $K$-scheme, then $LG$ is an ind-scheme expressible as the colimit of a filtered diagram of closed immersions between affine $k$-schemes.

For our purposes, we shall be interested in the case that $G$ and $P$ are \emph{group-valued functors;} then we call $LG$ and $L^+P$ the \emph{loop group} and \emph{positive loop group} attached to $G$ and $P$, respectively.  Given such $P$, let $P_\eta$ denote its generic fiber, and consider the fpqc quotient
\[
   \F_P := LP_\eta/L^+P,
\]
or in other words, the fpqc sheaf on $k$-algebras associated to the presheaf
\[
   R \mapsto P\bigl( R((t)) \bigr) \big/ P\bigl( R[[t]] \bigr).
\]
When $P$ is smooth and affine, the following is a basic structure result on $\F_P$. It generalizes results of Faltings \cite{Fa4}.

\begin{thm}[{\cite[Th.\ 1.4]{P-R3}}]
Let $P$ be a smooth affine group scheme over $\O_K$.  Then $\F_P$ is representable by an ind-scheme of ind-finite type over $k$, and the quotient morphism $LP_\eta \to \F_P$ admits sections locally in the \'etale topology.\qed
\end{thm}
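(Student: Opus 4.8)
The plan is to reduce the assertion to the case $P = GL_{n,\O_K}$, where $\F_{GL_n}$ is the usual affine Grassmannian, and then to transport both conclusions to a general smooth affine $P$ by means of a faithful representation. First, for $GL_n$ itself: one recalls, or reproves by Beauville--Laszlo-type gluing as in Faltings \cite{Fa4}, that $\F_{GL_n}$ is representable by an ind-projective ind-scheme over $k$. Indeed an $R$-point is a finitely generated projective $R[[t]]$-submodule $M$ of $R((t))^n$ with $M[t^{-1}] = R((t))^n$, and bounding the relative position of $M$ against the standard lattice $R[[t]]^n$ exhibits $\F_{GL_n}$ as an increasing union of closed subschemes of ordinary Grassmannians. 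Moreover $LGL_{n,\eta} \to \F_{GL_n}$ has sections Zariski-locally on the base, since a projective module is locally free and a local basis of $M$ gives a lift to $LGL_{n,\eta}$.

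Next, for a general smooth affine group scheme $P$ over the Dedekind ring $\O_K$, I would choose a closed immersion of $\O_K$-group schemes $P \hookrightarrow GL_{n,\O_K}$ for which the fppf quotient $X := GL_n/P$ is representable by a smooth quasi-affine $\O_K$-scheme; such embeddings exist over a Dedekind base. The induced morphism $\F_P \to \F_{GL_n}$ is then a monomorphism: the ``fibre'' over an $R$-point of $\F_{GL_n}$, given by a $GL_n$-torsor $\mathcal{E}$ on $\Spec R[[t]]$ trivialized over $\Spec R((t))$, is the set of sections over $\Spec R[[t]]$ of the associated $X$-bundle $\mathcal{E}\times^{GL_n}X$ that restrict to the base-point section over $\Spec R((t))$, and since $\Spec R((t))$ is a schematically dense open of $\Spec R[[t]]$ and $X$ is quasi-affine, hence separated, there is at most one such. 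To upgrade this monomorphism to the statement that $\F_P$ is itself an ind-scheme of ind-finite type, one shows that $\F_P \to \F_{GL_n}$ is, level by level, a locally closed immersion. After the \'etale-local trivializations of $LGL_{n,\eta} \to \F_{GL_n}$ above, the extra datum carried by $\F_P$ over $\F_{GL_n}$ is a reduction of structure group to $P$ compatible with the given generic trivialization, which is cut out inside the arc space $L^+X$ (representable, as a cofiltered limit of jet schemes since $X$ is quasi-affine) by the condition of agreeing generically with the base point; over each Schubert-type piece $\overline{Gr^\mu} \subset \F_{GL_n}$ one checks, via the bounded relative position of lattices, that this locus is a $k$-scheme of finite type, and that these loci are compatible with the transition maps. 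This is in essence the computation by which G\"ortz \cite{Go1} embeds the special fibre of a local model into an affine flag variety.

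For the second conclusion, note that $LP_\eta \to \F_P$ is by construction an $L^+P$-torsor for the fpqc topology, so it suffices to see that $L^+P$-torsors trivialize \'etale-locally. Write $L^+P$ as the cofiltered limit of the jet groups $L^+_mP\colon R \mapsto P\bigl(R[[t]]/t^{m+1}\bigr)$, each a smooth affine group scheme over $k$, with smooth surjective transition maps --- this surjectivity being precisely the infinitesimal lifting property of the smooth $\O_K$-scheme $P$. A torsor under $L^+_mP$ is a smooth $k$-scheme and so has sections \'etale-locally; the smooth surjectivity of the transition maps then allows one to lift a compatible system of such sections to the limit, so the quotient morphism $LP_\eta \to \F_P$ admits sections in the \'etale topology.

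The main obstacle is the passage, in the second paragraph, from the bare monomorphism $\F_P \to \F_{GL_n}$ into an ind-projective ind-scheme to the conclusion that the source is an ind-scheme of ind-finite type: a monomorphism of ind-schemes need not have an ind-scheme as source, so one must genuinely produce the locally closed structure at each finite level, and this forces one to control the interplay between bounded relative position of lattices and the (possibly non-affine) quotient $X = GL_n/P$ together with its arc space. A secondary point demanding care is the existence, over the one-dimensional base $\O_K$, of the closed embedding $P \hookrightarrow GL_{n,\O_K}$ with quasi-affine quotient.
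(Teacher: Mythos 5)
Your proposal follows essentially the same route as the paper's proof of this theorem: embed $P$ as a closed $\O_K$-subgroup of $GL_n$ with quasi-affine fppf quotient (which exists over the Dedekind base $\O_K$), establish ind-representability of $\F_{GL_n}$ directly via bounded lattices, show $\F_P \to \F_{GL_n}$ is a monomorphism via separatedness of $X = GL_n/P$ together with schematic density of $\Spec R((t))$ in $\Spec R[[t]]$, upgrade this to a quasi-compact immersion of ind-schemes, and obtain \'etale-local sections by lifting through the pro-smooth tower of jet groups $L^+_m P$, the key inputs being smooth surjectivity of the transition maps (infinitesimal lifting for the smooth $P$) and vanishing of $H^1$ of the vector-group kernels over an affine base. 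That is the paper's argument.

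The one place where you assert rather than prove is exactly where the technical heart of the paper's argument lies: showing that, at each bounded level of the affine Grassmannian of $GL_n$, the locus of lattices carrying a compatible $P$-reduction is a locally closed subscheme of finite type, and that these are compatible with transition maps. The paper isolates this as a separate proposition and proves it by first treating the case where $GL_n/P$ is \emph{affine} (where the arc space argument yields a genuinely \emph{closed} immersion), then handling a general smooth affine $P$ by writing the quasi-affine quotient as an open subscheme of its affine hull, which produces the locally closed immersion; the appeal to G\"ortz's embedding of special fibres of local models is in the right spirit but does not by itself discharge this step, since that computation concerns specific classical groups rather than an arbitrary smooth affine $P$. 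You correctly flag this as the main obstacle, and your secondary caveat about the existence of the embedding with quasi-affine quotient over $\O_K$ is also precisely the point the paper has to quote or verify.
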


Recall that an ind-scheme over $k$ is \emph{of ind-finite type} if it is expressible as a filtered colimit of $k$-schemes of finite type.

For applications to local models, we are mainly interested in the sheaf $\F_P$ in the case that $G = P_\eta$ is a \emph{connected reductive group} over $K$ and $P$ is a \emph{parahoric group scheme}.  Let us elaborate.  Let $G$ be a connected reductive group over the $t$-adically valued field $K$, let $G_\ad$ denote its adjoint group, and consider the Bruhat--Tits building $\B := \B\bigl( G_\ad(K) \bigr)$.  Let $\mathbf f$ be a facet in $\B$.  Then Bruhat--Tits theory attaches to the pair $(G,\mathbf f)$ the parahoric group scheme $P_{\mathbf f}$; this is a smooth affine $\O_K$-scheme with generic fiber $G$, with connected special fiber, and whose $\O_K$-points are identified with the corresponding parahoric subgroup of $G(K)$.  We make the following definition.

\begin{Definition}\label{def:afv}
Given a facet $\mathbf f$ in the building of the adjoint group of the connected reductive $K$-group $G$, the \emph{affine flag variety relative to $\mathbf f$} (or \emph{to $P_{\mathbf f}$}, or \emph{to $P_{\mathbf f}(\O_K)$}) is the ind-scheme over $k$
\[
   \F_{\mathbf f} := \F_{P_{\mathbf f}} = LG/L^+P_{\mathbf f}.
\]
\end{Definition}

In some cases, this mirrors the closely related constructions
of (partial) affine flag varieties in the setting of the theory of Kac-Moody Lie algebras (\cite{Kac}, \cite{Ku2}).
See  Remark \ref{rem:KacMoody} for more details on this relation.

In concrete examples involving classical groups, one can often identify the affine flag variety with a space of lattice chains; this fact is crucial to the embedding of the special fibers of local models mentioned at the beginning of this section.  Let $R$ be a $k$-algebra, and consider the $R((t))$-module $R((t))^n$ for some $n \geq 1$.  Recall that a \emph{lattice in $R((t))^n$} is an $R[[t]]$-submodule $L \subset R((t))^n$ which is free as an $R[[t]]$-module Zariski-locally on $\Spec R$, and such that the natural arrow $L \otimes_{R[[t]]} R((t)) \to R((t))^{n}$ is an isomorphism.  We leave it as an exercise to check that it is equivalent to say that $L$ is an $R[[t]]$-submodule of $R((t))^n$ such that $t^NR((t))^n \subset L \subset t^{-N}R((t))^n$ for $N$ sufficiently big, and such that $t^{-N}R((t))^n/L$ is projective as an $R$-module for one, hence any, such $N$.

All of the terminology for lattices from \S\ref{s:examples} admits an obvious analog in the present setting. A collection of lattices in $R((t))^n$ is a \emph{chain} if it is totally ordered under inclusion and all successive quotients are projective $R$-modules (necessarily of finite rank).  A lattice chain is \emph{periodic} if $t^{\pm 1} L$ is in the chain for every lattice $L$ in the chain.  In analogy with the definition of $\Lambda_i$ \eqref{disp:Lambda_i}, for $i = na+j$ with $0 \leq j < n$, we define the $\O_{K}$-lattice
\begin{equation}\label{disp:lambda_i}
   \lambda_i := \sum_{l=1}^j t^{-a-1}\O_K e_l + \sum_{l=j+1}^{n} t^{-a}\O_K e_l \subset K^n,
\end{equation}
where now $e_1,\dotsc,e_n$ denotes the standard ordered basis in $K^n$.  The $\lambda_i$'s form a periodic lattice chain $\dotsb \subset \lambda_{-1} \subset \lambda_0 \subset \lambda_1 \subset \dotsb$, which we again call the \emph{standard chain}.
More generally, let $I \subset \mZ$ be any nonempty subset which is closed under addition by $n$; or in other words, $I$ is the inverse image under the canonical projection $\mZ \to \mZ/n \mZ$ of a nonempty subset of $\mZ/n \mZ$.  Then we denote by $\lambda_I$ the periodic subchain of the standard chain consisting of all lattices of the form $\lambda_i$ for $i \in I$.  

\begin{eg}[$GL_n$ and $SL_n$]
\label{eg:GL_afv}
Let $G = GL_n$ over $K$.  Then the facets in the Bruhat--Tits building $\B\bigl(PGL_n(K)\bigr)$ are in bijective correspondence with the periodic $\O_K$-lattice chains in $K^n$, and the parahoric group schemes for $G$ can be described as automorphism schemes of these lattice chains.  More precisely, let us consider the chain $\lambda_I$ for some nonempty $I$ closed under addition by $n$; of course, every periodic $\O_K$-lattice chain in $K^n$ is $G(K)$-conjugate to $\lambda_I$ for some such $I$.
Let $P_I$ denote the automorphism scheme of $\lambda_I$ as a periodic lattice chain over $\O_K$.  Then for any $\O_K$-algebra $A$, the $A$-points of $P_I$ consist of all families
\begin{equation}\label{disp:(g_i)}
   (g_i) \in \prod_{i \in I} GL_A(\lambda_i \otimes_{\O_K} A)
\end{equation}
such that the isomorphism $\lambda_i \otimes A \xrightarrow[\sim]{t^a \otimes \id_A} \lambda_{i - na} \otimes A$ identifies $g_i$ with $g_{i - na}$ for all $i \in I$ and all $a \in \mZ$, and such that the diagram
\[
   \xymatrix{
      \lambda_i \otimes_{\O_K} A \ar[r] \ar[d]_-{g_i}^-\sim 
         &   \lambda_j \otimes_{\O_K} A \ar[d]^-{g_j}_-\sim\\
      \lambda_i \otimes_{\O_K} A \ar[r]   
         &   \lambda_j \otimes_{\O_K} A
      }
\]
commutes for all $i < j$ in $I$.  The scheme $P_I$ is a smooth $\O_K$-scheme with evident generic fiber $G = GL_n$ and whose $\O_K$-points identify with the full fixer in $G(K)$ of the facet $\mathbf f$ corresponding to $\lambda_I$.  Moreover, it is not hard to see that $P_I$ has connected special fiber.  Hence $P_I$ is the parahoric group scheme $P_{\mathbf f}$ attached to $\mathbf f$; see \cite[1.7, 4.6, 5.1.9, 5.2.6]{BTII}.  

For $R$ a $k$-algebra, let $\Lat_n(R)$ 
denote the category whose objects are the $R[[t]]$-lattices in $R((t))^n$ and whose morphisms are the natural inclusions of lattices.  Of course, any $R[[t]]$-lattice chain may be regarded as a full subcategory of $\Lat_n(R)$.  We define $\F_I$ to be the functor on $k$-algebras that assigns to each $R$ the set of all functors $L\colon\lambda_I \to \Lat_n(R)$ such that
\begin{altenumerate}
\renewcommand{\theenumi}{C}
\item\label{it:afv_chain_cond}
   (\emph{chain}) the image $L(\lambda_I)$ is a lattice chain in $R((t))^n$;
\renewcommand{\theenumi}{P}
\item\label{it:afv_periodicity_cond}
   (\emph{periodicity}) $L(t\lambda_i) = tL(\lambda_i)$ for all $i \in I$, so that the chain $L(\lambda_I)$ is periodic; and
\renewcommand{\theenumi}{R}
\item\label{it:afv_rank_cond}
   (\emph{rank}) $\dim_k \lambda_j/\lambda_i = \rank_R L(\lambda_j) / L(\lambda_i)$ for all $i  < j$.
\end{altenumerate}
In more down-to-earth terms, an $R$-point of $\F_I$ is just a periodic lattice chain in $R((t))^n$ indexed by the elements of $I$, such that the successive quotients have the same rank as the corresponding quotients in $\lambda_I$.

The loop group $LG$ acts on $\F_I$ via the natural representation of $G\bigl(R((t))\bigr)$ on $R((t))^n$, and it follows that the $LG$-equivariant map $LG \to \F_I$ specified by taking the tautological inclusion $\bigl(\lambda_I \inj \L(K^n)\bigr) \in \F_I(k)$ as basepoint defines an $LG$-equivariant morphism
\[
   \varphi\colon \F_{\mathbf f} \to \F_I.
\]
In fact $\varphi$ is an isomorphism: it is plainly a monomorphism, and it is an epimorphism because every periodic lattice chain in $R((t))^n$ admits a so-called ``normal form'' Zariski-locally on $\Spec R$, as is proved in \cite[Ch.~3 App.]{R-Z}.

Similar remarks apply to $SL_n$ over $K$.  Up to conjugacy, the parahoric group schemes for $SL_n$ are again given by certain automorphism schemes $P_I'$ of the chains $\lambda_I$ for nonempty $I$ closed under addition by $n$, where this time we consider families $(g_i)$ as in \eqref{disp:(g_i)} satisfying the same conditions as above and such that $\det (g_i) = 1$ for all $i$.  For given such $I$, let $\mathbf f'$ denote the facet 
associated to $P_I'$ and $\F'_{\mathbf f'} = LSL_n/L^+P_I'$ the associated affine flag variety.  The inclusion $SL_n \subset GL_n$ induces a monomorphism $\F'_{\mathbf f'} \inj \F_{\mathbf f}$, where $\mathbf f$ again denotes the associated facet for $GL_n$.

To describe $\F'_{\mathbf f'}$ as a space of lattice chains, we call  a functor $L\colon\lambda_I \to \Lat_n(R)$  \emph{special} if
\begin{altenumerate}
\renewcommand{\theenumi}{S}
\item\label{it:afv_special_cond}
   $\bigwedge_{R[[t]]}^n L(\lambda_i) = t^{-i} R[[t]]$ as a submodule of $\bigwedge_{R((t))}^n R((t))^n = R((t))$ for all $i \in I$. 
\end{altenumerate}
Then the isomorphism $\varphi$ above identifies $\F'_{\mathbf f'}$ with the subfunctor of $\F_I$ of special points $L$; this is easy to check directly, or see \cite[3.5]{Go1}.  As a consequence, note that a point $L \in \F_{\mathbf f}(R)$ is special as soon as $\bigwedge_{R[[t]]}^n L(\lambda_i) = t^{-i} R[[t]]$ for a single $i \in I$.

For applications to local models, it is convenient to consider not just the canonical embedding $\F'_{\mathbf f'} \inj \F_{\mathbf f}$, but the following variant,  involving a simple generalization of the notion of special.  For $r \in \mZ$, we say that a point $L \in \F_{\mathbf f}(R)$ is \emph{$r$-special} if $\bigwedge_{R[[t]]}^n L(\lambda_i) = t^{r-i} R[[t]]$ as a submodule of $R((t))$ for one, hence every, $i \in I$.  Let
\[
   I - r := \{\, i-r \mid i \in I \,\}.
\]
Then the functor $\lambda_i \mapsto \lambda_{i-r}$ is an $r$-special point in $\F_{\mathbf f}(k)$, and, taking it as basepoint, it determines an $LSL_n$-equivariant isomorphism from $\F'_{\mathbf f''}$ onto the subfunctor in $\F_{\mathbf f}$ of $r$-special points, where $\mathbf f''$ is the facet for $SL_n$ corresponding to $\lambda_{I-r}$.
\end{eg}

\begin{eg}[$GSp_{2g}$ and $Sp_{2g}$]\label{eg:GSp_afv}
Let $\phi$ denote the alternating $K$-bilinear form on $K^{2g}$ whose matrix with respect to the standard basis is
\[J_{2g}=
   \begin{pmatrix}
            &  H_g\\
      -H_g  &
   \end{pmatrix},
\]
as in \eqref{disp:sympl}.  We denote by $G$ the $K$-group $GSp_{2g} := GSp(\phi)$.

To describe the parahoric group schemes for $G$, let $I$ be a nonempty subset of \mZ closed under addition by $2g$ and multiplication by $-1$.  For any $i \in I$ and any $\O_K$-algebra $R$, the pairing $\phi$ induces a perfect $R$-bilinear pairing
\[
   (\lambda_i \otimes_{\O_K} R) \times (\lambda_{-i} \otimes_{\O_K} R)
      \xrightarrow{\phi_R} R,
\]
where we use a subscript $R$ to denote base change from $\O_K$ to $R$.  Let $P_I$ denote the $\O_K$-group scheme whose $R$-points consist of all families 
$$(g_i) \in \prod\nolimits_{i \in I} GL_R(\lambda_i \otimes_{\O_K} R)$$
 satisfying the same conditions as in the $GL_n$ case and such that, in addition, there exists $c \in R^\times$ such that
\begin{equation}\label{disp:c}
   \phi_R(g_i x, g_{-i}y) = c \cdot \phi_R(x,y)
\end{equation}
for all $i \in I$ and all $x \in \lambda_i \otimes_{\O_K} R$, $y \in \lambda_{-i} \otimes_{\O_K} R$.  Then, analogously to the $GL_n$ case, $P_I$ is a parahoric group scheme for $G$, and up to conjugacy all parahoric group schemes arise in this way.

Given nonempty $I$ closed under addition by $2g$ and multiplication by $-1$, let $\mathbf f$ denote the associated facet in the Bruhat--Tits building for $G_\ad$.  To describe the affine flag variety attached to $\mathbf f$, let $R$ be a $k$-algebra and recall the lattice category $\Lat_{2g}(R)$ from Example \ref{eg:GL_afv}.  For an $R[[t]]$-lattice $\Lambda$ in $R((t))^{2g}$, let $\wh\Lambda$ denote the $\phi$-dual of $\Lambda$, that is, the $R[[t]]$-module
\[
   \wh\Lambda := \bigl\{\, x \in R((t))^{2g} \bigm|
                     \phi_{R((t))}(\Lambda,x) \subset R[[t]] \,\bigr\}.
\]
We define $\F_I$ to be the functor on $k$-algebras that assigns to each $R$ the set of all functors $L\colon\lambda_I \to \Lat_{2g}(R)$ satisfying conditions \eqref{it:afv_chain_cond}, \eqref{it:afv_periodicity_cond}, and \eqref{it:afv_rank_cond} from Example \ref{eg:GL_afv} and such that, in addition,
\begin{altenumerate}
\renewcommand{\theenumi}{D}
\item\label{it:afv_dlty_cond}
   (duality) Zariski-locally on $\Spec R$, there exists $c \in R((t))^\times$ such that $\wh{L(\lambda_i)} = c \cdot L\bigl(\wh\lambda_i\bigr)$ for all $i \in I$.
\end{altenumerate}
Analogously to the $GL_n$ case, the loop group $LG$ acts naturally on $\F_I$, and taking the tautological inclusion $\bigl(\lambda_I \inj \L(K^n)\bigr) \in \F_I(k)$ as basepoint specifies an $LG$-equivariant isomorphism
\[
   \F_{\mathbf f} \isoarrow \F_I.
\]
This description of $\F_I$ is plainly equivalent to the lattice-theoretic description of the affine flag variety for $GSp_{2g}$ given in \cite[\S10]{P-R2} (except that the scalar denoted $a$ there should only be required to exist Zariski-locally on $\Spec R$).

For the group $Sp_{2g} := Sp(\phi)$ over $K$, up to conjugacy, the parahoric group schemes are again given by certain automorphism schemes $P'_I$ of the chains $\lambda_I$ for nonempty $I$ closed under addition by $2g$ and multiplication by $-1$, namely, we now take the closed subscheme of $P_I$ of points for which $c = 1$ in \eqref{disp:c}.  For given such $I$, let $\mathbf f'$ denote the associated facet for $Sp_{2g}$ and $\F'_{\mathbf f'}$ the associated affine flag variety.  The inclusion $Sp_{2g} \subset GSp_{2g}$ induces a monomorphism $\F'_{\mathbf f'} \inj \F_{\mathbf f}$, where $\mathbf f$ again denotes the associated facet for $GSp_{2g}$.  In this way $\F'_{\mathbf f'}$ identifies with the subfunctor of $\F_I$ of points $L$ such that $\wh{L(\lambda_i)} = L(\wh\lambda_i)$ for all $i \in I$, i.e.\ such that $c$ can be taken to equal $1$ in \eqref{it:afv_dlty_cond}.  This subfunctor can also be described as the subfunctor of all special $L$ such that the lattice chain $L(\lambda_I)$ is self-dual.


As in the linear case, it is convenient to consider other embeddings besides the standard one $\F'_{\mathbf f'} \inj \F_{\mathbf f}$.  This time we consider $r$-special $L$ only for $r \in g\mZ$.  Then the functor $\lambda_i \mapsto \lambda_{i-r}$ is an $r$-special point in $\F_{\mathbf f}(k)$ whose image lattice chain is self-dual, and just as in the linear case, it specifies an $LSp_{2g}$-equivariant isomorphism from $\F'_{\mathbf f''}$ onto the subfunctor in $\F_{\mathbf f}$ of $r$-special points $L$ such that $L(\lambda_I)$ is self-dual, where $\mathbf f''$ is the facet for $Sp_{2g}$ corresponding to $\lambda_{I-r}$.  Note that if $L$ is $r$-special, then the scalar $c$ appearing in \eqref{it:afv_dlty_cond} can be taken to equal $t^{-r/g}$.
\end{eg}

The affine flag varieties for other groups discussed in \S\ref{s:examples} can all be described similarly.  For example, see \cite[\S6.2]{Sm1} for $GO_{2g}$ (at least in the Iwahori case) and \cite[\S3.2]{P-R4} and \cite[\S4.2]{Sm3} for ramified $GU_n$.

Returning to the general discussion, we conclude this subsection with a couple of further structure results from \cite{P-R3}.  The first describes the connected components of loop groups and affine flag varieties in the case $k$ is algebraically closed.  Let $G$ be a connected reductive group over $K$ with $k = \ol k$.  Let $\pi_1(G)$ denote the fundamental group of $G$ in the sense of Borovoi \cite{Bor}; this can be described as the group $X_*(T)/Q^\vee$ of geometric cocharacters of $T$ modulo coroots, where $T$ is any maximal torus in $G$ defined over $K$. 
Let $K^\sep$ denote a separable closure of $K$.  Then the inertia group $I := \Gal(K^\sep/K)$ acts naturally on $\pi_1(G)$, and we may consider the coinvariants $\pi_1(G)_I$.  In \cite{Ko2}, Kottwitz constructs a functorial surjective homomorphism 
\begin{equation}\label{disp:Kottwitz_hom}
   G(K) \surj \pi_1(G)_I
\end{equation}
which turns out to parametrize the connected components of $LG$ and $\F_{\mathbf f}$ as follows.

\begin{thm}[{\cite[Th.\ 5.1]{P-R3}}]
Assume that $k$ is algebraically closed.  Then for any facet $\mathbf f$, the Kottwitz homomorphism induces isomorphisms
\begin{flalign*}
   \phantom{\qed} & &
   \pi_0(LG) \isoarrow \pi_0(\F_{\mathbf f}) \isoarrow \pi_1(G)_I. & &
   \qed
\end{flalign*}
\end{thm}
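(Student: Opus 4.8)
\medskip
\noindent\emph{Strategy.} The plan is to first reduce the statement about $\F_{\mathbf f}$ to the computation of $\pi_0(LG)$, and then to identify $\pi_0(LG)$ with $\pi_1(G)_I$ by making the Kottwitz homomorphism geometric. For the reduction: by \cite[Th.\ 1.4]{P-R3} the quotient map $LG\rightarrow\F_{\mathbf f}=LG/L^+P_{\mathbf f}$ has sections \'etale-locally, so it is an $L^+P_{\mathbf f}$-torsor for the \'etale topology, in particular faithfully flat with geometric fibres isomorphic to $L^+P_{\mathbf f}$. Since $P_{\mathbf f}$ is smooth over $\O_K$ with connected special fibre, each jet group $P_{\mathbf f}(R[t]/t^m)$ is an iterated extension of $P_{\mathbf f}\otimes k$ by vector groups, hence connected, so $L^+P_{\mathbf f}$ --- their inverse limit --- is connected. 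A faithfully flat morphism with connected fibres is a bijection on connected components over an algebraically closed field; hence $\pi_0(LG)\cong\pi_0(\F_{\mathbf f})$ for every facet $\mathbf f$, and it suffices to produce an isomorphism $\pi_0(LG)\cong\pi_1(G)_I$ induced by $\kappa_G\colon G(K)\rightarrow\pi_1(G)_I$.

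\emph{Quasi-split reduction and two building blocks.} Since $k=\overline k$, the field $K=k((t))$ has cohomological dimension $\le 1$, so by Steinberg's theorem $G$ is quasi-split; fix a Borel $B=TU$ over $K$ with maximal torus $T$. First, $LG_{\mathrm{sc}}$ is connected: since $G_{\mathrm{sc}}$ is quasi-split, $G_{\mathrm{sc}}(K)$ is generated by the $K$-points of the root subgroups $U_a$, and each $U_a$ is a Weil restriction of $\mathbb{G}_a$, so $LU_a$ is an ind-affine space, hence a connected ind-subgroup through $e$; the ind-subgroup these generate is connected and has $G_{\mathrm{sc}}(K)$ as its set of $k$-points. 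Second, for the torus $T$ one has $\pi_0(LT)\cong X_{\ast}(T)_I=\pi_1(T)_I$ via $\kappa_T$: one takes the connected N\'eron (``Iwahori'') model $\mathcal{T}^\circ$ of $T$ over $\O_K$, observes as above that $L^+\mathcal{T}^\circ$ is connected, and identifies $LT/L^+\mathcal{T}^\circ$ with the constant ind-scheme $X_{\ast}(T)_I$, reducing by short exact sequences to induced tori $\Res_{K'/K}\mathbb{G}_m$ (where the statement is the valuation $K'^{\times}\rightarrow\mathbb{Z}$) and using the vanishing of $H^1(K,\cdot)$ on induced tori.

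\emph{Assembling.} From the Bruhat decomposition $G(K)=\bigsqcup_w B(K)\,\dot w\,B(K)$ one gets $G(K)=\mathrm{im}(G_{\mathrm{sc}}(K))\cdot T(K)$, since the unipotent parts and the Weyl representatives $\dot w$ all lie in $\mathrm{im}(G_{\mathrm{sc}}(K))$. As $\kappa_G$ is functorial, vanishes on $\mathrm{im}(G_{\mathrm{sc}}(K))$ (because $\pi_1(G_{\mathrm{sc}})=0$), and restricts on $T(K)$ to the composite $T(K)\xrightarrow{\kappa_T}X_{\ast}(T)_I\rightarrow\pi_1(G)_I$, a short computation using right-exactness of $(-)_I$ and the torus building block gives $\ker\kappa_G=\mathrm{im}(G_{\mathrm{sc}}(K))\cdot T(K)_1$ with $T(K)_1:=(LT)^\circ(k)$ --- the coroot directions $a^{\vee}(K^{\times})$ needed a priori being already contained in $\mathrm{im}(G_{\mathrm{sc}}(K))$. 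Both factors lie in $(LG)^\circ(k)$, so the connected ind-subgroup $H_0\subset LG$ they generate satisfies $H_0(k)=\ker\kappa_G$ and $H_0\subset(LG)^\circ$. Finally, $\kappa_G$ is induced by a genuine morphism of ind-schemes $LG\rightarrow\underline{\pi_1(G)_I}$ to the constant ind-scheme --- built, as in the case of $\mathbb{G}_m$, from valuations of determinants attached to the $\mathbb{G}_m$-quotients of $G$, which are locally constant in families --- hence constant on connected components, so $(LG)^\circ(k)\subset\ker\kappa_G$. Therefore $(LG)^\circ(k)=\ker\kappa_G$, and $\kappa_G$ (surjective by Kottwitz's theorem \cite{Ko2}, using $k=\overline k$) descends to the desired isomorphism $\pi_0(LG)=G(K)/(LG)^\circ(k)\cong\pi_1(G)_I$, which is compatible with $\pi_0(\F_{\mathbf f})$ by the first step.

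\emph{Main obstacle.} The genuinely substantive inputs are the torus building block --- the description of $\pi_0(LT)$ for an arbitrary, possibly wildly ramified, torus through its connected N\'eron model --- and the ``continuity'' of $\kappa_G$, i.e.\ that it is cut out by a morphism to $\underline{\pi_1(G)_I}$ (equivalently, that the affine flag variety of $G_{\mathrm{sc}}$ sits inside $\F_{\mathbf f}$ as an open and closed sub-ind-scheme whose $\Omega$-translates, for $\Omega\cong\pi_1(G)_I$ the length-zero subgroup of the Iwahori--Weyl group, cover it). Everything else is bookkeeping with the Bruhat decomposition and $I$-coinvariants.
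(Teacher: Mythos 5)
Your architecture matches the paper's proof: reduce $\pi_0(\F_{\mathbf f})$ to $\pi_0(LG)$ via connectedness of $L^+P_{\mathbf f}$; deduce that $LG_{\mathrm{sc}}$ is connected from the generation of $G_{\mathrm{sc}}(K)$ by root subgroups (available because $G$ is quasi-split by Steinberg, hence $G_{\mathrm{sc}}$ is isotropic); compute $\pi_0(LT) \cong X_*(T)_I$; and assemble via the decomposition $G(K) = \mathrm{im}(G_{\mathrm{sc}}(K))\cdot T(K)$ to obtain a surjection $\pi_1(G)_I \twoheadrightarrow \pi_0(LG)$.

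The gap is in the injectivity step, the ``continuity'' $(LG)^\circ(k)\subset\ker\kappa_G$, and your proposed fix is wrong. You assert that the morphism $LG\to\underline{\pi_1(G)_I}$ can be built from valuations of determinants attached to $\mathbb{G}_m$-quotients of $G$. Such quotients are indexed by $X^*(G)^{\Gal_K}$ and detect only a torsion-free quotient of $\pi_1(G)_I$; they vanish entirely when $G$ is semisimple. For $G=\mathrm{PGL}_n$ there is no nontrivial character $G\to\mathbb{G}_m$, yet $\pi_1(\mathrm{PGL}_n)_I=\mathbb{Z}/n\mathbb{Z}$ and $L\mathrm{PGL}_n$ really does have $n$ connected components; your construction cannot see them, so together with your surjection it would force the false conclusion $\pi_0(L\mathrm{PGL}_n)=0$. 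The torsion in $\pi_1(G)_I$ requires a genuinely different device. The standard remedy --- and what makes the paper's proof go through --- is to pass to a $z$-extension $1\to Z\to G'\to G\to 1$ with $Z$ an induced torus and $G'_{\mathrm{der}}$ simply connected (for $\mathrm{PGL}_n$, take $G'=\mathrm{GL}_n$): then $\pi_1(G')=X_*(G'/G'_{\mathrm{der}})$ is the cocharacter group of a torus, $\kappa_{G'}$ factors through that torus, so continuity for $G'$ reduces to your torus building block; and $H^1(K,Z)=0$ makes $LG'\to LG$ surjective on $k$-points, which lets continuity descend from $G'$ to $G$. I also note in passing that your torus step is thinner than it looks --- the dévissage you sketch runs into the fact that the auxiliary kernel or cokernel tori need not have vanishing $H^1(K,-)$, so the reduction to induced tori is not a one-liner --- but that is in the right direction; the decisive error is the $\mathbb{G}_m$-quotient claim.
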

In the special case that $G$ is \emph{split} we have $\pi_1(G)_I = \pi_1(G)$.  Then the theorem may be regarded as an avatar of the familiar statement in topology, where $LG$ plays the role of the loop space of $G$.

The final result of the subsection (a generalization of a result of Faltings \cite{Fa4}) concerns the (ind-)scheme structure on $LG$ and $\F_{\mathbf f}$.  Recall that an ind-scheme is \emph{reduced} if it is expressible as a filtered colimit of reduced schemes.

\begin{thm}[{\cite[Th.\ 6.1]{P-R3}}]\label{reduced}
Assume that $k$ is perfect and let $G$ be a connected semi-simple $K$-group.  Suppose that $G$ splits over a tamely ramified extension of $K$ and that the order of the fundamental group $\pi_1(G_\der)$ of the derived group $G_\der$ is prime to the characteristic of $k$. Then the ind-schemes $LG$ and $\F_{\mathbf f}$, for any facet $\mathbf f$, are reduced.\qed
\end{thm}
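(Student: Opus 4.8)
I would reduce, in several steps, to reducedness of the Schubert varieties in the Iwahori affine flag variety of an absolutely almost simple simply connected group split by a tame extension, and then resolve those Schubert varieties by Demazure (Bott--Samelson) varieties. First, since reducedness of a $k$-scheme or ind-scheme is implied by that of its base change to $\bar k$, I may assume $k=\bar k$; then by Steinberg's theorem $G$ is quasi-split over $K$, and by hypothesis it splits over a tame extension of $K$, which (as $k=\bar k$) is totally ramified of some degree $e$ prime to $\charac k$. Next, \cite[Th.~1.4]{P-R3} exhibits $LG$, \'etale-locally over $\F_{\mathbf f}$, as a product $\F_{\mathbf f}\times L^+P_{\mathbf f}$; since $L^+P_{\mathbf f}$ is pro-smooth over $k$ (a filtered projective limit of smooth affine $k$-schemes with smooth surjective transition maps, hence reduced), $LG$ is reduced if and only if $\F_{\mathbf f}$ is, for every facet $\mathbf f$. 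For an alcove $\mathbf a$ having $\mathbf f$ in its closure, the projection $\F_{\mathbf a}\to\F_{\mathbf f}$ is smooth and surjective, with fibre a smooth projective partial flag variety of the reductive quotient of the special fibre of $P_{\mathbf f}$; as reducedness descends along faithfully flat morphisms, it suffices to treat the Iwahori case $\F_{\mathbf a}$. Finally I would pass to the simply connected cover $G_{\sc}\to G$: its kernel $\mu=\pi_1(G)$ has order $|\pi_1(G_{\der})|$, which is prime to $\charac k$, so $\mu$ is \'etale over $K$, and one checks that $\F^{G_{\sc}}_{\mathbf a}$ then maps isomorphically onto the neutral component of $\F^{G}_{\mathbf a}$; since the Kottwitz homomorphism (\cite[Th.~5.1]{P-R3}) shows that $LG(k)$ permutes the connected components of $\F^{G}_{\mathbf a}$ transitively, all of them are isomorphic, and $\F^{G}_{\mathbf a}$ is reduced once $\F^{G_{\sc}}_{\mathbf a}$ is. This last step is the only place the hypothesis on $|\pi_1(G_{\der})|$ is used.

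With $G$ now simply connected, I would invoke the canonical decomposition $G=\prod_i\Res_{K_i/K}H_i$ with $H_i$ absolutely almost simple and simply connected over a finite separable extension $K_i/K$; each $H_i$ is then quasi-split (a factor of a quasi-split group) and split by a tame extension. Since the affine flag variety of $\Res_{K_i/K}H_i$ over $k$ coincides with the Iwahori affine flag variety of $H_i$ over $K_i$ (which has residue field $\bar k$), and the affine flag variety of a product is the product of the affine flag varieties, it suffices to treat a single such $H$ over $\bar k((u))$ --- i.e.\ $H$ of one of the residually split tame types ($H$ split, or of type ${}^2A_n$, ${}^2D_n$, ${}^3D_4$, ${}^6D_4$, or ${}^2E_6$), obtained from a split group by twisting by a diagram automorphism of order $e$ prime to $\charac k$. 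Now $\F_{\mathbf a}=\F^{H}_{\mathbf a}$ is the filtered colimit of its Schubert varieties $\overline{S_w}$, $w$ running over the Iwahori--Weyl group, and a filtered colimit of reduced schemes is a reduced ind-scheme, so it suffices to show each $\overline{S_w}$ is reduced. For a reduced word $w=s_{i_1}\cdots s_{i_\ell}$ in simple affine reflections I would form the \emph{Demazure variety} $D=\mathcal P_{i_1}\times^{\B}\cdots\times^{\B}\mathcal P_{i_\ell}/\B$, where $\mathcal P_i\supset\B$ is the minimal parahoric with $\mathcal P_i/\B\cong\mathbb P^1_k$; then $D$ is a smooth projective $k$-variety (an iterated $\mathbb P^1_k$-bundle), and the multiplication morphism has image $\overline{S_w}$ and is birational onto it. It then suffices to prove $\pi_\ast\mathcal O_{D}=\mathcal O_{\overline{S_w}}$ for this morphism $\pi\colon D\to\overline{S_w}$: as $D$ is reduced, $\pi_\ast\mathcal O_D$ is a sheaf of reduced rings, so the equality forces $\overline{S_w}$ to be reduced. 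I would establish $\pi_\ast\mathcal O_D=\mathcal O_{\overline{S_w}}$ --- together with the stronger assertions that $\pi$ is a rational resolution and that $\overline{S_w}$ is \emph{Frobenius split}, compatibly with $\partial\overline{S_w}$ --- by induction on $\ell$ via the $\mathcal P_{i_\ell}$-bundle structure, the base case being the explicit rank-one ($\mathbb P^1_k$-bundle) computation; perfectness of $k$ enters in the Frobenius-splitting argument in positive characteristic, and the characteristic-zero case follows by a standard cohomological argument.

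The hard part will be carrying out this last step in the twisted cases. For split $G$ the whole package --- Demazure resolutions, their smoothness, and the compatible Frobenius splittings yielding $\pi_\ast\mathcal O_D=\mathcal O_{\overline{S_w}}$ --- is classical (Faltings \cite{Fa4}; in the Kac--Moody framework, Kumar and Mathieu), and one literally has a Kac--Moody group available. For groups split only over a tamely ramified extension there is no such ambient group, so the minimal parahoric group schemes $\mathcal P_i$, their special fibres, and the twisted products $\mathcal P_{i_1}\times^{\B}\cdots\times^{\B}\mathcal P_{i_\ell}$ must be constructed directly from Bruhat--Tits theory over a two-dimensional base, and one must verify that the special fibres of $\B$ and of the $\mathcal P_i$ remain smooth and connected with $\mathcal P_i/\B\cong\mathbb P^1_k$ --- exactly the point at which $e$ prime to $\charac k$ is indispensable, since wild ramification produces non-smooth, indeed non-reduced, parahoric group schemes and non-reduced affine flag varieties. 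Propagating the Frobenius-splitting and rational-resolution machinery through this non-split setting, and handling the accompanying case analysis over the finite list of tame types, is the technical heart of the proof.
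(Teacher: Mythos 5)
Your overall architecture is essentially the one in \cite{P-R3}: base-change to $\bar k$, use the \'etale-local product structure from \cite[Th.~1.4]{P-R3} together with pro-smoothness of $L^+P_{\mathbf f}$ to reduce $LG$ to $\F_{\mathbf f}$, pass from $\F_{\mathbf f}$ to the Iwahori case $\F_{\mathbf a}$ along the smooth surjection, use the Kottwitz homomorphism to move between components, pass to the simply connected cover via the prime-to-$p$ (hence \'etale) isogeny kernel, decompose $G_{\sc}$ into restrictions of scalars of absolutely simple factors, and finally appeal to Demazure resolutions and Frobenius splitting. This matches the strategy of \cite{P-R3} in outline. However, two points are off.

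First, you misidentify where the tameness hypothesis enters. Bruhat--Tits parahoric group schemes --- including the Iwahori $\B$ and the minimal parahorics $\mathcal P_i$, with $\mathcal P_i/\B\cong\mP^1_k$ --- are smooth affine with connected special fiber \emph{unconditionally}, by \cite{BTII}; this has nothing to do with tame versus wild ramification, and your assertion that wild ramification produces non-smooth parahoric group schemes is false. Nor is it known that wild ramification produces non-reduced affine flag varieties: both \cite{P-R3} and the present survey explicitly record that the necessity of the tameness hypothesis is open. What tameness actually buys is a realization of the twisted group and its affine flag variety as the fixed locus of a diagram automorphism $\sigma$ of prime-to-$p$ order acting on the split form and on the base; the prime-to-$p$ order is what makes the fixed-point constructions behave (one can average, fixed loci of smooth or reduced schemes stay smooth or reduced, the comparison of $LG_{\sc}$ with $(LG)^0$ via the \'etale isogeny kernel goes through). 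There is no need to construct twisted products of parahoric group schemes over a two-dimensional base for this theorem; that machinery belongs to the Beilinson--Drinfeld--Gaitsgory constructions of \S\ref{s:motivationalsection}.\ref{ss:BGm} and \cite{PZ}, not to the proof of Theorem~\ref{reduced}.

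Second, the step ``$\F_{\mathbf a}$ is the filtered colimit of its Schubert varieties $\overline{S_w}$, \dots\ so it suffices to show each $\overline{S_w}$ is reduced'' is circular as written. Schubert varieties carry the reduced structure \emph{by definition} (see the definition preceding Theorem~\ref{st:schub_vties}), so there is nothing to prove about them; and the assertion that they are cofinal in the ind-scheme presentation of $\F_{\mathbf a}$ is essentially equivalent to the theorem. The representability theorem produces $\F_{\mathbf a}$ as a colimit of certain closed subschemes $Z_n$ of finite type which are a priori possibly non-reduced, and what must be shown is that these $Z_n$ are reduced, or equivalently that the reduced Schubert varieties are cofinal. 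The Demazure/Bott--Samelson resolution and Frobenius-splitting arguments must therefore be aimed at the naively-defined schematic Schubert subschemes inside the $Z_n$ (or at the $Z_n$ themselves), not at the already-reduced $\overline{S_w}$; once one knows $\pi_\ast\mathcal O_D=\mathcal O_{Z}$ for the relevant $Z$ with its ambient scheme structure, reducedness of $Z$ does follow, but setting this up is precisely the non-circular content you need and currently skip.
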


We note that Theorem \ref{reduced} is only an existence theorem. In {\cite[Prop. 6.6]{P-R1}}, in the case $G=SL_n$, a candidate is proposed for writing the affine Grassmannian $LG/L^+G$ as an increasing union of reduced projective subschemes. This candidate indeed works if 
${\rm char }\ k=0$, or if $n\leq 2$, cf.\ loc.\ cit.  This is related to Remark \ref{rem:weyman} below. 

By contrast, if $G$ is reductive but not semi-simple, then $LG$ and $\F_{\mathbf f}$ are necessarily \emph{non-reduced} \cite[Prop.\ 6.5]{P-R3}.  We do not know if the assumption in the theorem that $G$ splits over a tamely ramified extension of $K$ is necessary.  On the other hand, the assumption on the order of $\pi_1(G_\der)$ appears to be:  for example, $\pi_1(PGL_2) = \mZ/2\mZ$ and $LPGL_2$ is non-reduced in characteristic $2$ \cite[Rem.\ 6.4]{P-R3}.

\subsection{Schubert varieties}\label{ss:Schubert_vars}

In this subsection we discuss Schubert cells and varieties in affine flag varieties; these are  the  analogs in the context of loop groups of the usual notions for ordinary flag varieties.  Let $G$ be connected reductive over $K$, and let $\mathbf f$ and $\mathbf f'$ be facets in $\B(G_\ad)$ contained in a common alcove.

\begin{Definition}
For $g \in G(K) = LG(k)$, the associated \emph{$\mathbf f'$-Schubert cell in $\F_{\mathbf f}$,} denoted $C_g$, is the reduced, locally closed subscheme of $\F_{\mathbf f}$ whose underlying topological space is the image 
of $L^+P_{\mathbf f'}$ in $\F_{\mathbf f}$ under the $L^+P_{\mathbf f'}$-equivariant map sending $1$ to the class of  $g$.%
\footnote{Note that $C_g$ is not a topological cell when $\mathbf f$ is not an alcove, i.e.,\ it is not isomorphic to an affine space.}
The associated \emph{$\mathbf f'$-Schubert variety in $\F_{\mathbf f}$,} denoted $S_g$, is the Zariski closure of $C_g$ in $\F_{\mathbf f}$ endowed with its reduced scheme structure.
\end{Definition}

We also refer to $\mathbf f'$-Schubert cells as $P_{\mathbf f'}$-Schubert cells or $P_{\mathbf f'}(\O_K)$-Schubert cells, and analogously for $\mathbf f'$-Schubert varieties.

The $\mathbf f'$-Schubert cell $C_g$ and the $\mathbf f'$-Schubert variety $S_g$ in $\F_{\mathbf f}$ only depend on the image of $g$ in the double coset space $P_{\mathbf f'}(\O_K) \bs G(K) / P_{\mathbf f}(\O_K)$.  For $k$ algebraically closed, we shall see later in  Proposition \ref{st:wt_W_double_cosets} that  this double coset space can be identified with the \emph{Iwahori-Weyl group} of $G$ when $\mathbf f$ and $\mathbf f'$ are a common alcove, and with a certain double coset space of the Iwahori-Weyl group in general.  Note that, since $P_{\mathbf f'}$ is smooth over $\O_K$ with connected special fiber, it follows from \cite[p.\ 264 Cor.\ 2]{Gr} that $L^+P_{\mathbf f'}$ is reduced and irreducible.  Hence each Schubert cell is irreducible.  Hence each Schubert variety is reduced and irreducible (reducedness being imposed by definition).  In general, the Schubert cells and Schubert varieties are subschemes of $\F_{\mathbf f}$ of finite type over $k$; moreover the Schubert varieties are proper over $k$.

The following theorem gives important information on the  structure of  Schubert varieties.

\begin{thm}[{\cite[Th.\ 8.4]{P-R3}}]\label{st:schub_vties}
Suppose that $G$ splits over a tamely ramified extension of $K$ and that the order of the fundamental group $\pi_1(G_\der)$ of the derived group $G_\der$ is prime to the characteristic of $k$.  
Then all $\mathbf f'$-Schubert varieties in $\F_{\mathbf f}$ are normal and have only rational singularities.  If $k$ has positive characteristic, then all $\mathbf f'$-Schubert varieties contained in a given $\mathbf f'$-Schubert variety are compatibly Frobenius split.\qed
\end{thm}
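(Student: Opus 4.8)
The plan is to identify the affine flag variety $\F_{\mathbf f}$, together with its full system of $\mathbf f'$-Schubert subvarieties, with the corresponding partial affine flag variety of an associated (possibly twisted) affine Kac-Moody group over $k$, and then to invoke the known structure theory of Schubert varieties in Kac-Moody flag varieties. First I would make the standard reductions: all three properties --- normality, rational singularities, and Frobenius splitting --- descend along the faithfully flat base change $\overline k/k$ for $k$ perfect, so we may assume $k=\overline k$; and since $\F_{\mathbf f}$, its Schubert varieties, and their inclusion relations are compatible with products and with central isogenies (via the functoriality of the Bruhat-Tits group schemes \cite{BTII}, under which, by the hypothesis on $\pi_1(G_\der)$, everything stays reduced), we may assume $G$ is absolutely simple and simply connected. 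Since $G$ splits over a tame extension of $K$, it determines a connected (if $G$ is split) or twisted affine Kac-Moody datum, obtained in the twisted case by folding the simply-laced one by a diagram automorphism of order $e$ prime to $p:=\charac k$; I would then match $\F_{\mathbf f}$ with the partial affine flag variety $\mathcal F\ell_P$ of this Kac-Moody group, sending the $\mathbf f'$-Schubert cell $C_g$ to the Kac-Moody Schubert cell indexed by the image of $g$ in the Iwahori-Weyl group, hence $S_g$ to the corresponding Kac-Moody Schubert variety. Equivalently, in the twisted case one realizes $\F_{\mathbf f}$ and its Schubert varieties as the fixed loci of the cyclic group $\Gal(\tilde K/K)$ of order $e$ acting on the split affine flag variety over the splitting field $\tilde K$; cf.\ Remark \ref{rem:KacMoody} and \cite{Fa4}.

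The crucial point is that this comparison is an isomorphism of \emph{schemes}, not merely of underlying topological spaces. The bijection on points is an Iwahori-Weyl group computation; the comparison morphism is a proper monomorphism; and both sides are reduced, the side $\F_{\mathbf f}$ by Theorem \ref{reduced} --- which is exactly why the theorem assumes that $G$ splits over a tame extension and that $|\pi_1(G_\der)|$ is prime to $\charac k$. A proper reduced monomorphism onto a reduced target inducing a bijection on points is an isomorphism, and it visibly respects the stratifications, so it carries the whole poset of $\mathbf f'$-Schubert subvarieties of $S_g$ over to the poset of Kac-Moody Schubert subvarieties of the ambient Kac-Moody Schubert variety.

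Granting the identification, I would conclude by citing the structure theory of Schubert varieties in (twisted) affine Kac-Moody partial flag varieties, valid over an arbitrary field: by the work of Mathieu, Littelmann and Kumar (building on Mehta-Ramanathan and Faltings \cite{Fa4} in the split case), these Schubert varieties are normal and Cohen-Macaulay with rational singularities, and when $\charac k=p>0$ there is a Frobenius splitting of the ambient flag variety that compatibly splits \emph{all} Schubert subvarieties --- in particular all Schubert subvarieties contained in a given one. The compatible-splitting statement transports verbatim to $\F_{\mathbf f}$ via the isomorphism above, yielding the final assertion; normality and rational singularities transport likewise. For the rational-singularities claim one uses the Bott-Samelson-Demazure-Hansen resolution $Z_{\underline w}\to S_g$ attached to a reduced word, a proper birational morphism from a smooth projective iterated $\mathbb P^1$-bundle: in characteristic $p$ the compatible splitting (which lifts to $Z_{\underline w}$) forces $R^i f_*\mathcal O_{Z_{\underline w}}=0$ for $i>0$ and $f_*\mathcal O_{Z_{\underline w}}=\mathcal O_{S_g}$ by the Mehta-Ramanathan argument; in characteristic $0$ this is the Kac-Moody theorem directly, or it follows by spreading out over a suitable $\mathbb Z[1/N]$ and reducing modulo $p$.

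\textbf{The main obstacle} is the first step: proving that the Kac-Moody (equivalently, tame-descent) comparison is an isomorphism of schemes compatible with the Schubert stratification and with Zariski closures. Everything downstream is essentially formal once this is in hand, but establishing it requires the reducedness input of Theorem \ref{reduced} --- hence the two hypotheses of the theorem --- together with a careful comparison of the parahoric subgroup schemes $L^+P_{\mathbf f}$ with the corresponding subgroups of the Kac-Moody group, and of the Iwahori-Weyl and affine Weyl combinatorics on the two sides. A secondary difficulty is that the Frobenius-splitting constructions for the \emph{twisted} affine Kac-Moody group must be available over the residue field $k$ rather than only over $\mathbb C$; this is covered by Mathieu's treatment of general Kac-Moody groups over arbitrary base rings, but the twisted case is where one must be most careful.
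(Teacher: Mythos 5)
Your route is essentially the reverse of the paper's, and it has a circularity problem at the exact place you flag as ``the main obstacle.'' You want to first prove that the loop-group $\mathbf f'$-Schubert varieties $S_g$ are isomorphic \emph{as schemes} to the corresponding Kac-Moody Schubert varieties $S^{\mathfrak g}_w$, and then import normality, rational singularities and compatible splitting from the Kac-Moody theory of Kumar, Mathieu and Littelmann. But the only way you propose to establish that isomorphism is to observe that the comparison is ``a proper monomorphism'' between reduced schemes inducing a bijection on points. The monomorphism claim is unjustified and, in this generality, false: a proper morphism of reduced finite-type $\overline k$-schemes that is a bijection on closed points need not be a monomorphism --- the normalization $\mathbb A^1 \to \Spec k[x,y]/(y^2-x^3)$, $t\mapsto(t^2,t^3)$, is exactly such a morphism without being an isomorphism. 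To promote a stratumwise isomorphism to an isomorphism of schemes you need Zariski's main theorem, which requires normality of the \emph{target}; and here the target is $S_g$, whose normality is precisely what the theorem is trying to establish. Reducedness of the ambient ind-scheme $\F_{\mathbf f}$ (Theorem~\ref{reduced}), which you invoke, does not help: $S_g$ is reduced by definition (it is a Zariski closure with reduced structure), and reducedness of an ambient ind-scheme in any case gives no control over normality of its subvarieties. The paper itself signals this dependence in Remark~\ref{rem:KacMoody}: the scheme-theoretic identification $S^{\mathfrak g}_w \cong S_w$ is described there as an \emph{a posteriori} consequence of Theorem~\ref{st:schub_vties} together with Mathieu--Littelmann normality, not as an independent tool available to prove it.

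What the cited proof actually does (following Faltings~\cite{Fa4} in the split simply connected case, extended in~\cite{P-R3} to the tamely ramified situation) is work \emph{directly} on the loop-group side. After your correct preliminary reductions (to $k=\overline k$ and to $G$ absolutely simple simply connected, where the tameness and $\pi_1(G_{\der})$ hypotheses enter via the parahoric group scheme constructions of~\cite{BTII}), one builds the Bott--Samelson--Demazure resolution $Z_{\underline w}\to S_g$ inside $\F_{\mathbf f}$ itself, produces a Frobenius splitting of $Z_{\underline w}$ that is compatible with its boundary divisor, pushes it forward to a compatible splitting of $S_g$ and of all its Schubert subvarieties, and deduces normality and the vanishing $R^i f_*\mathcal O_{Z_{\underline w}}=0$ ($i>0$) by the Mehta--Ramanathan argument; the characteristic-$0$ case follows by semicontinuity. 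Your last paragraph describes these same ingredients, but embedded in the Kac-Moody picture, which you cannot reach without the theorem you are trying to prove. The fix is to drop the Kac-Moody detour entirely and carry out the Demazure/Frobenius-splitting argument on $\F_{\mathbf f}$ directly, as in~\cite{Fa4} and~\cite{P-R3}.
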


We refer to \cite{B-K} for the notion of a scheme $X$ in characteristic $p$ being Frobenius split, and for a family of closed subschemes of $X$ being compatibly Frobenius split. This property has important consequences for  the local structure: if $X$ is Frobenius split, then $X$ is reduced and weakly normal, cf.\ \cite[\S1.2]{B-K}. Also, if $\{X_1,\dotsc,X_n\}$ is a family of compatibly split closed subschemes of $X$, then their (reduced) union $X_1\cup \dotsb \cup X_n$ and their intersection $X_1\cap \dotsb \cap X_n$ are also compatibly split; in particular, $X_1\cap \dotsb \cap X_n$ is reduced. Frobenius splitness also has interesting global consequences, such as strong forms of the Kodaira vanishing property, cf.\ loc.\ cit.

After introducing the Iwahori-Weyl group in \S\ref{s:combinatorics},  we will give in Propositions \ref{st:schub_dim} and \ref{dimandclos}, respectively, the dimension of Schubert varieties and their inclusion relations in terms of the combinatorics of the Iwahori-Weyl group.

\begin{Remark}
In \cite{P-R3} $\mathbf f'$-Schubert cells and varieties are only defined, and Theorem \ref{st:schub_vties} is only formulated and proved, in the case that $\mathbf f = \mathbf f'$.  But the method of proof involves a reduction to the case that $\mathbf f = \mathbf f'$ is an alcove, and this reduction step works just as well for any $\mathbf f'$-Schubert variety in $\F_{\mathbf f}$ in the sense defined here.  See \cite[Rem.\ 8.6, \S8.e.1]{P-R3}.  We shall see how $\mathbf f'$-Schubert varieties in $\F_{\mathbf f}$ with $\mathbf f' \neq \mathbf f$ arise naturally in the context of local models in the next subsection.
\end{Remark}

\begin{Remark}\label{rem:KacMoody}
In the case that the group $G$ is split semi-simple and simply connected, Theorem \ref{st:schub_vties} is due to Faltings \cite{Fa4}. 
Let us mention here that there are also corresponding results in the theory of affine flag varieties for Kac-Moody Lie algebras.
To explain this,  assume that $G$ is quasi-split, absolutely simple
and simply connected and splits over a tamely ramified extension. The  local Dynkin diagram of $G$ (as in the table of \S 1) is  also the Dynkin diagram 
of  a uniquely determined affine (or twisted affine) Kac-Moody Lie algebra ${\mathfrak g}={\mathfrak g}_{\rm KM}(G)$
(see \cite{Kac}). In the Kac-Moody setting, there is an affine flag variety ${\mathcal F}_{{\mathfrak g}}$ and Schubert varieties $S^{{\mathfrak g}}_w$
(see \cite{Ku2}, \cite{Ma}; here $w$ is an element of the affine Weyl group). Their definition is given by using an embedding into the infinite dimensional projective space associated to a highest
weight representation of the Kac-Moody algebra $\mathfrak g$; it is a priori different from our approach. The
normality of Schubert varieties $S^{\mathfrak g}_w$ in the Kac-Moody setting is a well-known 
cornerstone of the theory; it was shown by Kumar \cite{Ku1} in characteristic $0$ and 
by Mathieu \cite{Ma} and Littelmann \cite{Litt} in all characteristics.
It is not hard to  show that when ${\mathfrak g}={\mathfrak g}_{\rm KM}(G)$,
the Kac-Moody Schubert varieties $S^{{\mathfrak g}}_w$ are stratawise isomorphic to the  Schubert varieties 
$S_w$ in $\F_B=LG/L^+B$ that we consider here.  As a result, we can see a posteriori, 
as a  consequence of Theorem \ref{st:schub_vties} and the results 
of Mathieu and Littelmann, that the
Schubert varieties $S^{\mathfrak g}_{w}$ and $S_w$ are isomorphic. This also implies that the affine flag variety $\F_B$
 is isomorphic to the affine flag variety ${\mathcal F}_{{\mathfrak g}}$  for the corresponding Kac-Moody Lie algebra.
See \cite[9.h]{P-R3} for more details.
\end{Remark}

\subsection{Embedding the special fiber of local models}\label{ss:embedding}
We now come to the key application of affine flag varieties to the theory of local models, namely the embedding of the special fiber of the local model into an appropriate affine flag variety.  Since we do not know how to define the local model in general (cf.\ \S \ref{s:motivationalsection}), we can only describe the embedding in particular examples. Here we do so for $GL_n$ and $GSp_{2g}$. Note that for the Beilinson--Gaitsgory local model in \S
\ref{s:motivationalsection}.\ref{ss:BGm} such
an embedding is tautological. 

We resume the notation of \S\ref{s:examples}.  In particular, we take $k$ to be the residue field of $F$ and we recall the $\O_F$-lattices $\Lambda_i$ from \eqref{disp:Lambda_i}.  In analogy with our notation for the $\lambda_i$'s, for nonempty $I \subset \mZ$ closed under addition by $n$, we denote by $\Lambda_I$ the periodic lattice chain in $F^n$ consisting of the lattices $\Lambda_i$ for $i \in I$.

For any $\O_F$-scheme $X$, we write $\ol X$ for its special fiber $X \otimes_{\O_F} k$.

\begin{eg}[$GL_n$]\label{eg:GL_n_emb}
Let $I \subset \mZ$ be nonempty and closed under addition by $n$, let $\mu$ denote the cocharacter $\bigl( 1^{(r)}, 0^{(n-r)} \bigr)$ of the standard diagonal maximal torus in $GL_n$ and $\{\mu\}$ its geometric conjugacy class, and recall the local model $M^\loc_{GL_n,\{\mu\},\Lambda_I}$ over $\Spec \O_F$ from \S\ref{s:examples}.\ref{ss:GL_n}.  We embed the special fiber $\ol M^\loc_{GL_n,\{\mu\},\Lambda_I}$ in the affine flag variety $\F_I$ for $GL_n$ (see Example \ref{eg:GL_afv}) as follows.

Let $R$ be a $k$-algebra and $(\F_{\Lambda_i})_{i \in I}$ an $R$-point of $M^\loc_{GL_n,\{\mu\},\Lambda_I}$.  For $i \in I$, we identify
\[
   \Lambda_i \otimes_{\O_F} k \simeq \lambda_i \otimes_{\O_K} k
\]
by identifying the standard ordered bases on the two sides.  In this way we get an isomorphism of lattice chains $\Lambda_I \otimes_{\O_F} k \simeq \lambda_I \otimes_{\O_K} k$.  Via this isomorphism, we regard $\F_{\Lambda_i} \subset \Lambda_i \otimes_{\O_F} R$ as a submodule of $\lambda_i \otimes_{\O_K} R$, and we define $L_i$ to be the inverse image of $\F_{\Lambda_i}$ under the reduction-mod-$t$-map
\[
   \lambda_i \otimes_{\O_K} R[[t]] \surj \lambda_i \otimes_{\O_K} R.
\]
Then $L_i$ is an $R[[t]]$-lattice in $R((t))^n$.  Denoting the elements of $I$ by
\[
   \dotsb < i_{-1} < i_0 < i_1 < \dotsb,
\]
we get a diagram of lattices in $R((t))^n$
\begin{equation}\label{disp:L_diag}
   \newsavebox{\entryA}
   \sbox{\entryA}{$t\lambda_{i_{-1}} \otimes_{\O_K} R[[t]]$}
   \newsavebox{\entryB}
   \sbox{\entryB}{$t\lambda_{i_0} \otimes_{\O_K} R[[t]]$}
   \newsavebox{\entryC}
   \sbox{\entryC}{$t\lambda_{i_1} \otimes_{\O_K} R[[t]]$}
   \newlength{\entryAlen}
   \settowidth{\entryAlen}{\usebox{\entryA}}
   \newlength{\entryBlen}
   \settowidth{\entryBlen}{\usebox{\entryB}}
   \newlength{\entryClen}
   \settowidth{\entryClen}{\usebox{\entryC}}
   \vcenter{
   \xymatrix@C-3ex@R-3ex{
      \dotsb \ar@{}[r]|-*{\subset} &
         \makebox[\the\entryAlen]{$\lambda_{i_{-1}} \otimes_{\O_K} R[[t]]$}
               \ar@{}[r]|-*{\subset} \ar@{}[d]|-*{\rotatebox{90}{$\subset$}} &
         \makebox[\the\entryBlen]{$\lambda_{i_0} \otimes_{\O_K} R[[t]]$} 
               \ar@{}[r]|-*{\subset} \ar@{}[d]|-*{\rotatebox{90}{$\subset$}} &
         \makebox[\the\entryClen]{$\lambda_{i_1} \otimes_{\O_K} R[[t]]$}
               \ar@{}[r]|-*{\subset} \ar@{}[d]|-*{\rotatebox{90}{$\subset$}} &
         \dotsb\\
      \dotsb \ar@{}[r]|-*{\subset} &
         \makebox[\the\entryAlen]{$L_{i_{-1}}$} \ar@{}[r]|-*{\subset}
               \ar@{}[d]|-*{\rotatebox{90}{$\subset$}} &
         \makebox[\the\entryBlen]{$L_{i_0}$} \ar@{}[r]|-*{\subset}
               \ar@{}[d]|-*{\rotatebox{90}{$\subset$}} &
         \makebox[\the\entryClen]{$L_{i_1}$} \ar@{}[r]|-*{\subset}
               \ar@{}[d]|-*{\rotatebox{90}{$\subset$}} &
         \dotsb\\
      \dotsb \ar@{}[r]|-*{\subset} &
         \usebox\entryA \ar@{}[r]|-*{\subset} &
         \usebox\entryB \ar@{}[r]|-*{\subset} &
         \usebox\entryC \ar@{}[r]|-*{\subset} &
         \dotsb
   }
   }.
\end{equation}
It is easy to verify that the collection $(L_i)_{i\in I}$ specifies a point in $\F_I(R)$, and we define the morphism
\[
   \iota\colon \ol M^\loc_{GL_n,\{\mu\},\Lambda_I} \to \F_I
\]
by the rule $(\F_{\Lambda_i})_i \mapsto (L_i)_i$.  Plainly $\iota$ is a monomorphism, and it is therefore a closed immersion of ind-schemes since $\ol M^\loc_{GL_n,\{\mu\},\Lambda_I}$ is proper.

Moreover, it is easy to see that $(L_i)_i$ is \emph{$r$-special} as defined in  Example \ref{eg:GL_afv}.  Hence we get an embedding
\[
   \vcenter{
   \xymatrix@R-2ex@C-5ex{
      \ol M^\loc_{GL_n,\{\mu\},\Lambda_I} 
            \ar@{^{(}->}[rr]^-{\iota} \ar@{_{(}-->}[rd]
         & &  \F_I\\
      & \F'_{I-r} \ar@{^{(}->}[ru]
   }
   },
\]
where $\F'_{I-r}$ is the affine flag variety for $SL_n$ corresponding to the set $I - r$ and the unlabeled solid arrow is the embedding discussed in Example \ref{eg:GL_afv}.

The embeddings of $\ol M^\loc_{GL_n,\{\mu\},\Lambda_I}$ into $\F_I$ and into $\F'_{I-r}$ enjoy an important equivariance property which we now describe.  Let $\A$ denote the $\O_F$-group scheme of automorphisms of the lattice chain $\Lambda_I$, defined in the obviously analogous way to the $\O_K$-group scheme $P_I$ in Example \ref{eg:GL_afv}.  Then $\A$ acts naturally on $M^\loc_{GL_n,\{\mu\},\Lambda_I}$.  Now consider the positive loop group $L^+P_I$ over $\Spec k$.  The tautological action of $P_I$ on $\lambda_I$ furnishes a natural action of $L^+P_I$ on the chain $\lambda_I \otimes_{\O_K} k$.  The isomorphism $\lambda_I \otimes_{\O_K} k \simeq \Lambda_I \otimes_{\O_F} k$ then yields a homomorphism $L^+P_I \to \ol \A$.  Hence $L^+P_I$ acts on $\ol M^\loc_{GL_n,\{\mu\},\Lambda_I}$.  It is now easy to see that the embedding $\ol M^\loc_{GL_n,\{\mu\},\Lambda_I} \inj \F_I$ is $L^+P_I$-equivariant with respect to the natural $L^+P_I$-action on $\F_I$.  As a consequence, we see that $\ol M^\loc_{GL_n,\{\mu\},\Lambda_I}$ decomposes into a union of $P_I$-Schubert cells inside $\F_I$.

Entirely similar remarks apply to the embedding $\ol M^\loc_{GL_n,\{\mu\},\Lambda_I} \inj \F'_{I-r}$: the positive loop group $L^+P'_I$ acts naturally on $\ol M^\loc_{GL_n,\{\mu\},\Lambda_I}$ in an analogous way, the embedding into $\F'_{I-r}$ is then $L^+P'_I$-equivariant, and we conclude that $\ol M^\loc_{GL_n,\{\mu\},\Lambda_I}$ decomposes into a union of $P'_I$-Schubert cells inside $\F'_{I-r}$.
\end{eg}

\begin{eg}[$GSp_{2g}$]
Now let $n = 2g$, $\mu = \bigl(1^{(g)},0^{(g)}\bigr)$, and $\{\mu\}$ its geometric conjugacy class in $GSp_{2g}$, and suppose that $I$ is, in addition, closed under multiplication by $-1$.  Then we may consider the local model $M^\loc_{GSp_{2g},\{\mu\},\Lambda_I}$ for $GSp_{2g}$ as in \S\ref{s:examples}.\ref{ss:GSp_2g}.  The embedding of the special fiber $\ol M^\loc_{GSp_{2g},\{\mu\},\Lambda_I}$ into the affine flag variety $\F_I$ for $GSp_{2g}$ is completely analogous to the situation just considered for $GL_n$.

More precisely, under the embedding of $\ol M^\loc_{GL_{2g},\{\mu\},\Lambda_I}$ into the affine flag variety for $GL_{2g}$ from the previous example, it is easy to see that the closed subscheme
\[
   \ol M^\loc_{GSp_{2g},\{\mu\},\Lambda_I} \subset \ol M^\loc_{GL_{2g},\{\mu\},\Lambda_I}
\]
is carried into the locus of points satisfying condition \eqref{it:afv_dlty_cond} in Example \ref{eg:GSp_afv} (where for any $R$-valued point, the scalar $c$ in \eqref{it:afv_dlty_cond} can be taken to equal $t^{-1}$ globally).  Hence we get the desired embedding of $\ol M^\loc_{GSp_{2g},\{\mu\},\Lambda_I}$ into the affine flag variety for $GSp_{2g}$.

Continuing the analogy, the parahoric group scheme for $GSp_{2g}$ denoted $P_I$ in Example \ref{eg:GSp_afv} again acts naturally on $\ol M^\loc_{GL_{2g},\{\mu\},\Lambda_I}$, and we again conclude that $\ol M^\loc_{GSp_{2g},\{\mu\},\Lambda_I}$ decomposes into a union of $P_I$-Schubert cells inside $\F_I$.  Moreover, since the image of $\ol M^\loc_{GL_{2g},\{\mu\},\Lambda_I}$ consists of $g$-special points in $\F_I$, there is an induced embedding $\ol M^\loc_{GL_{2g},\{\mu\},\Lambda_I} \inj \F'_{I-g}$, where $\F'_{I-g}$ denotes the affine flag variety for $Sp_{2g}$ corresponding to the set $I-g$; and $\ol M^\loc_{GSp_{2g},\{\mu\},\Lambda_I}$ decomposes into a union of $P'_I$-Schubert cells inside $\F'_{I-g}$.
\end{eg}

The embeddings of the special fibers of the local models for other groups discussed in \S\ref{s:examples} can all be described similarly.  See \cite[\S7.1]{Sm1} for $GO_{2g}$ (at least in the Iwahori case), \cite[\S4]{P-R2} for totally ramified $\Res_{F/F_0}GL_n$, \cite[\S11]{P-R2} for totally ramified $\Res_{F/F_0} GSp_{2g}$, and \cite[\S3.3]{P-R4} and \cite[\S4.4]{Sm3} for ramified $GU_n$.  
In all cases, the image of the special fiber of the local model decomposes into a union of Schubert cells inside the affine flag variety.  In fact, since the local model is proper, the image is a union of Schubert varieties. It then becomes an interesting  problem to determine which Schubert varieties occur in the union.  This is a problem of an essentially combinatorial nature to which we turn in \S\ref{s:combinatorics}.

\section{Combinatorics}\label{s:combinatorics}

In all known examples --- and as we saw explicitly for $GL_n$ and $GSp_{2g}$ in \S\ref{s:afv}.\ref{ss:embedding} --- the special fiber of the local model admits an embedding into an affine flag variety, with regard to which it decomposes into a union of Schubert varieties.  It is then a basic problem to determine which Schubert varieties occur in the union.  
Arising from this are a number of considerations of an essentially combinatorial nature to which we turn in this section.  Much of our discussion is borrowed from \cite[\S\S2--3]{R} and \cite[\S\S2.1--2.2]{P-R4}.

We shall work over a complete, discretely valued field $L$, which we suppose in addition is \emph{strictly Henselian}.  For applications to local models, we are especially interested in the setting $L = \ol k((t))$, where $\overline k$ is an algebraic closure of the residue field $k$ as denoted in \S\ref{s:examples}; this setting implicitly corresponds to working with the \emph{geometric} special fiber of the local model.  We write $\O_L$ for the ring of integers in $L$.

Given a connected reductive group $G$ over $L$, we denote by $\kappa_G$ its Kottwitz homomorphism, as encountered earlier in \eqref{disp:Kottwitz_hom}; recall that this is a functorial surjective map $G(L) \to \pi_1(G)_I$, where $I := \Gal(L^\sep/L)$.  To be clear about signs, we take $\kappa_G$ to be exactly the map defined by Kottwitz in \cite[\S7]{Ko2} (which makes sense over any complete, discretely valued, strictly Henselian field), without the intervention of signs.  This is opposite to the sign convention taken in Richarz's article \cite{Ri}, to which we shall refer in several places.  The only practical effect of this difference is that we shall be led to make use of dominant coweights wherever Richarz makes use of antidominant coweights.

\subsection{Iwahori-Weyl group}\label{ss:I-W_gp}
Let $G$ be a connected reductive group over $L$, let $S$ be a maximal split torus in $G$, let $G_\ad$ denote the adjoint group of $G$, and let $S_\ad$ denote the image of $S$ in $G_\ad$.  Then $S_\ad$ is a maximal split torus in $G_\ad$, and we let $\A := X_*(S_\ad) \otimes_\mZ \mR$ denote the apartment in the building of $G_\ad$ attached to $S_\ad$. Let $T$ be the centralizer of $S$ in $G$.  Then $T$ is a maximal torus, since by Steinberg's theorem $G$ is quasi-split.  Let $N$ be the normalizer of $T$ in $G$, and let $T(L)_1$ denote the kernel of the Kottwitz homomorphism $\kappa_T \colon T(L) \surj \pi_1(T)_I = X_*(T)_I$ for $T$.

\begin{Definition}\label{def:I-W_gp}
The \emph{Iwahori-Weyl group of $G$ associated to $S$} is the group
\[
   \wt W_{G,S} := \wt W_G := \wt W := N(L)/T(L)_1.
\]
\end{Definition}

Observe that the evident exact sequence
\[
   0 \to T(L)/T(L)_1 \to \wt W \to N(L)/T(L) \to 1
\]
exhibits $\wt W$ as an extension of the relative Weyl group
\[
   W_0 := N(L)/T(L)
\]
by (via the Kottwitz homomorphism)
\[
   X_*(T)_I \cong T(L)/T(L)_1.
\]
In fact this sequence splits, by splittings which depend on choices.  More precisely, for any parahoric subgroup $K \subset G(L)$ attached to a facet contained in the apartment for $S$, let%
\footnote{Here we follow the convention of \cite{H-R,P-R3,P-R4} by using a superscript $K$ in \eqref{disp:W^K}.  Some authors would instead denote the group \eqref{disp:W^K} by $W_K$, and then use $W^K$ to denote the set of elements $w$ in the affine Weyl group such that $w$ has minimal length in the coset $wW_K$.}
\begin{equation}\label{disp:W^K}
   W^K := \bigl(N(L) \cap K\bigr)\big/ T(L)_1.
\end{equation}

\begin{prop}[{\cite[Prop.~13]{H-R}}]\label{st:semidirect_prod}
Let $K$ be the maximal parahoric subgroup of $G(L)$ attached to a special vertex in $\A$. Then the subgroup $W^K$ of $\wt W$ projects isomorphically to the factor group $W_0$, so that $\wt W$ admits a semidirect product decomposition
\begin{flalign*}
   \phantom{\qed} & &
   \wt W = X_*(T)_I \rtimes W^K \cong X_*(T)_I \rtimes W_0. & &
   \qed
\end{flalign*}
\end{prop}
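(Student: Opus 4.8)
The argument --- which is that of Haines and Rapoport --- reduces the proposition to the single claim that the composite $W^K \to \wt W \to W_0$ is an isomorphism. Granting this, the inclusion $W^K \hookrightarrow \wt W$ is a section of the exact sequence $1 \to X_*(T)_I \to \wt W \to W_0 \to 1$ recorded above, so $\wt W = X_*(T)_I \rtimes W^K$, and composing the section with the isomorphism $W^K \cong W_0$ gives $\wt W \cong X_*(T)_I \rtimes W_0$. Thus everything comes down to injectivity and surjectivity of $W^K \to W_0$, and only the latter will use that the vertex underlying $K$ is \emph{special}.

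For injectivity, recall that $\ker(\wt W \to W_0) = T(L)/T(L)_1$, so an element of $W^K$ mapping to $1$ in $W_0$ is represented by some $n \in N(L) \cap K \cap T(L) = T(L) \cap K$; it therefore suffices to see $T(L) \cap K = T(L)_1$. Here I would appeal to Bruhat--Tits theory: writing $K = \mathcal G^0_v(\O_L)$ for the neutral-component parahoric group scheme attached to $v$, the schematic closure of $T$ in $\mathcal G^0_v$ is the connected N\'eron model $\mathcal T^0$ of $T$ (this is where it matters that $v$ lies in the apartment of $S$), and $\mathcal T^0(\O_L) = \ker \kappa_T = T(L)_1$ by the construction of the Kottwitz homomorphism for a torus. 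Hence $T(L) \cap K = \mathcal T^0(\O_L) = T(L)_1$, and $W^K$ meets the kernel of $\wt W \to W_0$ trivially.

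For surjectivity --- the main obstacle, and the one step genuinely using speciality --- I would lift a generating set of $W_0$ into $N(L) \cap K$. The group $W_0$ is generated by the reflections $s_\alpha$ attached to the relative roots $\alpha$ of $(G,S)$. Fix $\alpha$. Because $v$ is special, there is an affine root $a$ with linear part $\alpha$ and $a(v) = 0$, i.e.\ a wall through $v$ in the direction of $\alpha$; both $a$ and $-a$ are then non-negative at $v$, so the rank-one subgroup generated by the affine root groups $U_{a}$ and $U_{-a}$ is contained in the parahoric $K = K_v$. This subgroup contains a representative $n_\alpha \in N(L)$ of $s_\alpha$, so $n_\alpha \in N(L)\cap K$ and $s_\alpha$ lies in the image of $W^K$. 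As the $s_\alpha$ generate $W_0$, the map $W^K \to W_0$ is surjective, and combined with injectivity it is an isomorphism. The delicate points one must pin down are exactly the combinatorial content of ``special'' (that \emph{every} root direction is the direction of a wall through $v$) and the Bruhat--Tits fact that the reflection representatives so obtained lie in the \emph{neutral-component} parahoric $K_v$, not merely in the full stabilizer of $v$; everything else is formal bookkeeping with $\kappa_G$, $\kappa_T$ and the extension $1 \to X_*(T)_I \to \wt W \to W_0 \to 1$.
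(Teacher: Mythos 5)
The paper gives no proof of this proposition, citing it from Haines--Rapoport (\cite[Prop.~13]{H-R}); your argument correctly reconstructs the substance of that proof. The reduction to showing that $W^K \to W_0$ is an isomorphism is exactly right, and the two halves are handled in the standard way: injectivity comes down to $T(L)\cap K = T(L)_1$, which is \cite[Lem.~4]{H-R}, and your derivation of that lemma via the identification of the schematic closure of $T$ inside the parahoric group scheme with the connected N\'eron model $\mathcal T^0$ (whose $\O_L$-points are $\ker\kappa_T$ by construction) is the right mechanism; surjectivity uses that every relative root direction supports an affine wall through a special vertex, whence the affine root groups $U_a$, $U_{-a}$ with $a(v)=0$ lie in $K$ and the rank-one calculation produces a representative of $s_\alpha$ inside $N(L)\cap K$, and you correctly flag that this lands in the neutral-component parahoric rather than merely the full stabilizer because it is assembled from the affine root groups themselves. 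The only stylistic difference worth noting is that Haines--Rapoport deduce surjectivity somewhat more abstractly, via the identification of $W^K$ with the Weyl group of the special fiber of the parahoric group scheme (cf.\ Remark~\ref{rk:W^K_Weyl_gp}) together with the fact that for a special vertex this reductive quotient has Weyl group all of $W_0$; your argument lifts the simple reflections directly, which is more explicit but amounts to the same thing.
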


We typically write $t_\mu$ when we wish to regard an element $\mu \in X_*(T)_I$ as an element in $\wt W$, and we refer to $X_*(T)_I$ as the \emph{translation subgroup} of $\wt W$.

\begin{Remark}\label{rk:wtW_split}
Let $\R = (X^*,X_*,\Phi,\Phi^\vee)$ be a root datum.  We define the \emph{extended affine Weyl group $\wt W(\R)$ of $\R$} to be the semidirect product $X_* \rtimes W(\R)$, where $W(\R)$ denotes the (finite) Weyl group of $\R$.
   
In the case that $G$ is split, $\wt W$ canonically identifies with the extended affine Weyl group $\wt W(\R)$ of the root datum $\R := \bigl(X^*(S), X_*(S), \Phi, \Phi^\vee \bigr)$ of $G$.  Indeed, in this case $S=T$, the action of $I$ on $X_*(T)$ is trivial, and $T(L)_1=S(\O_L)$. Taking $G(\O_L)$ as the special maximal parahoric subgroup in Proposition \ref{st:semidirect_prod}, we have $\wt W= X_*(S) \rtimes W_0$, where $W_0=W$ is the absolute Weyl group, which identifies with $W(\R)$. Then $\wt W$ contains the affine Weyl group $W_\aff(\R) := Q^\vee\rtimes W_0$ as a normal subgroup with abelian factor group $\pi_1(G) = X_*(S)/Q^\vee$. Here $Q^\vee\subset X_*(S)$ denotes the subgroup generated by the coroots $\Phi^\vee$. 

Even if $G$ is nonsplit, $\wt W$ can be identified with a  generalized extended affine Weyl group of a reduced root system, as is explained in Remark \ref{rk:generalized_awg} below.
\end{Remark}

\begin{eg}[$GL_n$]\label{eg:wtW_GL_n}
Let $G = GL_n$ in Remark \ref{rk:wtW_split} and take for $S = T $ the standard split maximal torus of diagonal matrices in $G$.  Then
\[
   \wt W \cong \mZ^n \rtimes S_n,
\]
where $\mZ^n \isoarrow X_*(S)$ by sending the $i$th standard basis element to the cocharacter $x \mapsto \diag\bigl(1^{(i-1)},x,1^{(n-i)}\bigr)$, and where the symmetric group $S_n$ of permutation matrices maps isomorphically to the Weyl group.
\end{eg}

\begin{eg}[$GSp_{2g}$]\label{eg:wtW_GSp}
Let $G = GSp_{2g}$ in Remark \ref{rk:wtW_split} and take $S = T$ to be the standard split maximal torus of diagonal matrices in $G$.  Let
\[
   S_{2g}^*  := \bigl\{\, \sigma\in S_{2g} \bigm| \sigma(i^*) = \sigma(i)^* 
               \text{ for all $i$} \,\bigr\},
\]
where $i^* := 2g+1-i$ for any $i \in \{1,\dotsc,2g\}$.  Then $S_{2g}^*$ identifies with the subgroup of permutation matrices in $G$ and maps isomorphically to the Weyl group.  We obtain
\[
   \wt W \cong X_* \rtimes S_{2g}^*,
\]
where, in terms of the natural embedding of $S$ into the maximal torus for $GL_{2g}$, and in terms of the identification of the previous example,  we have
\[
   X_* := \bigl\{\, (x_1,\dotsc,x_{2g}) \in \mZ^{2g} \bigm|
               x_1 + x_{2g} = x_2 + x_{2g -1} = \dotsb = x_g + x_{g+1} \,\bigr\}
   \isoarrow X_*(S).
\]
\end{eg}

\begin{Remark}[$GO_{2g}$]\label{rk:wtW_GO}
Although the orthogonal similitude group $GO_{2g}$ is not connected, we can give an ad hoc definition of its Iwahori-Weyl group by following the recipe in Definition \ref{def:I-W_gp} in the most literal way, where we take the normalizer in the full group.  We find
\[
   \wt W_{GO_{2g}} \cong X_* \rtimes S_{2g}^*,
\]
just as in the previous example.  The Iwahori-Weyl group $\wt W_{GO_{2g}^\circ}$ of the identity component $GO_{2g}^\circ$ is naturally a subgroup of $\wt W_{GO_{2g}}$ of index $2$. Explicitly,
\[
   \wt W_{GO_{2g}^\circ} \cong X_* \rtimes S_{2g}^\circ,
\]
where
\[
   S_{2g}^\circ := \bigl\{ \sigma \in S_{2g}^* \bigm|
                      \text{$\sigma$ is even in $S_{2g}$} \,\bigr\}.
\]

It turns out that, just as we shall see for connected groups, in the function field case $\wt W_{GO_{2g}}$ continues to parametrize the Schubert cells in the Iwahori affine flag variety for $GO_{2g}$.  See \cite{Sm1}.
\end{Remark}

The following group-theoretic result provides the key link between the Iwahori-Weyl group and local models.

\begin{prop}[{\cite[Prop.~8]{H-R}}]\label{st:wt_W_double_cosets}
Let $B$ be an Iwahori subgroup of $G(L)$ attached to an alcove in $\A$.  Then the inclusion $N(L) \subset G(L)$ induces a bijection
\[
   \wt W \isoarrow B\bs G(L) / B.
\]
More generally, let $K$ and $K'$ be parahoric subgroups of $G(L)$
attached to facets in $\A$.
Then the inclusion $N(L) \subset G(L)$ induces a bijection
\begin{flalign*}
   \phantom{\qed} & &
   W^{K'} \bs \wt W / W^{K} \isoarrow K'\bs G(L) / K. & &
   \qed
\end{flalign*}
\end{prop}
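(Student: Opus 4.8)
\textbf{Proof plan for Proposition \ref{st:wt_W_double_cosets}.}
The plan is to reduce everything to the Iwahori--Bruhat decomposition, which is the heart of the statement, and then to bootstrap the general parahoric case from it by a standard covering argument. First I would recall that by Bruhat--Tits theory the pair $(G(L),B)$, together with $N(L)$, forms a generalized (affine) BN-pair; since $L$ is strictly Henselian the residue field is separably closed, so the relevant root groups behave as in the split case and the Iwahori $B$ is the stabilizer of the chosen alcove in $\A$ (up to the connected-component subtlety already built into the parahoric group schemes $\mathcal G_{\mathbf f}$). From the BN-pair axioms one gets a disjoint decomposition $G(L)=\coprod_{w} B\,\dot w\,B$, where $w$ ranges over $N(L)/(N(L)\cap B)$; so the content is to identify $N(L)\cap B$ with $T(L)_1$. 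Here I would invoke the description of $B$ as $\mathcal G_{\mathbf a}(\O_L)$ for the alcove $\mathbf a$: an element of $N(L)$ lies in $B$ iff it fixes $\mathbf a$ pointwise and has trivial Kottwitz invariant, which by the very definition of the parahoric (connected stabilizer) forces it into $T(L)_1$. Thus $N(L)\cap B=T(L)_1$ and $\wt W=N(L)/T(L)_1\isoarrow B\bs G(L)/B$.

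Next, for a general parahoric $K=\mathcal G_{\mathbf f}(\O_L)$ attached to a facet $\mathbf f$ in $\A$ containing the alcove $\mathbf a$ (after conjugating, one may assume the facets all lie in $\overline{\mathbf a}$), I would use that $K$ is generated by $B$ together with $N(L)\cap K$; more precisely $K=\coprod_{v\in W^K} B\dot v B$, which is again a consequence of the BN-pair structure (the parahoric is a standard parabolic-type subgroup for the affine Tits system). Granting this, the double coset space $K'\bs G(L)/K$ is computed by collapsing $B\bs G(L)/B$ on the left by $W^{K'}$ and on the right by $W^{K}$: indeed $K'gK=\bigcup_{u\in W^{K'},v\in W^K}B\dot u g\dot v B$, and the map $W^{K'}\bs\wt W/W^{K}\to K'\bs G(L)/K$ is well-defined, surjective by this covering, and injective because two elements of $N(L)$ lying in the same $K'$-$K$ double coset must, after multiplying by elements of $N(L)\cap K'$ and $N(L)\cap K$, lie in the same $B$-$B$ double coset, hence be equal in $\wt W$ up to $W^{K'}$ and $W^K$. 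This uses the first part together with the fact that $W^K=(N(L)\cap K)/T(L)_1$ is exactly the stabilizer-type subgroup appearing in the collapsing.

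The main obstacle, and the step I would treat most carefully, is the identification $N(L)\cap B=T(L)_1$ and the parallel claim $K=\coprod_{v\in W^K}B\dot v B$: both hinge on the precise relationship between the Bruhat--Tits \emph{connected} stabilizer group schemes and the naive fixers of facets, i.e.\ on the fact (Haines--Rapoport \cite{H-R}) that the parahoric subgroup in the sense used here is generated by $T(L)_1$ and the affine root subgroups for roots vanishing on the facet. Once one accepts the input from \cite{BTII} and \cite{H-R} that these parahorics do form an affine Tits system with Weyl group $\wt W$ and parabolic subgroups indexed by subsets of the affine simple reflections (with $W^K$ the corresponding parabolic subgroup of $\wt W$), the two displayed bijections are the general Bruhat and parabolic-Bruhat decompositions for that Tits system, and no further computation is needed. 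I would therefore structure the proof as: (i) cite the affine Tits system for $(G(L),B,N(L))$ from Bruhat--Tits; (ii) derive $N(L)\cap B=T(L)_1$ from the connected-stabilizer definition to get the first bijection; (iii) deduce the second by the standard $P\bs G/Q$ double coset formula for Tits systems, with $K,K'$ the standard parabolics and $W^K,W^{K'}$ their Weyl groups.
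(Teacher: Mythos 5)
The paper itself records this as a citation to \cite[Prop.~8]{H-R} and gives no proof, so the comparison is with the Haines--Rapoport argument. Your plan has the right shape and the right ingredients ($N(L)\cap B = T(L)_1$, the refinement $K = B W^K B$, and the standard $P\backslash G/Q$ counting for Tits systems), and this is indeed essentially the route taken in the reference. There is, however, one technical point that your phrase ``generalized (affine) BN-pair'' quietly absorbs and that you should make explicit, because it is exactly where a naive version of the argument breaks.

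The quadruple $(G(L), B, N(L), \Pi)$ is \emph{not} a Tits system in general: $\wt W = N(L)/T(L)_1$ is typically not a Coxeter group but an extension of the Coxeter group $W_\aff$ by the abelian group $\Omega \cong \pi_1(G)_I$. What Bruhat--Tits theory actually gives --- and what the present paper records around \eqref{disp:G(L)_1_double_Tits_system} --- is a (double) Tits system on $G(L)_1 = \ker\kappa_G$, the subgroup generated by all parahorics, with Weyl group $N(L)_1/T(L)_1 \cong W_\aff$. From this one gets $G(L)_1 = \coprod_{w\in W_\aff} B\dot w B$, not yet the decomposition of $G(L)$ that you assert. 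To pass to $G(L)$ and $\wt W$ you need two further facts: $(a)$ the Kottwitz homomorphism restricted to $T(L)$ is still surjective onto $\pi_1(G)_I$, so $G(L) = N(L)\cdot G(L)_1$; and $(b)$ any lift $\dot\omega \in N(L)$ of $\omega\in\Omega$ normalizes $B$, since $\omega$ stabilizes the base alcove. Only combining $(a)$ and $(b)$ yields $G(L) = \coprod_{\omega\in\Omega}\coprod_{w\in W_\aff} B\,\dot w\dot\omega\, B = \coprod_{\tilde w\in\wt W} B\tilde w B$, and hence the first bijection via $N(L)\cap B = T(L)_1$ (which is \cite[Lem.~6]{H-R}, as you indicate). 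The same point recurs in the parahoric case: $K$ and $K'$ are contained in $G(L)_1$, while the ambient group $G(L)$ is strictly larger, so surjectivity of $N(L) \to K'\backslash G(L)/K$ again requires $(a)$. With the $G(L)_1$-versus-$G(L)$ and $W_\aff$-versus-$\wt W$ distinction spelled out --- and after conjugating $\mathbf f, \mathbf f'$ into $\overline{\mathbf a}$ so that $K, K' \supset B$ and $W^K, W^{K'}$ are standard parabolic subgroups of $W_\aff$ --- your steps go through and agree with the cited proof.
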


\begin{Remark}\label{rk:identWK}
 Assume in Proposition \ref{st:wt_W_double_cosets} that $K$ is  a parahoric subgroup attached to a \emph{special} vertex.    Then $\wt W \cong X_*(T)_I \rtimes W^K$ and, since $W^K \cong W_0$,
\[
   W^K \bs \wt W / W^K \cong X_*(T)_I/W_0.
\]
This last set may in turn be identified with the set of dominant elements in $X_*(T)_I$
 (any element in $X_*(T)_I$  is conjugate under $W_0$ to a unique dominant element, cf.\ {\cite[Remark before Cor.\ 1.8]{Ri}}; recall that we use dominant elements where Richarz uses antidominant ones). The notion of dominant elements in $X_*(T)_I$ arises after identifying $\wt W$ with a generalized extended affine Weyl group of a certain root system $\Sigma$ in the sense of Remark \ref{rk:generalized_awg} below, and then choosing a basis for $\Sigma$; comp.\ Remark \ref{rk:bruhatKK}. 

If we assume in addition  that $G$ is split, then $K$ is hyperspecial, $X_*(T)_I=X_*(S)$, the notion of a dominant coweight in $X_*(S)$ is more standard. 

\end{Remark}

\begin{Remark}\label{rk:W^K_Weyl_gp}
Let $K$ be a parahoric subgroup of $G(L)$ and $P$ the corresponding parahoric group scheme over $\Spec \O_L$. Then $W^K$ can be identified with
the Weyl group of the special fiber $\ol{P}$ of $P$ {\cite[Prop.~12]{H-R}}.
\end{Remark}

\subsection{Bruhat order}\label{ss:bruhat_order}
The force of Proposition \ref{st:wt_W_double_cosets} in the context of local models is that, when $L$ is a field of Laurent series and $K$ and $K'$ are as in the proposition, the set of double classes $W^{K'} \bs \wt W / W^{K}$ (which is $\wt W$ itself in the Iwahori case) parametrizes the $K'$-Schubert cells in the affine flag variety for $K$.  To better exploit this fact, in this subsection we shall introduce the \emph{Bruhat order} on $\wt W$.
This will lead us to some much heavier usage of Bruhat-Tits theory than we have yet encountered, but non-experts should be able to safely treat many of the external results we appeal to as black boxes, with little impairment to overall understanding. 
We continue with the notation of the previous subsection.

Consider the apartment $\A = X_*(S_\ad) \otimes_\mZ \mR \cong X_*(T_\ad)_I \otimes_\mZ \mR$.  We obtain from the Kottwitz homomorphism a map%
\footnote{Note that the displayed composite differs from the analogous map defined in Tits's article \cite[\S1.2]{T} by a sign of $-1$.  However this discrepancy will make no difference in any of our subsequent appeals to \cite{T}.  It matters only in that, as we have mentioned before, we systematically work with dominant elements where Richarz \cite{Ri} uses antidominant elements; see especially Proposition \ref{dimspec}.}
\[
   T(L)/T(L)_1 \xrightarrow[\sim]{\kappa_T} X_*(T)_I
        \to X_*(T_\ad)_I
        \to \A.
\]
Let $\Aff(\A)$ denote the group of affine transformations on $\A$.  The relative Weyl group $W_0$ acts naturally on $\A$ by linear transformations, and regarding $\A \rtimes W_0$ as a subgroup of $\Aff(\A)$, the displayed map extends to a map of exact sequences
\begin{equation}\label{disp:nu_diag}
   \vcenter{
   \xymatrix{
      1 \ar[r]
         & T(L)/T(L)_1 \ar[r] \ar[d]
         & \wt W \ar[r] \ar@{-->}[d]^-{\nu}
         & W_0 \ar[r] \ar@{=}[d]
         & 1 \\
      1 \ar[r]
         & \A \ar[r]
         & \A \rtimes W_0 \ar[r]
         & W_0 \ar[r]
         & 1,
   }
   }
\end{equation}
in which $\nu$ is unique up to conjugation by a unique translation element in $\Aff(\A)$; see \cite[\S1.2]{T} or \cite[Prop.\ 1.6 and 1.8]{Lan}.

Having chosen $\nu$, the set of \emph{affine roots} $\Phi_\aff$, which consists of certain affine functions on $\A$, is defined in Tits's article \cite[\S1.6]{T}.  There then exists a unique reduced root system $\Sigma$\label{ref:Sigma} on $\A$ with the properties that
\begin{altitemize}
\item
   every root $\alpha \in \Sigma$ is proportional to the linear part of some affine root; and
\item
   for any special vertex $v \in \A$ \cite[\S1.9]{T}, translation by $-v$
\[
   \A \xrightarrow{t_{-v}} \A
\]
carries the vanishing hyperplanes of the affine roots to precisely the vanishing hyperplanes of the functions on $\A$
\begin{equation}\label{disp:aff_root_hyperplanes}
   u \mapsto \alpha(u) + d
   \quad\text{for}\quad
   \alpha \in \Sigma,\ d \in \mZ;
\end{equation}
\end{altitemize}
see \cite[1.3.8]{BTI}, \cite[\S1.7]{T}, and \cite[VI \S2.5 Prop.\ 8]{B}.  For $G$ absolutely simple and simply connected, see Remark \ref{rk:generalized_awg} below for a description of $\Phi_\aff$ and $\Sigma$.

The \emph{affine Weyl group $W_\aff$} is the group of affine transformations on $\A$ generated by the reflections through the affine root hyperplanes.  Analogously, the \emph{affine Weyl group $W_\aff(\Sigma)$ of $\Sigma$} \cite[VI \S2.1 Def.\ 1]{B} is the group of affine transformations on $\A$ generated by the reflections through the vanishing hyperplanes of the functions \eqref{disp:aff_root_hyperplanes}.  Thus for any special vertex $v$, we have
\[
   W_\aff = t_vW_\aff(\Sigma)t_v^{-1}.
\]
Hence $W_\aff$ is a Coxeter group generated (as a Coxeter group) by the reflections through the walls of any fixed alcove.  The affine Weyl group for $\Sigma$ admits the semidirect product decomposition $W_\aff(\Sigma) = Q^\vee(\Sigma) \rtimes W(\Sigma)$, where $Q^\vee(\Sigma)$ is the coroot lattice for $\Sigma$ and $W(\Sigma)$ is the Weyl group of $\Sigma$.

To apply the preceding discussion to the Iwahori-Weyl group, consider the diagram
\[
   \xymatrix@C=3ex{
      & \wt W \ar[d]^-{\nu}\\
      W_\aff \ar@{}[r]|-*{\subset}  &  \Aff(\A).
   }
\]
We shall show that $W_\aff$ lifts canonically to $\wt W$.  Indeed, let
\[
   G(L)_1 := \ker \kappa_G
\]
and
\begin{equation}\label{disp:N(L)_1}
   N(L)_1 := N(L) \cap G(L)_1.
\end{equation}
Let $B \subset G(L)$ be the Iwahori subgroup attached to an alcove in $\A$, and let $\Pi$ be the set of reflections through the walls of this alcove. Then, taking into account that $N(L) \cap B = T(L)_1$ \cite[Lem.\ 6]{H-R} and that $G(L)_1$ is the subgroup of $G(L)$ generated by the parahoric subgroups of $G(L)$ \cite[Lem.\ 17]{H-R}, the quadruple
\begin{equation}\label{disp:G(L)_1_double_Tits_system}
   \bigl(G(L)_1, B, N(L)_1, \Pi\bigr)
\end{equation}
is a double Tits system \cite[5.1.1]{BTI} whose Weyl group $N(L)_1/T(L)_1 \subset \wt W$ identifies via $\nu$ with $W_\aff$ by \cite[5.2.12]{BTII}.

The affine Weyl group can also be realized as a subgroup of $\wt W$ via the
simply connected cover $G_\sc$ of the derived group $G_\der$ of $G$. To explain
this, let $S_\sc$, $T_\sc$, and $N_\sc$ denote the respective inverse images of
$S \cap G_\der$, $T\cap G_\der$, and $N \cap G_\der$ in $G_\sc$. Then $S_\sc$ is
a maximal split torus in $G_\sc$ with centralizer $T_\sc$ and normalizer
$N_\sc$. Let $\wt W_\sc := N_\sc(L)/T_\sc(L)_1$ denote the Iwahori-Weyl group of
$G_\sc$, let $B_\sc \subset G_\sc(L)$ be the Iwahori subgroup attached to an
alcove in $\A$, and let $\Pi$ again be the set of reflections through the walls
of this alcove. Then by \cite[Prop.\ 5.2.10]{BTII}, $\bigl(G_\sc(L), B_\sc,
N_\sc(L), \Pi\bigr)$ is a double Tits system whose Weyl group $\wt W_\sc$ (we
again use \cite[Lem.\ 6]{H-R}) identifies with $W_\aff$ via the composite
\[\
   \wt W_\sc \to \wt W \xra\nu \A \rtimes W_0.
\]
In this way $T_\sc(L)/T_\sc(L)_1 \cong X_*(T_\sc)_I$ \label{ref:X_*(T_sc)_I}
 identifies with the translation elements in $W_\aff$, or in other words, with the coroot lattice in $W_\aff(\Sigma)$.  Moreover, for any parahoric subgroup $K_\sc \subset G_\sc(L)$ attached to a special vertex $v$, the composite
\[
   \wt W_\sc \to \wt W \xra\nu \A \rtimes W_0 \xra{\substack{\text{conjugation}\\ \text{by }t_{-v}}} \A \rtimes W_0
\]
carries $W^{K_\sc} \subset \wt W_\sc$ isomorphically to the Weyl group of $\Sigma$, which identifies with $W_0$.  In other words, the composite isomorphism
\[
   \wt W_\sc \xra[\sim]{\nu|_{\wt W_\sc}}
   W_\aff \xra[\sim]{\substack{\text{conjugation}\\ \text{by }t_{-v}}}
   W_\aff(\Sigma)
\]
is compatible with the semidirect product decompositions $\wt W_\sc \cong X_*(T_\sc)_I \rtimes W^{K_{\sc}}$ and $W_\aff(\Sigma) = Q^\vee(\Sigma) \rtimes W(\Sigma)$.

\begin{Remark}\label{rk:generalized_awg}
Just as $\wt W_\sc$ can in this way be identified with the affine Weyl group for the root system $\Sigma$, so can $\wt W$ be identified
with a {\it generalized extended affine Weyl group} for $\Sigma$ via push-out by the canonical injection $X_*(T_{\rm sc})_I \inj X_*(T)_I$.  Here ``generalized'' means that the abelian group $X_*(T)_I$ may have torsion.

For any absolutely simple, simply connected group $G$ over a discretely valued field with algebraically closed residue field, the affine root system $\Phi_\aff$ and root system $\Sigma$ admit the following descriptions (up to a choice of normalization of the valuation, and of a special vertex as origin), which are given by Prasad--Raghunathan \cite[\S2.8]{PrRa}; we thank \mbox{J.-K.}\ Yu and X. Zhu for pointing this out to us.  Let $\Phi$ denote the relative roots of $S$ in $G$.

If $G$ is split over $L$, then $\Phi$ is necessarily reduced,
\[
   \Phi_\aff = \{\, a + \mZ \mid a\in \Phi\,\},
   \quad\text{and}\quad 
   \Sigma = \Phi.
\]
If $G$ is nonsplit over $L$ and $\Phi$ is reduced, then
\[
   \Phi_\aff = \biggl\{\, a + \frac{(a,a)} 2 \mZ \biggm| a\in \Phi \,\biggr\}
   \quad\text{and}\quad
   \Sigma = \Phi^\vee \cong \biggl\{\, \frac{2a}{(a,a)} \biggm| a \in \Phi\,\biggr\},
\] where \sform is a nondegenerate $W_0$-invariant inner product.  If $G$ is nonsplit and $\Phi$ is nonreduced, then $G$ is an outer form of type $A_{2n}$,
\[
   \Phi_\aff = 
      \biggl\{\, a + \mZ \biggm| a\in\Phi,\ \frac a 2 \notin \Phi\,\biggr\}
      \cup
      \biggl\{\, a + 1 + 2\mZ \biggm| a\in \Phi,\ \frac a 2 \in \Phi \,\biggr\},
\]
and $\Sigma$ is the subset of $\Phi$ of roots $a$ for which $2a \notin \Phi$.
\end{Remark}

We return to the main discussion.  Since the target of the Kottwitz homomorphism is abelian, it is immediate from the definition \eqref{disp:N(L)_1} of $N_1(L)$ that it is a normal subgroup of $N(L)$.  Hence $W_\aff \simeq N(L)_1/T(L)_1$ is a normal subgroup of $\wt W$, and we get an exact sequence
\[
   1 \to W_\aff \to \wt W\to X_*(T)_I/X_*(T_\sc)_I\to 1.
\]
This sequence splits canonically after choosing a base alcove: since $W_\aff$ acts simply transitively on the alcoves
in $\A$ \cite[1.7]{T},
$\wt W$ is the semidirect product of $W_\aff$ with the normalizer $\Omega \subset \wt W$ of the base alcove,
\begin{equation}\label{disp:semidirect_prod_2}
   \wt W=W_\aff \rtimes \Omega,
\end{equation}
with $\Omega \isoarrow \wt W/W_\aff \cong X_*(T)_I/X_*(T_\sc)_I \cong \pi_1(G)_I$.

The semidirect product decomposition \eqref{disp:semidirect_prod_2} for $\wt W$ has the important consequence of endowing $\wt W$ with a \emph{Bruhat order} and \emph{length function}.  Again let $\Pi$ denote the subset of $W_\aff$ of reflections across the walls of the base alcove.  As we have already recalled, $\Pi$ is a set of Coxeter generators for $W_\aff$.  We then get a Bruhat order $\leq$ and a length function $\ell$ on $W_\aff$ as for any Coxeter group:  for $s$, $s' \in W_\aff$, $\ell(s)$ is the smallest nonnegative integer $r$ such that $s$ is expressible as a product $s_1s_2\dotsm s_r$ with $s_1$, $s_2,\dotsc$, $s_r \in \Pi$; and $s' \leq s$ if there exists an expression $s = s_1 \dotsm s_r$ with $\ell(s) = r$ and the $s_i$'s in $\Pi$ such that $s'$ can be obtained by deleting some of the $s_i$'s from the product.  These definitions naturally extend to $\wt W$ via \eqref{disp:semidirect_prod_2}:  for $s$, $s' \in W_\aff$ and $\omega$, $\omega' \in \Omega$, we have $\ell(s\omega) := \ell(s)$, and $s'\omega' \leq s\omega$ exactly when $\omega' = \omega$ and $s' \leq s$ in $W_\aff$.

For parahoric subgroups $K$, $K' \subset G(L)$ attached to respective subfacets $\mathbf f$ and $\mathbf f'$ of the base alcove, the Bruhat order
on $\wt W$ induces
one on $W^{K'} \bs \wt W / W^{K}$. Indeed, let $X$ and $X'$ denote the
respective subsets of $\Pi$ of reflections fixing $\mathbf f$ and $\mathbf f'$,
let $W_X$ and $W_{X'}$\label{ref:W_X} denote the respective subgroups of
$W_\aff$ generated by $X$ and $X'$, and recall from \cite[5.2.12]{BTII} that the
parahoric subgroups of $G(L)$ are precisely the parahoric subgroups of the Tits
system \eqref{disp:G(L)_1_double_Tits_system}. Then
\[
   K = BW_XB
   \quad\text{and}\quad
   K' = BW_{X'}B,
\]
and
\[
   W^K = W_X
   \quad\text{and}\quad
   W^{K'} = W_{X'}.
\]
Hence by \cite[IV \S1 Ex.~3]{B} each double coset $w \in W^{K'} \bs \wt W / W^{K}$ contains a unique element in $\wt W$, which we denote $\wt w$, with the property that $\wt w \leq x$ for all $x \in w$.  For $w$, $w' \in W^{K'} \bs \wt W / W^{K}$, we then define
$w' \leq w$ if $\wt w' \leq \wt w$ in $\wt W$.  The Bruhat order on $W^{K'} \bs \wt W / W^{K}$ has the property that if $x \leq y$ in $\wt W$, then $W^{K'} x W^{K} \leq W^{K'} y W^{K}$ in $W^{K'} \bs \wt W / W^{K}$.

\begin{Remark}\label{rk:bruhatKK}
The Bruhat order on $W^{K'} \bs \wt W / W^{K}$ can be expressed in a particularly simple way when $K=K'$ is a maximal parahoric subgroup attached to a special vertex $v$ of the base alcove. Indeed, the choice of $v$ allows us to identify $\wt W_\sc$ with the affine Weyl group of the reduced root system $\Sigma$, and $\wt W$ with a generalized extended affine Weyl group of $\Sigma$, as in Remark \ref{rk:generalized_awg} above.  The affine root hyperplanes passing through $v$ determine chambers in $\A$, and we take as positive chamber the chamber \emph{opposite} the unique chamber containing our base alcove.  This determines a notion of positive roots in $X^*(T_{\rm sc})^I$, of positive coroots in $X_*(T_{\rm sc})_I$, and of dominant elements in $X_*(T)_I$ (those elements that pair non-negatively with positive roots).  The dominance order on $X_*(T)_I$ is defined by $\lambda\leq \lambda'$ if $\lambda'-\lambda$ is a nonnegative $\mZ$-linear combination of positive  coroots.  Then, after identifying $W^{K} \bs \wt W / W^{K}$ with the set of dominant elements in $X_*(T)_I$ as in Remark \ref{rk:identWK}, the Bruhat order on $W^{K} \bs \wt W / W^{K}$ is identified with the restriction of the dominance order to the set of dominant elements,  cf.\ \cite[Cor.\ 1.8]{Ri}. (In contrast to \cite{Ri}, we use dominant elements instead of antidominant elements because we have taken the base alcove to be in the negative chamber.)
\end{Remark}
 
In the function field setting, and in analogy with the case of ordinary flag varieties, the Bruhat order
carries important geometric content about Schubert varieties.

\begin{prop}[Richarz {\cite[Prop.\ 2.8]{Ri}}; {\cite[Prop.\ 9.6]{P-R3}}]\label{dimandclos}
Suppose that $L = k((t))$ with $k$ algebraically closed.  Let $\mathbf f$ and $\mathbf f'$ be subfacets of the base alcove in $\A$, let $K$ and $K'$ denote the respective associated parahoric subgroups of $G(L)$, and consider the associated affine flag variety $\F_{\mathbf f}$ over $\Spec k$, cf.\ Definition  \ref{def:afv}.  For $w \in W^{K'} \bs \wt W / W^K$, let $S_w$ denote the associated $\mathbf f'$-Schubert variety in $\F_{\mathbf f}$.  Then for all $w$, $w' \in W^{K'} \bs \wt W / W^K$,
\begin{flalign*}
   \phantom{\qed} & &
   S_{w'} \subset S_w\ \text{in}\ \F_{\mathbf f} \iff w' \leq w\ \text{in}\ W^{K'} \bs \wt W / W^K. & &
   \qed
\end{flalign*}
\end{prop}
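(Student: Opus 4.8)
The plan is to reduce the statement to the Iwahori case --- where $\mathbf f$ and $\mathbf f'$ are both the base alcove and $W^K = W^{K'} = \{1\}$, so that $W^{K'}\bs\wt W/W^K = \wt W$ --- and there to invoke the standard theory of Schubert varieties in the affine flag variety attached to a Tits system. So the first step is the case $K = K' = B$. Here $\F_{\mathbf f} = LG/L^+B$, and by Proposition \ref{st:wt_W_double_cosets} the $B$-Schubert cells are the reduced $L^+B$-orbits $C_w$ through the classes of $w\in\wt W$. Using the double Tits system \eqref{disp:G(L)_1_double_Tits_system} together with the semidirect decomposition $\wt W = W_\aff\rtimes\Omega$ \eqref{disp:semidirect_prod_2} (the factor $\Omega$ merely permuting connected components, hence irrelevant for closure relations), one gets the Bruhat stratification $\F_{\mathbf f}(\ol k) = \bigsqcup_{w} C_w(\ol k)$ and the closure formula $S_w = \overline{C_w} = \bigsqcup_{w'\le w} C_{w'}$. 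For the closure formula I would run the usual argument: if $s$ is a wall reflection of the base alcove with $sw < w$, then the corresponding minimal parahoric $L^+P_s$ (with $L^+P_s/L^+B\cong\mP^1_{\ol k}$) satisfies $L^+P_s\cdot C_w = C_w\sqcup C_{sw}$, whose closure is irreducible, projective, of dimension $\ell(w)$; iterating over a reduced word for $w$ --- equivalently, using the resulting Demazure-type surjection onto $S_w$ --- together with the fact that Schubert varieties are reduced by definition yields $C_{w'}\subseteq S_w$ for all $w'\le w$, the reverse inclusion being the deletion side of the Coxeter combinatorics. Alternatively one may shortcut this step by appealing to the Kac--Moody picture of Remark \ref{rem:KacMoody}, to Faltings \cite{Fa4}, or to the normality statement of Theorem \ref{st:schub_vties}, where these closure relations are classical.

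Next I would pass from an arbitrary $\mathbf f'$ to the Iwahori case, keeping $\mathbf f$ an alcove. Since $K' = BW_{X'}B$ one has $L^+P_{\mathbf f'} = \bigsqcup_{v\in W^{K'}} L^+B\,v\,L^+B$; hence, for $w$ of minimal length in $W^{K'}w$, the $\mathbf f'$-Schubert cell through the class of $w$ in $LG/L^+B$ is $\bigcup_{v\in W^{K'}} C_{vw}$, with top element $v_0 w$ where $v_0$ is the longest element of $W_{X'}$, and so by the first step its closure is $S^B_{v_0 w}$. Comparing two such, $S^B_{v_0 w'}\subseteq S^B_{v_0 w}$ iff $v_0 w'\le v_0 w$; and for minimal-length coset representatives $w,w'$ one has $v_0 w'\le v_0 w \iff w'\le w$ --- the implication $(\Leftarrow)$ by the subword criterion (lengths add), and $(\Rightarrow)$ because projection to a parabolic quotient preserves the Bruhat order (\cite[IV \S1]{B}). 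This is exactly the order on $W^{K'}\bs\wt W$ recalled earlier, so the statement holds whenever $\mathbf f$ is an alcove.

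For general $\mathbf f$, consider the canonical projection $\pi\colon LG/L^+B\to LG/L^+P_{\mathbf f} = \F_{\mathbf f}$ (well defined since $\mathbf f$ is a subfacet of the base alcove, so $L^+B\subset L^+P_{\mathbf f}$). It is $LG$-equivariant, a fortiori $L^+P_{\mathbf f'}$-equivariant, and restricts to a projective (hence proper) surjective morphism on each Schubert variety, its fibers being finite-dimensional flag varieties. Let $w$ be the minimal-length representative of a double coset $\bar w\in W^{K'}\bs\wt W/W^K$; then $w$ is also the minimal-length representative of the right coset $W^{K'}w$, and $\pi$ maps the $\mathbf f'$-Schubert cell through the class of $w$ upstairs onto the $\mathbf f'$-Schubert cell $C_{\bar w}^{\mathbf f'}$ in $\F_{\mathbf f}$. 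Applying $\pi$ to the (closed, hence proper) Schubert variety $S^B_{v_0 w}$ and using ``image of a closure equals closure of the image'' gives $S_{\bar w}^{\mathbf f'} = \pi\bigl(S^B_{v_0 w}\bigr)$. Now $S_{\bar w'}^{\mathbf f'}\subseteq S_{\bar w}^{\mathbf f'}$ holds iff the dense cell $C_{\bar w'}^{\mathbf f'}$ is contained in $\pi(S^B_{v_0 w})$, iff some $\mathbf f'$-Schubert cell in the preimage $\pi^{-1}(C_{\bar w'}^{\mathbf f'}) = \bigsqcup_{W^{K'}w_1W^K = \bar w'} C_{W^{K'}w_1}^{\mathbf f'}$ meets $S^B_{v_0 w}$, iff there exists $w_1\le w$ in $\wt W$ with $W^{K'}w_1W^K = \bar w'$. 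By the property recalled in this subsection ($x\le y$ in $\wt W$ $\Rightarrow W^{K'}xW^K\le W^{K'}yW^K$) such a $w_1$ forces $\bar w'\le\bar w$; conversely, if $\bar w'\le\bar w$ one may take $w_1$ to be the minimal representative of $\bar w'$. This gives $S_{\bar w'}^{\mathbf f'}\subseteq S_{\bar w}^{\mathbf f'}\iff \bar w'\le\bar w$, as desired, and recovers along the way the cases treated in \cite{Ri} and \cite{P-R3}.

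\noindent\textbf{Main obstacle.} The genuine input is the first step: the closure relations among Schubert varieties in the infinite-type affine flag variety $LG/L^+B$, which rest on Bruhat--Tits (or Kac--Moody) theory. Steps two and three are then essentially bookkeeping with parabolic subgroups of Coxeter groups and with the functoriality of proper morphisms, and in fact use nothing about $\pi$ beyond properness, surjectivity, and $L^+P_{\mathbf f'}$-equivariance.
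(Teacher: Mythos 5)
Your proof is correct, and it reproduces the standard argument that the cited sources \cite[Prop.\ 2.8]{Ri} and \cite[Prop.\ 9.6]{P-R3} use: establish the Iwahori closure relations from the double Tits system $\bigl(G(L)_1, B, N(L)_1, \Pi\bigr)$ and the decomposition $\wt W = W_\aff \rtimes \Omega$, then descend along the proper $L^+P_{\mathbf f'}$-equivariant projection $LG/L^+B \to \F_{\mathbf f}$ using minimal (double) coset representatives and the parabolic Bruhat-order bookkeeping from Bourbaki. The paper itself offers no proof beyond the two citations, and your reduction is the one they carry out.

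One small point of hygiene in your Step 3 chain of equivalences: when you pass from ``$C^{\mathbf f'}_{\bar w'}$ meets $\pi(S^B_{v_0 w})$'' to ``some upstairs cell in $\pi^{-1}(C^{\mathbf f'}_{\bar w'})$ meets $S^B_{v_0 w}$'' and then to the existence of $w_1 \le w$ with $W^{K'}w_1W^K = \bar w'$, you are implicitly using that $S^B_{v_0 w}$ is $L^+P_{\mathbf f'}$-stable (so ``meets'' equals ``contains'' for $L^+P_{\mathbf f'}$-orbits), which is true because $S^B_{v_0 w}$ is, by your Step 2, the closure of the upstairs $\mathbf f'$-orbit through $w$. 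Making that stability explicit would close the only gap a careful reader might flag; the rest is sound.
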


By choosing good representatives in $\wt W$ for double cosets, the inclusion relations between Schubert varieties can be phrased in a somewhat more precise way,  which is sometimes useful. We first state the following lemma.

\begin{lemma}[{Richarz, Waldspurger \cite[Lem.\ 1.9]{Ri}}] 
Let $\mathbf f$ and $\mathbf f'$ be subfacets of the base alcove in $\A$, and let $K$ and $K'$ denote the respective associated parahoric subgroups of $G(L)$. Let $w\in \widetilde W$.
\begin{altenumerate}
\item There exists a unique element $w^K$ of minimal length in $w W^K$. 
\item There exists a unique element $_{K'}w^K$ of maximal length in $\{(vw)^K\mid v\in W^{K'}\}$.
\end{altenumerate}\qed
\end{lemma}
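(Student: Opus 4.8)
The plan is to reduce both statements to classical combinatorics of Coxeter groups by means of the semidirect product decomposition $\wt W=W_\aff\rtimes\Omega$ of \eqref{disp:semidirect_prod_2}. Recall that $W_\aff$ is a Coxeter group with generating set $\Pi$ (the reflections in the walls of the base alcove), that $\Omega$ stabilises the base alcove and hence normalises $\Pi$ as a set, and that $\ell(\sigma\omega)=\ell(\sigma)$ for $\sigma\in W_\aff$, $\omega\in\Omega$; recall also that $W^K=W_X$ and $W^{K'}=W_{X'}$ are the standard parabolic subgroups of $W_\aff$ generated by the subsets $X,X'\subseteq\Pi$ of reflections fixing $\mathbf f$ and $\mathbf f'$, and that these are \emph{finite}, being the Weyl groups of the special fibres of the associated parahoric group schemes (cf.\ Remark~\ref{rk:W^K_Weyl_gp}). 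The first step is to write $w=\sigma\omega$ with $\sigma\in W_\aff$, $\omega\in\Omega$, and to note that $wW^K=\sigma W_Y\,\omega$ where $Y:=\omega X\omega^{-1}\subseteq\Pi$; since right translation by $\omega$ preserves length, assertion~(i) becomes the statement that the coset $\sigma W_Y$ of the Coxeter group $W_\aff$ has a unique minimal-length element, which is \cite[IV \S1 Ex.~3]{B}. One then sets $w^K:=\sigma_0\omega$, with $\sigma_0$ the minimal-length element of $\sigma W_Y$.

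For (ii) the same reduction, applied to each $vw$ with $v\in W_{X'}$, gives $(vw)^K=(v\sigma)_0\,\omega$, with $(v\sigma)_0$ the minimal-length element of $v\sigma W_Y$; as $\ell\bigl((v\sigma)_0\omega\bigr)=\ell\bigl((v\sigma)_0\bigr)$, it remains to show that the set $\mathcal M:=\{(v\sigma)_0\mid v\in W_{X'}\}$ has a unique element of maximal length. Now $\mathcal M$ is exactly the set of minimal-length representatives of the right $W_Y$-cosets contained in the double coset $D:=W_{X'}\,\sigma\,W_Y$ of $W_\aff$, and I would analyse it as follows. Let $d$ be the unique element of minimal length in $D$, so that $\ell(vd)=\ell(v)+\ell(d)$ for every $v\in W_{X'}$ and $\ell(dt)>\ell(d)$ for every $t\in Y$. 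Invoke the theorem of Kilmoyer that $H:=W_{X'}\cap dW_Yd^{-1}$ is a standard parabolic subgroup $W_L$ of $W_{X'}$, for a unique $L\subseteq X'$ (a standard fact about double cosets in Coxeter groups; cf.\ \cite[\S1]{Ri}). Writing $W_{X'}^L$ for the set of minimal-length representatives of $W_{X'}/W_L$, one checks — using the length additivity just quoted together with the Strong Exchange Condition — that $v\mapsto vdW_Y$ is a bijection $W_{X'}^L\xrightarrow{\sim}D/W_Y$ and that, for $v\in W_{X'}^L$, the element $vd$ is the minimal-length element of the right coset $vdW_Y$. Hence $\mathcal M=\{vd\mid v\in W_{X'}^L\}$, with $\ell(vd)=\ell(v)+\ell(d)$; since $W_{X'}$ is finite, $W_{X'}^L$ has a unique element of maximal length, namely $w_{X'}w_L$ (the product of the longest elements of $W_{X'}$ and of $W_L$). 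Therefore $\mathcal M$ has the unique longest element $w_{X'}w_L\,d$, and ${}_{K'}w^K:=w_{X'}w_L\,d\,\omega$ is the unique element of maximal length in $\{(vw)^K\mid v\in W^{K'}\}$.

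I expect the main obstacle to be the combination of Kilmoyer's theorem with the exchange-condition bookkeeping. Kilmoyer's theorem is precisely what forces $W_{X'}^L$ — and hence $\mathcal M$ — to have a \emph{unique} longest element: the analogous statement fails for cosets of a non-parabolic reflection subgroup, so there seems to be no way around it. The verification that $vd$ remains minimal in its right $W_Y$-coset for $v\in W_{X'}^L$ is the one point that genuinely needs care rather than a citation: a length-decreasing multiplication $vd\mapsto vdt$ with $t\in Y$ would, by the Strong Exchange Condition applied to the reduced expression obtained by concatenating reduced words for $v$ and for $d$, force either a shorter representative of $dW_Y$ (contradicting the minimality of $d$) or a shorter representative of $vW_L$ (contradicting $v\in W_{X'}^L$). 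Everything else — the reduction through $\Omega$, length additivity for coset representatives, and the existence and form of longest elements of finite Coxeter groups — is standard and already used elsewhere in this survey.
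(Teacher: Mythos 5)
The paper gives no proof here---the lemma is quoted from Richarz's article \cite[Lem.\ 1.9]{Ri} and the statement is followed immediately by a \qed---so there is nothing in the paper to compare against. Your argument is correct and is the expected Coxeter-theoretic proof: the reduction through $\wt W = W_\aff\rtimes\Omega$, with $\Omega$ permuting $\Pi$ and preserving length, reduces both parts to combinatorics in the Coxeter group $W_\aff$; part (i) is then the classical uniqueness of minimal-length left coset representatives of a standard parabolic subgroup; and part (ii) is precisely the Kilmoyer decomposition of the double coset $W_{X'}\sigma W_Y$ together with the existence of a unique longest element in $W_{X'}^L$, which in turn uses that $W^{K'}=W_{X'}$ is finite (Remark~\ref{rk:W^K_Weyl_gp}). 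The point you flag as delicate---that $vd$ remains minimal in $vdW_Y$ for $v\in W_{X'}^L$---is indeed the crux, and your Strong Exchange argument is correct; alternatively one can invoke the full Kilmoyer length formula $\ell(udy)=\ell(u)+\ell(d)+\ell(y)$ for $u\in W_{X'}^L$, $y\in W_Y$, from which the minimality is immediate. This is, as far as I am aware, the same route taken in \cite{Ri}.
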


We introduce the following subset of $\widetilde W$, 
\[
   _{K'}\widetilde W^K:= \bigl\{\, _{K'}w^K \bigm| w\in \widetilde W \,\bigr\}.
\]
Then $_{K'}\widetilde W^K$ maps bijectively to the set of double classes $W^{K'} \bs \wt W / W^{K}$, and we may phrase the inclusion relations between Schubert varieties in terms of these special representatives of double classes as follows.

\begin{prop}[{Richarz \cite[Prop.\ 2.8]{Ri}}]\label{st:schub_dim}
Let $w\in  {}_{K'}\widetilde W^K$. Then the ${\mathbf f'}$-Schubert variety $S_w$ in $\F_{\mathbf f} $ satisfies
\begin{altenumerate}
\item 
   \quad \phantom{i}$S_w=\bigcup_{\{w'\in {}_{K'}\widetilde W^K \mid w'\leq w\}}S_{w'}$; and
\item 
   \quad $\dim S_w=\ell(w) .$ \qed
\end{altenumerate}
\end{prop}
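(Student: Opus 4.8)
The plan is to derive both parts from the structural facts about affine flag varieties already recorded---principally Proposition \ref{dimandclos} and the decomposition of $\F_{\mathbf f}$ into Schubert cells---together with a reduction of part (ii) to the Iwahori case.

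For part (i): over the algebraically closed $k$, the $\mathbf f'$-Schubert cells $C_v$, $v\in W^{K'}\bs\wt W/W^K$, partition $\F_{\mathbf f}$, each $C_v$ is irreducible with $\overline{C_v}=S_v$, and by Proposition \ref{dimandclos} one has $S_{v'}\subseteq S_v\Leftrightarrow v'\le v$. Since $S_w$ is closed and $L^+P_{\mathbf f'}$-stable, it is the union of the $C_v$ it contains, and $C_v\subseteq S_w$ iff $\overline{C_v}=S_v\subseteq S_w$ iff $v\le w$; thus $S_w=\bigsqcup_{v\le w}C_v$, and applying the same description to each $S_v$ gives $S_w=\bigcup_{v\le w}S_v$. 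Transporting this along the bijection ${}_{K'}\wt W^K\isoarrow W^{K'}\bs\wt W/W^K$ of the preceding Lemma yields the formula claimed in (i); here one must check that the bijection is compatible with the respective Bruhat orders, which follows from the length-additivity of the distinguished representatives ${}_{K'}w^K$ inside the Coxeter group $W_\aff$.

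For part (ii): first I would record the Iwahori case $K=K'=B$, where the Iwahori--Schubert cell $C_{\wt w}$ in $\F_B$ is isomorphic to $\mA^{\ell(\wt w)}_k$ by the usual argument from the $BN$-pair structure and a reduced word for $\wt w$, so that $\dim S_{\wt w}=\ell(\wt w)$ (this is contained in \cite[\S9]{P-R3}; compare Remark \ref{rem:KacMoody} and Faltings \cite{Fa4}). For general $\mathbf f,\mathbf f'$, let $\mathbf f_0$ denote the base alcove and consider the projection $\pi\colon\F_{\mathbf f_0}\to\F_{\mathbf f}$. It is $L^+P_{\mathbf f'}$-equivariant and is a Zariski-locally trivial fibration whose fibres are the flag variety of the special fibre of $P_{\mathbf f}$, a smooth projective variety of dimension $\ell(w_{0,X})$, where $w_{0,X}$ is the longest element of $W^K=W_X$ (cf.\ Remark \ref{rk:W^K_Weyl_gp}); in particular $\pi$ is smooth, proper and surjective of relative dimension $\ell(w_{0,X})$. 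Now $\pi^{-1}(C_w)$ is $L^+P_{\mathbf f'}$-stable and irreducible (being a fibration with irreducible fibres over the irreducible $C_w$), so, using that $\pi$ is open, $\pi^{-1}(S_w)=\overline{\pi^{-1}(C_w)}$ is a single $\mathbf f'$-Schubert variety in $\F_{\mathbf f_0}$. Such an $\mathbf f'$-Schubert variety agrees, as a reduced closed subscheme, with the Iwahori--Schubert variety $S_v$ for $v$ the longest element of the corresponding left $W^{K'}$-coset in $\wt W$; a short computation with $w={}_{K'}w^K$---using that $w$ is $W^K$-reduced and that $w\,w_{0,X}$ is the longest element of the double coset $W^{K'}wW^K$, with $\ell(w\,w_{0,X})=\ell(w)+\ell(w_{0,X})$---identifies $v=w\,w_{0,X}$. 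Hence, by the Iwahori case, $\dim\pi^{-1}(S_w)=\ell(w)+\ell(w_{0,X})$. On the other hand $\pi$ is smooth of relative dimension $\ell(w_{0,X})$ and surjects onto the irreducible $S_w$, so $\dim\pi^{-1}(S_w)=\dim S_w+\ell(w_{0,X})$. Comparing the two computations gives $\dim S_w=\ell(w)$, which is part (ii); part (i) could alternatively be obtained in the same way by pulling back to $\F_{\mathbf f_0}$.

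The hard part will be the combinatorial bookkeeping in the second half of the argument for (ii): identifying an $\mathbf f'$-Schubert variety inside $\F_{\mathbf f_0}$ with a $B$-Schubert variety there, and matching the two length shifts. This amounts to the combinatorics of minimal and maximal representatives for the double cosets $W^{K'}\bs\wt W/W^K$---length-additivity of distinguished representatives, and keeping careful track of which (minimal or maximal) representative ${}_{K'}w^K$ is---worked out in the Coxeter group $W_\aff$ and transported to $\wt W$ via $\wt W=W_\aff\rtimes\Omega$; it is routine, but a sign error or a min/max confusion would destroy the dimension count. A secondary, purely technical point is to phrase the operations with $\pi^{-1}$, irreducibility and openness of $\pi$ correctly in the ind-scheme setting, but since everything happens inside finite-type subschemes of $\F_{\mathbf f_0}$ and $\F_{\mathbf f}$ this is standard; an alternative to the whole reduction would be a direct computation of $\dim C_w$ from the root-group filtration of $LG$.
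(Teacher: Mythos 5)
The paper gives no proof of this proposition (it just cites Richarz \cite{Ri}), and your strategy---reducing to the Iwahori case via the projection $\pi\colon\F_{\mathbf f_0}\to\F_{\mathbf f}$, computing $\dim\pi^{-1}(C_w)$ two ways, and organizing the cells via Proposition \ref{dimandclos}---is essentially the one Richarz uses. Your argument is sound, but two combinatorial facts are being finessed and deserve to be stated, since they are exactly where a sign or min/max slip would ruin the count.

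First, part (i) as stated needs the bijection ${}_{K'}\wt W^K\xrightarrow{\sim}W^{K'}\bs\wt W/W^K$ to be an \emph{order} isomorphism, where the target carries the quotient order defined in \S\ref{s:combinatorics}.\ref{ss:bruhat_order} via the \emph{minimal}-length representatives, while ${}_{K'}w^K$ is the maximal-length $W^K$-reduced element. Dismissing this as ``following from length-additivity'' is too quick: length-additivity of the distinguished representatives gives you the relation to the max/min elements inside each double coset, but the order-compatibility is a separate (true) lemma on double coset Bruhat orders, proved by Richarz using the lifting property. Without it you only obtain $S_w=\bigcup_{v\le w}S_v$ with $v$ a double coset, not the assertion in terms of ${}_{K'}\wt W^K$ with the restricted order from $\wt W$. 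Second, in part (ii) the identification $\overline{\pi^{-1}(C_w)}=\overline{C_{w\,w_{0,X}}}$ uses that (a) $\ell(w\,w_{0,X})=\ell(w)+\ell(w_{0,X})$ is the maximum of $\ell$ on $W^{K'}wW^K$---correct, since $w={}_{K'}w^K$ is $W^K$-right-reduced and each element of the double coset factors as $(vw)^K z$ with $z\in W^K$ and lengths adding---and (b) a parabolic double coset of $W_\aff$ has a unique maximal-length element which dominates all others in Bruhat order; fact (b) is classical Coxeter combinatorics but must be invoked explicitly, because it is what collapses $\bigcup_{u\in W^{K'}wW^K}\overline{C_u}$ to a single Schubert variety. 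Once these are spelled out, the chain $\dim S_w+\ell(w_{0,X})=\dim\pi^{-1}(S_w)=\ell(w\,w_{0,X})=\ell(w)+\ell(w_{0,X})$ correctly yields (ii).
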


Now let us specialize to the case of special maximal parahorics. 

\begin{prop}[{Richarz \cite[Cor.\ 1.8]{Ri}}]\label{dimspec}
Let $K=K'$ be a special maximal parahoric subgroup attached to a vertex of the base alcove, and consider the dominant elements of $X_*(T)_I$ as in Remark \ref{rk:bruhatKK}.
\begin{altenumerate}
\item
   \quad \phantom{i}$_{K}\widetilde W^K=\{\,t_\lambda\mid \lambda\in X_*(T)_I \ \text{is dominant}\,\} .$ 
\item 
   \quad  $\dim S_w = \langle\lambda, 2\rho\rangle$ for $w$ corresponding to $t_\lambda\in {}_{K}\widetilde W^K$. 
\end{altenumerate}
Here $\rho$ denotes the halfsum of positive roots for $\Sigma$. \qed
\end{prop}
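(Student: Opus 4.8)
The plan is to deduce both assertions from the combinatorics of the Iwahori-Weyl group together with the dimension formula of Proposition \ref{st:schub_dim}. Fix the special maximal parahoric $K = K'$ attached to a vertex $v$ of the base alcove, and invoke Proposition \ref{st:semidirect_prod} and Remark \ref{rk:bruhatKK}: conjugation by $t_{-v}$ realizes $\wt W$ as a generalized extended affine Weyl group of the reduced root system $\Sigma$, carrying $W^K$ onto the finite Weyl group $W(\Sigma) \cong W_0$ and having $W_\aff(\Sigma) = Q^\vee(\Sigma) \rtimes W(\Sigma)$ as normal subgroup. By Remark \ref{rk:identWK} the double coset space $W^K \bs \wt W / W^K$ is then identified with the set of dominant elements of $X_*(T)_I$, the class of $t_\lambda$ corresponding to the dominant representative of the $W_0$-orbit of $\lambda$; here ``dominant'' refers to the positive system on $\Sigma$ singled out, as in Remark \ref{rk:bruhatKK}, by placing the base alcove in the chamber opposite the one containing it.

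For (a), I would first record two classical facts about the length function on a (generalized, extended) affine Weyl group: the function $\mu \mapsto \ell(t_\mu)$ is constant on $W_0$-orbits; and $t_\mu$ is the minimal-length element of the coset $t_\mu W^K$ if and only if $\mu$ is dominant, in which case moreover $\ell(t_\mu w) = \ell(t_\mu) + \ell(w)$ for all $w \in W^K$. Granting these, fix $\lambda$ dominant and, for $v \in W^K$, analyze the element $(vt_\lambda)^K$ from the Lemma. Since $vt_\lambda = t_{v\lambda} v$ with $v \in W^K$, the coset $v t_\lambda W^K$ coincides with $t_{v\lambda} W^K$, so $(vt_\lambda)^K$ is the minimal-length element of $t_{v\lambda}W^K$; its length is at most $\ell(t_{v\lambda}) = \ell(t_\lambda)$, with equality precisely when $v\lambda$ is dominant, i.e.\ when $v\lambda = \lambda$, in which case $(vt_\lambda)^K = t_\lambda$. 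Hence the unique element of maximal length in $\{\, (vt_\lambda)^K \mid v \in W^K \,\}$ is $t_\lambda$, that is, ${}_K(t_\lambda)^K = t_\lambda$. Since ${}_K\wt W^K$ meets each double coset in exactly one element (the Lemma) and these cosets are indexed by the dominant elements of $X_*(T)_I$, we conclude ${}_K\wt W^K = \{\, t_\lambda \mid \lambda \in X_*(T)_I \text{ dominant} \,\}$, proving (a).

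For (b), combine (a) with Proposition \ref{st:schub_dim}(b): for $w = {}_K(t_\lambda)^K = t_\lambda$ one gets $\dim S_w = \ell(w) = \ell(t_\lambda)$. It then remains to evaluate $\ell(t_\lambda)$. Through the realization of $\wt W$ as a generalized extended affine Weyl group of $\Sigma$, the length of a translation $t_\lambda$ is the number of affine root hyperplanes separating the base alcove from its image under $t_\lambda$; grouping these hyperplanes by the linear part of the corresponding affine root gives $\ell(t_\lambda) = \sum_{\alpha \in \Sigma^+} |\langle \lambda, \alpha \rangle|$, and passing from $W_\aff(\Sigma)$ to the extended group along the abelian quotient $\Omega$ leaves lengths unchanged, so this remains valid even when $X_*(T)_I$ has torsion. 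For $\lambda$ dominant the absolute values may be dropped, whence $\ell(t_\lambda) = \sum_{\alpha \in \Sigma^+}\langle \lambda, \alpha\rangle = \langle \lambda, 2\rho\rangle$ with $\rho$ the halfsum of positive roots of $\Sigma$; this is (b). It is the affine-flag-variety counterpart of the dimension formula $\dim \mathcal O_\lambda = \langle 2\varrho, \lambda\rangle$ recorded for the affine Grassmannian in \S\ref{s:motivationalsection}.

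The main obstacle is the combinatorial core of (a) --- pinning down the distinguished representative ${}_K w^K$ of a double coset --- which rests on the minimal/maximal-length coset-representative calculus in the extended affine Weyl group and on keeping the dominance conventions straight (in particular the choice of positive chamber as the one opposite the chamber containing the base alcove, as in Remark \ref{rk:bruhatKK}; the opposite choice produces antidominant translations, as in \cite{Ri}). Once the two length facts used in (a) are in hand --- both classical for affine Coxeter groups and unaffected by the passage to the extended, generalized setting --- the rest of (a) is formal, and (b) reduces to bookkeeping with the translation-length formula.
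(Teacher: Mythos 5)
The paper provides no proof of this proposition — it simply cites Richarz \cite[Cor.\ 1.8]{Ri} and closes the statement with \qed. Your reconstruction is correct and follows the expected route via the minimal/maximal coset-representative Lemma, the translation-length formula $\ell(t_\mu)=\sum_{\alpha\in\Sigma^+}|\langle\mu,\alpha\rangle|$ computed by counting separating affine hyperplanes (which, as you note, passes to the generalized extended group since $\Omega$ is length-zero), and the observation that with the base alcove placed in the antidominant chamber as in Remark \ref{rk:bruhatKK}, $t_\mu$ is minimal in $t_\mu W^K$ exactly for $\mu$ dominant; you also correctly reduce equality of lengths to $v\lambda=\lambda$ using the uniqueness of the dominant representative in a $W_0$-orbit. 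This is, in substance, the argument in Richarz's paper.
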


\subsection{The $\{\mu\}$-admissible set}
In this subsection we come to the key notion of \emph{$\{\mu\}$-admissibility}.
We continue with the notation of the previous two subsections.  Let $\{\mu\} \subset X_*(T)$ be a $W$-conjugacy class of geometric cocharacters of $T$.  Let $\wt \Lambda_{\{\mu\}} \subset \{\mu\}$ be the subset of cocharacters whose images in $X_*(T) \otimes_{\mZ} \mR$ are contained in some (absolute) Weyl chamber corresponding to a Borel subgroup of $G$ containing $T$ and defined over $L$.%
\footnote{Note that such Borel subgroups always exists since $G$ is quasi-split.}
Then $\wt \Lambda_{\{\mu\}}$ forms a single $W_0$-conjugacy class, since all such Borels are $W_0$-conjugate.  Let $\Lambda_{\{\mu\}}$ denote the image of $\wt\Lambda_{\{\mu\}}$ in $X_*(T)_I$.  Let $\mathbf a$ be an alcove in the apartment $\A$, and consider the associated Bruhat order $\leq$ on $\wt W$.  We first state a conjecture.

\begin{conjecture}\label{conj:max_elts}
Let $\{\ol\mu\}$ denote the image of the $W$-conjugacy class $\{\mu\}$ in $X_*(T)_I$.  Then the set of maximal elements in $\{\ol\mu\}$ with respect to the Bruhat order is precisely the set $\Lambda_{\{\mu\}}$.
\end{conjecture}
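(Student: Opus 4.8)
The plan is to establish the two inclusions separately: the inclusion of $\Lambda_{\{\mu\}}$ into the set of maximal elements will follow from a length computation, while the reverse inclusion is the genuine difficulty. First I record the relevant length formula. Identifying $\widetilde W$ with a generalized extended affine Weyl group of $\Sigma$ (Remark \ref{rk:generalized_awg}) and writing $\widetilde W = W_\aff \rtimes \Omega$, the standard formula for the length of a translation gives $\ell(t_\nu) = \sum_{\alpha \in \Sigma^+} |\langle \nu, \alpha \rangle|$ for $\nu \in X_*(T)_I$; this quantity is invariant under $W_0$, hence equals $\langle \nu^{\mathrm{dom}}, 2\rho \rangle$, where $\nu^{\mathrm{dom}}$ is the unique dominant representative of $W_0\nu$ and $\rho$ is the half-sum of the positive roots of $\Sigma$ (cf.\ Proposition \ref{dimspec}). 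Next I claim that every $\ol\lambda \in \Lambda_{\{\mu\}}$ is dominant: such an $\ol\lambda$ is the image in $X_*(T)_I$ of some $\mu^\sharp \in \widetilde\Lambda_{\{\mu\}}$, i.e.\ of a coweight lying in the closed Weyl chamber of a Borel $B \supset T$ defined over $L$; since $B$ is then $I$-stable, $I$ preserves the corresponding set $\Phi^+$ of positive absolute roots, and since the positive roots of $\Sigma$ are proportional to the non-zero $I$-averages of restrictions of elements of $\Phi^+$, while each element of $\Phi^+$ restricts to a non-negative element of the relative root system, one gets $\langle \ol\lambda, \beta \rangle \geq 0$ for all $\beta \in \Sigma^+$. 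In particular $\ol\lambda = (\Lambda_{\{\mu\}})^{\mathrm{dom}}$, as $\widetilde\Lambda_{\{\mu\}}$, hence $\Lambda_{\{\mu\}}$, is a single $W_0$-orbit.

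Now the easy inclusion. Fix $\mu^\sharp \in \widetilde\Lambda_{\{\mu\}}$ with image $\ol\lambda$. Since $\mu^\sharp$ is $B$-dominant, every element of $\{\mu\} = W\mu^\sharp$ differs from $\mu^\sharp$ by a non-negative integral combination of positive absolute coroots. Given $\ol\nu \in \{\ol\mu\}$, choose a representative $\nu' \in \{\mu\}$ and an element $w$ of $W_0 = W^I \subseteq W$ (the relative Weyl group, which for the quasi-split $G$ is $W^I$) such that $w\nu'$ has dominant image $\ol\nu^{\mathrm{dom}}$ in $X_*(T)_I$; applying $X_*(T) \to X_*(T)_I$ to $\mu^\sharp - w\nu'$ and using that the image of a positive absolute coroot lies in the cone spanned by the positive coroots of $\Sigma$, we find that $\ol\lambda - \ol\nu^{\mathrm{dom}}$ lies in that same cone, and is non-zero unless $\ol\nu \in W_0\ol\lambda = \Lambda_{\{\mu\}}$. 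Pairing with $2\rho$ gives $\ell(t_{\ol\nu}) = \langle \ol\nu^{\mathrm{dom}}, 2\rho \rangle \leq \langle \ol\lambda, 2\rho \rangle = \ell(t_{\ol\lambda})$, strictly unless $\ol\nu \in \Lambda_{\{\mu\}}$. As $x < y$ in the Bruhat order forces $\ell(x) < \ell(y)$, no $t_{\ol\nu}$ with $\ol\nu \in \{\ol\mu\}$ lies strictly above any $t_{\ol\lambda}$ with $\ol\lambda \in \Lambda_{\{\mu\}}$; hence the $t_{\ol\lambda}$, $\ol\lambda \in \Lambda_{\{\mu\}}$, are maximal in $\{\, t_{\ol\nu} \mid \ol\nu \in \{\ol\mu\} \,\}$, and are mutually incomparable, having the common length $\langle \ol\lambda, 2\rho \rangle$.

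It remains to show that every $\ol\nu \in \{\ol\mu\} \setminus \Lambda_{\{\mu\}}$ yields a non-maximal $t_{\ol\nu}$; in view of the definition of the $\{\mu\}$-admissible set (Definition \ref{def:mu-admissible}) and of the easy inclusion, this is equivalent to $\{\ol\mu\} \subseteq \Adm(\{\mu\})$, i.e.\ to $t_{\ol\nu} \leq t_{\ol\lambda}$ for some $\ol\lambda \in \Lambda_{\{\mu\}}$. Available inputs are: $\ol\nu^{\mathrm{dom}} < \ol\lambda$ strictly; $\ol\nu \in \Conv(\Lambda_{\{\mu\}})$ (from $\ol\nu^{\mathrm{dom}} \leq \ol\lambda$, the $W_0$-invariance of convex hulls, and the standard identity $\Conv(W_0\ol\lambda) \cap \{\text{dominant}\} = \{\, \kappa \text{ dominant} \mid \kappa \leq \ol\lambda \,\}$); and the fact that $t_{\ol\nu}$ and $t_{\ol\lambda}$ lie in a common $W_\aff$-coset, since $\ol\nu - \ol\lambda$ is the image of an element of the absolute coroot lattice and so lies in the translation lattice $X_*(T_{\mathrm{sc}})_I$ of $W_\aff$. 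The natural strategy is to induct on $\langle \ol\lambda - \ol\nu^{\mathrm{dom}}, 2\rho \rangle$, reducing — by the combinatorics of dominant coweights and reduced decompositions (the subword property) in $W_\aff$ — to relating $t_{\ol\nu}$ to translations $t_{\ol\nu^{\mathrm{dom}} + \alpha^\vee}$ for $\alpha$ a simple root of $\Sigma$, and ultimately to the assertion that $t_\kappa < t_{\kappa'}$ whenever $\kappa < \kappa'$ are dominant in a common $W_\aff$-coset, together with its consequences for the $W_0$-conjugates of $t_\kappa$. I expect this final step to be the main obstacle: the subtlety is that one cannot first rotate $\ol\nu$ to its dominant representative within its own $W_0$-orbit and then climb, since translations in a single $W_0$-orbit are Bruhat-incomparable of equal length; any increasing chain from $t_{\ol\nu}$ into $\{\, t_{\ol\lambda} \mid \ol\lambda \in \Lambda_{\{\mu\}} \,\}$ must therefore genuinely pass between distinct $W_0$-orbits, so that controlling Bruhat comparisons of translation elements of $\widetilde W$ demands the finer alcove-walk (gallery) description of the order. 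In the split case this difficulty evaporates, since there $\{\ol\mu\} = \Lambda_{\{\mu\}}$ and there is nothing to prove.
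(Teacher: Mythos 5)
What you are attempting to prove is stated in the paper as an open conjecture, not a theorem: the text immediately following Conjecture \ref{conj:max_elts} reads ``The validity of the conjecture not being known to us in general, we define the $\{\mu\}$-admissible set as follows,'' and the authors record only that they have verified it for Weil restrictions of split groups and for unitary groups. There is therefore no proof in the paper to compare against, and your proposal should be read as an attempt at an open problem rather than as a reconstruction of a known argument.

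Your easy direction is essentially sound, but one intermediate claim is stated incorrectly. You assert that ``every $\ol\lambda \in \Lambda_{\{\mu\}}$ is dominant.'' That cannot be right: $\Lambda_{\{\mu\}}$ is by construction a single $W_0$-orbit in $X_*(T)_I$, and the notion of dominance refers to one fixed choice of positive roots for $\Sigma$ (equivalently a fixed base alcove), so only one element of the orbit is dominant. Your argument in fact shows only that each $\ol\lambda$ is dominant with respect to the positive system attached to the particular $L$-rational Borel $B$ through which it arises, and these systems vary over $W_0$. The conclusion you actually need --- that the fixed dominant representative of $\Lambda_{\{\mu\}}$ again lies in $\Lambda_{\{\mu\}}$, so that $\ell(t_{\ol\lambda}) = \langle \ol\lambda^{\mathrm{dom}}, 2\rho\rangle$ with $\ol\lambda^{\mathrm{dom}} \in \Lambda_{\{\mu\}}$ --- is nonetheless true since $\Lambda_{\{\mu\}}$ is a full $W_0$-orbit, so the length comparison and hence the inclusion $\Lambda_{\{\mu\}} \subseteq \{\text{maximal elements}\}$ survives the repair.

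The hard direction --- that every $\ol\nu \in \{\ol\mu\} \setminus \Lambda_{\{\mu\}}$ gives a non-maximal $t_{\ol\nu}$, equivalently that $t_{\ol\nu} \leq t_{\ol\lambda}$ for some $\ol\lambda \in \Lambda_{\{\mu\}}$ --- is, as you recognize, where the real difficulty lies, and your proposal does not close it. The obstacle you identify is exactly the genuine one: a chain up from $t_{\ol\nu}$ cannot stay within a single $W_0$-orbit of translations, since those all have equal length; it must move between orbits, and controlling the Bruhat order on translations of $\wt W$ that lie in distinct $W_0$-orbits requires finer input (e.g.\ an alcove-gallery or root-operator analysis) that your sketch does not supply. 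Your reduction to the dominant comparison $t_\kappa < t_{\kappa'}$ for $\kappa < \kappa'$ dominant in a common $W_\aff$-coset is a reasonable waypoint, but that statement alone would not suffice: what is needed is a comparison for the possibly non-dominant $\ol\nu$, and the passage from the dominant case to the general one is precisely the point you flag as the main obstacle. In short, the gap you concede at the end is real, and the conjecture remains open; what you have is a correct account of the easy inclusion together with an honest identification of where a proof would have to do nontrivial work.
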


Of course the conjecture only has content for nonsplit $G$.  We have verified it for Weil restrictions of split groups and for unitary groups.

The validity of the conjecture not being known to us in general, we define the $\{\mu\}$-admissible set as follows.

\begin{Definition}\label{def:mu-admissible}
An element $w \in \wt W$ is \emph{$\{\mu\}$-admissible} if $w \leq t_\lambda$ for some $\lambda \in \Lambda_{\{\mu\}}$.   We denote the set of $\mu$-admissible elements in $\wt W$ by $\Adm(\{\mu\})$.  
\end{Definition}

In other words, $w \in \wt W$ is $\{\mu\}$-admissible if and only if $w \leq \sigma t_{\ol\mu}\sigma^{-1} = t_{\sigma \cdot \ol\mu}$ for some $\sigma \in W_0$, where $\ol\mu$ is the image in $X_*(T)_I$ of a cocharacter $\mu \in \wt\Lambda_{\{\mu\}}$.  Since $W_0$ can be lifted to the affine Weyl group inside $\wt W$, all elements in $\Adm(\{\mu\})$ are congruent modulo $W_\aff$.  

More generally, let $K$ and $K'$ be parahoric subgroups of $G(L)$ attached to subfacets of $\mathbf a$, and consider the set of double cosets $W^{K'} \bs \wt W / W^{K}$.

\begin{Definition}\label{def:mu-adm_parahoric}
An element $w \in W^{K'} \bs \wt W / W^{K}$ is \emph{$\{\mu\}$-admissible} if
\[
   w \leq W^{K'} t_\lambda W^{K}
   \quad\text{for some}\quad
   \lambda \in \Lambda_{\{\mu\}}.
\]
We denote the set of $\{\mu\}$-admissible elements in $W^{K'} \bs \wt W / W^{K}$ by $\Adm_{K',K}(\{\mu\})$, or just by $\Adm_K(\{\mu\})$ when $K = K'$.  
\end{Definition}

Note that if Conjecture \ref{conj:max_elts} holds true, then the notion of $\{\mu\}$-admissibility is just that $w \leq W^{K'} t_{\ol\mu} W^K$ for some $\ol \mu$ in the image of $\{\mu\}$ in $X_*(T)_I$.

\begin{eg}\label{eg:adm_maximal_special}
Suppose that $K$ is a special maximal parahoric subgroup.  Then the Bruhat order on $W^K \bs \wt W / W^K$ identifies with the dominance order on the set of dominant elements in $X_*(T)_I$, as in Remark \ref{rk:bruhatKK}.  In this way the $\{\mu\}$-admissible set in $W^K \bs \wt W / W^K$ identifies with the dominant elements in $X_*(T)_I$ that are $\leq \ol \mu^{\mathrm{dom}}$ in the dominance order, where $\ol\mu^{\mathrm{dom}}$ denotes the unique dominant element in $\Lambda_{\{\mu\}}$.
\end{eg}

It is also worth making explicit the notion of $\{\mu\}$-admissibility in the setting of root data, which amounts to working in the special case that $G$ is split, cf.\ Remark \ref{rk:wtW_split}.  Let $\R = (X^*,X_*,\Phi,\Phi^\vee)$ be a root datum and $\{\mu\} \subset X_*$ a $W(\R)$-conjugacy class of cocharacters.  Again choose a base alcove and consider the induced Bruhat order on $\wt W(\R)$.  Then we define the $\{\mu\}$-admissible set
\[
   \Adm(\{\mu\}) := \bigl\{\, w \in \wt W(\R) \bigm| w \leq t_{\mu}\ \text{for some}\ \mu \in \{\mu\} \,\bigr\}.
\]
More generally, let $\mathbf f$ and $\mathbf{f'}$ be subfacets of the base alcove, and let $X$ (resp.\ $X'$) be the set of reflections across the walls of the base alcove containing $\mathbf f$ (resp.\ $\mathbf{f'}$).  As on p.\ \pageref{ref:W_X}, let $W_X$ (resp.\ $W_{X'}$) be the subgroup of $W_\aff(\R)$ generated by $X$ (resp.\ $X'$).  Then we define
\[
   \Adm_{\mathbf f, \mathbf{f'}}(\{\mu\}) := \bigl\{\, w \in W_{X'} \bs \wt W(\R) / W_X \bigm| w \leq W_{X'}t_{\mu}W_X\ \text{for some}\ \mu \in \{\mu\} \,\bigr\}.
\]
When $\mathbf f = \mathbf{f'}$, we write $\Adm_{\mathbf f}(\{\mu\}) := \Adm_{\mathbf f, \mathbf f}(\{\mu\})$.

\begin{Remark}
Let $\R$ be a root datum, choose a positive chamber, and take the base alcove to be the unique alcove contained in the negative chamber and whose closure contains the origin.  Let $\mu$ be a dominant cocharacter.  He and Lam \cite[Th.\ 2.2]{HL} have recently given a description of the partially ordered set $\Adm\bigl(W(\R)\cdot\mu\bigr) \cap W(\R)t_\mu W(\R)$ in terms of the combinatorics of \emph{projected Richardson varieties}.  Note that in the special case when $\mu$ is \emph{minuscule,} $\mu$ is minimal among dominant cocharacters in the dominance order, and it follows from Example \ref{eg:adm_maximal_special} that $\Adm\bigl(W(\R)\cdot\mu\bigr) \subset W(\R)t_\mu W(\R)$.  Thus He and Lam's result describes the full admissible set in the minuscule case.
\end{Remark}

\subsection{Relation to local models}

We continue with the notation of the previous three subsections.  Let us now return to the problem we posed at the beginning of \S\ref{s:combinatorics}, namely that of identifying the Schubert cells that occur in the geometric special fiber of a local model upon a suitable embedding into an affine flag variety.  More precisely, let $F$ be a discretely valued field with residue field $k$, let $\wt G$ be a connected reductive group over $F$, let $\{\mu\}$ be a conjugacy class of geometric cocharacters of $\wt G$, let $E$ denote the reflex field of $\{\mu\}$, and let $\O_E$ and $k_E$ denote the respective ring of integers in, and residue field of, $E$.  Suppose that for some choice of parahoric level structure we have attached a (flat) local model $M^\loc_{\wt G,\{\mu\}}$ to $\wt G$ and $\{\mu\}$ over $\Spec\O_E$; in each example we encountered in \S\ref{s:examples}, this was taken to be the scheme-theoretic closure of the generic fiber of the naive local model.  Let $F^\un$ denote the completion of a maximal unramified algebraic extension of $F$, let $\wt S$ be a maximal split torus in $\wt G_{F^\un} := \wt G \otimes_F F^\un$, let $\wt T$ be the centralizer of $\wt S$ in $\wt G_{F^\un}$, and regard $\{\mu\}$ as an absolute conjugacy class of geometric cocharacters of $\wt T$.   Let $L = \ol k_E ((t))$.  \emph{Then in every example we know,}\footnote{This will be addressed more systematically in \cite{PZ}.}
\begin{altitemize}
\item
   \emph{there exists a connected reductive group $G$ over $L$ (``a function field analog of $\wt G_{F^\un}$'') such that $G$ and $\wt G$ are forms of the same split Chevalley group defined over $\mZ$, and whose Iwahori-Weyl group $\wt W_G$ naturally identifies with $\wt W_{\wt G_{F^{\un}}}$;}
\item
   \emph{the geometric special fiber $\ol M^\loc_{\wt G,\{\mu\}} \otimes_{k_E} \ol k_E$ embeds $L^+P$-equivariantly in the affine flag variety $\F_P$ for $G$, where $P$ is a parahoric group scheme  for $G$ corresponding to the original choice of parahoric level structure;}
\item
   \emph{and with regard to this embedding, the Schubert cells occurring in $\ol M^\loc_{\wt G,\{\mu\}} \otimes_{k_E} \ol k_E$ are parametrized by precisely the $\{\mu\}$-admissible set, regarded as a subset of $W^{K} \bs \wt W_G / W^{K}$ via the above bijection $\wt W_G \cong \wt W_{\wt G_{F^\un}}$, where $K$ denotes the parahoric subgroup $P(\O_L) \subset G(L)$.}
\end{altitemize}   
Note that this says that the irreducible components of $\ol M^\loc_{\wt G,\{\mu\}} \otimes_{k_E} \ol k_E$, which correspond to the Schubert cells in $\ol M^\loc_{\wt G,\{\mu\}} \otimes_{k_E} \ol k_E$ that are  maximal for the inclusion relation of their closures, are exactly parametrized by the elements of $\Lambda_{\{\mu\}}$.

\begin{eg}[$GL_n$]\label{eg:GL_n_combinatorics}
Let $\wt G = GL_n$ over $F$, and let $\{\mu\}$ be the conjugacy class of $\mu = \bigl(1^{(r)},0^{(n-r)}\bigr)$, as in \S\ref{s:examples}.\ref{ss:GL_n}.  Then $E=F$.  Let $\L$ be the standard lattice chain $\Lambda_\mZ$ in $F^n$.  We take $G := GL_n$ over $L$ with split maximal diagonal torus $S$ and Iwahori-Weyl group $\wt W = \wt W_{G,S}$ as in Example \ref{eg:wtW_GL_n}, and we embed
\[
   \ol M^\loc_{GL_n,\{\mu\},\Lambda_\mZ} \otimes_k \ol k \to \F_\mZ
\]
as in Example \ref{eg:GL_n_emb}.  Then an element $w \in \wt W = N(L)/S(\O_L)$ specifies a Schubert cell contained in the image of $\ol M^\loc_{GL_n,\{\mu\},\Lambda_\mZ} \otimes_k \ol k$ exactly when $w \cdot \lambda_\mZ$ is contained in the image of $\ol M^\loc_{GL_n,\{\mu\},\Lambda_\mZ} \otimes_k \ol k$, that is, exactly when the lattice chain $w \cdot \lambda_\mZ$ satisfies
\begin{enumerate}
\item\label{it:ctn_cond}
   $\lambda_i \supset w \cdot \lambda_i \supset t\lambda_i$ for all $i$; and
\item\label{it:size_cond}
   $\dim_{\ol k} (w \cdot \lambda_i/t\lambda_i) = n - r$ for all $i$.
\end{enumerate}

To translate these conditions into more explicit combinatorics, let us identify each $\O_L$-lattice of the form $t^{i_1}\O_L \oplus \dotsb \oplus t^{i_n}\O_L$ with the vector $(i_1,\dotsc,i_n) \in \mZ^n$.  Then with regard to our identifications, the natural action of $N(L)/S(\O_L)$ on lattices translates to the natural action of $\mZ^n \rtimes S_n$ on $\mZ^n$ by affine transformations, with $\mZ^n$ acting by translations and $S_n$ acting by permuting coordinates.  For $i = nd + j$ with $0 \leq j < n$, the lattice $\lambda_i$ translates to the vector
\[
   \omega_i := \bigl( (-1)^{(j)},0^{(n-j)} \bigr) - \mathbf d,
\]
where for any $d$ we write boldface $\mathbf d$ to denote the vector $(d,\dotsc,d)$.
Conditions \eqref{it:ctn_cond} and \eqref{it:size_cond} become equivalent to
\begin{enumerate}
\renewcommand{\theenumi}{\arabic{enumi}$'$}
\item\label{it:ctn_cond'}
   $\mathbf 0 \leq w\cdot \omega_i - \omega_i \leq \mathbf 1$ for all $i$; and
\item\label{it:size_cond'}
   for all $i$, the sum of the entries of the vector $w\cdot \omega_i - \omega_i$ is $r$.
\end{enumerate}
Note that $\mu$ and all its Weyl conjugates, regarded as translation elements in $\wt W$, trivially satisfy \eqref{it:ctn_cond'} and \eqref{it:size_cond'}.  The main result for $GL_n$ in \cite{KR} is that the set of all $w \in \wt W$ satisfying \eqref{it:ctn_cond'} and \eqref{it:size_cond'} is precisely the set $\Adm(\{\mu\})$, where the Bruhat order is taken with respect to the alcove determined by the $\omega_i$'s.%
\footnote{Note that this alcove is the alcove contained in the \emph{negative} Weyl chamber (relative to the standard choice of positive roots) and whose closure contains the origin.  This is the motivation for our convention in defining the positive chamber in Remark \ref{rk:bruhatKK}.}
Entirely analogous remarks hold for any subchain $\Lambda_I$ of $\Lambda_\mZ$.
\end{eg}

Let us return to the general discussion, with $L$ again an arbitrary complete, discretely valued, strictly Henselian field.  Taking note that, in the previous example, the images of the $\omega_i$'s in the standard apartment for $PGL_n$ are the vertices of the base alcove, the papers \cite{KR,R} abstract conditions \eqref{it:ctn_cond'} and \eqref{it:size_cond'} to any Iwahori-Weyl group as follows.  Let $T_\ad$ denote the image of $T$ in $G_\ad$.  Consider the composition
\[
   X_*(T)_I \to X_*(T_\ad)_I \to X_*(T_\ad)_I \otimes_\mZ \mR \cong X_*(S_\ad) \otimes_\mZ \mR = \A,
\]
and let $\P_{\{\mu\}}$ denote the convex hull in $\A$ of the image of the set $\Lambda_{\{\mu\}}$.

\begin{Definition}\label{def:mu-permissible}
An element $w \in \wt W$ is \emph{$\{\mu\}$-permissible} if
\begin{altitemize}
\item
   $w \equiv t_{\ol\mu} \bmod W_\aff$ for one, hence any, $\ol\mu \in \Lambda_{\{\mu\}}$; and
\item
   $w\cdot x - x \in \P_{\{\mu\}}$ for all $x \in \mathbf a$.
\end{altitemize}
More generally, for any subfacet $\mathbf f$ of $\mathbf a$ with associated parahoric subgroup $K$, an element $w \in W^K \bs \wt W / W^K$ is \emph{$\{\mu\}$-permissible} if $w \equiv t_{\ol\mu} \bmod W_\aff$ for any $\ol\mu \in \Lambda_{\{\mu\}}$ and $w\cdot x -x \in \P_{\{\mu\}}$ for all $x \in \mathbf f$.  We write $\Perm(\{\mu\})$ for the set of $\{\mu\}$-permissible elements in $\wt W$ and $\Perm_K(\{\mu\})$ for the set of $\{\mu\}$-permissible elements in $W^K \bs \wt W / W^K$.
\end{Definition}

Note that for $w \in  W^K \bs \wt W / W^K$, the condition $w \equiv t_{\ol\mu} \bmod W_\aff$ is well-defined because $W^K \subset W_\aff$, and the condition 
\begin{equation}\label{disp:conv_cond}
   w\cdot x -x \in \P_{\{\mu\}} \quad\text{for all}\quad x \in \mathbf f
\end{equation}
is well-defined by \cite[\S3, p.\ 282]{R}.  By convexity, \eqref{disp:conv_cond} is equivalent to requiring that $w\cdot x -x \in \P_{\{\mu\}}$ for all \emph{vertices} $x$ of $\mathbf f$.

In the case of $GL_n$ and $\{\mu\}$ the conjugacy class of $\mu = \bigl(1^{(r)},0^{(n-r)}\bigr)$ from Example \ref{eg:GL_n_combinatorics}, one sees almost immediately that the set of elements in $\wt W$ satisfying \eqref{it:ctn_cond'} and \eqref{it:size_cond'} is precisely $\Perm(\{\mu\})$.  Thus the main result for $GL_n$ in \cite{KR} is to establish the equality $\Adm(\{\mu\}) = \Perm(\{\mu\})$ for such $\mu$.

In many (but not all!) cases known to us, the Schubert cells in the special fiber of the local model turn out to be parametrized by the $\{\mu\}$-permissible set, i.e.\ one has an equality between $\{\mu\}$-admissible and $\{\mu\}$-permissible sets.  And in explicit computations it is easier to determine the $\{\mu\}$-permissible set than the $\{\mu\}$-admissible set.  Thus it is of interest to understand the relationship between $\{\mu\}$-admissibility and $\{\mu\}$-permissibility.  The first results in this direction are the following.

\begin{prop}
\begin{altenumerate}
\renewcommand{\theenumi}{\roman{enumi}}
\item 
   \emph{(\cite[\S11]{KR})} For any $G$ and any $\{\mu\}$, $\Perm(\{\mu\})$ is closed in the Bruhat order and $\Adm(\{\mu\}) \subset \Perm(\{\mu\})$.
\item\label{it:Adm_neq_Perm}
   \emph{(Haines--Ng\^o \cite[7.2]{HN2})} The reverse inclusion can fail.  More precisely, suppose that $G$ is split over $L$ with irreducible root datum of rank $\geq 4$ and not of type $A$.  Then $\Adm(\{\mu\}) \neq \Perm(\{\mu\})$ for $\{\mu\}$ the conjugacy class of any sufficiently regular cocharacter $\mu$.\qed
\end{altenumerate}
\end{prop}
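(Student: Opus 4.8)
The plan is to first show that $\Perm(\{\mu\})$ is closed downward in the Bruhat order, and then to check that the extreme translations $t_\lambda$, $\lambda\in\Lambda_{\{\mu\}}$, are permissible; the inclusion $\Adm(\{\mu\})\subseteq\Perm(\{\mu\})$ follows at once since $\Adm(\{\mu\})$ is by definition the Bruhat-downward closure of $\{t_\lambda\mid\lambda\in\Lambda_{\{\mu\}}\}$. For closedness, suppose $w'\leq w$ in $\wt W$. From the semidirect decomposition $\wt W=W_\aff\rtimes\Omega$ and the description of the Bruhat order in \S\ref{ss:bruhat_order}, $w'$ and $w$ lie in the same coset of $W_\aff$, so $w'$ inherits the congruence condition defining $\Perm(\{\mu\})$. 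For the convexity condition, the key input is the convexity property of the Bruhat order on $\wt W$ established in \cite[\S11]{KR}: going down in the Bruhat order keeps the vector $w\cdot x-x$, for $x$ in the closed base alcove $\mathbf a$, inside the convex hull of its $W_0$-translates. Since $\Lambda_{\{\mu\}}$ is a single $W_0$-orbit, $\P_{\{\mu\}}$ is $W_0$-stable, hence convex and $W_0$-invariant, so $w\cdot x-x\in\P_{\{\mu\}}$ forces $w'\cdot x-x\in\P_{\{\mu\}}$ for all $x\in\mathbf a$. Finally each $t_\lambda$ is permissible: the congruence $t_\lambda\equiv t_{\bar\mu}\bmod W_\aff$ holds because $\lambda$ and $\bar\mu$ are $W_0$-conjugate and $W_0\subset W_\aff$, and $t_\lambda\cdot x-x$ equals the image of $\lambda$ in $\mathbf a$'s ambient space, which is a vertex of $\P_{\{\mu\}}$.

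\textbf{Part (ii), reduction.} Here $G$ is split, so $\wt W=X_*(S)\rtimes W_0$ with affine Weyl group $W_\aff=Q^\vee\rtimes W_0$; fix the base alcove whose closure contains the origin and which lies in the antidominant chamber, and let $\mu$ be dominant. The natural class in which to hunt for a permissible non-admissible element is the translations: for a cocharacter $\nu$ one has $t_\nu\in\Perm(\{\mu\})$ if and only if $\nu\in\P_{\{\mu\}}\cap(\mu+Q^\vee)$, since the congruence condition becomes $\nu\equiv\mu\bmod Q^\vee$ and the convexity condition collapses to $\nu\in\P_{\{\mu\}}$; whereas $t_\nu\in\Adm(\{\mu\})$ if and only if $t_\nu\leq t_{\sigma\mu}$ for some $\sigma\in W_0$. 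So it suffices to exhibit $\nu\in\P_{\{\mu\}}\cap(\mu+Q^\vee)$ which is dominated by no $t_{\sigma\mu}$.

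\textbf{Part (ii), the construction.} This is where the hypotheses enter. In type $A$ every permissible translation is admissible --- essentially the Kottwitz--Rapoport theorem --- because $X_*(S)/Q^\vee$ is cyclic and $\P_{\{\mu\}}$ is ``saturated'' for the Bruhat order: every alcove lying inside the permissibility polytope sits on a decreasing gallery issuing from some corner alcove $t_{\sigma\mu}\mathbf a$. In the other types, once the rank is at least $4$, there appear cocharacters lying in a ``recess'' of $\P_{\{\mu\}}$ that no decreasing gallery from a corner can reach. Following Haines--Ng\^o, I would (1) take $\mu$ sufficiently regular, so that $\P_{\{\mu\}}$ is thick in every direction; (2) in each minimal offending configuration --- a rank-$4$ root system not of type $A$, e.g. $B_4$, $C_4$ or $D_4$ --- carry out an explicit finite computation producing a $\nu\in\P_{\{\mu\}}\cap(\mu+Q^\vee)$ together with a proof, via an effective criterion for the Bruhat order between affine translations (gallery distances, or a Deodhar-type deletion criterion), that $t_\nu\not\leq t_{\sigma\mu}$ for all $\sigma\in W_0$; (3) promote this to arbitrary rank $\geq 4$ by restricting to a Levi/parabolic sub-root-system containing the offending rank-$4$ configuration and checking the obstruction persists; and (4) promote it to all sufficiently regular $\mu$ by a scaling argument, replacing $\mu$ by a suitable dilate and $\nu$ by the corresponding dilate.

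\textbf{Main obstacle.} The crux is step (2): choosing the right $\nu$ and, above all, certifying the \emph{negative} statement $t_\nu\not\leq t_{\sigma\mu}$ for every $\sigma$. Non-comparabilities in the affine Bruhat order are delicate --- one cannot argue simply by a failed deletion --- so what is needed is a clean combinatorial certificate of non-admissibility, and this certificate is exactly what distinguishes type $A$ from the remaining types and what forces both the rank bound and the regularity hypothesis. This is the content of \cite[\S7]{HN2}, and it is the only genuinely hard point in the proof.
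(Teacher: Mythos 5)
The proposition appears in the paper with a terminal box and no proof of its own: both parts are direct citations, to \cite{KR} for (i) and to \cite{HN2} for (ii), so what you are really reconstructing is the arguments of those references.

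Your part (i) has the right skeleton and is aligned with what \cite{KR} does. The congruence condition descends because $w' \leq w$ in $\wt W$ forces $w'$ and $w$ into the same $W_\aff$-coset (via $\wt W = W_\aff \rtimes \Omega$); the convexity condition descends because $\P_{\{\mu\}}$ is convex and $W_0$-stable; and each extreme translation $t_\lambda$, $\lambda \in \Lambda_{\{\mu\}}$, is permissible because $t_\lambda \cdot x - x = \lambda$ for every $x$. The one substantive step --- that a single downward cover $w' \lessdot w$, say $w' = s_H w$ with $H$ having linear part $\alpha$, places $w'\cdot x - x$ on the segment from $w\cdot x - x$ to $s_\alpha(w\cdot x - x)$, hence inside $\Conv\bigl(W_0 \cdot (w\cdot x - x)\bigr) \subset \P_{\{\mu\}}$ --- is exactly the lemma of \cite[\S 11]{KR}, which you invoke rather than derive; since the paper likewise only cites \cite{KR}, that is fine.

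For part (ii) you do not actually give a proof. You correctly reduce the search to translations and correctly unwind permissibility of $t_\nu$ to $\nu \in \P_{\{\mu\}} \cap (\mu + Q^\vee)$, but the step that carries all the weight --- exhibiting such a $\nu$ and \emph{certifying} that $t_\nu \not\leq t_{\sigma\mu}$ for every $\sigma \in W_0$ --- is precisely what you defer to \cite{HN2} in your ``Main obstacle'' paragraph, so the proposal contains a plan but not an argument. Two further cautions about that plan: your step (3), promoting a rank-$4$ counterexample to higher rank by restricting to a Levi $M$, needs the claim that non-comparability in $\wt W_M$ persists in $\wt W_G$; this is not automatic, because $W_\aff(M)$ is a reflection subgroup but in general not a standard parabolic subgroup of $W_\aff(G)$, so compatibility of the two Bruhat orders has to be argued. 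And your gesture toward why type $A$ is exceptional (cyclicity of $X_*/Q^\vee$, ``saturation'' of $\P_{\{\mu\}}$) points at the right phenomenon but would have to be replaced by the concrete root-combinatorial criterion that \cite{HN2} actually use to produce the non-admissible permissible element.
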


In \eqref{it:Adm_neq_Perm}, we refer to the proof of the cited result for the precise meaning of ``sufficiently regular.''  We also note that in \cite{Sm4} it is shown that $\Adm(\{\mu\}) \neq \Perm(\{\mu\})$ for $\{\mu\}$ the Weyl orbit of the coweight $\bigl(1^{(r)},0^{(m-r)}\bigr)$ for $B_m$ (using the standard coordinates, as in \cite[Pl.~II]{B}) for $m$, $r \geq 3$.

While $\{\mu\}$-admissibility and $\{\mu\}$-permissibility are not equivalent in general, the following result gives a summary of most situations in which they are known to coincide.  We shall formulate the results for extended affine Weyl groups attached to root data; in the most literal sense one may regard this as an assumption that $G$ is split over $L$, as in Remark \ref{rk:wtW_split}, but see Remark \ref{rk:nonsplit_case} below for the relevance of this to the nonsplit case.  Given a root datum $\R = (X^*,X_*,\Phi,\Phi^\vee)$ and a $W(\R)$-conjugacy class $\{\mu\} \subset X_*$ of cocharacters, we define $\Perm(\{\mu\})$ in obvious analogy with Definition \ref{def:mu-permissible},
\[
   \Perm(\{\mu\}) := \biggl\{\, w \in \wt W(\R) \biggm| 
   \twolinestight{$w \equiv t_{\mu} \bmod W_\aff(\R)$ for any $\mu \in \{\mu\}$ and}{$w\cdot x-x \in \wt\P_{\{\mu\}}$ for all $x$ in the base alcove}
   \,\biggr\},
\]
where $\wt \P_{\{\mu\}}$ denotes the convex hull of $\{\mu\}$ in $X_* \otimes_\mZ \mR$.

\begin{prop}\label{st:adm_vs_perm}
Let $\wt W$ be the extended affine Weyl group attached to a root datum \R, as in Remark \ref{rk:wtW_split}, and take the Bruhat order on $\wt W$ corresponding to a base alcove $\mathbf a$.
\begin{altenumerate}
\renewcommand{\theenumi}{\roman{enumi}}
\item\label{it:A}
   \emph{(Haines--Ng\^o \cite[3.3]{HN2}; \cite[3.5]{KR})}\, If \R involves only type $A$, then $\Adm(\{\mu\}) = \Perm(\{\mu\})$ for any $W(\R)$-conjugacy class $\{\mu\}$.
\item\label{it:GSp_Adm=Perm}
   \emph{(Haines--Ng\^o \cite[10.1]{HN2}; \cite[4.5, 12.4]{KR})}\, Suppose that $\wt W$ is the Iwahori-Weyl group of $GSp_{2g}$ and that $\{\mu\} = W(\R)\cdot \mu$ for $\mu$ a \emph{sum of dominant minuscule} cocharacters.  Then $\Adm(\{\mu\}) = \Perm(\{\mu\})$.
\item\label{it:general_classical}
   \emph{(\cite[3.5, 4.5]{KR}, \cite[7.6.1]{Sm1}, \cite[Main Theorem]{Sm2})} Suppose that \R involves only types $A$, $B$, $C$, and $D$ and that $\{\mu\}$ is a $W(\R)$-conjugacy class of \emph{minuscule} cocharacters.  Then $\Adm(\{\mu\}) = \Perm(\{\mu\})$.\qed
\end{altenumerate}
\end{prop}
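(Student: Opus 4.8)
The plan is to treat $\Adm(\{\mu\})=\Perm(\{\mu\})$ for minuscule $\{\mu\}$ in types $A$--$D$ by reducing to the irreducible, ``absolutely simple'' case and then checking each type. Since $\wt W(\R)$ decomposes as a product over the irreducible factors of $\R$, and both $\Adm$ and $\Perm$ are compatible with such products (each condition is imposed factorwise; for the permissibility condition this uses that $\wt\P_{\{\mu\}}$ and the base alcove are products), it suffices to prove the equality when $\R$ is irreducible and $\mu$ is minuscule for that factor. Note that a minuscule coweight projects to a minuscule or zero coweight on each simple factor, so this reduction is harmless. In each irreducible case the key general fact, already available from \eqref{st:adm_vs_perm}\eqref{it:A} or directly from \cite[\S11]{KR}, is the easy inclusion $\Adm(\{\mu\}) \subset \Perm(\{\mu\})$ together with closedness of $\Perm(\{\mu\})$ under the Bruhat order; so the real content is the reverse inclusion $\Perm(\{\mu\}) \subset \Adm(\{\mu\})$.

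For the reverse inclusion I would argue type-by-type. Type $A$ is \eqref{it:A}, so it may be invoked as a black box (this also disposes of the ambient $GL_n$ or $SL_n$ bookkeeping). For type $C$, when $\mu$ is minuscule it is a sum of (a single) dominant minuscule coweight --- in fact $\mu = \varpi_g^\vee$ --- so the equality follows from \eqref{it:GSp_Adm=Perm} via \cite[4.5, 12.4]{KR}; here one passes between $GSp_{2g}$ and $Sp_{2g}$, noting that the translation lattice for $Sp_{2g}$ is the coroot lattice, which changes neither $\Adm$ nor $\Perm$ since both are single $W_\aff$-cosets. The genuinely new input is types $B$ and $D$. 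For type $B_m$ with $\mu=\varpi_1^\vee=(1,0^{(m-1)})$ in the standard coordinates of \cite[Pl.~II]{B}, the equality is \cite[7.6.1]{Sm1} (which also underlies the orthogonal topological flatness statement Theorem \ref{st:GO_loc_mod_top_flat}); for type $D_m$ with $\mu \in \{\varpi_1^\vee, \varpi_{m-1}^\vee, \varpi_m^\vee\}$, the equality is the Main Theorem of \cite{Sm2}. Collecting the types: an arbitrary minuscule $\{\mu\}$ for $\R$ of type $A$/$B$/$C$/$D$ restricts on each simple factor to one of the coweights covered above, and the product decomposition then yields $\Perm(\{\mu\}) \subset \Adm(\{\mu\})$ in general.

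Concretely the steps are: (1) reduce to $\R$ irreducible via the product decomposition of $\wt W(\R)$, $\Adm$, and $\Perm$, checking that minusculeness passes to each factor; (2) record $\Adm(\{\mu\}) \subset \Perm(\{\mu\})$ and closedness of $\Perm(\{\mu\})$ from \cite{KR}, reducing everything to the opposite inclusion; (3) dispatch type $A$ by \eqref{it:A}; (4) dispatch type $C$ (where minuscule $\Rightarrow$ $\mu=\varpi_g^\vee$, a dominant minuscule coweight) by \eqref{it:GSp_Adm=Perm}/\cite{KR}, transferring between $GSp_{2g}$ and $Sp_{2g}$; (5) dispatch type $B$ by \cite[7.6.1]{Sm1} and type $D$ by \cite{Sm2}; (6) reassemble. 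The main obstacle --- really the only nontrivial mathematics not cited wholesale --- is the orthogonal cases $B$ and $D$ in steps (5), where the combinatorics of the permissible set is genuinely more delicate than in type $A$ (indeed by Haines--Ng\^o the two sets already diverge for non-minuscule $\mu$ in type $B$, so minusculeness must be used in an essential way); fortunately these are exactly the theorems of Smithling \cite{Sm1, Sm2} that we are entitled to quote, so in the survey the proof is assembled from the cited results rather than reproved.
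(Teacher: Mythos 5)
The paper gives no proof of this proposition---it is stated as a list of citations with no separate argument---and you correctly observe that the survey assembles the result from the cited literature rather than reproving it. Your proposed assembly is right in its architecture (reduce to irreducible factors, dispatch each type to its cited source), but two of your steps need repair. First, the claim that ``$\wt W(\R)$ decomposes as a product over the irreducible factors of $\R$'' is false in general: even when $\Phi$ is a direct sum of irreducible root systems, the lattice $X_*$ need not decompose compatibly (consider $GL_n$ rather than $SL_n\times\mG_m$). The reduction that actually works, and that the paper sets up in Remark \ref{rk:nonsplit_case}, is to pass first to the adjoint root datum via the canonical map $\wt W(\R)\to\wt W(\R_\ad)$, which carries $\Adm$ onto $\Adm$, $\Perm$ onto $\Perm$, and preserves the question of their equality; $\wt W(\R_\ad)$ then genuinely splits as a product over the simple factors, and your observation that a minuscule coweight restricts to a minuscule or zero coweight on each factor finishes the reduction.

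Second, and more concretely, you have the Smithling attributions swapped. Reference \cite{Sm1}---which you yourself note is the source of Theorem \ref{st:GO_loc_mod_top_flat}---concerns topological flatness of orthogonal local models in the split \emph{even} case, i.e.\ split $GO_{2g}$, hence type $D_g$ with the PEL coweights $\varpi_{g-1}^\vee$, $\varpi_g^\vee$; it has nothing to do with type $B$. The cases it does not cover, namely type $B_m$ with $\varpi_1^\vee$ and type $D_m$ with $\varpi_1^\vee$, are the province of \cite{Sm2}. Your own parenthetical (citing \cite{Sm1} for type $B$ while simultaneously acknowledging that it underlies the type $D$ flatness theorem) is the tell that the attribution is inverted. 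The remaining ingredients---types $A$ and $C$ via parts \eqref{it:A}, \eqref{it:GSp_Adm=Perm} and \cite[3.5, 4.5]{KR}, the easy inclusion $\Adm\subset\Perm$ and closedness of $\Perm$ from \cite[\S 11]{KR}, and the transfer between $GSp_{2g}$ and $Sp_{2g}$ by matching $W_\aff$-cosets---are all correct.
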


In \eqref{it:GSp_Adm=Perm}, a cocharacter $\mu$ is a sum of dominant minuscule cocharacters (with respect to the standard choice of positive Weyl chamber) exactly when it is of the form $\bigl(n^{(g)},0^{(g)}\bigr) + \mathbf d$ for some $n \in \mZ_{\geq 0}$ and $d \in \mZ$, in the notation of Example \ref{eg:wtW_GSp}.

\begin{Remark}\label{rk:adm=perm_parahoric}
As stated, the proposition covers only the Iwahori case, but it is known to generalize to the general parahoric case.  To be precise, let $\mathbf f$ be a subfacet of $\mathbf a$, let $X$ be the set of reflections across the walls of the base alcove containing $\mathbf f$, and let $W_X$ be the subgroup of $W_\aff(\R)$ generated by $X$.  Then, in analogy with Definition \ref{def:mu-permissible}, we define $\Perm_{\mathbf f}(\{\mu\})$ to be the set of all $w \in W_X \bs \wt W(\R) / W_X$ such that $w \equiv t_{\mu} \bmod W_\aff(\R)$ for any $\mu \in \{\mu\}$, and such that $\wt w\cdot x-x \in \wt\P_{\{\mu\}}$ for all $x \in \mathbf f$, where $\wt w$ is any representative of $w$ in $\wt W(\R)$ (this is again independent of the choice of $\wt w$ by \cite[\S3, p.\ 282]{R}).

Then in \eqref{it:A}, we have $\Adm_{\mathbf f}(\{\mu\}) = \Perm_{\mathbf f}(\{\mu\})$ for any $\{\mu\}$ and any $\mathbf f$ when \R involves only type $A$; this was proved in the case of minuscule $\{\mu\}$ in \cite[9.6]{KR}, and the general case is an immediate consequence of G\"ortz's result \cite[Cor.\ 9]{Go4} (which itself makes crucial use of the cited result in the Iwahori case of Haines--Ng\^o).

In \eqref{it:GSp_Adm=Perm}, we have $\Adm_{\mathbf f}(\{\mu\}) = \Perm_{\mathbf f}(\{\mu\})$ inside $W_X \bs \wt W_{GSp_{2g}} / W_X$ for any $\mathbf f$ and any $\{\mu\}$ which is the conjugacy class of a sum of dominant minuscule coweights; this was proved in the case of minuscule $\{\mu\}$ in \cite[10.7]{KR}, and the general case is an immediate consequence of G\"ortz's result \cite[Cor.\ 13]{Go4} (which again relies on the Iwahori case established in \cite{HN2}).

It follows that the parahoric version of \eqref{it:general_classical} holds for any $\mathbf f$ and any minuscule $\{\mu\}$ provided \R involves only types $A$ and $C$.  On the other hand, the general parahoric version of \eqref{it:general_classical} for types $B$ and $D$ will be deduced in \cite{Sm5} from the Iwahori case for these types, via arguments along the lines of those in \cite{KR} or \cite{Go4}.
\end{Remark}

\begin{Remark}\label{rk:nonsplit_case}
Proposition \ref{st:adm_vs_perm} is useful for more than just the case that $G$ is split.  Indeed, for any group $G$, questions of admissibility and permissibility in $\wt W$ can \emph{always} be reduced to the case of an extended affine Weyl group attached to a root datum.  The link is made via the reduced root system $\Sigma$ on $\A$ attached to the affine root system $\Phi_\aff$ for $G$, as discussed in \S\ref{s:combinatorics}.\ref{ss:bruhat_order}, p.\ \pageref{ref:Sigma}.

Consider the group $X_*(T_\ad)_I$.  By \cite[4.4.16]{BTII}, $X_*(T_\ad)$ is an induced Galois module.  Hence $X_*(T_\ad)_I$ is torsion-free.  And by \cite[Lem.\ 15]{H-R}, we have
\[
   Q^\vee(\Sigma) \subset X_*(T_\ad)_I \subset P^\vee(\Sigma),
\]
where $Q^\vee(\Sigma)$ and $P^\vee(\Sigma)$ denote the respective coroot and coweight lattices for $\Sigma$.  Hence
\[
   \R := \bigl( X^*(T_\ad)^I, X_*(T_\ad)_I, \Sigma, \Sigma^\vee \bigr)
\]
is a root datum.  For any $v \in \A$ which is a special vertex relative to the affine root system for $G$, the image of the composition
\[
   \wt W \xra\nu \A \rtimes W_0 
      \xra{\substack{\text{conjugation}\\ \text{by }t_{-v}}} \A \rtimes W_0
\]
is contained in $\wt W(\R) = X_*(T_\ad)_I \rtimes W_0$; let us write
\[
   f\colon \wt W \to \wt W(\R).
\]
Then,  on translation elements $f$ restricts to the natural map $X_*(T)_I \to X_*(T_\ad)_I$, and $f$ carries $W^K \subset \wt W$ isomorphically to $W_0$, where $K \subset G(L)$ is the parahoric subgroup attached to $v$ and $W^K$ is the subgroup \eqref{disp:W^K}.

For the present discussion it is necessary to understand the map $f$ in terms of the semidirect product decomposition $\wt W = W_\aff \rtimes \Omega$ \eqref{disp:semidirect_prod_2}, where $\Omega$ is the stabilizer of the base alcove $\mathbf a$ inside $\wt W$.  Inside $\wt W(\R)$ is the affine Weyl group $W_\aff(\R) = Q^\vee(\Sigma) \rtimes W_0$, and we denote by $\Omega(\R)$ the stabilizer in $\wt W(\R)$ of the translate $\mathbf a -v$, which is an alcove in $\A$ for $\Sigma$.  Then $\wt W(\R) = W_\aff(\R) \rtimes \Omega(\R)$, and $f$ restricts to an isomorphism $W_\aff \isoarrow W_\aff(\R)$ and a map $\Omega \to \Omega(\R)$.  Endow $\wt W$ with the Bruhat order corresponding to $\mathbf a -v$.  Then it is clear that
\begin{altitemize}
\item
   $w' \leq w$ in $\wt W$ $\implies$ $f(w') \leq f(w)$ in $\wt W(\R)$, with the converse holding exactly when $w' \equiv w \bmod W_\aff$;
\end{altitemize}
and that for any $W$-conjugacy class $\{\mu\} \subset X_*(T)$,
\begin{altitemize}
\item
   $f$ carries the subset $\Adm(\{\mu\}) \subset \wt W$ bijectively onto $\Adm(\{\ol\mu_\ad\}) \subset \wt W(\R)$, where $\{\ol\mu_\ad\}$ denotes the image of $\Lambda_{\{\mu\}}$ in $X_*(T_\ad)_I$; and
\item
   $f$ carries the subset $\Perm(\{\mu\}) \subset \wt W$ bijectively onto $\Perm(\{\ol\mu_\ad\}) \subset \wt W(\R)$.
\end{altitemize}
Moreover, we have
\begin{altitemize}
\item
   $\Adm(\{\mu\}) = \Perm(\{\mu\})$ in $\wt W$ $\iff$ $\Adm(\{\ol\mu_\ad\}) = \Perm(\{\ol\mu_\ad\})$ in $\wt W(\R)$.
\end{altitemize}
\end{Remark}

\begin{Remark}
The following variant of the preceding remark, which uses the building for $G$ in place of the building for $G_\ad$, is sometimes more convenient in practice.

Let $\wt \A := X_*(S) \otimes \mR \cong X_*(T)_I \otimes \mR$, and consider the natural map
\[
   T(L)/T(L)_1 \isoarrow X_*(T)_I \to \wt \A.
\]
Then there exists an extension of the displayed composite to a map $\wt W \xra{\wt\nu} \wt \A \rtimes W_0$.  More precisely, replacing $\A$ with $\wt \A$ everywhere in the diagram \eqref{disp:nu_diag}, there exists a map $\wt W \xra{\wt\nu} \wt \A \rtimes W_0$ making the diagram commute, and any two extensions differ by conjugation by a translation element, but this translation element is no longer uniquely determined.

Let $\Phi_\aff$ denote the affine root system for $G$ relative to the composite
\[
   \wt W \xra{\wt\nu} \wt\A \rtimes W_0 \to \A \rtimes W_0.
\]
Let $\Sigma$ denote the associated reduced root system on $\A$, as in the preceding remark.  Then we can regard the elements of $\Sigma$ as linear functions on $\wt A$, and the $W_0$-action on $\wt \A$ allows us to canonically lift the coroots to $\wt\A$:  for each root $\alpha \in \Sigma$ we have the associated reflection $s_\alpha \in W_0$, and this determines the associated coroot $\alpha^\vee$ via the formula
\[
   s_\alpha(x) = x - \la \alpha, x \ra \alpha^\vee
   \quad\text{for}\quad
   x \in \wt \A.%
\footnote{Of course, we can also canonically lift the coroots to $\wt\A$ via the embedding $X_*(T_\sc)_I \inj X_*(T)_I$ discussed in \S\ref{s:combinatorics}.\ref{ss:bruhat_order}, p.\ \pageref{ref:X_*(T_sc)_I}.}
\]

Finally let $N$ denote the torsion subgroup of $X_*(T)_I$.  Then
\[
   \wt\R := \bigl( X^*(T)^I, X_*(T)_I/N, \Sigma, \Sigma^\vee \bigr)
\]
is a root datum, and everything carries over from the previous remark with $\wt\R$ in place of $\R$.
\end{Remark}

\begin{Remark}
Although we are interested in minuscule conjugacy classes of cocharacters for applications to Shimura varieties, we caution that, in the context of the previous two remarks, the image of a minuscule $\wt\Lambda_{\{\mu\}}$ in $X_*(T)_I$ or $X_*(T_\ad)_I$  \emph{need not be minuscule for $\Sigma$.}  In this way the study of admissibility for non-minuscule cocharacters in root data is relevant to the study of admissibility for minuscule cocharacters in nonsplit groups.
\end{Remark}

\begin{Remark}\label{rk:R_conj}
It is conjectured in \cite[\S3, p.~283]{R} that $\Adm(\{\mu\}) = \Perm(\{\mu\})$ for any Weyl orbit $\{\mu\}$ of minuscule cocharacters in any extended affine Weyl group attached to a based root datum.  Thus part \eqref{it:general_classical} of  Proposition \ref{st:adm_vs_perm} is a partial confirmation of this conjecture.  In fact, \cite{R} formulates the more optimistic conjecture that $\Adm(\{\mu\}) = \Perm(\{\mu\})$ whenever $\{\mu\}$ is the conjugacy class attached to a sum $\mu$ of dominant minuscule cocharacters. However,  this more optimistic version of the conjecture can fail, cf.\ \cite{Sm4}.  In particular, $\Adm(\{\mu\}) \neq \Perm(\{\mu\})$ for $\mu$ the sum of dominant minuscule coweights
\[
   (1,1,1,0) = \bigl(\tfrac 1 2, \tfrac 1 2, \tfrac 1 2, \tfrac 1 2\bigr) + \bigl(\tfrac 1 2, \tfrac 1 2, \tfrac 1 2, - \tfrac 1 2\bigr)
\]
in $D_4$ (using the standard coordinates, as in \cite[Pl.~IV]{B}).
\end{Remark}

Let us conclude this subsection by making more explicit the relation of the $\{\mu\}$-admissible and $\{\mu\}$-permissible sets to the local models discussed in this article and elsewhere in the literature.  For all of the local models attached to $GL_n$ in \S\ref{s:examples}.\ref{ss:GL_n} \cite{Go1}, $GSp_{2g}$ in \S\ref{s:examples}.\ref{ss:GSp_2g} \cite{Go2}, $\Res_{F/F_0} GL_n$ in \S\ref{s:examples}.\ref{ss:ResGL_n} \cite{Go4,P-R2}, $\Res_{F/F_0}GSp_{2g}$ in \S\ref{s:examples}.\ref{ss:ResGSp} \cite{Go4,P-R2}, and ramified, quasi-split $GU_n$ in \S\ref{s:examples}.\ref{ss:GU_n} \cite{P-R4,Sm3,Sm4}, the geometric special fiber of the local model $M^\loc_{G,\mu,\L}$ admits an embedding into an affine flag variety --- constructed very much in the spirit of \S\ref{s:afv}.\ref{ss:embedding} --- with regard to which it decomposes into a union of Schubert cells indexed by exactly the $\{\mu\}$-admissible set.  In \S\ref{s:examples}.\ref{ss:GO_2g}, the orthogonal group $GO_{2g}$ is disconnected, so that as in Remark \ref{rk:wtW_GO} the present discussion does not literally apply.  Nevertheless, the special fiber of the local model, which has two connected components, can still be embedded into an affine flag variety for $GO_{2g}$, where it is found to contain the Schubert cells indexed by \emph{two} admissible sets for $GO^\circ_{2g}$:  one for the conjugacy class of the cocharacter $\bigl(1^{(g)},0^{(g)}\bigr)$, and the other for the conjugacy class of the cocharacter $\bigl(1^{(g-1)},0,1,0^{(g-1)}\bigr)$.  Note that these cocharacters are $GO_{2g}$-conjugate but not $GO_{2g}^\circ$-conjugate.  See \cite{Sm1}.

In all of the examples mentioned in the previous paragraph, one also has an equality between $\{\mu\}$-admissible and $\{\mu\}$-permissible sets with the single exception of ramified, quasi-split $GU_n$ for $n$ even and $\geq 4$, in which case $\{\mu\}$-admissibility and $\{\mu\}$-permissibility are typically not equivalent.  See \cite{Sm4}.

\subsection{Vertexwise admissibility}\label{ss:vwise_adm}

We continue with the notation of the previous four subsections.  Let $K$ and $K'$ be parahoric subgroups of $G(L)$ attached to subfacets of the base alcove $\mathbf a$, and let $\{\mu\} \subset X_*(T)$ be a $W$-conjugacy class. It is an immediate consequence of the properties of the Bruhat order that the canonical projection $\wt W \to W^{K'} \bs \wt W / W^{K}$ induces a surjective map
\begin{equation}\label{disp:Adm->Adm_K,K'}
   \Adm(\{\mu\}) \surj \Adm_{K',K}(\{\mu\}).
\end{equation}
If $\mathbf f$ is a subfacet of $\mathbf a$ with associated parahoric subgroup $K$, then for each vertex $x$ of $\mathbf f$, let $K_x$ denote the associated parahoric subgroup and
\[
   \rho_{x}\colon W^K \bs \wt W / W^K \surj W^{K_x} \bs \wt W / W^{K_x}
\]
the canonical projection.  We make the following definition.

\begin{Definition}
The \emph{$\{\mu\}$-vertexwise admissible set} in $W^K \bs \wt W / W^K$ is the subset
\[
   \Adm_K^\vert(\{\mu\}) := \bigcap_{\substack{\text{vertices}\\ x\text{ of } \mathbf f}} \rho_x^{-1} \bigl(\Adm_{K_x}(\{\mu\})\bigr).
\]
\end{Definition}

In other words, an element $w \in W^K \bs \wt W / W^K$ is $\{\mu\}$-vertexwise admissible if $W^{K_x} w W^{K_x} \in \Adm_{K_x}(\{\mu\})$ for all vertices $x$ of $\mathbf f$.  It is an obvious consequence of the map \eqref{disp:Adm->Adm_K,K'} that $\Adm_K(\mu) \subset \Adm^\vert_K(\mu)$, and we conjecture the following.

\begin{conjecture}\label{st:vert_adm}
Let $\{\mu\} \subset X_*(T)$ be a $W$-conjugacy class of minuscule cocharacters, and let $\mathbf f$ be a subfacet of $\mathbf a$  with associated parahoric subgroup $K$.  Then the inclusion $\Adm_K(\{\mu\}) \subset \Adm^\vert_K(\{\mu\})$ is an equality.
\end{conjecture}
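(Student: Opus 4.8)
The plan is to squeeze $\Adm_K^\vert(\{\mu\})$ between $\Adm_K(\{\mu\})$ and $\Perm_K(\{\mu\})$, and then to treat by a direct argument those cases not already settled by the known equality of the admissible and permissible sets. The inclusion $\Adm_K(\{\mu\}) \subseteq \Adm_K^\vert(\{\mu\})$ asserted without proof in the statement is immediate: for each vertex $x$ of $\mathbf f$ the projection $\wt W \to W^{K_x}\bs\wt W/W^{K_x}$ preserves the Bruhat order on double cosets and carries $\Adm(\{\mu\})$ onto $\Adm_{K_x}(\{\mu\})$, so $\rho_x$ carries $\Adm_K(\{\mu\})$ into $\Adm_{K_x}(\{\mu\})$, cf.\ \eqref{disp:Adm->Adm_K,K'}. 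Thus the content of the conjecture is the reverse inclusion $\Adm_K^\vert(\{\mu\}) \subseteq \Adm_K(\{\mu\})$.

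First I would prove, unconditionally, that $\Adm_K^\vert(\{\mu\}) \subseteq \Perm_K(\{\mu\})$. Fix $w$ in the left-hand side and a representative $\wt w \in \wt W$. For every vertex $x$ of $\mathbf f$ one has $\rho_x(w) \in \Adm_{K_x}(\{\mu\}) \subseteq \Perm_{K_x}(\{\mu\})$. Since $K_x$ is the parahoric attached to the single vertex $x$, permissibility of $\rho_x(w)$ says exactly that $\wt w\cdot x - x \in \P_{\{\mu\}}$ — a condition well-defined on the double coset by \cite[\S3]{R} — and it also forces $w \equiv t_{\ol\mu} \bmod W_\aff$. Letting $x$ range over all vertices of $\mathbf f$ and using convexity of $\P_{\{\mu\}}$, this is precisely the assertion $w \in \Perm_K(\{\mu\})$. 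Combined with the trivial inclusion, it follows that whenever $\Adm_K(\{\mu\}) = \Perm_K(\{\mu\})$ all three sets coincide and the conjecture holds. Via the reduction of Remark \ref{rk:nonsplit_case} from $(\wt W,\{\mu\})$ to an extended affine Weyl group $\wt W(\R)$ of a root datum — a reduction compatible with all the projections $\rho_x$, since the map $f$ of that remark is an isomorphism on $W_\aff$ and hence carries each $W^{K_x}$ onto the corresponding standard parabolic — Proposition \ref{st:adm_vs_perm} and Remark \ref{rk:adm=perm_parahoric} then dispose of the split classical groups (invoking \cite{Sm5} for the parahoric case in types $B$ and $D$) and of the nonsplit classical groups whose induced conjugacy class on the reduced root system $\Sigma$ still satisfies $\Adm = \Perm$.

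What remains are the exceptional root data $E_6$ and $E_7$ with their minuscule coweights, and the nonsplit situations in which $\Adm_K \neq \Perm_K$ — most notably ramified, even, quasi-split $GU_n$, whose adjoint group reduces to a non-minuscule class in a root datum of type $B$ or $C$. In these cases the $\Perm$-sandwich is too lossy, since one expects $\Adm_K^\vert = \Adm_K \subsetneq \Perm_K$, so $\Adm_K^\vert(\{\mu\}) \subseteq \Adm_K(\{\mu\})$ must be shown directly. Here I would induct on the length $\ell({}_{K}\wt w^K)$ of the distinguished representative of $w$. The maximal elements of $\Adm_K^\vert(\{\mu\})$ in the Bruhat order are translations $t_\lambda$ with $\lambda \in \Lambda_{\{\mu\}}$: at a special vertex $x$, $\Adm_{K_x}(\{\mu\})$ is the set of dominant coweights $\leq \ol\mu^{\mathrm{dom}}$ (Example \ref{eg:adm_maximal_special}), which pins down the translation part. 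For the inductive step, given a non-maximal $w \in \Adm_K^\vert(\{\mu\})$ one looks for a simple affine reflection $s$ with $sw$ or $ws$ of strictly smaller length in the double-coset sense and still vertexwise admissible — an exchange-condition argument — and then concludes by induction together with the inclusion relations among Schubert varieties of Proposition \ref{dimandclos}. After first reducing to $\R$ irreducible (a facet of a product building, and its vertices, factor through the simple components) and, where convenient, to the Iwahori case, one is left with a short list of configurations in each irreducible affine type; for the minuscule coweights of $E_6$ and $E_7$ these can alternatively be handled via the description of $\Adm\bigl(W(\R)\cdot\mu\bigr) \cap W(\R)t_\mu W(\R)$ by projected Richardson varieties due to He and Lam.

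The main obstacle is this last point: the vertexwise hypothesis supplies, for each vertex $x$ of $\mathbf f$, some $\lambda_x \in \Lambda_{\{\mu\}}$ with $\rho_x(w) \leq \rho_x(t_{\lambda_x})$, but these coweights need not coincide, and the real work is to amalgamate the local bounds into a single global bound $w \leq t_\lambda$ with $\lambda \in \Lambda_{\{\mu\}}$. Whether this amalgamation can be carried out uniformly — for instance through the projected-Richardson-variety combinatorics, or by identifying the minimal non-admissible yet vertexwise-admissible elements in each affine type — or whether it genuinely requires a type-by-type verification, is the crux of the matter.
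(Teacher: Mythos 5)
The statement you are proving is labeled a \emph{conjecture} in the paper, and the paper does not prove it; it only records the reduction to the case $\Adm_K(\{\mu\}) = \Perm_K(\{\mu\})$ (via the sandwich $\Adm_K \subset \Adm_K^\vert \subset \Perm_K$) and then reports that the remaining cases arising from local models have been verified directly, citing \cite{Sm3} and \cite{Sm4} for ramified $GU_n$. Your first two paragraphs recover the paper's reduction exactly: the inclusion $\Adm_K^\vert \subset \Perm_K$ is established in the same way, using that permissibility is a vertexwise condition, and combined with Proposition \ref{st:adm_vs_perm} and Remark \ref{rk:adm=perm_parahoric} this disposes of the split classical cases and the unramified cases. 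That much is correct and matches the paper.

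The third paragraph, however, does not close the remaining cases. The assertion that ``the maximal elements of $\Adm_K^\vert(\{\mu\})$ in the Bruhat order are translations $t_\lambda$ with $\lambda \in \Lambda_{\{\mu\}}$'' is precisely what needs to be shown (it is equivalent to the conjecture) and is not a consequence of your appeal to Example \ref{eg:adm_maximal_special}. Two issues: first, a proper subfacet $\mathbf f$ of the alcove need not contain a special vertex, so the special-vertex argument is not always available; second, even when $x$ is a special vertex, the condition $\rho_x(w) \leq \rho_x(t_{\ol\mu^{\mathrm{dom}}})$ lives in $W^{K_x}\bs\wt W/W^{K_x}$ and controls the image of $w$ in a double-coset space, not the element $w$ of $\wt W$ itself --- it cannot ``pin down'' $w$ to be bounded by a translation. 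The proposed inductive step via an exchange argument is stated only as a hope; you yourself acknowledge that the amalgamation of the local bounds $\lambda_x$ into a single global $\lambda$ is ``the crux of the matter'' and unresolved. This is consistent with the status of the statement as an open conjecture: the verified cases (e.g.\ even ramified $GU_n$, where $\Adm_K \neq \Perm_K$) are handled in \cite{Sm4} by combinatorics specific to that group, not by a uniform argument of the kind you sketch.
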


We do not know if the assumption that $\{\mu\}$ be minuscule is necessary, but the examples that we have studied all arise from local models, where the assumption holds by definition.

We note that in cases where $\{\mu\}$-admissibility and $\{\mu\}$-permissibility are \emph{equivalent}, the conjecture is automatic.  Indeed, for any $\{\mu\}$ we have $\Adm_K^\vert(\{\mu\}) \subset \Perm_K(\{\mu\})$ because $\Adm_{K'}(\{\mu\}) \subset \Perm_{K'}(\{\mu\})$ for any $K'$, and in particular for $K'$ of the form $K_x$, and because $\{\mu\}$-permissibility is manifestly a vertexwise condition.  Hence the equality $\Adm_K(\{\mu\}) = \Perm_K(\{\mu\})$ implies the equality $\Adm_K(\{\mu\}) = \Adm_K^\vert(\{\mu\})$.  Because of this, the conjecture may in some sense be regarded as a version for arbitrary groups of the conjecture in \cite{R} that $\Adm(\{\mu\}) = \Perm(\{\mu\})$ for minuscule cocharacters in \emph{split} groups; see Remark \ref{rk:R_conj}.

We also note that the conjecture holds in all examples that we know of arising from local models.  
More precisely, for all of the local models attached to $GL_n$ in \S\ref{s:examples}.\ref{ss:GL_n}; $GSp_{2g}$ in \S\ref{s:examples}.\ref{ss:GSp_2g}; $GO_{2g}$ in  \S\ref{s:examples}.\ref{ss:GO_2g}; $\Res_{F/F_0} GL_n$ in \S\ref{s:examples}.\ref{ss:ResGL_n};  and $\Res_{F/F_0}GSp_{2g}$ in \S\ref{s:examples}.\ref{ss:ResGSp}, the conjecture holds because $\{\mu\}$-admissibility and $\{\mu\}$-permissibility are equivalent by Proposition \ref{st:adm_vs_perm} and Remark \ref{rk:adm=perm_parahoric}. 
For the local models attached to ramified, quasi-split $GU_n$ for $n$ odd in \S\ref{s:examples}.\ref{ss:GU_n}, the conjecture is known via the equivalence of $\{\mu\}$-admissibility and $\{\mu\}$-permissibility \cite{Sm3}, but these cases are not covered by Proposition \ref{st:adm_vs_perm}.  Finally, for the local models attached to ramified, quasi-split $GU_n$ for $n$ even in \S\ref{s:examples}.\ref{ss:GU_n}, $\{\mu\}$-admissibility and $\{\mu\}$-permissibility are typically not equivalent, but the conjecture still holds in these cases \cite{Sm4}.

\section{Local models and nilpotent orbits}\label{s:no}

In a few cases, the special fibers of local models can be described via nilpotent orbits
and their closures. As was first observed in \cite{P-R1}, this connection is especially tight in the case
of the (ramified) group ${\rm Res}_{F/F_0}GL_n$. This also gives a connection between 
affine Schubert varieties for $SL_n$ and nilpotent orbit closures. In this section we discuss this relation in a somewhat informal manner. 

\subsection{Nilpotent orbits}\label{ss:no} 

Let $G$ be a reductive group over a field $k$ and denote by ${\mathfrak g}$ its Lie algebra, 
which we think of as an affine space.
Recall that an element $x$ of  $\mathfrak g$ is called nilpotent if its adjoint endomorphism ${\rm ad}(x)\colon {\mathfrak g} \to {\mathfrak g}$ 
is nilpotent.  
The property of being nilpotent is invariant under the adjoint action of $G$ on $\mathfrak g$;
a nilpotent orbit $N_x=\{{\rm ad}(g)\cdot x\ |\ g\in G\}$ is the orbit of a nilpotent element $x$ under the adjoint action. Here we consider $N_x$ as the reduced subscheme with underlying topological space the orbit of $x$.  We will denote by
$\ov{N_x}$ the Zariski closure of $N_x$ in the affine space ${\mathfrak g}$.
The  varieties $\ov{N_x}$  have been the subject of intense study (\cite{KP,BC,dCP2}, etc.)  
The most classical example of course is when $G=GL_r$ and ${\mathfrak g}={\rm Mat}_{r\times r}$.
Then $N_A$ is the conjugation orbit of the nilpotent matrix $A$. These orbits
are parametrized by partitions $\bold r=(r_1\geq r_2\geq \cdots \geq r_s)$
of $r$; the numbers $r_i$ are the sizes of the blocks in the Jordan decomposition of $A$.

\subsection{Relations to local models}\label{ss:lm&no}
We consider the situation of \S\ref{s:examples}.\ref{ss:ResGL_n}, i.e take $G={\rm Res}_{F/F_0}GL_n$, where $F/F_0$ is a totally ramified separable extension of degree $e$. Let $\pi$ be a uniformizer of $\O_F$, and let $Q(T)\in \O_{F_0}[T]$ be the Eisenstein polynomial satisfied by $\pi$. 

Recall from loc.\ cit.\  that the minuscule cocharacter
$\mu$ is determined by choosing $r_\varphi$ with $0\leq r_\varphi\leq n$, for  each embedding $\varphi$ 
of $F$ in a fixed  algebraic closure $\ol F_0$ of $F_0$.  
We choose the lattice chain ${\mathcal L}=\{\pi^k \Lambda_0\}_{k\in {\mathbb Z}}$ 
to be given by the multiples of the standard $\O_F$-lattice in $F^n$. 
(Then the corresponding parahoric group
is maximal and special). We denote by $M$ the naive local model $M^{\rm naive}_{G, \{\mu\}, \L}$
for these choices (defined in \S\ref{s:examples}.\ref{ss:ResGL_n}) and write $M^{\rm loc}$ for the corresponding 
local model $M^{\rm loc}_{G, \{\mu\},\L}$ given as the flat closure of $M\otimes_{\O_E}E$ over the ring of 
integers $\O_E$ of the reflex field $E$. Denote by $k_E$ the residue field of $\O_E$.

Set $r=\sum_{\varphi}r_\varphi$. Then the subspace $\F:=\F_{\La_0}\subset \La_{0, S}:=\La_0\otimes_{\Z_p}\O_S$ occurring 
in the definition of $M$ is locally on $S$ free of rank $n-r$. This allows us to
 consider the $GL_r$-torsor $\wt M$ over $M$ defined
by 
$$
\wt M(S)=\bigl\{\, (\F, \alpha) \bigm|  \F\in M(S),\ \alpha\colon \Lambda_{0, S}/\F\xrightarrow{\sim} \O_S^r \,\bigr\}\ ,
$$
and construct a $GL_r$-equivariant morphism
$$
\wt q\colon \wt M\to N\ ,
$$
with
\begin{equation}\label{nileq}
N:=\biggl\{\,A\in {\rm Mat}_{r\times r}\biggm| \det(X\cdot I-A)\equiv \prod_\varphi \bigl(X-\varphi(\pi)\bigr)^{r_\varphi},\  Q(A)=0\,\biggr\}\ , 
\end{equation}
where the $GL_r$-action on the target is via conjugation.
The morphism $\wt q$ is smooth \cite[Th.\ 4.1]{P-R1}, and 
hence we obtain a smooth morphism of algebraic stacks
\begin{equation}
q\colon M\to [GL_r\backslash N].
\end{equation}
Note that the special fiber $N\otimes_{\O_E}k_E$ 
is the $GL_r$-invariant subscheme of the nilpotent matrices 
${\rm Nilp}_{r\times r}$ over $k_E$, given as 
\begin{equation}
N\otimes_{\O_E}k_E = \bigl\{\, A\in {\rm Mat}_{r\times r} \bigm| \det(X\cdot I-A)\equiv X^r,\  A^e=0 \,\bigr\}\ .
\end{equation} 

Recall  the dual partition $\bold{ t}$ of the decomposition $\{r_\varphi\}_\varphi$ of $r$ 
 defined by 
$$
t_1=\#\{\,\varphi\mid  r_\varphi\geq 1\,\},\ \ t_2=\#\{\,\varphi\mid  r_\varphi\geq 2\,\}, \ \ \hbox{\rm etc.}
$$
We have $t_1\geq t_2\geq\cdots \geq t_n$. Consider the (reduced by definition)
closed nilpotent orbit $\ov{N}_{\bold{t}}$ that
corresponds to the partition $\bold t$. All matrices in this closure $\ov{N}_{\bold t}$
have  Jordan blocks of size at most $e$. Hence we have a $GL_r$-equivariant 
closed immersion
$$
i\colon \ov{N}_{\bold t}\hookrightarrow N\otimes_{\O_E}k_E.
$$
From \cite{P-R1}, Theorem 5.4 and the above, we now deduce
that the special fiber $M^{\rm loc}\otimes_{\O_E}k_E$ of the local model $M^{\rm loc}$
is isomorphic to the pull-back of $i$ along $q$. This gives the following:

\begin{thm}\label{thm:nilp}
There is a smooth morphism of algebraic stacks
\begin{flalign*}
   \phantom{\qed} & &
   q^{\rm loc}\colon M^{\rm loc}\otimes_{\O_E}k_E \to [GL_r\backslash \ov{N}_{\bold t}].
  & &
   \qed
\end{flalign*}
\end{thm}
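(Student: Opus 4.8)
The plan is to derive Theorem \ref{thm:nilp} by combining the smooth morphism $q\colon M\to [GL_r\backslash N]$ constructed before the statement (via the $GL_r$-torsor $\wt M\to M$ and the smooth $GL_r$-equivariant map $\wt q\colon \wt M\to N$ of \cite[Th.~4.1]{P-R1}) with the identification of $M^{\rm loc}\otimes_{\O_E}k_E$ inside $M\otimes_{\O_E}k_E$ furnished by \cite[Th.~5.4]{P-R1}. First I would recall that $q$ is a smooth morphism of algebraic stacks over $\Spec\O_E$, so that its base change $q\otimes_{\O_E}k_E\colon M\otimes_{\O_E}k_E\to [GL_r\backslash (N\otimes_{\O_E}k_E)]$ is smooth, where $N\otimes_{\O_E}k_E=\{A\in {\rm Mat}_{r\times r}\mid \det(X\cdot I-A)=X^r,\ A^e=0\}$ is the displayed description of the special fiber of $N$. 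Since smoothness is stable under base change along any morphism of stacks, pulling back along the closed immersion $i\colon \ov N_{\bold t}\hookrightarrow N\otimes_{\O_E}k_E$ — more precisely along the induced closed immersion $[GL_r\backslash \ov N_{\bold t}]\hookrightarrow [GL_r\backslash (N\otimes_{\O_E}k_E)]$ of quotient stacks — yields a smooth morphism from the fiber product $\bigl(M\otimes_{\O_E}k_E\bigr)\times_{[GL_r\backslash (N\otimes_{\O_E}k_E)]}[GL_r\backslash\ov N_{\bold t}]$ to $[GL_r\backslash\ov N_{\bold t}]$.

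The crux is then to identify this fiber product with $M^{\rm loc}\otimes_{\O_E}k_E$. This is exactly the content of the paragraph preceding the theorem: by \cite[Th.~5.4]{P-R1} and the torsor/descent construction, the special fiber $M^{\rm loc}\otimes_{\O_E}k_E$ of the honest (flat closure) local model is isomorphic, $GL_r$-equivariantly at the torsor level, to the pull-back of $i$ along $\wt q\otimes k_E$; descending the $GL_r$-torsor structure gives the identification of stacks $M^{\rm loc}\otimes_{\O_E}k_E\cong \bigl(M\otimes_{\O_E}k_E\bigr)\times_{[GL_r\backslash (N\otimes_{\O_E}k_E)]}[GL_r\backslash\ov N_{\bold t}]$. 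Composing, we obtain the desired smooth morphism $q^{\rm loc}\colon M^{\rm loc}\otimes_{\O_E}k_E\to [GL_r\backslash\ov N_{\bold t}]$, which is the restriction of (the special fiber of) $q$. One should also check that the partition $\bold t$ appearing here really is the dual of $\{r_\varphi\}_\varphi$: this is a bookkeeping point matching the Jordan-type stratification of nilpotent matrices killed by the $e$-th power against the Kottwitz condition $\det(X I-A)=\prod_\varphi(X-\varphi(\pi))^{r_\varphi}$, whose mod-$\pi$ reduction degenerates to $X^r$, and is settled in \cite{P-R1}.

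The main obstacle is the identification step, i.e.\ verifying that the flat closure $M^{\rm loc}$ — which a priori has no moduli description — has special fiber exactly the preimage under $q$ of the nilpotent orbit closure $\ov N_{\bold t}$, rather than of some larger or reducible subscheme of $N\otimes_{\O_E}k_E$. This rests on knowing (a) that $\ov N_{\bold t}$ is reduced and irreducible of the expected dimension and that its $GL_r$-orbit closure structure matches the generic fiber of $M^{\rm loc}$ under $\wt q$, and (b) a flatness/closure compatibility: pulling back the flat closure through the smooth morphism $q$ commutes with taking the flat closure downstairs, so that $\ov{N_{\bold t}}$ is precisely the scheme-theoretic image of the generic fiber. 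These facts are supplied by \cite[Th.~5.4]{P-R1} (which in turn uses the theory of nilpotent orbit closures and, in the cases treated there, the validity of relevant equations), so the proof is essentially a formal assembly: base change of a smooth morphism, pullback along a closed substack, and the cited identification. I would therefore present the argument as: (1) smoothness of $q$ and its special fiber; (2) the identification $M^{\rm loc}\otimes_{\O_E}k_E\cong q^{-1}([GL_r\backslash\ov N_{\bold t}])$ from \cite{P-R1}; (3) conclude that the restriction $q^{\rm loc}$ of $q$ is smooth.
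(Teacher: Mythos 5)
Your proposal is correct and follows exactly the paper's own argument: the smooth morphism $q\colon M\to [GL_r\backslash N]$ is restricted to the special fiber, the identification $M^{\rm loc}\otimes_{\O_E}k_E\cong q^{-1}\bigl([GL_r\backslash \ov N_{\bold t}]\bigr)$ is drawn from \cite[Th.~5.4]{P-R1}, and smoothness of the restriction follows from stability of smooth morphisms under base change. The paper treats this as an essentially formal deduction, as you do, and cites the same inputs.
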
 

\begin{cor}
The special fiber $M^{\rm loc}\otimes_{\O_E}k_E$ of the local model $M^{\rm loc}$
for the choice of $\mu$ determined by $\{r_\varphi\}_\varphi$
is smoothly equivalent to the closed nilpotent orbit 
$\ov{N}_{\bold t}$.\qed
\end{cor}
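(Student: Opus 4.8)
The plan is to read off the Corollary as a direct consequence of Theorem \ref{thm:nilp}, so the proof is essentially a one-line deduction once the terminology is fixed. First I would recall what ``smoothly equivalent'' means: two schemes (or stacks) $Y_1$ and $Y_2$ over a field are smoothly equivalent if there is a third scheme $Z$ together with smooth surjective morphisms $Z \to Y_1$ and $Z \to Y_2$; equivalently, one produces a diagram of smooth morphisms connecting them. In our situation Theorem \ref{thm:nilp} already supplies a smooth morphism of algebraic stacks $q^{\rm loc}\colon M^{\rm loc}\otimes_{\O_E}k_E \to [GL_r\backslash \ol{N}_{\bold t}]$, and the quotient morphism $\ol{N}_{\bold t} \to [GL_r\backslash \ol{N}_{\bold t}]$ is itself a $GL_r$-torsor, hence smooth and surjective.

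The key step is then to take the fiber product. Set
\[
   Z := \bigl(M^{\rm loc}\otimes_{\O_E}k_E\bigr) \times_{[GL_r\backslash \ol{N}_{\bold t}]} \ol{N}_{\bold t}.
\]
Since $q^{\rm loc}$ is smooth, the base change $Z \to \ol{N}_{\bold t}$ is smooth; since $\ol{N}_{\bold t} \to [GL_r\backslash \ol{N}_{\bold t}]$ is smooth and surjective, the base change $Z \to M^{\rm loc}\otimes_{\O_E}k_E$ is smooth and surjective. (One should note that $Z$ is representable by a scheme, being the total space of a $GL_r$-torsor over the scheme $M^{\rm loc}\otimes_{\O_E}k_E$; concretely $Z$ is nothing but the pull-back to $M^{\rm loc}\otimes_{\O_E}k_E$ of the $GL_r$-torsor $\wt M \to M$ restricted to the special fiber, intersected with the preimage of $\ol{N}_{\bold t}$ — indeed this is exactly the object whose existence is implicit in the construction preceding Theorem \ref{thm:nilp}.) Surjectivity of $Z \to M^{\rm loc}\otimes_{\O_E}k_E$ follows from faithfully-flat base change applied to the surjection $\ol{N}_{\bold t}\to[GL_r\backslash\ol{N}_{\bold t}]$. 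Thus $Z$ witnesses the smooth equivalence of $M^{\rm loc}\otimes_{\O_E}k_E$ and $\ol{N}_{\bold t}$.

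I do not anticipate a genuine obstacle here: the entire content has been moved into Theorem \ref{thm:nilp} (and ultimately into \cite[Th.\ 4.1, Th.\ 5.4]{P-R1}), and the Corollary is a formal unwinding of definitions. The only point requiring a modicum of care is to confirm that a smooth morphism to a quotient stack $[GL_r\backslash \ol{N}_{\bold t}]$ together with the presentation $\ol{N}_{\bold t}\to[GL_r\backslash\ol{N}_{\bold t}]$ really does yield a smooth correspondence at the level of schemes — but this is the standard fact that a smooth atlas for a quotient stack pulls back to a smooth atlas, combined with the (obvious) representability of the fiber product in this case. If one prefers to avoid stacks entirely, the same argument can be phrased using the $GL_r$-torsors $\wt M$ and $\ol{N}_{\bold t}$ themselves: $\wt q^{\rm loc}\colon \wt M^{\rm loc}\otimes_{\O_E}k_E \to \ol{N}_{\bold t}$ is smooth by Theorem \ref{thm:nilp} (being the pull-back of the smooth map $\wt q$), $\wt M^{\rm loc}\otimes_{\O_E}k_E \to M^{\rm loc}\otimes_{\O_E}k_E$ is a $GL_r$-torsor hence smooth surjective, and these two maps exhibit the smooth equivalence directly.
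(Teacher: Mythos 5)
Your proof is correct and follows exactly the route the paper intends: the paper supplies no explicit argument for the Corollary (it is marked \texttt{\textbackslash qed} immediately), since it is understood to be a formal unwinding of Theorem \ref{thm:nilp} together with the definition of smooth equivalence. Your two formulations — taking the fiber product along the atlas $\ov{N}_{\bold t}\to[GL_r\backslash\ov{N}_{\bold t}]$, or equivalently using the $GL_r$-torsor $\wt M^{\rm loc}\otimes_{\O_E}k_E\to M^{\rm loc}\otimes_{\O_E}k_E$ and the smooth map $\wt q^{\rm loc}$ directly — are precisely the intended deduction, and your identification of $Z$ with (the relevant restriction of) $\wt M^{\rm loc}\otimes_{\O_E}k_E$ is correct.
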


In particular, $M^{\rm loc}\otimes_{\O_E}k_E$ is reduced.
By \cite{MvdK} the closed orbits $\ov{N}_{\bold t}$ are normal 
and Frobenius split (when $k_E$ has positive characteristic),  and so we conclude that the same properties are true
for $M^{\rm loc}\otimes_{\O_E}k_E$. 

\begin{Remark}\label{rem:weyman}
Note that if all $r_\varphi$ differ amongst themselves by at most $1$, then $\ov{N}_{\bold t}=(N\otimes_{\O_E}k_E)_{\rm red}$. In \cite{P-R1}, it is conjectured that $N\otimes_{\O_E}k_E$ is in fact reduced. This holds by a classical result of Kostant when $r\leq e$,  and this is proved by Weyman in \cite{W} in the cases where either ${\rm char }\  k_E=0$, or where $e=2$, comp. Theorem \ref{st:gl_nilp_flat} below. 
\end{Remark}

\begin{Remark}\label{rem:kisin}
{\rm The fact that $M^{\rm loc}\otimes_{\O_E}k_E$ 
is reduced and normal has found an interesting application
in the theory of deformations of Galois representations by Kisin \cite{Ki}.
This application is based on the following lemma, comp.\ \cite[Cor.\ 2.4.10]{Ki}: {\it Let $X$ be a scheme which is proper and flat over the spectrum $S$ of a complete discrete valuation ring. We denote by $X_\eta$, resp.\ $X_s$ the generic, resp.\ the special fiber. If $X_s$ is reduced, then there are  bijections between  the sets of connected components 
$$
\pi_0(X_s)=\pi_0(X)=\pi_0(X_\eta) .
$$ 
}}\end{Remark}

\medskip

Consider the
dominant cocharacter $\lambda$ of $GL_n$ that corresponds to  
$\bold t$,  and denote by $\ov \O_\lambda$
the corresponding Schubert variety in the affine Grassmannian for $GL_n$.
Now we can see that the embedding of the special fiber of the local model in the affine Grassmannian
(cf.\ \S\ref{s:afv}.\ref{ss:embedding}, and for this example \cite{P-R1})
gives an isomorphism
\begin{equation}
M^{\rm loc}\otimes_{\O_E}k_E\xrightarrow{\sim} \ov \O_\lambda.
\end{equation}
Since by varying the data $\bold t$  we can obtain all dominant
cocharacters $\lambda$, this observation together with Theorem \ref{thm:nilp} also shows

\begin{thm}[{\cite[Th.~C]{P-R1}}]\label{thm:sch&no}
 Any Schubert variety in the affine Grassmannian of
$GL_n$ is smoothly equivalent to a nilpotent orbit closure for $GL_r$, for
suitable $r$. \qed 
\end{thm}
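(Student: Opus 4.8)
The plan is to deduce Theorem \ref{thm:sch&no} by combining the two descriptions of the special fiber of the local model $M^{\rm loc}_{G,\{\mu\},\L}$ for $G = {\rm Res}_{F/F_0}GL_n$ with $\L$ the standard (maximal, special) lattice chain: on the one hand as smoothly equivalent to a closed nilpotent orbit closure for $GL_r$ (Theorem \ref{thm:nilp} and its corollary), and on the other hand as isomorphic to a Schubert variety $\ol\O_\lambda$ in the affine Grassmannian for $GL_n$. First I would fix a dominant coweight $\lambda$ of $GL_n$; writing $\lambda = (\lambda_1 \geq \lambda_2 \geq \dots \geq \lambda_n)$ with $\lambda_i \geq 0$ (we may reduce to this case by translating by a central cocharacter, which only shifts $\ol\O_\lambda$ by an automorphism of the affine Grassmannian), set $\bold t := \lambda$ viewed as a partition and let $e := \lambda_1$ be the number of parts of the dual partition, i.e.\ take $F/F_0$ totally ramified of degree $e$. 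Then I would choose the multiplicities $\{r_\varphi\}_\varphi$ so that their dual partition is exactly $\bold t$; concretely $r_\varphi$ runs over the conjugate partition of $\lambda$, so that $t_j = \#\{\varphi \mid r_\varphi \geq j\}$ recovers $\lambda_j$, and $r = \sum_\varphi r_\varphi = \sum_j \lambda_j = |\lambda|$.

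Next I would invoke the chain of identifications already assembled in \S\ref{s:no}.\ref{ss:lm&no}. By the discussion there (resting on \cite[Th.~4.1, Th.~5.4, Th.~C]{P-R1}), for this choice of data the embedding of the special fiber into the affine Grassmannian for $GL_n$ identifies $M^{\rm loc}\otimes_{\O_E}k_E$ with the Schubert variety $\ol\O_\lambda$ attached to the dominant coweight $\lambda$ corresponding to $\bold t$. Simultaneously, Theorem \ref{thm:nilp} provides a smooth morphism of algebraic stacks $q^{\rm loc}\colon M^{\rm loc}\otimes_{\O_E}k_E \to [GL_r\backslash \ol N_{\bold t}]$, and the accompanying corollary states that $M^{\rm loc}\otimes_{\O_E}k_E$ is smoothly equivalent to the nilpotent orbit closure $\ol N_{\bold t}$ for $GL_r$. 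Transporting the first identification through the second then yields that $\ol\O_\lambda$ is smoothly equivalent to $\ol N_{\bold t}$. Since every dominant coweight $\lambda$ of $GL_n$ arises this way — as $\bold t$ ranges over all partitions with at most $n$ parts, with $e$ and $r$ determined as above — this proves the theorem.

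The point that requires a little care, and which I would spell out, is the bookkeeping between the combinatorial data: one must check that the assignment $\{r_\varphi\}_\varphi \mapsto \bold t \mapsto \lambda$ is surjective onto dominant coweights (allowing arbitrarily large $e = [F:F_0]$, which is harmless since $F_0$ is only required to be some discretely valued field and $F/F_0$ a totally ramified separable extension — one may always enlarge $F_0$ if needed to guarantee existence of such an $F$ with given $e$ and separability, e.g.\ by passing to an equal-characteristic-zero or suitably large residue-characteristic base), and that $r$ and the partition $\bold t$ are compatible in the sense that $\bold t$ has exactly $e$ parts (automatic from $\bold t$ being the dual of a partition all of whose parts are $\leq n$ and the largest being $e$). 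The genuine mathematical content — smoothness of $\wt q$, flatness of $M^{\rm loc}$, the identification of its special fiber with $\ol N_{\bold t}$ via \cite{P-R1}, and the affine-Grassmannian embedding — is all imported as a black box from the results already recalled, so the main (and only real) obstacle is to verify nothing is lost when one allows $\bold t$ to range over \emph{all} partitions rather than only those whose parts differ by at most $1$ (the case highlighted in Remark \ref{rem:weyman}); here one uses that Theorem \ref{thm:nilp}, unlike the reducedness statement of Remark \ref{rem:weyman}, holds for arbitrary $\bold t$, so the smooth equivalence persists in full generality.
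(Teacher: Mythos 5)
Your proposal is correct and follows essentially the same argument as the paper: identify the geometric special fiber of the local model for $\Res_{F/F_0}GL_n$ (totally ramified, maximal special parahoric) both with the Schubert variety $\ol{\mathcal O}_\lambda$ via the affine Grassmannian embedding and, via Theorem \ref{thm:nilp} and its Corollary, as smoothly equivalent to the nilpotent orbit closure $\ol N_{\bold t}$, then observe that every dominant cocharacter $\lambda$ arises from some choice of $e$ and $\{r_\varphi\}_\varphi$. The only difference is presentational — you run the bookkeeping from a given $\lambda$ to the data $(e,\{r_\varphi\})$ rather than the other way around, and you spell out the harmless normalization by a central cocharacter and the existence of the ramified extension — but this is the paper's proof.
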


This has also been shown independently by Mirkovi\'c and Vybornov \cite{MVy}.
Recall that earlier Lusztig \cite{Lu} interpreted certain Schubert varieties in the
affine Grassmannian of $GL_r$ as compactifications of the nilpotent
variety of $GL_r$ (namely the Schubert variety corresponding to the
coweight $(r,0,\ldots, 0)$), compatible with the orbit stratifications of
both varieties. In particular, as used by Lusztig in his paper, all
singularities of nilpotent orbit closures occur in certain Schubert
varieties in the affine Grassmannians. The above goes in 
the opposite direction.

\begin{Remark}
This tight connection between local models (or affine Schubert varieties)
and nilpotent orbits does not persist for other groups. 
There are,  however, some  isolated instances of such a correspondence
in other cases. 
  For example, the reduced special fibers of 
the local models
for the ramified unitary groups and special parahoric subgroups
are smoothly equivalent to nilpotent orbit closures in the classical 
symmetric pairs $({\mathfrak sl}_n, {\mathfrak so}_n)$, resp.\ 
 $({\mathfrak sl}_{2n}, {\mathfrak sp}_{2n})$ which have been studied
 by Kostant-Rallis \cite{KoRall}, and Ohta \cite{Oh}. See \cite[\S 5]{P-R4}, and especially Theorem 5.4
 and its proof in \cite{P-R4}, for more details. However, not all such nilpotent orbits
 appear in this correspondence.
\end{Remark}

\section{Local models and matrix equations}\label{s:matrix}

In some cases, local charts around points of local models can be described via  the spectra of affine rings given by generators and relations, in shorthand matrix form ({\it matrix equations}).
We have already seen some instances of this in Example \ref{exn=2}, Remark \ref{rk:GU2}, and \S \ref{s:no}. Rather than giving a formal definition of what we mean by matrix equations, we list in this section a few examples.  Obviously,  structure results on matrix equations have consequences for local models. What  is more surprising is that   sometimes results on local models imply structure results on matrix equations. 

\subsection{Matrix equations related to naive local models}\label{ss:redmat}
 Our first example is as follows. Let $\O$ be a discrete valuation ring with uniformizer $\pi$. We fix positive integers $r$ and $n$, and consider the following closed subscheme of affine space of dimension $nr^2$ over $\Spec \O$, 
\begin{equation}\label{gencirceq}
Z_{r, n}=\biggl\{\,(A_1, \dotsc, A_n)\in {\rm Mat}_{r\times r}^n
\biggm| 
\twolinestight{$A_1 A_2\dotsm A_n= A_2 A_3\dotsm A_1=\dotsb =$}
   {$=A_n A_1\dotsm A_{n-1}=\pi\cdot I$} \,\biggr\} . 
\end{equation}
In the special case $r=1$ there is only one equation $X_1X_2\dotsm X_n=\pi$ in the $n$ unknowns 
$X_1, X_2, \ldots, X_n$,  which describes the semistable reduction case. The special fiber $Z_{r, n}\otimes_\O k$ is called  the {\it generalized circular variety} over the residue field $k$. The scheme $Z_{r, 2} \otimes_\O k$ is called the {\it variety of circular complexes}, and has been considered long before local models were defined, cf.\ \cite{MT, St}. 
\begin{thm}[G\"ortz {\cite[4.4.5]{Go1}}]\label{st:gencirc_flat}
The scheme $Z_{r, n}$ is flat over $\O$, with reduced special fiber. The irreducible components of its special fiber are normal with rational singularities, so in particular are Cohen-Macaulay. \qed
\end{thm}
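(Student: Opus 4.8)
The plan is to deduce Theorem \ref{st:gencirc_flat} from the flatness and normality results for the $GL_n$ local models (Theorem \ref{st:GL_n_loc_mod_flat}), by exhibiting $Z_{r,n}$ as an explicit affine chart — or a disjoint union of such charts indexed by ``ranks'' — inside a local model $M^\loc_{GL_N,\{\mu\},\L}$ for a suitable $N$, $\mu$, and periodic lattice chain $\L$ of period $n$. Concretely, given a point $(\F_{\Lambda_i})_{i}$ of the $GL_N$ local model with $\L = \Lambda_\mZ$ the standard chain, the functoriality and periodicity conditions produce, on a standard affine chart of the product of Grassmannians, a cyclic system of matrix maps $A_1,\dots,A_n$ realizing the transition maps $\Lambda_{i-1}\otimes R \to \Lambda_i\otimes R$ restricted to the $\F$'s; the rank condition forces the $N-r$ versus $r$ splitting, and periodicity (multiplication by $\pi$) forces precisely the relations $A_1\cdots A_n = A_2\cdots A_1 = \dots = \pi\cdot I$. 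This is exactly the computation already carried out in Example \ref{exn=2} for $n=2$, $r=1$, where $U \cong \Spec\O_F[X,Y]/(XY-\pi) = Z_{1,2}$; the general case is the ``higher rank and more factors'' version alluded to there.

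\textbf{Key steps.} First I would fix $N$ (the natural choice being $N = $ a multiple of $n$, with $\mu = (1^{(r)},0^{(N-r)})$ adjusted so that the Kottwitz/rank data at each lattice in the chain cuts out rank $N-r$ summands) and write down the standard affine chart of $\prod_{i} \Gr(N-r,N)$ around the ``worst point'' — the point where each $\F_{\Lambda_i}$ is spanned by a coordinate sub-block compatible with the chain. Second, I would carry out the elimination of variables on this chart, showing that the local model equations reduce to the cyclic matrix relations defining $Z_{r,n}$ (tracking how the periodicity isomorphism $\Lambda \xra[\sim]{\pi} \pi\Lambda$ contributes the factor $\pi\cdot I$), so that $Z_{r,n}$ is an open subscheme of $M^\loc_{GL_N,\{\mu\},\Lambda_\mZ}$, or — if the bookkeeping forces several strata — a finite disjoint union of such opens. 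Third, I would invoke Theorem \ref{st:GL_n_loc_mod_flat}: flatness over $\Spec\O_F$ descends to open subschemes, reducedness of the special fiber is local, and ``irreducible components normal with rational singularities (hence Cohen--Macaulay)'' is likewise preserved under passing to an open subscheme. Since $\O$ and $\O_F$ play interchangeable roles (both complete DVRs, or one can base change), this yields all assertions for $Z_{r,n}$.

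\textbf{Main obstacle.} The technical heart is the combinatorial/linear-algebra identification in the second step: matching the generators-and-relations presentation \eqref{gencirceq} of $Z_{r,n}$ with the equations of the local model on the chosen affine chart, and verifying that the chart genuinely covers $Z_{r,n}$ (i.e.\ that every point of $Z_{r,n}$, after the appropriate change of coordinates, lies in the image — which is where one may be forced to allow a disjoint union of charts corresponding to the possible ranks of the intermediate maps $A_i\cdots A_j$). Getting the precise dictionary between $(r,n)$ on the $Z$-side and $(N,\mu,\L)$ on the local-model side right — so that the special fiber really is $Z_{r,n}\otimes k$ on the nose and not some variant — is the step requiring genuine care; the rest is formal descent of geometric properties along open immersions. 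An alternative, if one prefers to avoid the embedding entirely, would be to rerun G\"ortz's own proof of Theorem \ref{st:GL_n_loc_mod_flat} (flatness via a global argument, reducedness via a Frobenius splitting / embedding into an affine flag variety, normality and rational singularities of components via Theorem \ref{st:schub_vties}) directly for the scheme $Z_{r,n}$; but leveraging the already-proved local model result is the cleaner route, and I expect that is what the authors do.
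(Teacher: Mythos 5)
Your high-level strategy is exactly the paper's: exhibit $Z_{r,n}$ as "smoothly equivalent" to a local model for a suitable $GL_N$ and then import flatness, reducedness, and the properties of the components from Theorem \ref{st:GL_n_loc_mod_flat}. But the mechanism is genuinely different. The paper does \emph{not} identify $Z_{r,n}$ with an open subscheme (affine chart) of a local model. Instead, over $M^\loc := M^\loc_{GL_n,\{\mu\},\L}$ (Iwahori case, $\mu=(1^{(r)},0^{(n-r)})$) it forms the $\prod_i GL_r$-torsor $\widetilde M^\loc$ parametrizing, in addition to a point $(\F_i)_i$, a choice of basis of each rank-$r$ quotient $\Lambda_i\otimes\O_S/\F_i$. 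Sending such a point to the matrices of the transition maps on the quotients gives a morphism $q\colon \widetilde M^\loc\to Z_{r,n}$, which is shown to be \emph{smooth} (this is the ``local model diagram'' formalism). Since all the properties in question are local in the smooth topology, they transfer from $M^\loc$ to the image of $q$. The remaining issue --- which you correctly isolate as your main obstacle, namely whether the construction reaches all of $Z_{r,n}$ --- the paper resolves not by arranging a union of charts but by changing the group: for the triple $(GL_{rn},\ \L=\{\Lambda_i : i\in r\mZ\},\ \mu=(1^{(r)},0^{(rn-r)}))$, the analogous $q$ is \emph{surjective}, and the theorem follows.

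Two smaller corrections to your write-up. First, the matrices $A_i\in{\rm Mat}_{r\times r}$ realize the transition maps \emph{on the quotients} $\Lambda_i\otimes\O_S/\F_i\to\Lambda_{i+1}\otimes\O_S/\F_{i+1}$, which have rank $r$; they are not the transition maps ``restricted to the $\F$'s'', since those modules have rank $N-r$, not $r$. This distinction is exactly what the torsor makes precise, as it carries bases of the quotients, not of the $\F_i$. Second, your ``suitable $N$'' needs to be pinned down as $N=rn$, and the relevant lattice chain is the $n$-member partial chain $\{\Lambda_{jr}\}_{j\in\mZ}$ in $F^{rn}$ (not the full Iwahori chain), so that the dimensions on both sides match ($r^2(n-1)$) and $q$ is surjective. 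Your affine-chart approach is plausible once $N$ and $\L$ are fixed this way --- a direct computation for small $r,n$ does show the chart around the worst point collapses, after elimination, to exactly the relations of $Z_{r,n}$ --- but the torsor formulation has the advantage of being coordinate-free and reducing the surjectivity question to a single, easily checked statement rather than to chart-by-chart bookkeeping.
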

The matrix equation (\ref{gencirceq}) arises  in the analysis of local charts for local models for the triple consisting of $GL_n$, the Iwahori subgroup, and the minuscule cocharacter $\mu=(1^{(r)}, 0^{(n-r})$. Recall from Theorem \ref{st:GL_n_loc_mod_flat} that, in this case, the local model coincides with the naive local model. More precisely, and similarly to what happened in \S\ref{s:no}.\ref{ss:lm&no},  we define a scheme $\widetilde M^{\rm loc}$ over $M^{\rm loc}$ which parametrizes, in addition to a point $(\F_i\mid i\in \mZ/n\mZ)$ of $M^{\rm loc}(S)$, a basis of $\Lambda_{i, S}/\F_i$. Then associating to the transition morphisms $\Lambda_{i, S}/\F_i\to \Lambda_{i+1,S}/\F_{i+1}$ their matrices in terms of these bases, we obtain a morphism $q\colon \widetilde M^{\rm loc}\to Z_{r, n}$, which turns out to be smooth, cf.\ \cite{Fa1, P-R1}. Hence the properties claimed in the theorem follow from Theorem \ref{st:GL_n_loc_mod_flat}, locally at each point of $Z_{r, n}$ in the image of $q$. Something similar holds for any parahoric subgroup corresponding to a partial periodic lattice chain $\L$. Now apply this result to the local model for the triple consisting of $GL_{rn}$, the parahoric subgroup corresponding to the periodic lattice chain $\L=\{\Lambda_i\mid i\in r\mZ\}$, and the minuscule coweight $( 1^{(r)}, 0^{(rn-r)})$. It is easy to see that in this case the morphism $q\colon \widetilde M^{\rm loc}\to Z_{r, n}$ is surjective, and this proves the claim, cf.\ \cite{Go5}.

 The next example arises in the analysis of the naive local model for the triple consisting of a symplectic group,  a {\it non-special} maximal parahoric subgroup,  and the unique conjugacy class of nontrivial minuscule coweights $\mu$.  Let $n$ be even, and define 
\begin{equation}\label{sympnonsp}
Z=\{\,A\in {\rm Mat}_{n\times n}\mid AJ^t\!A=~^t\!AJA=\pi\cdot I\,\} . 
\end{equation}
Here, as in the beginning of \S \ref{s:examples}.\ref{ss:GSp_2g}, $J=J_n$ denotes the matrix describing the standard symplectic form. 
\begin{thm}[G\"ortz {\cite[\S5]{Go2}}] 
The   scheme  $Z$ is flat over $\O$, with reduced irreducible normal special fiber,  which has only rational singularities.  \qed
\end{thm}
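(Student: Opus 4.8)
The plan is to deduce this from the already-established flatness and reducedness results for local models for symplectic groups, exactly in the spirit of the argument given just before for the generalized circular variety $Z_{r,n}$ and in \S\ref{s:no}.\ref{ss:lm&no}. First I would recall the setup of \S\ref{s:examples}.\ref{ss:GSp_2g}: let $n = 2g$, let $\L$ be the homothety class of a single lattice $\Lambda$ with $\wh\Lambda = \pi\Lambda$ (the non-special maximal parahoric case), and consider $M^\loc := M^\loc_{GSp_{2g},\{\mu\},\L}$ for $\mu = \bigl(1^{(g)},0^{(g)}\bigr)$. By Theorem \ref{st:GSp_loc_mod_flat}, $M^\loc$ is flat over $\Spec\O$ with reduced special fiber whose irreducible components are normal with rational singularities. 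The key point is to manufacture a smooth morphism relating a rigidified version of $M^\loc$ to $Z$, so that the local properties transfer.

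The main construction: define $\wt M^\loc$ over $M^\loc$ by adding to a point $\F_\Lambda \subset \Lambda \otimes_\O S$ the datum of a trivialization $\alpha\colon (\Lambda\otimes_\O S)/\F_\Lambda \xrightarrow{\sim} \O_S^g$; this is a $GL_g$-torsor over $M^\loc$. To each point one associates the matrix of the composite transition map $(\Lambda \otimes_\O S)/\F_\Lambda \to (\wh\Lambda\otimes_\O S)/\F_{\wh\Lambda} \xrightarrow{\sim} (\pi\Lambda \otimes_\O S)/\F_{\pi\Lambda} \xrightarrow{\sim} (\Lambda\otimes_\O S)/\F_\Lambda$ expressed via $\alpha$, where the perpendicularity condition of \S\ref{s:examples}.\ref{ss:GSp_2g} forces the relevant matrix $A$ to satisfy $AJ\,{}^t\!A = {}^t\!A J A = \pi\cdot I$. (One should be slightly careful about which dual lattice inclusions and which bases to use, but the symplectic duality condition is precisely what produces the two relations in \eqref{sympnonsp}; the condition $\wh\Lambda = \pi\Lambda$ is what makes the maximal parahoric here non-special and makes the target a single matrix equation rather than a chain of them.) This gives a $GL_g$-equivariant morphism $\wt M^\loc \to Z$, and the heart of the matter — cited to \cite[\S5]{Go2} — is that this morphism is smooth; moreover one checks, as for $Z_{r,n}$, that after enlarging to a suitable $GL_{2g}$-local-model it is surjective onto $Z$, so that every point of $Z$ lies in the image.

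Granting smoothness and surjectivity of $\wt M^\loc \to Z$, the conclusion is formal. Flatness of $Z$ over $\O$: flatness is local on the source, $\wt M^\loc \to M^\loc$ is faithfully flat, $M^\loc$ is $\O$-flat by Theorem \ref{st:GSp_loc_mod_flat}, and smooth morphisms are flat, so $Z$ is $\O$-flat at every point in the image, hence everywhere by surjectivity. Reducedness, normality, and rational singularities of the special fiber $Z\otimes_\O k$ then follow because these are smooth-local properties: $\wt M^\loc \otimes_\O k \to Z\otimes_\O k$ is smooth surjective, and $\wt M^\loc\otimes_\O k$ is a $GL_g$-torsor over $M^\loc\otimes_\O k$, whose irreducible components are normal with rational singularities (hence reduced and Cohen--Macaulay) by Theorem \ref{st:GSp_loc_mod_flat}; these properties descend and ascend along smooth morphisms. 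For \emph{irreducibility} of $Z\otimes_\O k$ — which is an extra assertion beyond Theorem \ref{st:GSp_loc_mod_flat}, where one only gets that the components are nice — I would argue directly: the maximal parahoric (single-lattice-class) case is exactly the situation of Remark \ref{rem:2.2.5}, where the whole special fiber is known to be irreducible (Cohen--Macaulay and normal) as in \cite{CN}; equivalently the special fiber of $M^\loc$ is a single Schubert variety in the relevant affine flag variety, hence irreducible, and irreducibility passes to $Z\otimes_\O k$ through the smooth surjection.

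The step I expect to be the main obstacle is verifying the smoothness of the morphism $\wt M^\loc \to Z$ cleanly: one must choose the rigidifying data and the identification $(\pi\Lambda)/\F_{\pi\Lambda} \cong \Lambda/\F_\Lambda$ so that the symplectic self-duality translates \emph{exactly} into the pair of equations in \eqref{sympnonsp} with no spurious constraints, and then run a deformation-theoretic (or explicit chart-by-chart) computation showing the map is formally smooth of the expected relative dimension. This is the only genuinely non-formal input; everything downstream is transfer of local properties along smooth surjective morphisms together with the cited flatness/reducedness of symplectic local models. I would lean on \cite[\S5]{Go2} for the smoothness itself and present the rest as the deduction sketched above.
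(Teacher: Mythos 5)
Your overall strategy---rigidify the local model, map to the matrix scheme, and transfer properties along a smooth surjection---is the right general idea and is exactly what the paper does for the generalized circular variety $Z_{r,n}$. But for this particular theorem your proposal has several genuine errors, and the paper itself signals that the transfer argument alone is not enough here.

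First, the parahoric is mis-identified. You set $\L$ to be the homothety class of a single lattice $\Lambda$ with $\wh\Lambda = \pi\Lambda$ and call this ``the non-special maximal parahoric case.'' It is the opposite: $\wh\Lambda = \pi\Lambda$ is a \emph{special} (indeed hyperspecial) vertex for $GSp_{2g}$, and with that choice $M^\loc$ is the smooth Lagrangian Grassmannian over $\O$, so there is nothing to transfer---it cannot see the singularities of $Z$. The paper explicitly says the matrix equation \eqref{sympnonsp} comes from a \emph{non-special} maximal parahoric, whose minimal self-dual periodic chain contains \emph{two} distinct lattices per period (a pair $\Lambda \subsetneq \wh\Lambda$ with $\pi\wh\Lambda \subsetneq \Lambda$). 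Second, your irreducibility argument invokes Remark \ref{rem:2.2.5}, but that remark concerns the Chai--Norman chain $\{\Lambda,\Lambda'\}$ with $\wh\Lambda = \Lambda$ and $\wh\Lambda' = \pi\Lambda'$, which is the setting of equation \eqref{sympsp} and Theorem \ref{Thm6.1.6}, not the present one; moreover that remark asserts Cohen--Macaulayness and normality, not irreducibility of the special fiber. Irreducibility for the non-special maximal parahoric is in fact true (one checks $W_0 = {\rm Stab}(\mu)\cdot W^K$ for any maximal $K$ of $GSp_{2g}$, so the $\{\mu\}$-admissible set has a unique maximal element in $W^K\backslash\wt W/W^K$), but it needs this argument, not the remark you cite. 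Third, your surjectivity step (``after enlarging to a suitable $GL_{2g}$-local-model it is surjective onto $Z$'') does not transcribe from the $GL_n$ case, where the enlargement to $GL_{rn}$ keeps the relevant structure; in the symplectic setting the duality constraints obstruct this trick, and the paper gives no such surjectivity claim. Finally, and most tellingly, the paper states that G\"ortz's proof of exactly this theorem ``uses local model techniques, \emph{combined with} the theory of de Concini of doubly symplectic tableaux,'' which provides a good basis of the coordinate ring of $Z\otimes_\O k$. That is the extra combinatorial/commutative-algebra input (controlling the coordinate ring of $Z\otimes_\O k$ directly) that your proposal omits; its presence is a strong signal that a pure transfer along a smooth surjection, as you have laid it out, does not close the argument in the symplectic case.
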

The proof of G\"ortz of this theorem uses local model techniques, combined with the theory of deConcini \cite{deC1} of doubly symplectic tableaux which gives a good basis of  the coordinate ring of $Z\otimes_\O k$ as a  $k$-vector space.
 
Similarly, in the analysis of the naive local model for the triple consisting of a symplectic group,  the parahoric subgroup which stabilizes a pair
of lattices $\Lambda$, $\Lambda'$ where $\Lambda$ is self-dual and $\Lambda'$  is self-dual up to scalar $\pi$,
 and the unique (nontrivial dominant) minuscule coweight $\mu$, the following matrix equations arise, 
\begin{equation}\label{sympsp}
Z=\{\,A, B\in {\rm Mat}_{n\times n}\mid AB=BA=\pi\cdot I,\ ^t\!A=A,\  ^t\!B=B\,\} . 
\end{equation}
 More precisely, $Z$ is locally around the origin isomorphic to an open neighborhood of the `worst point' of the local model in question.  
\begin{thm}[Chai--Norman {\cite{CN}}, {\cite{DP}}, G\"ortz {\cite[2.1]{Go2}}] \label{Thm6.1.6}
The   scheme  $Z$ is flat, normal and Cohen-Macaulay over $\O$, with reduced special fiber. The irreducible components of its special fiber are normal with rational singularities.  \qed
\end{thm}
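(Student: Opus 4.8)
\textbf{Proof proposal for Theorem \ref{Thm6.1.6}.}

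The plan is to deduce all the stated properties of $Z$ from the corresponding properties of the local model to which $Z$ is (étale-)locally attached, combined with an explicit analysis of the matrix equations. First I would make the link with the local model precise: let $G = GSp_{2g}$ (so $n = 2g$), let $K$ be the parahoric subgroup stabilizing a chain $\L$ consisting of the homothety classes of two lattices $\Lambda$, $\Lambda'$ with $\wh\Lambda = \Lambda$ and $\wh{\Lambda'} = \pi\Lambda'$, and let $\mu = \bigl(1^{(g)},0^{(g)}\bigr)$, as in Remark \ref{rem:2.2.5}. Following the recipe used in \S\ref{s:no}.\ref{ss:lm&no} and in \S\ref{s:matrix}.\ref{ss:redmat}, one introduces the scheme $\wt M^\loc$ over $M^\loc_{G,\{\mu\},\L}$ which in addition to a point $(\F_\Lambda, \F_{\Lambda'})$ records bases of the rank-$g$ quotients $\Lambda_{S}/\F_\Lambda$ and $\Lambda'_S/\F_{\Lambda'}$; expressing the two transition maps in these bases, and using the perpendicularity condition \eqref{it:lm_duality} to see that the resulting pair of $g\times g$ matrices is symmetric with product $\pi\cdot I$ in both orders, produces a smooth morphism $\wt M^\loc \to Z$ (here the relevant matrices are of size $g$, but after the standard rescaling one lands in the scheme \eqref{sympsp}; alternatively one works directly with the $g\times g$ version and the argument is identical). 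By construction this morphism is smooth and surjects onto a neighborhood of the origin of $Z$; since $Z$ is visibly homogeneous enough that the origin is its ``worst point'', the image covers all of $Z$. Thus $Z$ is smoothly equivalent to $M^\loc_{G,\{\mu\},\L}$.

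Given this, the flatness, reducedness of the special fiber, and the statement that the irreducible components of the special fiber are normal with rational singularities all transfer immediately from Theorem \ref{st:GSp_loc_mod_flat} (smoothness preserves, and reflects, each of these properties: flatness and reducedness descend along smooth covers, and a smooth-local model of a scheme with rational singularities has rational singularities, etc.). What does \emph{not} come for free from \ref{st:GSp_loc_mod_flat} is that $Z$ itself — and hence $M^\loc_{G,\{\mu\},\L}$ globally, not just its irreducible components — is normal and Cohen-Macaulay. For this I would argue directly on the coordinate ring $k[Z_s]$ of the special fiber. The key input, already available for the analogous $GSp$ matrix equation \eqref{sympnonsp}, is a good combinatorial basis: one writes down an explicit standard-monomial / tableau basis for $k[Z_s]$ (here the natural candidate is a basis indexed by pairs of symmetric tableaux, in the spirit of deConcini's doubly symplectic tableaux \cite{deC1}, adapted to the defining relations $AB = BA = \pi I$, $\,^tA = A$, $\,^tB = B$ reduced mod $\pi$), together with a straightening law. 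From such a basis one reads off that $k[Z_s]$ is Cohen-Macaulay (the standard monomials form a Hodge algebra / ASL on a Cohen-Macaulay poset), and hence, since we already know $Z$ is flat over $\O$ with Cohen-Macaulay fibers, that $Z$ is Cohen-Macaulay. Normality of $Z$ then follows from Serre's criterion: $Z$ is $S_2$ by Cohen-Macaulayness, and $R_1$ because the special fiber is generically smooth (each irreducible component being normal, hence generically reduced, by the transfer above) while $Z$ is flat over a DVR, so the singular locus has codimension $\ge 2$. This is exactly the pattern indicated in Remark \ref{rem:2.2.5} — ``a better result'' than the component-wise statement — and it is also where the historically earlier work of Chai--Norman \cite{CN} and Deligne--Pappas \cite{DP} enters: one may either import their normality/Cohen-Macaulayness result for this particular Siegel model directly, or reprove it via the standard-monomial analysis just sketched.

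The main obstacle, then, is not the transfer of the ``componentwise'' properties (that is routine given \ref{st:GSp_loc_mod_flat} and smooth equivalence) but the \emph{global} Cohen-Macaulayness of $Z_s$: establishing a straightening law / Hodge-algebra structure on $k[Z_s]$ with the correct combinatorics. One subtlety to watch is that the defining ideal of $Z_s$ involves the nilpotence of $\pi\otimes 1$ in the reduction, so the relations $AB = BA = 0$ together with the symmetry conditions must be shown to generate a \emph{radical} ideal whose associated graded for a suitable term order is generated by monomials indexed by a shellable (or Cohen-Macaulay) poset; verifying that the proposed tableau basis actually spans and is linearly independent — i.e.\ that the straightening relations suffice — is the technical heart. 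Once that is in hand, Cohen-Macaulayness of the total space $Z$ follows by flatness, normality follows by Serre, and the remaining assertions have already been accounted for, completing the proof.
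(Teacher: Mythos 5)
Your proposal matches, in structure and in substance, the account the paper gives of the original arguments: the survey offers no self-contained proof but cites Chai--Norman \cite{CN}, Deligne--Pappas \cite{DP}, and G\"ortz \cite{Go2}, and the discussion immediately after the theorem (together with Remarks \ref{rem: 2.1.3} and \ref{rem:2.2.5}) says exactly what you reconstruct: flatness, reduced special fiber, and the ``componentwise'' normality-with-rational-singularities come from G\"ortz's Theorem \ref{st:GSp_loc_mod_flat} via the smooth morphism $\wt M^{\rm loc}\to Z$; the \emph{global} Cohen--Macaulayness of $Z_s$ is the extra ingredient, established by Chai--Norman using an ASL/straightening-law basis (simplified in \cite{DP}); and normality then follows by Serre's criterion since $Z$ is flat with generically reduced (hence generically regular) special fiber. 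You also correctly flag that the technical heart is the straightening law, and that everything before it is routine transfer. So this is the same route the paper points to, not a different one.

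A few small remarks on precision rather than correctness. First, your step ``the perpendicularity condition \eqref{it:lm_duality} shows the transition matrices are symmetric'' requires choosing bases of the quotients $\Lambda/\F_\Lambda$ and $\Lambda'/\F_{\Lambda'}$ that are mutually dual under the pairings induced by $\Lambda=\wh\Lambda$ and $\wh\Lambda'=\pi\Lambda'$; otherwise the relations one reads off are of the form $\,^tA = S A S'$ with ``Gram matrices'' $S$, $S'$. This normalization is routine (and done carefully in \cite{DP}), but it is where ``symmetric with product $\pi\cdot I$'' actually comes from, and it is also why the matrix size is $g$ (or, in the paper's notation for \eqref{sympsp}, $n$, matching $GSp_{2n}$) rather than $2g$. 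Second, the surjectivity of $\wt M^{\rm loc}\to Z$ that you assert by ``homogeneity'' can be made precise by the $\mathbb G_m$-action $(A,B)\mapsto(\lambda A,\lambda^{-1}B)$ on $Z_s$: it contracts every point to the origin, so any open $\mathbb G_m$-stable subset containing the origin (such as the image of the smooth, hence open, map) is all of $Z_s$. Third, the paper notes that the Cohen--Macaulayness of $Z_s$ can alternatively be obtained purely through G\"ortz's affine-Weyl-group combinatorics \cite[\S4.5.1]{Go1}, without invoking an explicit standard-monomial basis; that gives a second, more Lie-theoretic route to the ``global'' part of the statement that your proposal does not explore but which the survey specifically records as available in this two-lattice case.
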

Whereas G\"ortz' proof  of this theorem uses local models (in particular, the embedding of the special fiber in the affine Grassmannian) and Frobenius splitting methods, the proof of Chai and Norman uses techniques from the theory of algebras with straightening laws (and the proof in \cite{DP} is a simplification of this proof). The Cohen-Macaulay property of $Z$ is shown directly in \cite{CN}, but it can also be derived by the methods of 
G\"ortz (see \cite[\S4.5.1]{Go1}). We refer to \cite{Go2} and \cite{DP} for further discussion of other methods in the literature.

Another example of a matrix equation we have seen already in the previous section,  cf.\ (\ref{nileq}). For better comparison with the  matrix equations appearing right after it, let us recall it. As in the beginning of \S\ref{s:no}.\ref{ss:lm&no},
let  $F/F_0$ be a totally ramified separable extension of degree $e$. Let $\pi$ be a uniformizer of $\O_F$, and let $Q(T)\in \O_{F_0}[T]$ be the Eisenstein polynomial satisfied by $\pi$.  As in loc.\ cit.,\ we fix a tuple ${\bold r}=(r_\varphi)$. Then 
\begin{equation}
N=N_{{\bold r}}=
   \biggl\{\,A\in {\rm Mat}_{r\times r}
   \biggm| 
   \twolinestight{$Q(A)=0$ and}
      {$\det(X\cdot I-A)\equiv \prod\nolimits_\varphi (X-\varphi(\pi))^{r_\varphi}$}\,\biggr\}\ , 
\end{equation}
which is a scheme over $\Spec \O_E$, where $E$ is the reflex field corresponding to  ${\bold r}$. 
\begin{thm}[Weyman {\cite{W}}]\label{st:gl_nilp_flat}
Assume that all $e$ integers $r_\varphi$ differ amongst each other by at most $1$. Assume further that either the characteristic of the residue field $k_E$ is zero, or that $e\leq 2$, or that $\sum\nolimits_\varphi r_\varphi\leq e$. Then  $N$ is flat over $\O$, with reduced special fiber, which is normal with rational singularities.\qed
\end{thm}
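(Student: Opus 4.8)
The statement to prove is Theorem \ref{st:gl_nilp_flat} (Weyman's theorem), concerning flatness and properties of the special fiber of the scheme
\[
   N = N_{\bold r} = \biggl\{\, A \in {\rm Mat}_{r \times r} \biggm| Q(A) = 0,\ \det(X \cdot I - A) \equiv \prod_\varphi (X - \varphi(\pi))^{r_\varphi} \,\biggr\}
\]
over $\Spec \O_E$, under the hypothesis that the $r_\varphi$ differ pairwise by at most $1$ and that either $\charac k_E = 0$, or $e \leq 2$, or $\sum_\varphi r_\varphi \leq e$.

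\textbf{Overall strategy.} The plan is to pass from $N$ to its special fiber $N \otimes_{\O_E} k_E$ and exploit the identification, coming from \S\ref{s:no}.\ref{ss:lm&no}, of the reduced special fiber with a closed nilpotent orbit closure $\ol N_{\bold t}$ for $GL_r$, where $\bold t$ is the partition dual to $\bold r = (r_\varphi)$. Under the hypothesis that the $r_\varphi$ differ by at most $1$, Remark \ref{rem:weyman} records that $\ol N_{\bold t} = (N \otimes_{\O_E} k_E)_{\rm red}$, and by \cite{MvdK} this orbit closure is normal, Cohen--Macaulay and (in positive characteristic) Frobenius split with rational singularities. So the whole content of the theorem is the \emph{reducedness} of the special fiber; once that is known, flatness over the discrete valuation ring $\O_E$ follows because $N$ is $\Spec\O_E$-flat if and only if it has no $\pi$-torsion, which (since $N$ is cut out in affine space by equations with the generic fiber of the expected dimension) will follow once we show the special fiber has the same dimension as the generic fiber and is generically reduced along each component — but in fact the cleanest route is: reduced special fiber $\Rightarrow$ (by comparison of Hilbert polynomials / the closure being all of $N$) flatness, exactly as in the deduction of Theorem \ref{st:GL_n_loc_mod_flat} and Theorem \ref{thm:nilp}. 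Thus the heart of the proof is a reducedness statement for the scheme $N \otimes_{\O_E} k_E = \{ A \in {\rm Mat}_{r\times r} \mid A^e = 0,\ \det(X\cdot I - A) \equiv X^r \}$.

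\textbf{Key steps.} First I would reduce to the case $\charac k_E = p > 0$ (the characteristic zero case being classical: when $\charac = 0$, the scheme $\{A^e = 0,\ \charac_A(X) = X^r\}$ is reduced by invariant theory — Kostant's result that the scheme-theoretic nilpotent cone is reduced, combined with a slice argument to cut it down to Jordan type bounded by $e$). Second, in the case $\sum_\varphi r_\varphi = r \leq e$ the condition $A^e = 0$ is implied by $\charac_A(X) = X^r$ (Cayley--Hamilton), so $N \otimes_{\O_E} k_E$ is just the nilpotent cone of ${\rm Mat}_{r\times r}$, which is reduced by Kostant's theorem and its positive-characteristic refinements. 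Third, and this is the real work, for $e = 2$ one analyzes the scheme $\{ A^2 = 0,\ \charac_A(X) = X^r \}$ directly: here $A^2 = 0$ forces $\rank A \leq \lfloor r/2 \rfloor$, the component structure is governed by rank, and one shows reducedness by constructing an explicit flat resolution / a standard-monomial-type basis for the coordinate ring — this is where Weyman's homological machinery (Schur complexes, the geometric technique of computing syzygies via Bott's theorem on the cotangent bundle of a Grassmannian) enters: one builds a resolution of singularities $\{(A, W) \mid AW = 0, W \subset \ker A\} \to \ol N_{\bold t}$ over a Grassmannian, pushes down $\O$ and $\omega$, and verifies the higher cohomology vanishing that forces the special fiber's defining ideal to be radical. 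Fourth, having reducedness, I would invoke \cite{MvdK} for normality and rational singularities (and Frobenius splitting when $\charac k_E > 0$), and conclude flatness over $\O_E$ by the dimension/closure argument above.

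\textbf{Main obstacle.} The hard part is the reducedness of $N \otimes_{\O_E} k_E$ in positive characteristic when $e = 2$ and $r$ is not $\leq e$. Kostant's argument is characteristic-zero in nature (it uses the separability of the Chevalley restriction map and complete reducibility), so one genuinely needs Weyman's geometric-technique computation: the potential failure is that $H^i$ of a relevant line bundle on a Grassmannian could be nonzero in small characteristic, which would make the scheme non-reduced or even non-Cohen--Macaulay. Checking that Bott-type vanishing survives in characteristic $p$ for the specific bundles occurring when $e = 2$ — equivalently, that the relevant Schur functors have the expected cohomology — is the delicate point, and it is precisely why the theorem is stated only for $e \leq 2$ (and $r \leq e$) outside characteristic zero: for larger $e$ the analogous vanishing is not known, which is the source of the conjecture in Remark \ref{rem:weyman} that $N \otimes_{\O_E} k_E$ is always reduced.
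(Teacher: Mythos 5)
The paper supplies no proof of this theorem: it is attributed to Weyman's article [W] and closed with a \qed, so the intended justification is a citation, not an argument. There is therefore no in-paper proof to measure your sketch against. That said, your structural reading of the statement is correct: the substance is to show that $N \otimes_{\O_E} k_E = \{A : A^e = 0,\ \charac_A(X) = X^r\}$ is reduced, after which the identification with the orbit closure $\ol{N}_{\bold t}$ from Remark~\ref{rem:weyman}, the normality and rational-singularities result of [MvdK], and the standard flat-closure criterion over a discrete valuation ring deliver the rest. Your reduction of the case $r \leq e$ to the full nilpotent cone via Cayley--Hamilton is also correct, and the nilpotent cone of $\mathfrak{gl}_r$ is indeed reduced in every characteristic (it is an irreducible complete intersection, hence Cohen--Macaulay, and generically reduced along the regular orbit).

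Where your account goes wrong is the characteristic-zero case. You present it as a routine deduction from Kostant's theorem plus a slice argument, but Kostant's theorem gives reducedness only of the full nilpotent cone, which is exactly the $r \leq e$ situation. For $e \geq 3$ the assertion that the explicit scheme $\{A^e = 0,\ \charac_A = X^r\}$ is reduced is the DeConcini--Procesi conjecture, established by Weyman (1989) using precisely the geometric technique --- Kempf collapsing over a flag or Grassmannian bundle, pushforward, Bott vanishing --- that you invoke for $e = 2$. The Kraft--Procesi slice argument yields normality of the orbit closure $\ol{N}_{\bold t}$; it says nothing about whether the candidate equations generate the radical ideal, which is the issue here. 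The correct case split is therefore: characteristic zero by Bott vanishing (Weyman 1989); $r \leq e$ by Cayley--Hamilton reduction to the nilpotent cone; $e = 2$ by a characteristic-free substitute for Bott vanishing that exploits the particular bundles appearing when all Jordan blocks have size at most two. That last step is, as you say, the genuinely hard one, and the absence of such a substitute for $e \geq 3$ in positive residue characteristic is exactly what leaves the conjecture in Remark~\ref{rem:weyman} open.
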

As explained in the previous section, the scheme $N$ occurs in relation to the naive local model for the triple consisting of the group $G := \Res_{F/F_0} GL_n$, the natural special maximal parahoric subgroup,  and  the minuscule  cocharacter $\mu$ determined by 
${\bold r}$, cf.\ \S\ref{s:examples}.\ref{ss:GL_n}. If the conclusion of Theorem \ref{st:gl_nilp_flat} were true without the ``further'' restrictions listed (as is conjectured in \cite{P-R1}), then the local model and the naive local model would coincide in this case. 

For the triple consisting of $G := \Res_{F/F_0} GSp_{2n}$,  the natural special parahoric subgroup,  and the natural minuscule cocharacter $\mu$, one obtains in the analogous way the following matrix equation, cf.\ \cite[12.5]{P-R2}, 
\begin{equation}
   P = \Biggl\{\,
          \biggl(\begin{matrix}
             a & b\\
             0 & {}^ta
          \end{matrix}\biggr) \in {\rm Mat}_{2ne\times 2ne}
   \Biggm|
   \twolinestight{$a,b\in
{\rm Mat}_{ne\times ne}$, ${}^tb=-b$, $Q(a)=0$,}
      {$\det(X\cdot I-a)\equiv \prod_\varphi (X-\varphi(\pi))^{n}$}\,\Biggr\}.
\end{equation}
\begin{conjecture}[{\cite[12.5]{P-R2}}]
The scheme $P$ is flat over $\Spec{\cal O}_{F_0}$, with reduced special fiber.
\end{conjecture}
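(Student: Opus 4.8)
The plan is to follow the same strategy that succeeded in the analogous $GL_n$ case (Theorem \ref{st:gl_nilp_flat} combined with the reducedness conjecture of \cite{P-R1}), namely to relate the matrix-equation scheme $P$ to the local model and then to exploit the structure results available on the special fiber of the local model. First I would recall from \cite[12.5]{P-R2} that $P$ arises precisely as (an affine chart of) the naive local model $M^{\rm naive}_{G,\{\mu\},\L}$ attached to $G=\Res_{F/F_0}GSp_{2n}$, the natural special maximal parahoric subgroup, and the natural minuscule cocharacter $\mu$: concretely, one enlarges $M^{\rm naive}$ to a torsor $\widetilde M^{\rm naive}$ that also records bases of the quotients $\Lambda_{0,S}/\F$, and the transition-map-plus-Hermitian-form data produces a smooth surjective morphism $\widetilde M^{\rm naive}\to P$. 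Thus flatness and reducedness of the special fiber of $P$ are equivalent to the same statements for $M^{\rm naive}_{G,\{\mu\},\L}$, which is exactly Conjecture \ref{st:ram_loc_mod_conj} in the special maximal parahoric case.

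Given this translation, I would proceed in three steps. \textbf{Step 1 (topological flatness).} By the block form of $P$, the scheme decomposes: the $a$-block is governed by the scheme $N=N_{\mathbf r}$ of Section \ref{s:no} with all $r_\varphi=n$ (so that the ``differ by at most $1$'' hypothesis of Theorem \ref{st:gl_nilp_flat} is automatic), and the $b$-block is an affine space of alternating matrices cut out by a linear compatibility with $a$. Topological flatness of $M^{\rm naive}_{G,\{\mu\},\L}$ for self-dual periodic lattice chains is precisely the content of G\"ortz's theorem quoted just before Conjecture \ref{st:ram_loc_mod_conj}; applied to the special maximal parahoric level it gives that the generic fiber of $P$ is dense, so $\dim P\otimes_{\O_{F_0}}k = \dim P\otimes_{\O_{F_0}}F_0$. \textbf{Step 2 (reducedness of the special fiber).} Here I would invoke the embedding of the special fiber of the local model into an affine flag variety for $\Res_{F/F_0}Sp_{2n}$ (in the style of \S\ref{s:afv}.\ref{ss:embedding} and \cite[\S11]{P-R2}), under which the reduced special fiber $M^{\rm loc}\otimes k$ becomes a union of Schubert varieties. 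By Theorem \ref{st:schub_vties} these Schubert varieties are normal with rational singularities and, in positive characteristic, compatibly Frobenius split, hence their union is reduced; the point is then to show that $M^{\rm naive}\otimes k$ \emph{equals} this reduced union rather than being a fatter scheme. For this one compares $k$-dimensions of coordinate rings: using deConcini-type standard-monomial / straightening-law bases for the ring defining $N\otimes k$ (as in the work cited for Theorems 6.1.5 and \ref{Thm6.1.6}) together with the explicit description of the affine chart, one produces a spanning set for $k[P\otimes k]$ indexed by the same combinatorial data that indexes a basis of the coordinate ring of the reduced union of Schubert varieties; a dimension count then forces the surjection $k[P\otimes k]\twoheadrightarrow \mathcal O_{(M^{\rm loc}\otimes k)}$ on the relevant chart to be an isomorphism, giving reducedness. \textbf{Step 3 (flatness).} Once the special fiber is reduced and (by Step 1) equidimensional of the expected dimension, flatness over the discrete valuation ring $\O_{F_0}$ follows from the standard criterion that a scheme over a regular $1$-dimensional base is flat iff it has no embedded or isolated components supported on the special fiber; reducedness of the special fiber together with density of the generic fiber rules these out.

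The main obstacle I expect is \textbf{Step 2}, exactly as in the $\Res_{F/F_0}GL_n$ situation where the corresponding statement (reducedness of $N\otimes k$, Remark \ref{rem:weyman}) is still open beyond $e\le 2$, characteristic $0$, or $\sum r_\varphi\le e$. For the symplectic Weil restriction the $a$-block all have $r_\varphi=n$ equal, so one is in the ``all $r_\varphi$ differ by at most $1$'' regime where Theorem \ref{st:gl_nilp_flat} gives reducedness of the $N$-factor under those same auxiliary hypotheses; the genuinely new difficulty is controlling the interaction of the symmetric/alternating $b$-block with $a$ under the constraint $Q(a)=0$, i.e.\ producing the straightening-law basis for the full coordinate ring of $P\otimes k$ and matching it to the Frobenius-split filtration of the union of Schubert varieties. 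Absent that combinatorial input, the argument would at best reprove topological flatness (Step 1), which is already known, so the whole content of the conjecture sits in making Step 2 unconditional.
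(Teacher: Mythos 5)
The statement you are asked to prove is labelled explicitly as a \emph{conjecture} in the paper; the text immediately preceding it records only the topological flatness of the naive local model (G\"ortz's theorem) and then states that full flatness of $P$ would follow if the conjecture held, as a special case of Conjecture \ref{st:ram_loc_mod_conj}. There is therefore no proof in the paper to compare your proposal against, and indeed the conjecture remains open. Your proposal is honest about this: Step~1 (topological flatness) is essentially a restatement of the G\"ortz result cited in the paper, and you correctly identify Step~2 (producing a straightening-law or Frobenius-split basis for $k[P\otimes k]$ that matches the reduced union of Schubert varieties) as the genuinely missing input. So your text is not a proof, and you do not claim it is; it is a plausible programme that reproduces what the paper already says is known and isolates what is not.

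Two remarks on the programme itself. First, be careful with the claim that $P$ ``decomposes'' into an $a$-block governed by $N_{\mathbf{r}}$ and a $b$-block of alternating matrices cut out by a linear compatibility with $a$: as the display is written in the paper the only constraints are ${}^t b=-b$ and conditions on $a$ alone, and if that were literally the full set of equations then $P$ would be a product $N_{\mathbf r}\times\mathbb{A}^{ne(ne-1)/2}$ and the conjecture would reduce to the already-conjectural reducedness of $N_{\mathbf r}$ discussed in Remark \ref{rem:weyman}. In the moduli-theoretic derivation from the naive local model one expects the $\O_F$-stability to impose $Q$ of the \emph{full} block upper triangular matrix to vanish, which couples $a$ and $b$ through the off-diagonal block; that coupling is exactly the ``genuinely new difficulty'' you flag, so your Step~2 is aimed at the right target, but the framing in Step~1 should not suggest that the coupling is merely linear or that flatness of the $a$-part plus a product structure already settles the question. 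Second, the analogy you draw with the Chai--Norman / G\"ortz results (Theorem \ref{Thm6.1.6}) and deConcini-type bases is exactly the approach the experts would expect, but in the ramified Weil-restriction setting the relevant affine flag variety is for $\Res_{F/F_0}Sp_{2n}$ and the combinatorics has not been carried out; absent that, your argument establishes nothing beyond what is already in the paper.
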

If this conjecture held true, it would follow that in this case the naive local model is flat, i.e.,\ coincides with the local model --- which would constitute a special case of Conjecture \ref{st:ram_loc_mod_conj}.

Our next examples are related to the case of a ramified unitary group. Let $F/F_0$ be a ramified quadratic extension obtained by adjoining the square root of a uniformizer $\pi_0$ of $F_0$. The following matrix equations  arise in connection with the triple consisting of a group of unitary similitudes of size $n$ for $F/F_0$, a special maximal parahoric subgroup (in the case when $n$ is odd, the parahoric subgroup fixing a self-dual lattice, and in the case when $n$ is even, the parahoric subgroup fixing a lattice which is selfdual up to a scalar $\sqrt \pi_0$), and a minuscule cocharacter given by $(r, s)$ with $r+s=n$. Consider the following schemes of matrices.
  
\noindent {When $n$ is odd},  
\begin{equation}\label{uniteqodd}
   N = \left\{A \in {\rm Mat}_{n\times n}\left|\,
   \threelinestight{$A^2=\pi_0\cdot I,\  A^t=HAH$,}
      {${\rm char}_A(T)=(T-\sqrt \pi_0)^s(T+\sqrt \pi_0)^r$,}
      {$\wedge^{s+1}A=0,\  \wedge^{r+1}A=0$}
      \right.\right\}.
\end{equation}
\noindent { When  both $n=2m$ and $s$ are even}, 
\begin{equation}\label{uniteqev}
   N = \left\{A\in{\rm Mat}_{n\times n} \left|\,
   \threelinestight{$A^2=\pi_0\cdot I,\  A^t=-JAJ,$}
      {${\rm char}_A(T)=(T-\sqrt \pi_0)^s(T+\sqrt \pi_0)^r$,}
      {$\wedge^{s+1}A=0,\  \wedge^{r+1}A=0$}
      \right.\right\},
\end{equation}
where the conditions on wedge powers are imposed only when $r\neq s$.
 Here, as in the beginning of \S\ref{s:examples}.\ref{ss:GSp_2g}, $H=H_n$ denotes the antidiagonal unit matrix, and $J=J_{n}$ the skew-symmetric matrix 
with  square blocks $0_m$ on the diagonal and $H_m$, resp.\ $-H_m$, above the diagonal, resp.\ below the diagonal. 
\begin{conjecture}[{\cite[\S5]{P-R4}}]\label{conjmatunit}
 The schemes $N$  above are flat over $\O_E$, with reduced special fiber (which is then normal, with rational singularities). 
\end{conjecture}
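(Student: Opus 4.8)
The plan is to reduce the statement to the already-established geometry of the local models for ramified $GU_n$ at special maximal parahoric level, and then to isolate the remaining scheme-theoretic reducedness question. First I would set up, exactly as in \S\ref{s:no}.\ref{ss:lm&no} for $\Res_{F/F_0}GL_n$ (and as in the proof of Theorem \ref{st:gencirc_flat} for $GL_n$), a ``rigidified'' local model $\widetilde M$ carrying a torsor structure over an open neighborhood of the worst point of the wedge local model $M^\wedge_{G,\{\mu\},\L}$ (for $n$ odd, $\L=[\Lambda_0]$; for $n$ even, $\L=[\Lambda_{n/2}]$), together with a morphism $\widetilde q\colon\widetilde M\to N$ onto the matrix scheme $N$ of \eqref{uniteqodd}, resp.\ \eqref{uniteqev}; here $N$ records the matrix of the action of the generator $\pi\otimes 1$ on $(\Lambda\otimes_{\O_{F_0}}R)/\F_\Lambda$ in a trivializing basis, and the wedge and characteristic-polynomial equations cutting out $N$ are precisely the wedge and Kottwitz conditions. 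The morphism $\widetilde q$ is smooth and, around the worst point, surjective (this is where one must check that $N$ genuinely is a chart of $M^\wedge$, cf.\ the analogous discussion in \cite{P-R4}), so each of the four properties in the conclusion --- flatness over $\O_E$, reducedness of the special fiber, normality, rational singularities --- holds for $N$ if and only if it holds for $M^\wedge_{G,\{\mu\},\L}$ near its worst point.

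Next I would feed in what is already known. Theorem \ref{st:GU_simple_cases}\eqref{it:spec_parahoric} tells us that the reduced special fiber of $M^\loc_{G,\{\mu\},\L}$ is \emph{integral, normal, and has only rational singularities}, and $M^\loc_{G,\{\mu\},\L}$ is flat over $\O_E$ by construction; moreover Theorem \ref{st:GU_top_flat} gives that $M^\wedge_{G,\{\mu\},\L}$ is topologically flat when $n$ is odd, and that the spin local model $M^\spin_{G,\{\mu\},\L}$ --- whose worst-point chart coincides with that of $M^\wedge$ in the even case, by Remark \ref{locverswedge} --- is topologically flat for all $n$. Hence $N$ is topologically flat, its special fiber $\overline N$ has dimension $\lfloor n/2\rfloor\lceil n/2\rceil$, and $(\overline N)_{\mathrm{red}}$ is integral, normal, with rational singularities; equivalently $(\overline N)_{\mathrm{red}}$ is a single normal Schubert variety in the affine flag variety for $GU_n$ (Theorem \ref{st:schub_vties}, at least when $\charac k\nmid n$), smoothly equivalent to a nilpotent orbit closure in the symmetric pair $(\mathfrak{sl}_n,\mathfrak{so}_n)$ for $n$ odd, resp.\ $(\mathfrak{sl}_{2n},\mathfrak{sp}_{2n})$ for $n$ even (cf.\ the final Remark of \S\ref{s:no} and \cite{KoRall,Oh}). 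So the \emph{entire} content of the conjecture is reduced to the single assertion that the ideal generated over $k$ by the displayed equations for $N$ is \emph{radical}; once that is known, Hironaka's Lemma (EGA IV.5.12.8) upgrades topological flatness to flatness, and flatness over a discrete valuation ring together with an integral normal special fiber having rational singularities then forces $N$ itself to be normal with rational singularities (Serre's criterion $R_1+S_2$, plus Elkik's theorem on deformations of rational singularities).

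The main obstacle, therefore, is exactly this reducedness of the special fiber of the matrix scheme $N$ --- equivalently, the identity $M^\wedge_{G,\{\mu\},\L}=M^\loc_{G,\{\mu\},\L}$ for $n$ odd, resp.\ $M^\spin_{G,\{\mu\},\L}=M^\loc_{G,\{\mu\},\L}$ for $n$ even, near the worst point --- which is the special-parahoric case of Conjecture \ref{conj:GU_top_flat}. I see two plausible routes, each with its own difficulty. The first is a standard-monomial / straightening-law computation for the coordinate ring of $\overline N$, in the spirit of deConcini's doubly symplectic tableaux (used by G\"ortz for Theorem \ref{Thm6.1.6}) and of the Kostant--Rallis and Ohta descriptions of the nilpotent cones in the symmetric pairs above: one would produce an explicit $k$-basis of $k[\overline N]$ indexed by a combinatorial family of ``tableaux'' compatible with the wedge constraints and check that it has the expected Hilbert function; the obstacle is that this combinatorics for the \emph{ramified unitary} matrix equations, with the wedge conditions mixing the two eigenvalue blocks, has not been developed. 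The second route is Frobenius splitting: the geometric special fiber $\overline M^{\loc}$ is a Schubert variety, hence compatibly Frobenius split and therefore reduced and weakly normal (Theorem \ref{st:schub_vties}), but to deduce reducedness of the a priori larger scheme $\overline N$ one must control its scheme structure, ruling out embedded components supported on $\overline M^{\loc}$ and, in the even case, ruling out the spurious extra components (the point $\F_1$ of Remark \ref{locverswedge}) entering the chosen chart; moreover this route is only available when $\charac k\nmid n$, so the case of the characteristic dividing $n$ (but still $\neq 2$) would remain genuinely open on this approach. Any complete proof will have to overcome one of these two difficulties.
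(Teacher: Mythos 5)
The statement you were asked to prove is a \emph{conjecture} in the paper, not a theorem: the attribution \cite[\S5]{P-R4} points back to the source where it was originally posed, and the paper itself offers no proof. Your write-up correctly recognizes this and does not paper over the gap. The reduction you set up --- rigidifying the wedge local model by a torsor, producing a smooth surjective chart $\widetilde q\colon \widetilde M\to N$ around the worst point, and thereby transferring flatness, reducedness, normality and rational singularities back and forth between $N$ and the special-maximal-parahoric local model --- is sound and consistent with how \cite{P-R4} relates these matrix schemes to the wedge local models. Your assembly of the known inputs is also accurate: Theorem~\ref{st:GU_simple_cases}\eqref{it:spec_parahoric} gives integrality, normality and rational singularities of the \emph{reduced} special fiber of $M^\loc$; Theorem~\ref{st:GU_top_flat} gives topological flatness; and the passage from ``topologically flat with reduced special fiber'' to honest flatness is a standard argument (close in spirit to the paper's own invocation of Hironaka's lemma). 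So indeed, modulo these facts, the whole content of the conjecture is the scheme-theoretic reducedness of $\overline{N}$, i.e.\ the local worst-point form of Conjecture~\ref{conj:GU_top_flat}.

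Your two proposed routes for that remaining step --- a straightening-law/tableaux computation in the style of de~Concini and G\"ortz (cf.\ Theorem~\ref{Thm6.1.6}), and a Frobenius-splitting argument via the identification of the reduced special fiber with a Schubert variety (Theorem~\ref{st:schub_vties}, together with the symmetric-pair remark at the end of \S\ref{s:no}) --- are exactly the candidate methods the surrounding text of \S\ref{s:matrix} and \S\ref{s:no} suggests, and you have been appropriately honest that neither has been carried through (the combinatorics for the ramified unitary wedge conditions is undeveloped, and the Frobenius-splitting route still needs control of embedded and extraneous components, and possibly a hypothesis on $\charac k$). In short: you have not proved the statement, but since the paper states it as an open conjecture, no proof was available, and your reduction plus candid identification of the obstruction is the correct reading of the paper's state of knowledge.
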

If this conjecture were true, it would follow that for the local models mentioned above, the wedge local model contains the local model as an open subscheme, cf.\ Remark \ref{locverswedge} (a corrected version of \cite[Rem.\ 5.3]{P-R4}). 
\begin{Remark}
There should be similar matrix equations related to local models for orthogonal groups. This does not seem to have been investigated so far. 
\end{Remark}

\section{Local models and quiver Grassmannians}\label{s:lm&qg}

In a few cases, the special fibers of local models can be identified with certain quiver Grassmannians in the sense of Zelevinsky and others, cf.\ \cite{Z}. In this section we discuss this in rough outline. 

\subsection{Quiver Grassmannians}\label{ss:qg}

Let $Q$ be a quiver, with set of vertices $Q_0$ and set of arrows $Q_1$. Then $Q$ is in the obvious way a category. Let $(V, \varphi)$ be a representation of $Q$ over the field $k$, in other words, a functor from the category $Q$ to the category of finite-dimensional 
vector spaces over $k$. To any such representation there is associated its dimension vector ${\bold d}(V)\in (\mZ_{\geq 0})^{Q_0}$ with
${\bold d}(V)_i=\dim V_i$. Let 
${\bold e}\in (\mZ_{\geq 0})^{Q_0}$ such that ${\bold e}\leq {\bold d}$, i.e.,\ each component of ${\bold e}$ is less than or equal to the corresponding  component of ${\bold d}$.  The {\it quiver Grassmannian} associated to these data is the projective variety (comp.\ e.g.\ \cite[\S1]{CR}) 
\begin{equation}\label{quivergr}
{\rm Gr}_{\bold e}(V) = \bigl\{\,\F_i\in {\rm Gr}(e_i, V_i),\ \forall i\in Q_0 \bigm| \varphi_{i, j}(\F_i)\subset \F_j,\ \forall (i,j)\in Q_1\,\bigr\} \ .
\end{equation}
The subgroup $G_V$ of elements in $\prod_{i\in Q_0} GL(V_i)$ which respect the homomorphisms  $\{\varphi_{i, j}\}_{(i, j)\in Q_1}$ acts in the obvious way on ${\rm Gr}_{\bold e}(V)$. Most often, there are infinitely many orbits. 

\subsection{Relations to local models}\label{ss:qg&lm} 

We consider the situation of \S\ref{s:examples}.\ref{ss:ResGL_n}, i.e.,\ take $G=GL_n$ over $F$, $\mu=(1^{(r)}, 0^{n-r)})$, and a periodic lattice chain in $F^n$ extracted from the 
standard lattice chain $\Lambda_i,$ $i\in \mZ$,  by keeping those $\Lambda_i$ with $i$ congruent to an element  in a fixed non-empty subset $I\subset \mZ/n\mZ$.  
Let $\ol \Lambda_i=\Lambda_i\otimes_\O k$, with the linear maps $\ol \Lambda_i\to \ol \Lambda_{i+1}$ induced by the inclusions $\Lambda_i\subset \Lambda_{i+1}$. Using the identification $\Lambda_{i+n}=\pi\Lambda_i$, we may identify $\ol\Lambda_{i+n}$ with 
$\ol\Lambda_i$, and therefore define unambiguously $\ol\Lambda_i$ for $i\in \mZ/n\mZ$. By keeping only those $\Lambda_i$ with $i\in I$, we obtain a 
representation $\ol \Lambda_I$ of the quiver of type $\wt A_{|I|}$.  Here an extended Dynkin diagram of type $\tilde A$ defines a quiver by choosing the clockwise orientation of its bonds. This representation is characterized up to isomorphism by the following conditions:
\begin{altenumerate}
\item $\dim \ol\Lambda_i=n$ for all $ i\in I . $
\item  $\dim \Ker \varphi_{i, i'}=i'-i$ for all $i \leq i' \leq i+n.$

\end{altenumerate}
From \S\ref{s:examples}.\ref{ss:GL_n} it is plain that  a point of the local model $M=M^{\rm loc}_{G, \mu, I}$ with values in a $k$-scheme 
$S$ corresponds to a $S$-valued point $\F_i$ of the Grassmannian of subspaces of dimension $n-r$ of $\ol\Lambda_i$, one for each $i\in I$ such that, under $\varphi_{i,i'}$, the subspace $\F_i$ is carried into a subspace of $\F_{i'}$. 

Comparing with (\ref{quivergr}),  we see that  $M\otimes_{\O_F}k$ can be identified with the quiver Grassmannian ${\rm Gr}_{\bold n-\bold r}(\ol \Lambda_I)$ of subspaces with scalar dimension vector ${\bold n}-{\bold r}$ of the representation $\ol\Lambda_I$ of the quiver of type $\wt A_{|I|}$. Furthermore, under this identification, the action of the loop group $L^+P_I$ on  $M\otimes_{\O_F}k$ from Example \ref{eg:GL_n_emb} coincides  with the action of the automorphism group $G_V$ of the quiver $\ol\Lambda_I$ from \S\ref{s:lm&qg}.\ref{ss:qg}. In particular, in this case the $G_V$-action has only finitely many orbits. 

From this perspective, the local model  $M$ is a deformation over $\O_F$ of a quiver Grassmannian over $k$. 

\begin{Remark}
In \cite{CR}  and other papers in the area of representations of algebras, quiver Grassmannians are considered as varieties, i.e.,\ nilpotent elements are neglected.  It follows from G\"ortz's Theorem \ref{st:GL_n_loc_mod_flat} that  the quiver Grassmannians of type $\wt A$ are reduced. For other quiver Grassmannians this question does not seem to have been considered in the literature. 
\end{Remark}
\begin{Remark} It is not clear which local models can be described in this way. 
\begin{altenumerate}
\item  In \S\ref{s:examples}.\ref{ss:ResGL_n}, we mentioned the splitting model $\M=\M_{G, \{\mu\}, \L}$ from \cite{P-R2} for $G={\rm Res}_{F/F_0}(GL_n)$, where  $F/F_0$ is a totally ramified extension. Similar to the above identification, the special fiber of $\M$ can be described as a {\it subvariety}  of a quiver {\it flag variety} of a representation of a quiver of type $\wt A$ (defined by the condition that the nilpotent operator induced by $\pi$ induces the zero endomorphism on a certain associated graded vector space), cf.\ \cite{P-R2}.

\item Recall from \S\ref{s:examples}.\ref{ss:GSp_2g} the local model  corresponding to the triple $(GSp_{2g}, \{\mu\}, \L)$, where $\{\mu\}$ is the unique conjugacy class of nontrivial minuscule coweights of $GSp_{2g}$, and where $\L$ is a self-dual periodic lattice chain. In fact, to simplify matters, let us assume that $\L$ is maximal. 
By choosing the symplectic form as in  \S \ref{s:examples}.\ref{ss:GSp_2g}, and taking for $\L$ the standard lattice chain, we see that $\widehat\Lambda_i=\Lambda_{-i}$. Using again the notation $\ol\Lambda_i$ for $\Lambda_i\otimes_{\O_F}k$, we see that we obtain a non-degenerate pairing
\begin{equation}\label{pairing}
\ol\Lambda_i\times \ol\Lambda_{-i}\to k  .
\end{equation}
Now a point of the special fiber $M\otimes_{\O_F}k$  is given by a subspace $\F_i$ of dimension $g$ of $\ol\Lambda_i$, one for each $i$,  such that,  under each map $\varphi_{i, i+1}\colon\ol\Lambda_i\to \ol\Lambda_{i+1}$,  the subspace $\F_i$ is mapped into a subspace of $\F_{i+1}$, and such under the natural pairing 
(\ref{pairing})  the subspaces $\F_i$ and $\F_{-i}$ are 
perpendicular to each other, for all $i$. However,  this kind of   object  has apparently not been considered in the context of quiver Grassmannians. 
\end{altenumerate}
\end{Remark}

\section{Local models and wonderful completions}\label{s:wc}

In this section, which is of an (even)  more informal nature, we will explain various relations between the theory of local models  and  
the so-called wonderful compactifications of symmetric spaces. This extends to also give a relation of  local models  for $GL_n$
with Lafforgue's compactifications of the quotients $(PGL_r)^s/PGL_r$.  At the moment we do not have a very good understanding
of the scope of these connections between  the theory of local models and those theories; they appear somewhat sporadic. As a result we will mainly concentrate on several illustrative examples and explanations 
that, we hope, are enough to explain why one should expect such a connection in the first place. We also hope
that this will motivate readers
to explore possible generalizations.  

An instance of a connection
between some local models and wonderful completions was first observed by Faltings \cite{Fa1}
(also \cite{Fa2}). Faltings starts by considering  certain schemes given by matrix equations.
These schemes are given by embedding symmetric spaces in projective spaces defined by homogeneous line bundles
and considering their closures. 
In several cases, these give affine charts of local models in the sense of \S \ref{s:matrix}. 
Faltings then uses constructions from the theory of wonderful completions of symmetric spaces
 to produce birational modifications of these schemes. In many cases, these also give 
modifications of the corresponding local models
which are regular and have as special fiber a divisor with (possibly non-reduced) normal crossings. 

In this section, after a quick review of wonderful completions (\S \ref{s:wc}.\ref{ss:wc}), we  will explain  (in  \S\S \ref{s:wc}.\ref{ss:exawcGLn}, \ref{ss:wcsympl}) a different and more direct  relation between local models and wonderful completions, 
based on some unpublished notes \cite{P2} by the first author. This was inspired by Faltings' paper. The goal in this approach is to  relate  
local models to closures of orbits of parabolic subgroups in the wonderful completion; such parabolic orbit closures
have been studied by Brion and others \cite{Br2,BrP,BrTh}. In some cases this  gives 
an alternative construction of the local models. Then, in \S \ref{s:wc}.\ref{ss:wcfaltings}, we give some comments on Faltings' methods.

Contrary to our notation earlier in the paper, in this section we shall use the symbols $\Lambda_0$, $\Lambda_1,\dotsc$ to denote arbitrary lattices in a vector space, not lattices in the standard lattice chain \eqref{disp:std_lattice_chain}.

\subsection{Wonderful completions}\label{ss:wc}

For a more complete overview of this ``wonderful" theory and its connections to classical 
algebraic geometry we refer the reader to \cite{dC,dCP}. We also refer to    \cite{dCS} and \cite[\S2]{Fa1} 
for details on the actual constructions
in the generality we require. The basic set-up is as follows. 

Let $G$ be an   adjoint semi-simple algebraic group  over a field $F$ 
of characteristic $\neq 2$ which is equipped with  an involution $\theta$
defined over $F$. Let $H=G^\theta$ be the fixed points of the involution
which is  then a reductive group over $F$; it is connected when $G$ is simply connected, cf.\ 
\cite[\S 1]{dCS}.
The corresponding symmetric space is the affine quotient $X=G/H$ over $F$. 
The {\it wonderful completion} $\ov X$ of $X$  is a smooth projective variety over $F$
which contains $X$ as a dense Zariski open subset.
It supports a left action of $G$ that extends the
translation action on $X$. In addition, it has the following property: 
The complement $\ov X-X$ is a divisor with normal crossings
which is the union of a finite set of smooth irreducible $G$-stable divisors
such that all their partial intersections are transversal;  the closures of 
the $G$-orbits in $X$ are precisely these intersections.

One basic example is obtained by  taking the group to be the product $G\times G$ with $\theta(g_1, g_2)=(g_2,g_1)$, 
so that $X=(G\times G)/G\simeq G$. Then  $\ov X=\ov G$ is a compactification of the 
group $G$. Another well-studied example is given by taking the group $PGL_n$ with involution given by
$\theta(g)=(^t\! g)^{-1}$. Then $H=PGO_n$, and $X$ is the variety of invertible symmetric matrices and $\ov X$ is the 
classical variety of ``complete quadrics,'' see \cite{dCP,Laksov,dCGMP}.

\subsection{The example of the general linear group} \label{ss:exawcGLn}

In this subsection, we explain the method of \cite{P2}. We will concentrate on two classes of examples.
For simplicity, we only consider the equal characteristic case\footnote {An extension to the mixed characteristic case depends 
on defining wonderful completions over ${\mathbb Z}_p$.
This should not present any problems (provided $p$ is odd) and 
is roughly sketched in \cite{Fa1} and 
\cite{dCS},  but we prefer to leave the details for another occasion.},
i.e.,\  the local models will be schemes over the discrete
valuation ring $\O=k[[t]]$ with uniformizer $\pi=t$. Let $F=k((t))$.
   Suppose that $ \La_0$, $ \La_1$ are two $\O$-lattices 
in the vector space $V=F^{n}$ such that
$\La_0\subset \La_1\subset t^{-1}\La_0$. 
Choose a $\O$-basis of  $e_1,\ldots , e_n$ of $\La_1$ such that $\La_0$ has $\O$-basis formed by $e_1,\ldots, e_m, te_{m+1},\ldots , te_n$, for some $m\leq n-1$. Fix an integer $0<r<n$. 

Recall from \S \ref{s:examples}.\ref{ss:GL_n} that the naive local model 
$M=M^{\rm naive}$ corresponding to the triple consisting of $GL_n$, of the minuscule cocharacter $\mu=(1^{(r)}, 0^{(n-r)})$, and of the above lattice 
chain,  is the scheme over $\Spec \O$
whose $S$-points parametrize pairs $(\F_0, \F_1)$ of $\O_S$-subbundles of rank $n-r$  of 
$\La_0\otimes_{\O}\O_S$, resp.~$\La_1\otimes_{\O}\O_S$,
 such that
the following diagram commutes 
\begin{equation}
   \xymatrix@R=2.5ex@C-1ex{
      \La_0\otimes_{\O}\O_S \ar[r] \ar@{}[d] |*{\rotatebox{90}{$\subset$}}
         &  \La_{1}\otimes_{\O}\O_S \ar[r] \ar@{}[d] |*{\rotatebox{90}{$\subset$}}
         &  t^{-1}\La_{0}\otimes_{\O}\O_S \ar@{}[d] |*{\rotatebox{90}{$\subset$}}\\
      \mathcal F_0 \ar[r]
         &  \mathcal F_1 \ar[r]
         &  t^{-1}\F_{0}.
   }
\end{equation}
Here the horizontal maps on the top row are induced by the inclusions 
$\La_0\subset \La_1\subset t^{-1}\La_0$ and $t^{-1}\F_{0}$
stands for image of $\F_0$
under the isomorphism $\La_0\otimes_{\O}\O_S\xrightarrow{\sim} 
t^{-1}\La_0\otimes_{\O}\O_S$ induced by multiplication by $t^{-1}$.
(Recall that in this case, by  G\"ortz's Theorem \ref{st:GL_n_loc_mod_flat}, the naive local model is
equal to the local model $M^{\rm loc}=M$.)

Of course, $M$ is realized as a closed subscheme of the product of Grassmannians
 ${{\rm Gr}}(n-r, n)_{\O}\times_\O {\rm Gr}(n-r, n)_{\O}$.
The generic fiber is isomorphic to ${\rm Gr}(n-r, n)_{F}$, and in the generic fiber  the embedding of $M\otimes_\O F$ in the product ${{\rm Gr}}(n-r, n)_{F}\times_F {\rm Gr}(n-r, n)_{F}$
is described 
by $\F_1\mapsto (A\cdot \F_1, \F_1)$
with $A={\rm diag}(1^{(m)}, t^{(n-m)})$ ($m$ copies of $1$ and $n-m$ copies of $t$ placed along the diagonal.)  This allows us to view $M$ 
as a deformation over $\O$ of what is essentially the diagonal embedding  ${\rm Gr}(n-r, n)_F\hookrightarrow {\rm Gr}(n-r, n)_F\times_F {\rm Gr}(n-r, n)_F$.
Such deformations have been considered by Brion \cite{Br1} (following work by Thaddeus \cite{Tha} and
others on Kapranov's ``Chow quotients" \cite{Ka}). Brion views such deformations as parametrized
by a part of the Hilbert scheme of subschemes of ${\rm Gr}(n-r, n)_\O\times_\O {\rm Gr}(n-r, n)_\O$.

Here is how this is related to the wonderful completion of $G=PGL_n$.
Set $P=P_\mu$ for the standard parabolic subgroup of  $G$ such that ${\rm Gr}(n-r, n)=G/P$. Let $\overline G$ 
be the wonderful completion  of $G$  and denote by $\overline P\subset \overline G$ the Zariski closure of $P$
in $\overline G$. The product $G\times G$ acts on  $\overline G$
in a way that extends the action $(g_1, g_2)\cdot g=g_1gg_2^{-1}$ of $G\times G$ on $G$. This also restricts to an action of $P\times P$   on $\overline P$. Over $\overline G$, we can construct a   family of closed subschemes of ${\rm Gr}(n-r, n)\times {\rm Gr}(n-r, n)$ as follows. Consider the commutative diagram 
\[
   \xymatrix{
      \M \ar[r]^-{\iota} \ar[d]_-{\phi} \ar[dr]_-{\pi}
         &  G/P\times G/P\times \ol G \ar[d]^{{\rm pr}_{\ol G}}\\
      G/P\times G/P
         & \ol G.
   }
\]
Here $\M=(G\times G){\buildrel{P\times P}\over{\times}}\overline P$ 
is the ``contracted product'' which is given as the quotient of $G\times G\times \overline P$ by the right action of $P\times P$   
by 
$$
\bigl((g_1, g_2), x\bigr) \cdot (p_1, p_2) = \bigl(g_1p_1, g_2p_2, (p^{-1}_1, p^{-1}_2)\cdot x\bigr).
$$
The morphism $\iota$ is given 
by
$$
\iota \bigl((g_1, g_2), x\bigr) = \bigl(g_1P, g_2P, (g_1, g_2)\cdot x\bigr)\,  
$$
and the morphism $\phi$ by $\phi\bigl((g_1, g_2), x\bigr)=(g_1P, g_2P)$.
It is easy to  see that $\iota$ is a closed immersion; hence we can view $\pi\colon \M\to \overline G$ as a family of closed subschemes of $G/P\times G/P$ over the base $\overline G$. It follows from  \cite{Br1,BrP}
that this is a flat family. Now  the matrix $A$ gives, by
the valuative criterion of properness,  a well-defined point $[A]\colon \Spec k[[t]]\to \overline G$. 
Then it is not hard to see that the base change $\M\times_{\overline G, [A]}\O$ of $\pi$ along $[A]$ can be identified 
with the flat closure of the generic fiber $M\otimes_\O{F}$ in the naive local model $M$. 
By definition, this is the local model $M^{\rm loc}$ for our situation, and so we obtain
the following result.

\begin{thm}
In the situation described above,  there is an isomorphism 
$$
 M^{\rm loc}\simeq \M\times_{\overline G, [A]}\Spec \O.
$$
\end{thm}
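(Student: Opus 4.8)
The plan is to identify both sides as the scheme-theoretic closure of the common generic fiber $\Gr(n-r,n)_F$ inside a product of Grassmannians over $\O$, where the embedding into the product is governed by the same matrix $A = \diag(1^{(m)}, t^{(n-m)})$. On the local model side this is essentially the definition: by G\"ortz's Theorem \ref{st:GL_n_loc_mod_flat} we have $M^{\loc} = M = M^{\naive}$ in our situation, and $M^{\naive}$ is already flat over $\O$, hence equal to the scheme-theoretic closure of its own generic fiber $M^{\naive}\otimes_\O F \cong \Gr(n-r,n)_F$, which sits inside $\Gr(n-r,n)_\O \times_\O \Gr(n-r,n)_\O$ via $\F_1 \mapsto (A\cdot\F_1, \F_1)$ as stated in the excerpt. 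The first step I would carry out is to make this embedding and this identification of generic fibers fully precise, checking that the closed subscheme of the product cut out by the commuting-diagram conditions of $M^{\naive}$ has generic fiber exactly the graph of the automorphism $A\colon \Gr(n-r,n)_F \to \Gr(n-r,n)_F$.

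The second, and more substantial, step is to understand $\M \times_{\ol G, [A]}\Spec\O$. First I would recall the construction of $\M = (G\times G)\times^{P\times P}\ol P$ and the morphism $\pi\colon \M \to \ol G$, and verify that $\iota$ is a closed immersion, so that $\pi$ exhibits $\M$ as a family of closed subschemes of $G/P\times G/P$ parametrized by $\ol G$. I would then invoke the flatness of this family, which is the key input borrowed from Brion \cite{Br1} and Brion--Polo \cite{BrP}: over the open orbit $G \subset \ol G$ the fibers are the graphs of the left-translation automorphisms $g\cdot\colon G/P\to G/P$, so all these fibers are isomorphic to $G/P$, and flatness propagates the description to the boundary. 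Next, the point $[A]\colon \Spec\O \to \ol G$ produced by the valuative criterion of properness sends the generic point to the class of $A \in G(F) = \PGL_n(F)$ and the special point to a boundary point of $\ol G$; restricting the flat family along $[A]$ gives a flat $\O$-scheme $\M\times_{\ol G,[A]}\Spec\O$, closed in $\Gr(n-r,n)_\O\times_\O\Gr(n-r,n)_\O$, whose generic fiber is the graph of $A\cdot\colon \Gr(n-r,n)_F\to\Gr(n-r,n)_F$ — which by Step 1 is exactly the generic fiber of $M^{\loc}$ inside the product.

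The third step closes the argument: since $\M\times_{\ol G,[A]}\Spec\O$ is flat over the discrete valuation ring $\O$, it equals the scheme-theoretic closure of its generic fiber inside $\Gr(n-r,n)_\O\times_\O\Gr(n-r,n)_\O$ (a flat scheme over a regular one-dimensional base is the closure of its generic fiber, as recalled in \S\ref{s:examples}.\ref{ss:GO_2g}); and by Step 1 the same is true of $M^{\loc}$ with the identical generic fiber sitting in the identical ambient scheme. Two closed subschemes of the same scheme with the same generic fiber, both flat over $\O$, coincide, so $M^{\loc} \simeq \M\times_{\ol G,[A]}\Spec\O$. One should also check the equivariance compatibilities (the $L^+P_I$-action, resp.\ the residual $P\times P$-action, match under the isomorphism), though this is not needed for the bare statement.

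\textbf{Main obstacle.} The genuinely delicate point is the flatness of $\pi\colon \M\to\ol G$ together with the precise behavior of the section $[A]$ at the special fiber: one must be sure that $[A]$ lands in the appropriate $(G\times G)$-orbit stratum of $\ol G$ so that the restricted family has the correct special fiber, and that no embedded or non-reduced phenomena appear after base change. This requires a careful matching of the combinatorics of the lattice chain $\Lambda_0 \subset \Lambda_1 \subset t^{-1}\Lambda_0$ (encoded in $m$) with the orbit structure of $\ol{\PGL_n}$, and an appeal to the structure theory of $\ol P$ and parabolic orbit closures in \cite{Br1,BrP}. Everything else — identifying generic fibers, closure arguments over a DVR — is formal.
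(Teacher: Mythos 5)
Your proof is correct and uses the same essential ingredients as the paper: flatness of $\pi\colon \M\to \ov G$ (from Brion and Brion--Polo), the closed immersion $\iota$ making $\M$ a flat family of closed subschemes of $G/P\times G/P$, the identification of generic fibers via the matrix $A$, and the DVR fact that a flat closed subscheme equals the scheme-theoretic closure of its generic fiber. The paper's own proof is extremely terse — it reduces to ``the base change is a closed subscheme of the naive local model $M$'' (plus, implicitly, agreement of generic fibers) and defers the verification to the analogous step in the symplectic Theorem \ref{faltwonder}, where one checks the relevant moduli condition directly. Your framing sidesteps that check: rather than verifying that $\M\times_{\ol G,[A]}\Spec\O$ satisfies the commuting-diagram condition defining $M$, you observe that both it and $M^{\rm loc}$ are flat closed subschemes of the product of Grassmannians over $\O$ with identical generic fiber, hence both are the unique scheme-theoretic closure of that generic fiber. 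This is a small but genuine simplification. Two minor points. First, the appeal to G\"ortz's Theorem \ref{st:GL_n_loc_mod_flat} in your Step~1 is unnecessary and runs against the spirit of the construction: the Remark after the theorem explicitly notes the construction is independent of G\"ortz's result, and indeed all you need is that $M^{\rm loc}$ is by definition the scheme-theoretic closure of $M\otimes_\O F$ in $M$ (hence in the product), so it is automatically flat. Second, your ``main obstacle'' paragraph overstates what must be checked: once flatness of $\pi$ is granted and the generic fibers are matched, the closure argument over the DVR determines the special fiber automatically, with no need to track which $(G\times G)$-orbit stratum of $\ol G$ the closed point of $[A]$ lands in, nor to rule out embedded or non-reduced phenomena separately — the uniqueness of the scheme-theoretic closure handles all of that. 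That finer analysis would be relevant for describing the special fiber explicitly (e.g.\ via the structure of $\ov P$), but not for the bare isomorphism.
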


Indeed, using the flatness result above,  it is enough to check that this base change is a closed subscheme of the naive local model $M$.
This can be easily verified (see the proof of Theorem \ref{faltwonder} below for a more detailed
explanation of a more interesting case).

\begin{Remark}
a) As was pointed out above,  the naive local model $M$ is flat by G\"ortz's theorem,  and so $M^{\rm loc}=M$.
However, the above construction of the local model is independent of G\"ortz's result.

b) Note that the morphism $\phi$  above is a (Zariski) locally trivial fibration with fibers 
isomorphic to $\overline P$. Hence, the singularities of the total space $\M$ are smoothly equivalent
to the singularities of $\overline P$. The singularities 
of parabolic orbit closures in complete symmetric varieties (such as $\overline G$) 
have been studied by Brion and others (e.g \cite{Br2,BrP}); we can
then obtain  results on the singularities of the local models in question. For example, one can 
 deduce from this approach that the special fibers   of these local models are reduced
and Cohen-Macaulay. Of course, these results can also be obtained by the method of embedding 
the local models in affine flag varieties described in the previous sections (see \cite{Go1}), cf.\ Theorem \ref{st:GL_n_loc_mod_flat}. (In the case of this lattice chain, which consists of multiples of two lattices,
one can obtain that the special fiber, as a whole, is Cohen-Macaulay,
cf.\ Remark \ref{rem: 2.1.3}. See also Remark \ref{rem:2.2.5} and Theorem \ref{Thm6.1.6} 
for similar results in the symplectic case.)
\end{Remark}

\subsection{Other examples, some symplectic and orthogonal cases}\label{ss:wcsympl}

Here we will explain how the method of  \S  \ref{s:wc}.\ref{ss:exawcGLn}
can be extended to relate certain local models for maximal 
parahoric subgroups in the cases of symplectic and even orthogonal groups
to wonderful completions of the corresponding symmetric spaces.

Suppose that $n=2m$ is even. We assume that  $V=F^n=k((t))^n$ is equipped with 
a perfect form $h\colon V\times V\to F$ which is  
alternating, resp.\ symmetric, the two cases leading to a description
of local models with respect to the symplectic, resp.\ the orthogonal group.
When $h$ is symmetric, we assume ${\rm char}(k)\neq 2$.
 We will assume that the form $h$ is split and ``standard"
i.e  it satisfies $\bigl(h(e_i, e_j)\bigr)_{i,j}=J_{n}$, resp.\  $\bigl(h(e_i, e_j)\bigr)_{i,j}=H_n$, for the standard basis 
$\{e_i\}_i$
of $V=F^n$, with the matrices $J_{n}$, resp.\ $H_n$ as in  (\ref{disp:sympl}) and (\ref{disp:antidiag_1}).    
Denote by $S$ the matrix of the form $h$ 
so that $S=J_{n}$, resp.\ $S=H_n$.

Recall that we denote by $G(V, h)$ 
the group of similitudes of the form $h$. By the above, this
is ${ GSp}_n(F)$, resp.\ ${ GO}_n(F)$. We consider 
the minuscule cocharacter 
$
\mu=(1^{(m)}, 0^{(m)})
$
for $G(V,h)$ expressed as a cocharacter for the standard 
torus in $GL(V)=GL_n$.

For $0\leq r\leq m$, we consider the lattice
$$
\Lambda=\sum^{r}_{j=1}\pi\O e_j+\sum^n_{j=r+1} \O e_j.
$$
Then $\La\subset \wh\La$. We will denote  by 
$\alpha$ the inclusion 
$\La\subset \wh\La$. The form $h$ restricts 
to give an $\O$-bilinear form $ \La\times \La\to \O$ and 
a perfect $\O$-bilinear form $ \La\times \wh\La\to \O$;
we will also write $h$ for these forms.
We also denote by $h'$ the (different)   alternating, resp.\ symmetric, form
on $\La$ given on the standard basis $\{\pi e_1,\ldots, \pi e_r, e_{r+1}, \ldots ,e_{n}\}$
of $\La$ by the matrix $S$ . Denote by $L$ the $\O$-submodule of rank $m$
of $\La$ generated by the first $m$ standard basis elements of $\La$ as listed above;
 it is totally isotropic for both forms $h'$ and $h$. 

In this case, the local model $M^{\rm loc}$
can be described as follows. 
Let us first give the ``naive" local model  $M=M^{\rm naive}$ for this situation.
Consider the functor over $\Spec \O$ 
whose points with values in an $\O$-scheme 
$S$ are given by $\O_S$-submodules 
$\F\subset \La\otimes_{\O}\O_S$, 
which are $\O_S$-locally free direct 
summands of rank $m$ such that 
$$
(\alpha_{\O_S})(\F )\subset \F ^\perp.
$$
Here the perpendicular $\F^\perp\subset \wh\La\otimes_{\O}\O_S$ is by definition
the kernel of the $\O_S$-homomorphism 
$ 
(\La\otimes_{\O}\O_S)^*=\wh\La\otimes_\O\O_S\to \F^*
$ 
which is the dual of the inclusion $\F \subset \La\otimes_\O\O_S$. 
This condition is equivalent to 
$$
(h\otimes_\O\O_S)(\F , \F )\equiv 0.
$$
This functor is representable by a projective scheme $M$ over $\Spec \O$ which is a closed subscheme
of the Grassmannian ${\rm Gr}(m, n)$ over $\Spec \O$. The generic fiber 
of $M$ can be identified with the Langrangian, resp.\ 
(disconnected) orthogonal Grassmannian of isotropic $m$-subspaces
in $n$-space. The local model $M^{\rm loc}$ is by definition the (flat) Zariski closure
of the generic fiber  in $M$.
\smallskip

Consider the involution $\theta$ on $G={\rm PGL}_n$ given by $\theta(g)= S^{-1}(^tg)^{-1}S$.
The fixed points $H={\rm PGL}_n^\theta$ can be identified with the groups
$PGSp_{2m}$, resp.\ $PGO_{n}$. Let us consider the symmetric space $X=G/H$.
The morphism
\begin{equation*}
gH\mapsto A_g=(^tg)^{-1}\cdot S\cdot g^{-1}.
\end{equation*}
identifies $X$ with the quotients
\begin{gather*}
\bigl\{\, A\in {\rm Mat}_{n\times n}  \bigm|  {}^t\!A=-A,\ \det(A)\neq 0\,\bigr\}   \big/{\mathbb G}_m\, , \\
\bigl\{\, A\in {\rm Mat}_{n\times n} \bigm| {}^t\!A=A,\ \det(A)\neq 0\,\bigr\} \big/{\mathbb G}_m
\end{gather*}
of antisymmetric, resp.\ symmetric $n\times n$ invertible matrices 
up to homothety. Consider  the  wonderful completion
$\ov X$ of the symmetric space $X=G/H$. 
By the construction of $\ov X$, it follows that there is a morphism 
\begin{equation*}
T\colon \ov X\xrightarrow{ \ \ } {\mathbb P}^{n^2-1}=({\rm Mat}_{n\times n}-\{0\})/{\mathbb G}_m
\end{equation*}
which extends the natural  inclusion $G/H\hookrightarrow ({\rm Mat}_{n\times n}-\{0\})/{\mathbb G}_m$.
The morphism $T$ factors through the closed subscheme given by matrices 
which are antisymmetric, resp.\ symmetric. Now let us consider the 
parabolic $P$ of $G$ that corresponds to $\mu$, so that ${\rm Gr}(m, n)=G/P$.
Let us also consider the Zariski closure $\ov P{\,\rm mod\, } H=\ov{P/P\cap H}$ of the orbit
of $1\cdot H$ by the action of $P\subset G$ in $\ov X$.

There is a diagram
\begin{equation}
G/P \buildrel{q}\over\longleftarrow 
G\times^{P}(\ov P{\,\rm mod\, } H)\xrightarrow {\pi} \ov{G/H}.
\end{equation}
Here $G\times^{P}(\ov P{\,\rm mod\, } H)=(G\times\ov P{\,\rm mod\, } H)/P$
where the quotient is for the right $P$-action given by 
$(g, x)\cdot p= (gp, p^{-1}\cdot x)$. We have $q(g, x)=gP$
and $\pi $ is given by $\pi (g,x)=g\cdot x$, via the action of $G$ on $\ov{G/H}$.
There is also a morphism
\begin{equation}
\iota\colon G\times^{P}(\ov P{\,\rm mod\, } H)\to G/P\times  \ov{G/H}\ ,
\end{equation}
given by $\iota(z)=\bigl(q(z), \pi (z)\bigr)$. These fit in a diagram
\[
   \xymatrix{
      \M \ar[r]^-{\iota} \ar[d]_-{q} \ar[dr]_-{\pi}
         & G/P \times \ol {G/H} \ar[d]^-{{\rm pr}_2}\\
      G/P
         & \ol{G/H},
   }
\]
where $\M=G\times^{P}(\ov P{\,\rm mod\, } H)$. As in \cite{Br1}, one can see that:

\begin{altitemize}
\item [a)]\   The morphism $q$ is an \'etale locally trivial $G$-equivariant 
fibration with fibers isomorphic to  $\ov P{\,\rm mod\, } H$.

\item [b)]\   The morphism $\iota$ is a closed immersion which identifies $G\times^{P}(\ov P{\,\rm mod\, } H)$
with the closed subscheme of $G/P\times  \ov{G/H}$ whose points
$(gP, x)$ satisfy the ``incidence" condition $x\in g\cdot (\ov P{\,\rm mod\, } H)$. 

\item [c)]\  The morphism $\pi\colon \M\to \ov{G/H}$ is flat.
\end{altitemize} 
 
Now consider the matrix $H_\La=\bigl(h(e_i, e_j)\bigr)_{ij}\in {\rm Mat}_{n\times n}(\O)$ obtained 
by the restriction of our form $h$ to $\La\times \La$. Since $h\otimes_\O F$ is perfect,
this matrix $H_\Lambda$ gives an $F$-valued point of $G/H$. By properness, this extends to a point
\begin{equation*}
[H_\Lambda]\colon \Spec \O\to \ov{G/H} .
\end{equation*}
After these preparations we can finally give the description of the local model. 
\begin{thm}[{\cite{P2}}] \label{faltwonder}
Under our assumptions, there is an isomorphism 
$$
 M^{\rm loc}\simeq \M\times_{\overline {G/H}, [H_\La]}\Spec \O.
$$
\end{thm}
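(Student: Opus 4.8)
The plan is to show that the base change $\M \times_{\overline{G/H},[H_\Lambda]} \Spec\O$ is a closed $\O$-subscheme of the naive local model $M = M^{\rm naive}$ with the same generic fiber; since $\overline{G/H}$ is smooth (hence $\M$ is flat over $\overline{G/H}$ by item (c) above, and $\Spec\O$ is a regular $1$-dimensional base) the pullback is flat over $\O$, and a flat closed subscheme of $M$ whose generic fiber agrees with that of $M$ is precisely the flat closure, i.e.\ $M^{\rm loc}$ by definition. So the whole statement reduces to a scheme-theoretic identification of the pullback with a closed subscheme of $M$ together with a generic-fiber comparison. First I would make explicit the identification of the generic fiber $M^{\rm loc}\otimes_\O F$ with the (Lagrangian, resp.\ orthogonal) Grassmannian of isotropic $m$-planes in $V$: a point is an isotropic $\F\subset V$ for $h$, and via $gH \mapsto A_g = ({}^tg)^{-1}Sg^{-1}$ together with the point $[H_\Lambda]\colon\Spec F\to G/H$, this matches the incidence condition defining $\iota(\M)$ fiberwise over the generic point. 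Here one uses that $H_\Lambda\otimes F$ is (up to homothety) the matrix $A_g$ for a suitable $g$ carrying the standard form to $h'$, so that the fiber $q^{-1}\bigl([H_\Lambda]\bigr)$ over the generic point is exactly $g\cdot(\ov P\,{\rm mod}\,H)$, which parametrizes isotropic $m$-planes — this is the content of the wonderful-completion description of $X$ as invertible (anti)symmetric matrices up to scalar, extended to $\overline X$ via the morphism $T$.

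Next I would carry out the integral comparison. The map $[H_\Lambda]\colon\Spec\O\to\overline{G/H}$ is determined by the valuative criterion of properness applied to the $F$-point $H_\Lambda\otimes F \in (G/H)(F)$; concretely $H_\Lambda$ is the Gram matrix of $h$ restricted to $\Lambda\times\Lambda$, and its reduction mod $t$ is the degenerate form whose radical is $L$ — this is exactly the data that the boundary stratum of $\overline{G/H}$ through $[H_\Lambda](0)$ records. I would then write down, over $\O$, the closed immersion $\iota$ restricted to the fiber over $[H_\Lambda]$: an $\O$-point consists of a rank-$m$ direct summand $\F\subset\Lambda\otimes_\O\O_S$ subject to the incidence condition $x\in g\cdot(\ov P\,{\rm mod}\,H)$ with $x = [H_\Lambda]$. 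Unwinding the incidence condition for the closure $\ov P\,{\rm mod}\,H$ of the parabolic orbit, and using the matrix $H_\Lambda$ as the specialized value of $A_g$, this translates precisely into the perpendicularity relation $(\alpha_{\O_S})(\F)\subset\F^\perp$, equivalently $(h\otimes\O_S)(\F,\F)\equiv 0$, which is the defining condition of $M^{\rm naive}$. Thus the pullback embeds as a closed subscheme of $M^{\rm naive}$; combined with the generic-fiber identification from the first paragraph and flatness of $\M\to\overline{G/H}$, this forces it to be $M^{\rm loc}$.

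Two further checks round out the argument. I would verify that $\iota$ pulled back along $[H_\Lambda]$ is indeed a closed immersion (this follows from item (b), that $\iota$ is a closed immersion over all of $\overline{G/H}$, which is the analogue over $\overline{G/H}$ of Brion's result \cite{Br1,BrP}); and I would check the flatness of $\pi\colon\M\to\overline{G/H}$, item (c), which again is the content of \cite{Br1,BrP} on parabolic orbit closures in complete symmetric varieties — this is the one external input I would treat as a black box. The main obstacle I anticipate is the bookkeeping in the integral incidence computation: one must match the scheme structure cut out by the closure $\ov P\,{\rm mod}\,H\subset\overline{G/H}$, read off via the projective embedding $T$ into $\mP^{n^2-1}$ by (anti)symmetric matrices, against the scheme structure defined by the perpendicularity condition on $\F$ — and in the orthogonal case one must be careful about the two components of the orthogonal Grassmannian in the generic fiber (so that what one obtains is genuinely the flat closure, which may be a disjoint union, rather than something smaller). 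Carrying this out cleanly is where the real work lies; everything else is formal once flatness of $\pi$ and the generic-fiber identification are in hand.
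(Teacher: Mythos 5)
Your proposal is correct and takes essentially the same approach as the paper's sketch: both reduce the theorem to showing the base change $M'=\M\times_{\overline{G/H},[H_\Lambda]}\Spec\O$ is flat (from flatness of $\pi$) and a closed subscheme of $M^{\rm naive}$ with the same generic fiber, with the closed-immersion step hinging on the same computation --- that $T$ sends $\ov P\bmod H$ into (anti)symmetric matrices vanishing on $L\times L$, so that $g^{-1}\cdot[H_\Lambda]\in\ov P\bmod H$ forces $h(gv,gw)=0$ for $v,w\in L$, i.e.\ $\F=gL$ isotropic. Your flag about the two components of the orthogonal Grassmannian is a fair point to check explicitly (the paper compresses this into ``it is not hard to show that the generic fibers agree''), but it resolves because $H=PGO_n$, not $PSO_n$, acts transitively on all isotropic $m$-planes.
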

\begin{proof}(Sketch)
Denote by $M'$ the base change in the statement of the theorem:
\begin{equation}
   \xymatrix{
   M' \ar[r] \ar[d]  &  G\times^{P}(\ov P{\,\rm mod\, } H) \ar[d]^-{\pi}\\
   \Spec \O \ar[r]^-{[H_\La]}  &  \ol{G/H}.
   }
\end{equation}
By c), $M'\to \Spec \O$ is flat. Since, by definition,  the local model $M^{\rm loc}$
is the Zariski closure of the generic fiber $M\otimes_\O F$ in the naive local model $M$,
it remains to show that $M'$ is a closed subscheme of $M$ and has the same
generic fiber, i.e.,\  $M'\otimes_\O F=M\otimes_\O F$.
Let us identify $G/P$ with the Grassmannian
using $g P\mapsto \F =gL $. Recall that  we can identify $M $ with 
a closed subscheme of $G/P$: this is the subscheme 
of points $g P$ for which 
$[H_\La]\in g\cdot (\ov P{\,\rm mod\, } H)$, or 
equivalently $g^{-1}\cdot [H_\La]\in \ov P{\,\rm mod\, } H$.
Using that  $L $ is an isotropic subspace for the
form $h'$,  we now obtain that the image of $\ov {P}{\,\rm mod\, } H$
under the morphism $T$ is contained in the closed subscheme with affine cone 
the antisymmetric or symmetric matrices 
$A$ for which
\begin{equation}\label{isoA}
^tv\cdot A\cdot w=0\quad\text{for all}\quad v, w\in L  \ .
\end{equation}
Now suppose that $g P$ is in $M'$, i.e.,\  
$g^{-1}\cdot [H_\La]\in \ov P{\,\rm mod\, } H$. By applying
$T$ we find that $A=(^tg)T([H_\La])g$ satisfies (\ref{isoA}).
Since,  by definition, the $\O$-valued point $T([H_\La])$ is equal to $H_\La=\bigl(h(e_i,e_j)\bigr)_{ij}$,  we obtain that
\begin{equation}
^tv\cdot ^tgT([H_\La])g\cdot w=h(g v, gw)=0   \quad\text{for all}\quad v , w\in L \ .
\end{equation}
Since $\F =gL $, this shows that $\F $ is isotropic for $h$. Hence $M'$
is a closed subscheme of $M$. Now it is not hard to show
that the generic fibers of $M'$ and $M$ are equal, and the claim follows. 
\end{proof}

\begin{Remark} \label{rem:wcothercases} 
This approach  can also be applied to the  local model studied by Chai and Norman (\cite{CN}, cf.\ (\ref{sympsp}))
and to certain orthogonal local models corresponding to pairs of lattices. The relevant symmetric space 
is the one corresponding to the symplectic, resp.\ the orthogonal group. In the interest 
of brevity we omit discussing these examples.
\end{Remark}

 \subsection{Wonderful completions and resolutions}\label{ss:wcfaltings}

In what follows, we will  first explain in rough outline some of the constructions of 
\cite{Fa1}  in the case 
of the  local model  $M=M^{\rm loc}$ for $GL_n$  considered in \S \ref{s:wc}.\ref{ss:exawcGLn} above.  Faltings' approach also applies
to cases of other groups, see Remark \ref{rem:wcsympl}. Then we sketch the method of \cite{Fa2} to include more general parahoric level structures (defined by more than two lattices). Similar constructions
also appear in  the work of Genestier  \cite{Ge1,Ge3}.  
The main goal of all these papers is to produce  resolutions of a local model $M^{\rm loc}$
which are regular and have as special fiber a divisor with normal crossings. 
More precisely, this goal may be formulated as follows. 
\smallskip

Recall that, in all cases that it is successfully constructed,
 the local model $M^{\rm loc}_{G,\{\mu\}, {\mathcal L}}$ supports an action of the 
parahoric group scheme ${\mathcal G}\otimes_{\O}\O_E$.

\begin{Definition}\label{equivmod}
An  {\sl equivariant modification}  of $M^{\rm loc}_{G,\{\mu\}, {\mathcal L}}$ consists of a  proper $\O_E$-scheme
 that supports an action of ${\mathcal G}\otimes_{\O}\O_E$
and a ${\mathcal G}\otimes_{\O}\O_E$-equivariant proper birational morphism
$\pi\colon N\to M^{\rm loc}_{G,\{\mu\}, {\mathcal L}}$, 
which is an isomorphism on the generic fibers. We can obtain such modifications
by blowing-up ${\mathcal G}\otimes_{\O}\O_E$-invariant subschemes
of $M^{\rm loc}_{G,\{\mu\}, {\mathcal L}}$ which are supported in the special fiber.
\end{Definition}
 
It is reasonable to conjecture that there   always exists an equivariant 
modification $N\to M^{\rm loc}_{G,\{\mu\}, {\mathcal L}}$
such that $N$ is regular and has as special fiber a divisor 
with (possibly non-reduced) normal crossings \cite{P1}.
 \smallskip

Let us return to the local model $M=M^{\rm loc}$
for $GL_n$ and $\mu=(1^{(r)}, 0^{(n-r)})$ considered in \S \ref{s:wc}.\ref{ss:exawcGLn}. Consider, as in \S\ref{s:matrix},  the $GL_r\times GL_r$-torsor 
$$
\widetilde M\xrightarrow{ \ } M
$$
 given by choosing bases for $\Lambda_0/\F_0$ and $\Lambda_1/\F_1$, 
$$
\widetilde M(S)=\bigl\{(\F_0, \F_1)\in M(S),\ \alpha_0\colon  \Lambda_{0, S}/\F_0\xrightarrow{\sim}\O_S^r,\ \ \alpha_1\colon   \Lambda_{1, S}/\F_1\xrightarrow{\sim}\O_S^r\bigr\} \ .
$$
The scheme $\widetilde M$ affords a morphism $q\colon \widetilde M\to Y$, where 
$Y$ is the $\O$-scheme of matrices
\begin{equation}
Y=\bigl\{\, (A, B)\in {\rm Mat}_{r\times r}\times {\rm Mat}_{r\times r} \bigm| A\cdot B=B\cdot A=\pi\cdot I \,\bigr\} , 
\end{equation}
comp. \eqref{gencirceq}. The morphism $q$ is
given by sending $(\F_0,\F_1; \alpha_0, \alpha_1)$ to the pair 
of matrices that describe   the maps 
$\La_{0, S}/\F_0\to \La_{1, S}/\F_1$, resp.\
$\La_{1, S}/\F_1\to \La_{0, S}/\F_0$ induced by 
$\La_0\otimes_{\O}\O_S\to \La_1\otimes_{\O}\O_S$, resp.\
$\La_1\otimes_{\O}\O_S\to \La_0\otimes_{\O}\O_S$.
It is not hard to  see that the morphism $q\colon \widetilde M\to Y$
is smooth, comp.\ \cite[Th.\ 4.2]{P-R1}.  The scheme $Y$ supports 
an action of $GL_r\times GL_r$ given by 
$$
(g_1, g_2)\cdot (A, B)=(g_1Ag_2^{-1}, g_2Bg_1^{-1})\ ,
$$
such that $q$ is $GL_r\times GL_r$-equivariant.
Hence we obtain a smooth morphism of algebraic stacks
\begin{equation}
M \xrightarrow{ \ } [(GL_r\times GL_r)\backslash Y].
\end{equation}
Now consider the following variant of $Y$,  
$$
Y_1=\bigl\{\,(A, B, a) \bigm| A\cdot B=B\cdot A=a\cdot {\rm Id}\,\bigr\}\subset {\rm Mat}_{r\times r}\times {\rm Mat}_{r\times r}\times {\mathbb A}^1\ ,
$$
regarded as a $k$-variety with
$GL_r\times GL_r$-action. Following \cite[p.\ 194]{Fa1}, (see also \cite[\S 2.2]{Ge3}), we can now see 
  that the open subset $Y_1-\{a=0\}$ 
is a ${\mathbb G}_m^2$-bundle over $PGL_r\simeq (PGL_r\times PGL_r)/PGL_r$
and actually $Y_1$ can be viewed as a double affine cone over the projective
variety $X$  in ${\mathbb P}({\rm Mat}_{r\times r})\times {\mathbb P}({\rm Mat}_{r\times r})$
given by the closure  of the image of the map $A\mapsto (A, A^{\rm adj})$, where $A^{\rm adj}$ denotes the adjugate matrix of $A$.
As in loc.\ cit.\ we see that the total space of the corresponding affine  
bundle obtained by pulling back  by $\ov{PGL_r}\to X$ provides
a resolution $\ti Y_1\to Y_1$. 
 By intersecting  $\ti Y_1$ with $a-\pi=0$, we obtain
a resolution $\wt Y\to Y$.
Explicitly, $\ti Y$ can be obtained by successively blowing up ideals 
obtained from minors of $A$ and $B$.
This can now be used to obtain that in this very special case:

\begin{thm}\label{thm:resolution}
There exists an equivariant modification $\pi\colon N\to M^{\rm loc}$ 
such that $N$ is regular and has as special fiber a divisor with simple normal crossings.\qed
\end{thm}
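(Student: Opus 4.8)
The plan is to transport the resolution $\widetilde Y \to Y$ constructed above through the smooth morphism $M^{\rm loc}\to [(GL_r\times GL_r)\backslash Y]$. (Here $M^{\rm loc} = M$ by Theorem~\ref{st:GL_n_loc_mod_flat}, but, as already noted, the argument does not need this.) First I would record two properties of $\widetilde Y\to Y$. It is $(GL_r\times GL_r)$-equivariant: it is obtained by successively blowing up ideals generated by the minors of the matrix variables $A$ and $B$, and such ideals are stable under the action $(g_1,g_2)\cdot(A,B) = (g_1Ag_2^{-1}, g_2Bg_1^{-1})$, so every blow-up centre is invariant and the action lifts to $\widetilde Y$. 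And it is an isomorphism over the generic fibre: since $AB=\pi\cdot I$ with $\pi$ invertible forces $B=\pi A^{-1}$, the scheme $Y\otimes_{\O}F$ is already smooth, so all the blow-up centres are supported in the special fibre.

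Next I would form the fibre product. Descending $\widetilde Y\to Y$ to a morphism of quotient stacks $[(GL_r\times GL_r)\backslash\widetilde Y]\to[(GL_r\times GL_r)\backslash Y]$ and setting
\[
   N := M^{\rm loc}\times_{[(GL_r\times GL_r)\backslash Y]}[(GL_r\times GL_r)\backslash\widetilde Y],
\]
one obtains a scheme proper over $M^{\rm loc}$; equivalently, pulling back along the $GL_r\times GL_r$-torsor $\widetilde M\to M^{\rm loc}$, form $\widetilde N := \widetilde M\times_Y\widetilde Y$, on which $GL_r\times GL_r$ acts freely, and put $N := \widetilde N/(GL_r\times GL_r)$. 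The morphism $N\to M^{\rm loc}$ is proper, and because $\widetilde Y\to Y$ is an isomorphism over the generic fibre it is an isomorphism on generic fibres, hence proper birational. It is $\mathcal G_{\O_E}$-equivariant: the torsor $\widetilde M$ and the smooth morphism $q\colon\widetilde M\to Y$ are set up compatibly with the action of $\mathcal G_{\O_E}$ on $M^{\rm loc}$, so the action lifts to $N$; and $N\to M^{\rm loc}$ is the blow-up of the pullbacks to $M^{\rm loc}$ of the $(GL_r\times GL_r)$-invariant centres, which are $\mathcal G_{\O_E}$-invariant subschemes supported in the special fibre. Thus $\pi\colon N\to M^{\rm loc}$ is an equivariant modification in the sense of Definition~\ref{equivmod}.

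Finally I would transfer the geometry. The morphism $\widetilde N\to\widetilde Y$ is the pullback of $q$, hence smooth, and $\widetilde N\to N$ is a $GL_r\times GL_r$-torsor, hence smooth and surjective. Since smooth morphisms preserve regularity and the preimage of a divisor with simple normal crossings under a smooth morphism is again such a divisor, regularity and the simple-normal-crossings property of the special fibre descend from $\widetilde Y$ first to $\widetilde N$ and then, by smooth (indeed torsor) descent, to $N$. It therefore remains only to know that $\widetilde Y\to Y$ really is a resolution with $\widetilde Y$ regular and $\widetilde Y\otimes_{\O}k$ a divisor with simple normal crossings; this is Faltings' input and is the crux of the whole argument, since it is here that the wonderful compactification $\overline{PGL_r}$ enters essentially — one exploits that the boundary of $\overline{PGL_r}$ is a simple normal crossings divisor, that $\widetilde Y_1$ is obtained from $\overline{PGL_r}$ by pulling back along an affine bundle (which preserves regularity and normal crossings), and that the hyperplane $a=\pi$ cuts the resulting special-fibre configuration transversally. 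Controlling this transversality, across the passage to the double affine cone and the section $a=\pi$, is the main obstacle; everything else is formal once it is in hand.
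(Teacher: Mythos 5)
Your proposal matches the paper's intended argument: the paper treats Faltings' resolution $\widetilde Y\to Y$ as the input and obtains the modification of $M^{\rm loc}$ by exactly the transport mechanism you describe — pulling $\widetilde Y$ back along the smooth $(GL_r\times GL_r)$-equivariant map $q\colon\widetilde M\to Y$ and descending through the torsor $\widetilde M\to M^{\rm loc}$, with regularity and the normal-crossings property descending along smooth surjective morphisms. The details you supply (equivariance of the blow-up centres since they are generated by minors, triviality over the generic fibre since $Y\otimes_\O F\cong GL_r$ is smooth) are precisely the unspoken ingredients behind the paper's ``this can now be used.''
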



\begin{Remark}\label{rem:Lafforgue}
One can attempt to generalize this method of resolution, as well as the method of \cite{P2},  to general parahoric 
level subgroups. Let us start with a lattice chain
$$
\Lambda_0\subset \Lambda_1\subset \cdots \subset \Lambda_{s-1}\subset t^{-1}\Lambda_0
$$
in $V=F^n$ and consider the corresponding local model for $G=GL_n$ and $\mu=(1^{(r)}, 0^{(n-r)})$.
The natural replacement for $\overline {PGL_r}=\overline{(PGL_r\times PGL_r)/PGL_r}$ to accomodate more than two lattices would be a suitable
completion $\ov X$ of the quotient $X=(PGL_r)^s/PGL_r$. Unfortunately, there 
is no easy ``wonderful" choice for such a completion. Indeed, the equivariant compactifications of such quotients
have a very complicated theory,  which was developed by Lafforgue 
\cite{Laf}. To transpose the theory of \cite{P2},   one can then  consider the corresponding closures in the completion of a product 
of parabolics and attempt to obtain local models as pull-backs of the corresponding universal families.
The details of such a general construction have not been worked out.
On the other hand, as far as constructing resolutions of local models in the style of Theorem \ref{thm:resolution} are concerned, an  approach using completions of $(PGL_r)^s/PGL_r$ is given in \cite{Fa2}, see also   \cite{Ge3}. 
\end{Remark} 

\begin{Remark}
A somewhat different but related point of view which also connects with Lafforgue's
completions is explained in \cite{Fa2}. Using it, Faltings constructs a resolution of the local models of Remark 
\ref{rem:Lafforgue} when $r=2$ (when $r=1$
the local models themselves  have the desired properties, cf.\ the second part of Theorem \ref{st:GL_n_loc_mod_flat}).
We will not attempt to fully reproduce his (ingenious!) construction in this survey, 
but here is an idea.

Faltings  starts with the following observation: If $R$ is a discrete valuation ring, 
then an $R$-valued point $\F(R):=\{\F_i(R)\}_i$
of the local model $M^{\rm loc}$ gives a sequence of free $R$-modules
$$
\F_0 \subset \F_1 \subset \cdots \subset \F_{s-1}
$$
of rank $r$. This sequence can be viewed as a lattice arrangement
in a vector space of dimension $r$. Note that the local model $M^{\rm loc}$
and all its proper modifications $N$ that share the same generic fiber, also share
the same set of $R$-valued points for   a discrete valuation ring $R$. We can now view
the search for a suitable birational modification $N\to M^{\rm loc}$ as a search for a suitable 
parameter
space of lattice arrangements  as above. 
Observe that to any such lattice arrangement 
we can associate its corresponding {\it Deligne scheme} $D=D(\F(R))$ over $\Spec R$
(which is a type of local model
on its own) cf.\ \cite[\S 5]{Fa2}, \cite{Mustafin}. A suitable blow-down of the Deligne scheme gives a ``minimal" model $D^{\rm min}$
with toroidal singularities. Now parameters for  a space of lattice arrangements 
can be obtained by looking at moduli of these minimal Deligne schemes. 
More specifically, Faltings constructs  a  universal family of lattice arrangements
that supports a universal minimal Deligne scheme. The base of this family is a projective equivariant embedding of the homogeneous space $({PGL}_r)^s/{ PGL}_r$.
It turns out that this embedding is of the kind  considered by Lafforgue. As  explained in Remark \ref{rem:Lafforgue} above,  this can then be used to obtain modifications of the local models.  
For example, when $r=2$, the Deligne scheme is a projective flat curve over $\Spec R$ with 
generic fiber ${\mathbb P}^1$ and special fiber a chain of ${\mathbb P}^1$'s
intersecting transversely (our first local model for $\Gamma_0(p)$ in Example \ref{exn=2} is
such an example of a Deligne scheme).  The minimal model 
$D^{\rm min}$ now gives a semi-stable curve over $\Spec R$ and we
can parametrize the lattice arrangement by a corresponding
point of the moduli space of  genus $0$ semi-stable marked curves.
In this case, Faltings' construction produces   a 
smooth compactification of $(PGL_2)^s/PGL_2$
and hence also a regular equivariant modification of the corresponding local model
for $\mu=(1^{(2)}, 0^{(n-2)})$ and the periodic  lattice chain with $s$ members.
See also \cite{KT} for some more recent developments
in this circle of ideas.
 \end{Remark} 

\begin{Remark} \label{rem:wcsympl} 
In \cite{Fa1}, Faltings gives a  construction of resolutions of local models in some cases related to 
other groups. This is done by working with explicit  schemes of matrices
that give affine charts for $M^{\rm loc}$, and  relating   those  
to wonderful completions. This then leads 
to resolutions for these affine charts. One can then obtain
 equivariant resolutions of the corresponding local models
as in Theorem \ref{thm:resolution} (note however that the special fibers of these resolutions are
not always reduced).  

We conclude this section with a list of some matrix equations which are  among those  investigated by Faltings \cite[\S4]{Fa1}. Before doing so, we make two remarks. First of all, even though some of the matrix equations that Faltings writes down are among the ones discussed in \S \ref{s:matrix} (e.g., $Z$ in \eqref{sympsp} appears in the middle of p.~194 in \cite{Fa1}, and $N$ of \eqref{uniteqodd}, resp.\ \eqref{uniteqev} occurs in the middle of p.~195 in \cite{Fa1}), and therefore are closely related to local models, this is less clear for others. In fact, his list arises from embedding symmetric spaces in projective spaces via homogeneous line bundles, and considering the singularities which occur in their closures---so there is a priori no connection to local models. Secondly, Faltings is less interested in questions of flatness, but rather allows himself to pass to the flat closure of the generic fiber, i.e.,\ to the affine variety with coordinate ring obtained by dividing out by $\pi$-torsion, and then tries to construct resolutions of those.  

Again we fix $\O$ with uniformizer $\pi$. 
One matrix equation  considered in \cite{Fa1}  is 
\begin{equation}
Z=\{\,A \in {\rm Mat}_{n\times n}\mid AA^{\rm ad}=A^{\rm ad}A=\pi\cdot I\, \} ,
\end{equation} 
where $A^{\rm ad}$ is the adjoint of $A$ with respect to a symmetric or a symplectic form. Faltings proves that, when $n$ is even, the flat closure of $Z\otimes_\O F$ inside $Z$ is Cohen-Macaulay with rational singularities. When $n$ is odd, the flat closure of $Z\otimes_\O F$ inside $Z$ is not Cohen-Macaulay, but its normalization is, with rational singularities.  Furthermore, he gives equivariant resolutions of these flat $\O$-schemes which have a normal crossings divisor as their special fibers, and computes the multiplicities of the irreducible components. 

In a similar vein, Faltings also analyzes  the intersection of $Z$ with the locus where $A=A^{\rm ad}$, i.e.,\
\begin{equation}
\{\,A \in {\rm Mat}_{n\times n}\mid A=A^{\rm ad}, A^2=\pi\cdot I\, \} .
\end{equation}
When $A^{\rm ad}$ is the adjoint of $A$ for a symmetric form,  this matrix equation relates to  
local models for the ramified unitary group, and the maximal parahoric subgroup  fixing  a selfdual lattice, comp. \eqref{uniteqodd}.
Similarly, he also considers the matrix equation
\begin{equation}
\{\,A \in {\rm Mat}_{n\times n}\mid A=-A^{\rm ad}, A^2=\pi\cdot I\, \} .
\end{equation} 
We refer to \cite[\S4]{Fa1} for further matrix equations, and results concerning them.

In \cite[Th.~13]{Fa2}, Faltings constructs resolutions of local models in the case of  
 the symplectic group  of genus $2$,  for more general parahoric subgroups 
(see also \cite{Ge3}).
\end{Remark}

\end{document}